\newcommand*{\colorboxed}{}
\def\colorboxed#1#{%
	\colorboxedAux{#1}%
}
\newcommand*{\colorboxedAux}[3]{%
	\begingroup
	\colorlet{cb@saved}{.}%
	\color#1{#2}%
	\boxed{%
		\color{cb@saved}%
		#3%
	}%
	\endgroup
}
\newtheorem{Theorem}{Theorem}
\newtheorem{proposition}{Proposition} 
\newtheorem{lemma}{Lemma} 
\newtheorem{Assumption}{Assumption} 
\newtheorem{corollary}{Corollary} 
\newtheorem{Definition}{Definition}
\DeclareMathOperator*{\E}{\mathbb{E}} 
\DeclareMathOperator*{\Var}{\mathbb{V}ar}
\newcommand{\floor}[1]{\left\lfloor  #1 \right\rfloor}
\newcommand{\ceil}[1]{\left\lceil  #1 \right\rceil}
\crefname{Assumption}{Assumption}{Assumptions}
\crefname{Theorem}{Theorem}{Theorems}
\icmltitlerunning{History-Gradient Aided Batch Size Adaptation for Variance Reduced Algorithms}
\begin{document}

\twocolumn[
\icmltitle{History-Gradient Aided Batch Size Adaptation for Variance Reduced Algorithms}




\begin{icmlauthorlist}
\icmlauthor{Kaiyi Ji}{to}
\icmlauthor{Zhe Wang}{to}
\icmlauthor{Bowen Weng}{to}
\icmlauthor{Yi Zhou}{ed}
\icmlauthor{Wei Zhang}{goo}
\icmlauthor{Yingbin Liang}{to}
\end{icmlauthorlist}

\icmlaffiliation{to}{Department of Electrical and Computer Engineering, The Ohio State University}
\icmlaffiliation{ed}{Department of Electrical and Computer Engineering, University of Utah}
\icmlaffiliation{goo}{Department of Mechanical Engineering, Southern University of Science and Technology}

\icmlcorrespondingauthor{Kaiyi Ji}{ji.367@osu.edu}

\icmlkeywords{Machine Learning, ICML}

\vskip 0.3in
]



\printAffiliationsAndNotice{}  

\begin{abstract}
Variance-reduced algorithms, although achieve great theoretical performance, can run slowly in practice due to the periodic gradient estimation with a large batch of data. Batch-size adaptation thus arises as a promising approach to accelerate such algorithms. However, existing schemes either apply prescribed batch-size adaption rule or exploit the information along optimization path via additional backtracking and condition verification steps. In this paper, we propose a novel scheme, which eliminates backtracking line search but still exploits the information along optimization path by adapting the batch size via history stochastic gradients. We further theoretically show that such a scheme substantially reduces the overall complexity for popular variance-reduced algorithms SVRG and SARAH/SPIDER for both conventional nonconvex optimization and reinforcement learning problems. To this end, we develop a new convergence analysis framework to handle the dependence of the batch size on history stochastic gradients. Extensive experiments validate the effectiveness of the proposed batch-size adaptation scheme. 
\vspace{-0.2cm}
\end{abstract}

\section{Introduction}\label{sec:intro}
Stochastic gradient descent (SGD)~\cite{ghadimi2013stochastic} algorithms have been extensively used to efficiently solve large-scale optimization problems recently. Furthermore, various variance reduced algorithms such as SAGA~\cite{Defazio2014}, SVRG~\cite{johnson2013accelerating,reddi2016stochastic}, SARAH~\cite{nguyen2017sarah,nguyen2017stochastic}, and SPIDER~\cite{fang2018spider}/SpiderBoost~\cite{wang2019spiderboost}, have been proposed to reduce the variance of SGD. Such variance reduction techniques have also been applied to policy gradient algorithms to develop SVRPG~\cite{Papini2018}, SRVR-PG~\cite{xu2019sample} and SARAPO~\cite{yuan2018policy} in reinforcement learning (RL).  
Though variance reduced algorithms have been shown to have order-level lower computational complexity than SGD (and than vanilla policy gradient in RL), they often do not perform as well as SGD in practice, largely due to the periodic  
 large-batch gradient estimation. In fact, variance-reduced gradient estimation plays an important role only towards the later stage of the algorithm execution, and hence a promising way to accelerate variance reduced algorithms is to adaptively increase the batch size.


Two types of batch-size adaptation schemes have been proposed so far to accelerate stochastic algorithms~\cite{smith2017don,friedlander2012hybrid,devarakonda2017adabatch}. The first approach follows a {\em prescribed} rule to adapt the batch size, which can be {\em exponential} and {\em polynomial} increase of batch size as in hybrid SGD (HSGD) \cite{friedlander2012hybrid,zhou2018new} and {\em linear} increase of batch size \cite{zhou2018new}. 
Moreover,~\citealt{harikandeh2015stopwasting} proposed to use exponential increase of batch size at each outer-loop iteration for SVRG. 

The second approach adapts the batch size based on the information along the optimization path. 
 For example, \citealt{de2016big,de2017automated} proposed Big Batch SGD, which adapts the batch size so that the resulting gradient and variance satisfy certain optimization properties. Since the batch size needs to be chosen {\em even before} its resulting gradient is calculated, the algorithm 
adopts  the backtracking line search to iteratively check that the chosen batch size ensures the resulting gradient to satisfy a variance bound. Clearly, the backtracking step adds undesired complexity, but seems to be unavoidable, 
 because the convergence analysis exploits the {\em instantaneous} variance bound. 

Our contribution lies in designing an easy-to-implement scheme to adapt the batch size, which  incorporates the information along the optimization path, but  
does not involve backtracking and condition verification. We further show by both theory and experiments that such a scheme achieves much better performance than vanilla variance reduced algorithms in both conventional optimization and RL problems.

\vspace{-0.2cm}
\subsection{Our Contributions}\label{sec:contribution}
\vspace{-0.1cm}

{\bf New batch-size adaptation scheme via history gradients:}  We propose to adapt the batch size of each epoch (i.e., each outer loop) of variance reduced algorithms SVRG and SPIDER inversely proportional to the average of stochastic gradients over each epoch, and call the algorithms as {\bf A}daptive {\bf ba}tch-size SVRG (AbaSVRG) and AbaSPIDER. We further apply the scheme to the variance reduced policy gradient algorithms SVRPG and SPIDER-PG (which refers to SRVR-PG in~\citealt{xu2019sample}) in RL, and call the resulting algorithms as AbaSVRPG and AbaSPIDER-PG. These algorithms initially use small batch size (due to large gradients) and enjoy fast iteration, and then gradually increase the batch size (due to the reduced gradients) and enjoy reduced variance and stable convergence. Further technical justification is provided in \Cref{sec:algorithm}. 

Since the batch size should be set at the beginning of the epoch at which point the gradients in that epoch has not been calculated yet. It is a similar situation as in~\citealt{de2016big,de2017automated}, which introduced the backtracking line search to guarantee the variance bound. Here, we propose to use the average of stochastic gradients over the {\em preceding} epoch as an approximation of the present gradient information to avoid the complexity of backtracking line search, which we further show theoretically to still achieve guaranteed improved performance.

{\bf New convergence analysis:} Since the updates in our algorithms depend on the past gradients, it becomes much more challenging to establish the provable convergence guarantee. The technical novelty of our analysis mainly lies in the following two aspects.
\begin{list}{$\bullet$}{\topsep=-0.2ex \leftmargin=0.12in \rightmargin=0.in \itemsep =-0.1in}
\vspace{0.16cm}

\item We develop a new framework to analyze the convergence of variance reduced algorithms with  batch size adapted to history gradients 
for nonconvex optimization. In particular, we bound the function values for each epoch by the average gradient in the preceding epoch due to the batch size dependence, which further facilitates the bounding of the accumulative change of the objective value over the entire execution. Such an analysis is different from the existing analysis of SVRG in  \citealt{reddi2016stochastic,li2018simple} and SPIDER/SpiderBoost in \citealt{fang2018spider,wang2019spiderboost}, which
 are based on guaranteeing
  the decrease of the objective value iterationwisely or epochwisely. 

\vspace{0.16cm}

\item We develop a simpler convergence analysis for SVRG-type algorithms than previous studies \cite{reddi2016stochastic,li2018simple} for nonconvex optimization, which allows more  flexible choices of parameters. More importantly, such an analysis fits well to the analysis framework we develop to handle the dependence of the adaptive batch size on the stochastic gradients in the previous epoch.
\end{list}


Based on the new analysis framework, we show that both AbaSVRG and AbaSPIDER for conventional nonconvex optimization and AbaSVRPG and AbaSPIDER-PG for policy optimization in RL achieve improved complexity over their corresponding vanilla counterpart (without batch-size adaptation). The worst-case complexity of these algorithms all match the best known complexity. We also provide the convergence analysis of AbaSVRG and AbaSPIDER for nonconvex problems under the {P\L} condition in Appendix~\ref{se:5}.  

{\bf Experiments:} We provide extensive experiments on both supervised learning and RL problems and demonstrate that the proposed adaptive batch-size scheme substantially speeds up the convergence of variance reduced algorithms.

\begin{figure*}[t]
	\begin{minipage}[t]{8.5cm}
		\null 
		\begin{algorithm}[H]
				\small
			\caption{AbaSVRG}
			\label{alg:svrg}
			\begin{algorithmic}[1]
				\STATE {\bfseries Input:} ${x}_0$,  $m$, 
				$B$, $\eta$, $c_\beta,c_\epsilon, \beta_1>0$ 
				\STATE{${\tilde x}^0 = {x}_0$}
				\FOR{$s=1, 2, ..., S$}
				\STATE{${x}_0^s = {\tilde x}^{s-1}$}
				\STATE{Sample $\mathcal{N}_s$ from $[n]$ without replacement, 
					\\ where $\color{blue}{N_s =\min\{c_\beta \sigma^2 \beta_s^{-1}, c_\epsilon\sigma^2\epsilon^{-1},n\} }$ } 
				\STATE{${g}^s = \nabla f_{\mathcal{N}_s}({\tilde x}^{s-1})$}
				\STATE{ $\color{blue}{\text{Set}\; \beta_{s+1} = 0}$}
				\FOR{$t = 1,2,...,m$ }
				\STATE {Sample  $\mathcal{B}$ from $[n]$ with replacement }
				\STATE{ ${v}_{t-1}^s = \nabla f_{\mathcal{B}}({x}_{t-1}^s) - \nabla f_{\mathcal{B}}({\tilde x}^{s-1})+{g}^s$}
				\STATE{${x}_t^{s}={x}_{t-1}^s-\eta{v}_{t-1}^s$}
				\STATE{$\color{blue}{\beta_{s+1} \leftarrow \beta_{s+1}  +\|{v}^s_{t-1}\|^2/m}$}
				\ENDFOR
				\STATE{  ${\tilde x}^s = {x}_m^s$ }
				\ENDFOR
				\STATE {\bfseries Output:} ${x}_{\zeta}$ from $\{ {x}_{t-1}^s\}_{s\in[S], t\in[m]}$ uniformly at random
			\end{algorithmic}
		\end{algorithm}
	\end{minipage}
	\hspace{0.1cm}
	\begin{minipage}[t]{8.5cm}
		\null 
		\begin{algorithm}[H]
				\small
			\caption{AbaSPIDER }
			\label{alg:spider}
			\begin{algorithmic}[1]
				\STATE {\bfseries Input:} ${x}_0$,  $m$, 
				$B$, $\eta$, $c_\beta,c_\epsilon, \beta_1>0$
				\STATE{${\tilde x}^0 = {x}_0$}
				\FOR{$s=1, 2, ..., S$}
				\STATE{${x}_0^s = {\tilde x}^{s-1}$}
				\STATE{Sample $\mathcal{N}_s$ from $[n]$ without replacement, 
					\\ where $\color{blue}{N_s =\min\{c_\beta\sigma^2 \beta_s^{-1}, c_\epsilon \sigma^2\epsilon^{-1},n\} }$ } 
				\STATE{${v}_0^s = \nabla f_{\mathcal{N}_s}({\tilde x}^{s-1})$}
				\STATE{${x}_1^{s}={x}_{0}^s-\eta{v}_{0}^s$}
				\STATE{ $\color{blue}{\text{Set}\; \beta_{s+1} = \|{v}_0^s\|^2/m}$}
				\FOR{$t = 1,2,...,m-1$ }
				\STATE {Sample $\mathcal{B}$ from $[n]$ with replacement }
				\STATE{ ${v}_{t}^s = \nabla f_{\mathcal{B}}({x}_{t}^s) - \nabla f_{\mathcal{B}}({x}_{t-1}^s)+{v}_{t-1}^s$}
				\STATE{${x}_{t+1}^{s}={x}_{t}^s-\eta{v}_{t}^s$}
				\STATE{$\color{blue}{ \beta_{s+1} \leftarrow \beta_{s+1}  +\|{v}^s_{t}\|^2 /m}$}
				\ENDFOR
				\STATE{  ${\tilde x}^s = {x}_m^s$ }
				\ENDFOR
				\STATE {\bfseries Output:} ${x}_{\zeta}$ from $\{ {x}_{t-1}^s\}_{s\in[S], t\in[m]}$ uniformly at random
			\end{algorithmic}
		\end{algorithm}
	\end{minipage}
\vspace{-0.2cm}
\end{figure*}

\vspace{-0.2cm}
\subsection{Related Work}
\vspace{-0.2cm}
{\bf Variance reduced algorithms for conventional optimization.}
In order to improve the performance of SGD~\cite{robbins1951}, various variance reduced algorithms have been proposed such as SAG \cite{Nicolas2012}, SAGA \cite{Defazio2014}, SVRG \cite{Allen_Zhu2016, johnson2013accelerating},  SARAH~\cite{nguyen2017sarah,nguyen2017stochastic,nguyen2019finite}, SNVRG~\cite{zhou2018stochastic}, SPIDER~\cite{fang2018spider}, SpiderBoost~\cite{wang2019spiderboost}. 
This paper shows that two representative algorithms SVRG and SPIDER can be equipped with the proposed adaptive batch size  and attain substantial performance gain.  

{\bf Variance reduced policy gradient  for RL:} Variance reduction methods have also been applied to policy gradient methods \citep{Richard2000} in RL. One way is to incorporate a baseline in the gradient estimator, e.g., \citealt{Williams1992,Weaver2001,wu2018variance}. Optimization techniques have also been applied. For example, \citealt{Papini2018,xu2019improved}  applied SVRG to develop stochastic variance reduced policy gradient (SVRPG) algorithm. \citealt{yuan2018policy} and \citealt{xu2019sample} applied SARAH/SPIDER to develop stochastic recursive gradient policy optimization (SARAPO) and
stochastic recursive variance reduced policy gradient (SRVR-PG), respectively. \citealt{shen2019hessian} developed Hessian aided policy gradient (HAPG). This paper shows that the batch size adaptation scheme can also be applied to  variance reduced policy gradient algorithms
to significantly improve their performance. 


{\bf Stochastic algorithms with adaptive batch size.} Adaptively changing the batch size emerges as a powerful approach for accelerating stochastic algorithms~\cite{smith2017don,friedlander2012hybrid,devarakonda2017adabatch}. Hybrid SGD (HSGD) applies {\em exponential} and {\em polynomial} increase of batch size \cite{friedlander2012hybrid,zhou2018new} and {\em linear} increase of batch size \cite{zhou2018new}. \citealt{de2016big,de2017automated} proposed Big Batch SGD with the batch size adaptive to the \textit{instantaneous} gradient and variance information (which needs to be ensured, e.g., by backtracking line search) at each iteration. Moreover,~\citealt{harikandeh2015stopwasting} and \citealt{lei2019adaptivity} proposed to use exponential increase of batch size at each outer-loop iteration respectively for SVRG and for an adaptively sampled variance reduced algorithm SCSG~\cite{lei2017non}. Our algorithms adapt the batch size to history gradients, which differs from the prescribed adaptive schemes, and is easier to implement than Big Batch SGD by eliminating backtraking line search and still guarantees the convergence.


We note that a concurrent work~\cite{sievert2019improving} also proposed an improved SGD algorithm by adapting the batch size to history gradients, but only as a practice without convergence proof. Our analysis framework is applicable to their algorithm as we show in Appendix~\ref{apen:sgd}. 







{\bf Notations.} Let $\wedge$ and $\vee$ denote the minimum and the maximum.
Let $[n]:=\{1,....,n\}$. For a set $\mathcal{S}$, let $S$ be its cardinality and define 
 $\nabla f_{\mathcal{S}}(\cdot):=\frac{1}{S}\sum_{i\in\mathcal{S} }\nabla f_i(\cdot)$.

\vspace{-0.2cm}
\section{Batch Size Adaptation for Nonconvex Optimization}\label{se:4}
\vspace{-0.1cm}

In this section, we consider the following finite-sum optimization problem
\begin{align}
\min_{{x}\in\mathbb{R}^d}\, f({x}) : = \frac{1}{n}\sum_{i=1}^n f_i({x}).  \tag{P}
\end{align}
In the context of 
machine learning problems, 
each  function $f_i(\cdot)$ evaluates the loss on a particular $i$-th data sample, and is generally nonconvex due to the complex models.



\subsection{Proposed Algorithms with Batch-Size Adaptation}\label{sec:algorithm}

Two popular variance reduced algorithms to solve the optimization problem~(P) are SVRG \cite{johnson2013accelerating} and SARAH~\cite{nguyen2017sarah}/SPIDER~\cite{fang2018spider}, which have been shown to outperform SGD. 
However, SVRG and SARAH/SPIDER often run slowly in practice due to the full/large-batch gradient evaluation at the beginning of each epoch. We propose a batch-size adaptation scheme to mitigate such an issue for these algorithms, and we call the corresponding algorithms as AbaSVRG and AbaSPIDER (see Algorithms~\ref{alg:svrg} and~\ref{alg:spider}). Note that AbaSPIDER adopts the improved version SpiderBoost~\cite{wang2019spiderboost} of the original SPIDER~\cite{fang2018spider}.

We take SVRG as an example to briefly explain our idea. Our analysis of SVRG shows that the decrease of the average function value over an epoch $s$ with length $m$ satisfies
\begin{align*}
\frac{\mathbb{E}(f({\tilde x}^{s+1}) - f({\tilde x}^{s})) }{m}\leq -\phi  \frac{\sum_{t=0}^{m-1} \mathbb{E}\|{v}_t\|^2}{m}+ \frac{\psi I_{(N_s<n)}}{N_s} 
\end{align*} 
where $\phi, \psi>0$ are constants, the indicator function $I_{(A)}$ equals $1$ if the event $A$ is true and $0$ otherwise, ${\tilde x}^s$ is  the snapshot in epoch $s$, ${v}_t$ is a stochastic estimation of $\nabla f({x}_t)$ within epoch $s$, and $N_s$ is the batch size used at the outer-loop iteration.  
The above bound naturally suggests that $N_s$ should be chosen such that the second term is at the same level as the first term, i.e., the batch size should adapt to the average stochastic gradient over the epoch, in which case the convergence guarantee follows easily. However, this is not feasible in practice,  because the batch size should be chosen at the beginning of each epoch, at which point the gradients in the same loop have not been calculated yet. Such an issue was previously solved in~\citealt{de2016big,de2017automated} via backtracking line search, which adds significantly additional complexity. Our main idea here is to use the stochastic gradients calculated in the previous epoch for adapting the batch size of the coming loop, and we show that such a scheme still retains the convergence guarantee and achieves improved computational complexity. 

More precisely, AbaSVRG/AbaSPIDER chooses the batch size $N_s$ at epoch $s$ adaptively to the average $\beta_s$ of stochastic gradients in the \textit{preceding} epoch $s-1$ as given below 
\begin{align*}
N_s=\min\{c_\beta \sigma^2\beta_s^{-1}, c_\epsilon\sigma^2 \epsilon^{-1},n\},\beta_s = \frac{\sum_{t=1}^m \|{v}_{t-1}^{s-1}\|^2}{m},
\end{align*}
where $c_\beta,c_\epsilon>0$ are constants and $\sigma^2$ is the variance bound. As a comparison, the vanilla SVRG and SPIDER pick a {\em fixed} batch size to be either $n$ or $\min\{c_\epsilon \sigma^2\epsilon^{-1},n\}$.

\vspace{-0.2cm}
\subsection{Assumptions and Definitions} 
\vspace{-0.1cm}
We adopt the following standard assumptions~\cite{lei2017non, reddi2016stochastic} for convergence analysis.
\begin{Assumption}\label{assum1}
The objective function in (P) satisfies:
\begin{list}{$\bullet$}{\topsep=0.ex \leftmargin=0.06in \rightmargin=0in \itemsep =-0in}
\item[(1)]$\nabla f_i(\cdot)$ is $L$-smooth for $ i\in[n]$, i.e.,  for any ${x,y}\in\mathbb{R}^d$, 
$\|\nabla f_i({x})- \nabla f_i({y})\| \leq L\|{x-y}\|$.
\item[(2)]$f(\cdot)$ is bounded below, i.e.,  $f^*=\inf_{{x}\in\mathbb{R}^d} f({x})>-\infty$.
\item[(3)]$\nabla f_i(\cdot)$ (with the index $i$ uniformly randomly chosen) has bounded variance, i.e.,  there exists a constant $\sigma>0$ such that for any ${x}\in\mathbb{R}^d$, 
$\frac{1}{n}\sum_{i=1}^n\|\nabla f_i({x})-\nabla f({x})\|^2 \leq \sigma^2.$
\end{list}
\end{Assumption}
 The item (3) of the bounded variance assumption is commonly adopted for proving the convergence of SGD-type algorithms  (e.g., SGD~\cite{ghadimi2013stochastic}) and stochastic variance reduced methods  (e.g., SCSG~\cite{lei2017non}) that draw a sample batch with size less than $n$ for gradient estimation at each outer-loop iteration. 
 
 In this paper, we use the gradient norm as the convergence criterion for nonconvex optimization. 
 \begin{Definition}
 We say that ${x}^\zeta$ is an $\epsilon$-accurate solution for the optimization problem (P) if $\mathbb{E}\|\nabla f({x}^\zeta)\|^2\leq \epsilon$, where  ${x}^\zeta$ is an output returned by a stochastic algorithm.
 \end{Definition}
 To compare the efficiency of different stochastic algorithms, we adopt the following stochastic first-order oracle (SFO) for the analysis of the computational complexity.
 \begin{Definition}
Given an input ${x}\in\mathbb{R}^d$, SFO randomly takes an index $i \in [n]$ and returns a stochastic gradient $\nabla f_i({x})$ such that $\mathbb{E}_i\big[\nabla f_i({x})\big]=\nabla f({x})$.
 \end{Definition}
 

\vspace{-0.2cm}
\subsection{Convergence Analysis for AbaSVRG}\label{sec:abasvrgss}
\vspace{-0.1cm}
Since the batch size of AbaSVRG is adaptive to the {\em history} gradients due to the component $c_\beta\sigma^2 \beta_s^{-1}$, the existing convergence analysis for SVRG type of algorithms in~\citealt{li2018simple, reddi2016stochastic} does not extend easily. Here, we develop a simpler analysis for SVRG algorithm than that in~\citealt{li2018simple, reddi2016stochastic} (and can be of independent interest), and enables to handle the dependence of the  batch size on the stochastic gradients in the past epoch in the convergence analysis for AbaSVRG. To compare more specifically, \citealt{reddi2016stochastic} introduced a Lyapunov function $R^{s}_{t} = \mathbb{E} [f({x}^{s}_t)+ c_t\|{x}_t^s-{\tilde x}^{s-1}\|^2]$ and proves that $R^s$ decreases by the accumulated gradient norms $\sum_{t=0}^{m-1} \mathbb{E}\|\nabla f({x}_{t}^s)\|^2$ within an epoch $s$, and \citealt{li2018simple} directly showed that $ \mathbb{E} f({x}^{s})$ decreases by $\sum_{t=0}^{m-1} \mathbb{E}\|\nabla f({x}_{t}^s)\|^2$ using tighter bounds. 
As a comparison, our analysis shows that $ \mathbb{E} f({x}^{s})$ decreases by the accumulated \textit{stochastic gradient} norms $\sum_{t=0}^{m-1} \mathbb{E}\|{v}_{t}^s\|^2$. More details  about our proof can be found in Appendix~\ref{apen:svrg}.
The following theorem provides a general convergence result for AbaSVRG.
\begin{Theorem}\label{th_svrg} 
Suppose Assumption~\ref{assum1} is satisfied. Let $\epsilon >0$ and $c_\beta,c_\epsilon\geq \alpha$ for certain constant $\alpha>0$. Let $\psi=\frac{2\eta^2 L^2 m^2}{B} + \frac{2}{\alpha}+2 $ and choose $\beta_1$, $\alpha$ and $\eta$ such that $\beta_1\leq  \epsilon S$ and $\phi = \frac{1}{2}-\frac{1}{2\alpha}-\frac{L\eta}{2}- \frac{\eta^2L^2m^2}{2B}>0$, where $S$ denotes the number of epochs.  Then, the output ${x}_\zeta$ returned by AbaSVRG satisfies 
\begin{align*}
\mathbb{E}\|\nabla f({x}_\zeta)\|^2   \leq  \frac{\psi(f({x}_0)- f^*)}{\phi \eta K} + \frac{\psi\epsilon}{\phi \alpha} + \frac{4\epsilon}{\alpha},
\end{align*} 
where $f^*=\inf_{{x}\in\mathbb{R}^d} f({x})$ and $K=Sm$ represents the total number of iterations.
\end{Theorem}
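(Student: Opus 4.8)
The plan is to carry out a one-epoch descent analysis for the SVRG update, then sum over epochs and handle the history-dependent batch size by a telescoping argument. First I would fix an epoch $s$ and analyze the inner loop: using $L$-smoothness of $f$, the update ${x}_t^s = {x}_{t-1}^s - \eta {v}_{t-1}^s$, and taking expectation, I would write $\mathbb{E} f({x}_t^s) \leq \mathbb{E} f({x}_{t-1}^s) - \eta\langle \nabla f({x}_{t-1}^s), \mathbb{E}{v}_{t-1}^s\rangle + \frac{L\eta^2}{2}\mathbb{E}\|{v}_{t-1}^s\|^2$. Since ${v}_{t-1}^s$ is a biased-looking but actually conditionally-unbiased-given-${g}^s$ estimator — more precisely $\mathbb{E}[{v}_{t-1}^s \mid {x}_{t-1}^s, {g}^s] = \nabla f({x}_{t-1}^s) - \nabla f({\tilde x}^{s-1}) + {g}^s$, which is not $\nabla f({x}_{t-1}^s)$ unless we also take expectation over $\mathcal{N}_s$ — I would instead track the quantity $\|{x}_{t-1}^s - {\tilde x}^{s-1}\|^2$ and relate $\mathbb{E}\|{v}_{t-1}^s\|^2$ to $\mathbb{E}\|\nabla f({x}_{t-1}^s)\|^2$, the variance-reduction error, and the initial batch variance. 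The standard SVRG bound gives $\mathbb{E}\|{v}_{t-1}^s - \nabla f({x}_{t-1}^s)\|^2 \leq \frac{L^2}{B}\mathbb{E}\|{x}_{t-1}^s - {\tilde x}^{s-1}\|^2 + \frac{\sigma^2 I_{(N_s < n)}}{N_s}$, where the last term comes from sampling $\mathcal{N}_s$ without replacement from $[n]$.

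Next I would bound $\mathbb{E}\|{x}_{t-1}^s - {\tilde x}^{s-1}\|^2 = \eta^2 \mathbb{E}\|\sum_{j=0}^{t-2}{v}_j^s\|^2 \leq \eta^2 (t-1)\sum_{j=0}^{t-2}\mathbb{E}\|{v}_j^s\|^2 \leq \eta^2 m \sum_{j=0}^{m-2}\mathbb{E}\|{v}_j^s\|^2$, and sum the per-iteration descent inequality over $t = 1, \dots, m$. Collecting terms, this should yield an epoch-level bound of the form
\begin{align*}
\mathbb{E} f({\tilde x}^s) - \mathbb{E} f({\tilde x}^{s-1}) \leq -\phi \eta \sum_{t=0}^{m-1}\mathbb{E}\|{v}_t^s\|^2 + \frac{\psi \eta m \, \sigma^2 I_{(N_s<n)}}{N_s},
\end{align*}
with $\phi = \frac{1}{2} - \frac{L\eta}{2} - \frac{\eta^2 L^2 m^2}{2B} - \frac{1}{2\alpha}$ as in the statement — the $\frac{1}{2\alpha}$ slack being reserved to absorb a fraction of the $\|\nabla f\|^2$ terms against the variance term, which is where the $c_\beta, c_\epsilon \geq \alpha$ hypothesis enters. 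Now the key step: since $N_s = \min\{c_\beta \sigma^2 \beta_s^{-1}, c_\epsilon \sigma^2 \epsilon^{-1}, n\}$, on the event $\{N_s < n\}$ we have $\frac{\sigma^2}{N_s} = \max\{\beta_s / c_\beta, \epsilon / c_\epsilon, \sigma^2/n\} \leq \frac{\beta_s}{c_\beta} + \frac{\epsilon}{c_\epsilon} \leq \frac{\beta_s}{\alpha} + \frac{\epsilon}{\alpha}$, and crucially $\beta_s = \frac{1}{m}\sum_{t=1}^m \|{v}_{t-1}^{s-1}\|^2$ is exactly (a multiple of) the accumulated stochastic-gradient norm from the \emph{previous} epoch. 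So the "bad" term in epoch $s$ is controlled by the "good" term in epoch $s-1$.

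Summing over $s = 1, \dots, S$ and telescoping $\mathbb{E} f({\tilde x}^s)$, the $\beta_s/\alpha$ contributions line up one-epoch-shifted against the $-\phi\eta \sum \mathbb{E}\|{v}_t^s\|^2$ contributions; after moving them to the left we get $(\phi\eta - \frac{\psi\eta m}{\alpha}\cdot\frac{1}{m})\sum_{s,t}\mathbb{E}\|{v}_t^s\|^2$ — I would need the constants arranged so this coefficient is still a positive multiple of $\phi\eta$, plus a leftover boundary term $\frac{\psi\eta m \beta_1}{\alpha m} = \frac{\psi \eta \beta_1}{\alpha}$ from the very first epoch (handled by the hypothesis $\beta_1 \leq \epsilon S$, turning it into an $O(\epsilon)$ term after dividing by $K = Sm$), and the $\frac{\psi \eta m \epsilon}{c_\epsilon N_s}$-type residuals summing to $O(S m \epsilon)$. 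Dividing through by $\phi \eta K$ and using $f({\tilde x}^0) = f({x}_0)$, $f({\tilde x}^S) \geq f^*$, together with $\mathbb{E}\|\nabla f({x}_\zeta)\|^2 = \frac{1}{K}\sum_{s,t}\mathbb{E}\|\nabla f({x}_t^s)\|^2$ and a final conversion from $\mathbb{E}\|{v}_t^s\|^2$ back to $\mathbb{E}\|\nabla f({x}_t^s)\|^2$ (costing another variance term, reabsorbed into the $\frac{4\epsilon}{\alpha}$), should produce the claimed inequality $\mathbb{E}\|\nabla f({x}_\zeta)\|^2 \leq \frac{\psi(f({x}_0)-f^*)}{\phi\eta K} + \frac{\psi\epsilon}{\phi\alpha} + \frac{4\epsilon}{\alpha}$. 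The main obstacle I anticipate is the telescoping bookkeeping around the batch-size dependence: because $N_s$ depends on $\beta_s$, which is itself a random function of epoch $s-1$'s iterates, the term $\frac{\psi\eta m \sigma^2 I_{(N_s<n)}}{N_s}$ is genuinely random and correlated with the past, so one must be careful to take expectations in the right order (condition on the history up to the start of epoch $s$, bound the epoch-$s$ descent, then use the tower property) and to verify that replacing $\frac{\sigma^2}{N_s}$ by $\frac{\beta_s}{\alpha} + \frac{\epsilon}{\alpha}$ is valid \emph{after} taking expectations — i.e., that $\mathbb{E}[\beta_s]$ is what appears, matching $\mathbb{E}\sum_t \|{v}_{t-1}^{s-1}\|^2$ from the previous epoch's lower-bound term. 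Getting the constants $\phi, \psi$ to close consistently (the $\frac{1}{2\alpha}$ and $\frac{2}{\alpha}$ in $\psi$ must be exactly the slacks that make the telescoped coefficient nonnegative) is the delicate part; the rest is routine smoothness-and-variance estimation.
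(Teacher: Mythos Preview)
Your proposal is essentially correct and follows the same route as the paper's proof: smoothness descent, the SVRG variance bound $\mathbb{E}\|v_{t-1}^s-\nabla f(x_{t-1}^s)\|^2\le \frac{\eta^2L^2(t-1)}{B}\sum_{i<t-1}\mathbb{E}\|v_i^s\|^2+\frac{I_{(N_s<n)}}{N_s}\sigma^2$, the bound $\frac{\sigma^2 I_{(N_s<n)}}{N_s}\le \frac{\beta_s}{\alpha}+\frac{\epsilon}{\alpha}$, the one-epoch-shifted telescoping over $s$, and the final conversion $\|\nabla f\|^2\le 2\|v\|^2+2\|v-\nabla f\|^2$.

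The one bookkeeping confusion worth flagging: $\psi$ should \emph{not} appear in your per-epoch descent inequality. The paper's epoch bound is
\[
\mathbb{E}_{0,s} f(\tilde x^s)\le \mathbb{E}_{0,s} f(\tilde x^{s-1})-\Big(\tfrac{\eta}{2}-\tfrac{L\eta^2}{2}-\tfrac{\eta^3L^2m^2}{2B}\Big)\sum_{t=0}^{m-1}\mathbb{E}_{0,s}\|v_t^s\|^2+\tfrac{\eta m}{2}\Big(\tfrac{\beta_s}{\alpha}+\tfrac{\epsilon}{\alpha}\Big),
\]
with coefficient $\tfrac{1}{2}-\tfrac{L\eta}{2}-\tfrac{\eta^2L^2m^2}{2B}$ on the $\|v\|^2$-sum, \emph{not yet} $\phi$. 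The missing $-\tfrac{1}{2\alpha}$ that completes $\phi$ enters precisely when you sum over $s$ and absorb the shifted $\tfrac{\eta m}{2\alpha}\mathbb{E}\beta_s=\tfrac{\eta}{2\alpha}\sum_t\mathbb{E}\|v_t^{s-1}\|^2$ into the preceding epoch's negative term; there is no further subtraction after that, so the telescoped coefficient is exactly $\eta\phi$, not ``$\phi\eta-\tfrac{\psi\eta}{\alpha}$'' as you wrote. The constant $\psi=\tfrac{2\eta^2L^2m^2}{B}+\tfrac{2}{\alpha}+2$ appears only at the very end, when you bound $\sum_{s,t}\mathbb{E}\|\nabla f(x_t^s)\|^2\le \psi\sum_{s,t}\mathbb{E}\|v_t^s\|^2+\tfrac{4Sm\epsilon}{\alpha}$ via the variance lemma again. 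With that correction your outline closes exactly to the stated bound.
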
 
Theorem~\ref{th_svrg} guarantees the convergence of AbaSVRG as long as $\phi$ is positive, i.e. $\frac{L\eta}{2}+ \frac{\eta^2L^2m^2}{B} \leq \frac{1}{2}-\frac{1}{2\alpha}$, and thus allows very flexible choices of the stepsize $\eta$, the epoch length $m$ and the mini-batch size $B$. Such flexibility and generality are also due to the aforementioned simpler proof that we develop for SVRG-type algorithms.

In the following corollary,  we provide the complexity performance of AbaSVRG under certain choices of parameters.
\begin{corollary}\label{co:svrg}
Under the setting of Theorem~\ref{th_svrg}, we choose 
the constant stepsize $\eta = \frac{1}{4L}$, the epoch length $m=\sqrt{B}$ (which $B$ denotes the mini-batch size) and  $c_\beta, c_\epsilon \geq 16$. 
Then, 
to achieve $\mathbb{E}\|\nabla f({x}^\zeta)\|^2\leq \epsilon$,  the total SFO complexity of  AbaSVRG is given by 
\vspace{-0cm}
\begin{align*}
&\underbrace{\sum_{s=1}^S\min\bigg\{{\color{blue}{  \frac{c_\beta\sigma^2 }{\sum_{t=1}^m \|{v}_{t-1}^{s-1}\|^2/m}      }}, c_\epsilon\sigma^2\epsilon^{-1},n\bigg\} + KB }_{\text{ \normalfont complexity of AbaSVRG}} \nonumber\\
&\quad {\color{blue} <} \underbrace{S\min\big\{c_\epsilon\sigma^2\epsilon^{-1},  n\big\}+  KB}_{\text{\normalfont complexity of  vanilla SVRG }} =\mathcal{O}\Big(\frac{n  \wedge \epsilon^{-1}}{\sqrt{B}\epsilon} + \frac{B}{\epsilon}\Big).
\end{align*}
If we choose $B=n^{2/3} \wedge \epsilon^{-2/3}$, then the worst-case  complexity is  given by $\mathcal{O}\left( \epsilon^{-1}(n \wedge \epsilon^{-1})^{2/3}\right).$
\end{corollary}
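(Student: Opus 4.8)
The plan is to start from the general convergence bound in Theorem~\ref{th_svrg} and then specialize the parameters $\eta=\frac{1}{4L}$, $m=\sqrt{B}$, $c_\beta,c_\epsilon\geq 16$, verify that the hypotheses on $\phi$ and $\psi$ hold with these choices, and finally convert the resulting guarantee $\mathbb{E}\|\nabla f(x_\zeta)\|^2\leq\epsilon$ into an SFO complexity count. First I would check the parameter feasibility: with $\alpha=16$ we have $\frac{1}{2\alpha}=\frac{1}{32}$, $\frac{L\eta}{2}=\frac{1}{8}$, and $\frac{\eta^2L^2m^2}{2B}=\frac{m^2}{32B}=\frac{1}{32}$ since $m^2=B$, so $\phi=\frac{1}{2}-\frac{1}{32}-\frac{1}{8}-\frac{1}{32}=\frac{5}{16}>0$, and $\psi=\frac{2\eta^2L^2m^2}{B}+\frac{2}{\alpha}+2=\frac{1}{8}+\frac{1}{8}+2=\frac{9}{4}=\Theta(1)$. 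Thus Theorem~\ref{th_svrg} gives $\mathbb{E}\|\nabla f(x_\zeta)\|^2=\mathcal{O}\big(\frac{f(x_0)-f^*}{\eta K}+\epsilon\big)$, so taking $K=Sm=\Theta(\epsilon^{-1})$ iterations (equivalently $S=\Theta(\epsilon^{-1}/\sqrt{B})$ epochs) drives the error to $\mathcal{O}(\epsilon)$; one also needs to check the constraint $\beta_1\leq\epsilon S$, which is satisfiable by an appropriate constant choice of $\beta_1$ once $S$ is of the stated order.

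Next I would count the SFO calls. Each epoch of AbaSVRG costs $N_s$ for the outer-loop batch gradient $g^s=\nabla f_{\mathcal{N}_s}(\tilde x^{s-1})$ plus $2B$ per inner iteration over $m$ inner steps, i.e. $N_s+2mB$; summing over $s\in[S]$ gives total complexity $\sum_{s=1}^S N_s + 2SmB = \sum_{s=1}^S N_s + 2KB$. Since $N_s=\min\{c_\beta\sigma^2/\beta_s,\,c_\epsilon\sigma^2/\epsilon,\,n\}\leq\min\{c_\epsilon\sigma^2/\epsilon,\,n\}$ deterministically (the history-gradient term only makes $N_s$ smaller), we get the claimed inequality comparing AbaSVRG to vanilla SVRG, whose complexity is $S\min\{c_\epsilon\sigma^2\epsilon^{-1},n\}+KB$. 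The strict inequality holds whenever the history term $c_\beta\sigma^2/\beta_s$ is active in at least one epoch, i.e. whenever the average stochastic gradient in some preceding epoch is large enough — which is exactly the early-stage regime the scheme exploits. Plugging $K=\Theta(\epsilon^{-1})$, $m=\sqrt{B}$ so $S=K/m=\Theta(\epsilon^{-1}B^{-1/2})$, the worst-case bound becomes $S\cdot(n\wedge\epsilon^{-1})+KB=\mathcal{O}\big(\frac{n\wedge\epsilon^{-1}}{\sqrt{B}\,\epsilon}+\frac{B}{\epsilon}\big)$.

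Finally I would optimize over $B$: the expression $\frac{n\wedge\epsilon^{-1}}{\sqrt B\,\epsilon}+\frac{B}{\epsilon}$ is minimized (up to constants) by balancing the two terms, $\frac{n\wedge\epsilon^{-1}}{\sqrt B}=B$, i.e. $B^{3/2}=n\wedge\epsilon^{-1}$, giving $B=(n\wedge\epsilon^{-1})^{2/3}=n^{2/3}\wedge\epsilon^{-2/3}$ and worst-case complexity $\mathcal{O}\big(\epsilon^{-1}(n\wedge\epsilon^{-1})^{2/3}\big)$. The main obstacle I anticipate is not any of these routine calculations but rather the bookkeeping to make the complexity statement rigorous: one must be careful that $S$, $K$, and $\beta_1$ can be chosen simultaneously to satisfy all of Theorem~\ref{th_svrg}'s constraints ($\beta_1\leq\epsilon S$, $\phi>0$, integrality of $m=\sqrt B$ and $S$), and that the summation $\sum_s N_s$ is interpreted correctly given that $N_s$ is itself a random quantity depending on the (stochastic) history gradients — here the deterministic upper bound $N_s\leq\min\{c_\epsilon\sigma^2\epsilon^{-1},n\}$ is what makes the worst-case count clean, while the "$<$" claim is really a statement that holds pathwise whenever the adaptive term bites.
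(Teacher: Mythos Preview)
Your proposal is correct and follows essentially the same route as the paper's own proof: specialize $\alpha=16$ to get $\phi=\tfrac{5}{16}$ and $\psi=\tfrac{9}{4}$, invoke Theorem~\ref{th_svrg} to see that $K=\Theta(\epsilon^{-1})$ iterations suffice, bound $N_s\le\min\{c_\epsilon\sigma^2\epsilon^{-1},n\}$ deterministically, and then balance the two terms by choosing $B=(n\wedge\epsilon^{-1})^{2/3}$. Your additional remarks about the per-iteration factor of $2B$, the pathwise interpretation of the strict inequality, and the simultaneous satisfiability of the constraints are all sound and, if anything, slightly more careful than the paper's terse argument.
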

We make the following  remarks on Corollary~\ref{co:svrg}.

First, the worst-case SFO complexity  under the specific choice of $B=n^{2/3} \wedge \epsilon^{-2/3}$ matches the best known result for SVRG-type algorithms. More importantly, since the adaptive component $\frac{c_\beta\sigma^2}{\sum_{t=1}^m \|{v}_{t-1}^{s-1}\|^2/m}$ can be much smaller than $\min\{c_\epsilon \sigma^2\epsilon^{-1}, n\}$ during the optimization process particularly in the initial stage, the actual  SFO complexity of AbaSVRG can be much lower than that of SVRG with fixed batch size as well as the worst-case complexity of $\mathcal{O}\left(\frac{1}{\epsilon}(n \wedge \frac{1}{\epsilon})^{2/3}\right)$, as demonstrated in  our experiments. 

Second, our convergence and complexity results hold for any choice of mini-batch size $B$, and thus we can safely choose a small mini-batch size  rather than the large one  $n^{2/3} \wedge \epsilon^{-2/3}$ in the regime with large $n$ and $\epsilon^{-1}$. 
In addition, for a given $B$, the resulting worst-case complexity $\mathcal{O}\big(\frac{n  \wedge \epsilon^{-1}}{\sqrt{B}\epsilon} + \frac{B}{\epsilon}\big)$ still matches the best known order given by ProxSVRG+~\cite{li2018simple} for SVRG-type algorithms.

Third,  Corollary~\ref{co:svrg} sets the mini-batch size $B=m^2$ to obtain the best complexity order. However, our experiments suggest that $B=m$ has better performance. Hence, we also provide analysis for this case in Appendix~\ref{apen:a3}.

Note that although Corollary~\ref{co:svrg} requires $c_\beta, c_\epsilon \geq 16$, the same complexity result can be achieved by 
  more flexible choices for $c_\beta$ and $c_\epsilon$. More specifically, Theorem~\ref{th_svrg} only requires $c_\beta$ and $c_\epsilon$ to be greater than $\alpha$ for any positive $\alpha$. Hence, $\alpha = 0.5, \eta = 1/(4L), B = m^2, c_\beta>0.5$, and $c_\epsilon > 0.5$ are also valid parameters, which can be checked to yield the same complexity order in Corollary~\ref{co:svrg}. Hence, the choices of $c_\beta,c_\epsilon$ in the experiments are consistent with our theory for AbaSVRG. 



\vspace{-0.1cm}
\subsection{Convergence Analysis for AbaSPIDER}
\vspace{-0.1cm}
In this subsection, we study AbaSPIDER, and compare our results with that for AbaSVRG. Note that AbaSPIDER in~\Cref{alg:spider} adopts the improved version SpiderBoost~\cite{wang2019spiderboost} of the original SPIDER~\cite{fang2018spider}.

The following theorem provides a general convergence result for AbaSPIDER. 
\begin{Theorem}\label{th_spider}
	Suppose Assumption~\ref{assum1} holds. Let $\epsilon>0$ and $c_\beta,c_\epsilon\geq \alpha $ for certain constant $\alpha>0$. Let  $\psi=\frac{2\eta^2 L^2 m}{B} + \frac{2}{\alpha}+2 $ and choose $\beta_1$, $\alpha$ and $\eta$ such that  $\beta_1 \leq S\epsilon$ and $\phi = \frac{1}{2}-\frac{1}{2\alpha}-\frac{L\eta}{2}- \frac{\eta ^2 L^2 m}{2B}>0$. Then, the output ${x}_\zeta$ returned by AbaSPIDER satisfies 
	\begin{align*}
	\mathbb{E}\|\nabla f({x}_\zeta)\|^2  \leq	\frac{\psi(f({x}_0)- f^*)}{\phi \eta K} + \frac{\psi\epsilon}{2\phi \alpha} +\frac{4\epsilon}{\alpha},
	\end{align*}
	where $f^*=\inf_{{x}\in\mathbb{R}^d} f({x})$ and $K=Sm$.
\end{Theorem}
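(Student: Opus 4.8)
The plan is to mirror the AbaSVRG analysis (Theorem~\ref{th_svrg}) but with the SPIDER/SpiderBoost recursive estimator, so the first step is to establish the key epoch-wise descent inequality. Fix an epoch $s$ of length $m$. Using $L$-smoothness of $f$ and the update ${x}_{t+1}^s = {x}_t^s - \eta {v}_t^s$, I would write $f({x}_{t+1}^s) \le f({x}_t^s) - \eta\langle \nabla f({x}_t^s), {v}_t^s\rangle + \frac{L\eta^2}{2}\|{v}_t^s\|^2$, then convert the inner product via the identity $-\langle a,b\rangle = \tfrac12\|a-b\|^2 - \tfrac12\|a\|^2 - \tfrac12\|b\|^2$ to get a term $-\tfrac{\eta}{2}\|\nabla f({x}_t^s)\|^2 - \tfrac{\eta}{2}\|{v}_t^s\|^2 + \tfrac{\eta}{2}\|{v}_t^s - \nabla f({x}_t^s)\|^2 + \tfrac{L\eta^2}{2}\|{v}_t^s\|^2$. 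The crucial SPIDER-specific ingredient is the variance bound on the estimator: for the recursive estimator with mini-batch size $B$, one has $\mathbb{E}\|{v}_t^s - \nabla f({x}_t^s)\|^2 \le \frac{L^2}{B}\sum_{j=1}^{t}\mathbb{E}\|{x}_j^s - {x}_{j-1}^s\|^2 + \mathbb{E}\|{v}_0^s - \nabla f({x}_0^s)\|^2$, and since ${x}_j^s - {x}_{j-1}^s = -\eta {v}_{j-1}^s$, this is $\le \frac{\eta^2 L^2}{B}\sum_{j=0}^{t-1}\mathbb{E}\|{v}_j^s\|^2 + \mathbb{E}\|{v}_0^s - \nabla f({x}_0^s)\|^2$. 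The initialization error is controlled by sampling without replacement: $\mathbb{E}\|{v}_0^s - \nabla f({x}_0^s)\|^2 \le \frac{\sigma^2 I_{(N_s<n)}}{N_s}$.

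Second, I would sum the per-iteration bound over $t = 0, \dots, m-1$ within epoch $s$. Telescoping the left side gives $\mathbb{E}(f({\tilde x}^s) - f({\tilde x}^{s-1}))$. Summing the variance term produces a double sum $\sum_{t}\sum_{j<t}\mathbb{E}\|{v}_j^s\|^2 \le m\sum_{t=0}^{m-1}\mathbb{E}\|{v}_t^s\|^2$, which contributes a factor $\frac{\eta^3 L^2 m}{2B}\sum_t \mathbb{E}\|{v}_t^s\|^2$ (note the single $m$, as opposed to the $m^2$ appearing for AbaSVRG — this is why $\psi$ and $\phi$ for SPIDER have $\eta^2 L^2 m/B$ rather than $\eta^2 L^2 m^2/B$). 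Collecting the $\|{v}_t^s\|^2$ coefficients yields $-\eta(\tfrac12 - \tfrac{L\eta}{2} - \tfrac{\eta^2 L^2 m}{2B})\sum_t \mathbb{E}\|{v}_t^s\|^2$; to match the stated $\phi$ I would use $\tfrac12 \ge \tfrac{1}{2\alpha}$-type slack (replacing $\tfrac12$ by $\tfrac12 - \tfrac{1}{2\alpha}$ at a cost of $\tfrac{\eta}{2\alpha}\sum_t\mathbb{E}\|{v}_t^s\|^2$, which is reabsorbed using $\|\nabla f\|^2 \le 2\|{v}\|^2 + 2\|{v}-\nabla f\|^2$). The net epoch inequality should read, after dividing by $\eta m$,
\begin{align*}
\frac{\mathbb{E}(f({\tilde x}^s) - f({\tilde x}^{s-1}))}{\eta m} \le -\frac{\phi}{m}\sum_{t=0}^{m-1}\mathbb{E}\|{v}_t^s\|^2 + \frac{\psi}{2}\cdot\frac{\sigma^2 I_{(N_s<n)}}{N_s} + \text{(lower-order $\|\nabla f\|^2$ bookkeeping)},
\end{align*}
and rearranging relates the average $\|\nabla f\|^2$ in epoch $s$ to the average $\|{v}\|^2$ plus the variance term.

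Third — and this is the step that differs most from vanilla SPIDER — I would handle the \emph{history-gradient} dependence of $N_s$. Since $N_s = \min\{c_\beta \sigma^2 \beta_s^{-1}, c_\epsilon \sigma^2 \epsilon^{-1}, n\}$ with $\beta_s = \frac1m\sum_{t=1}^m \|{v}_{t-1}^{s-1}\|^2$, on the event $N_s < n$ we have $\frac{\sigma^2}{N_s} \le \max\{\frac{\beta_s}{c_\beta}, \frac{\epsilon}{c_\epsilon}\} \le \frac{\beta_s}{c_\beta} + \frac{\epsilon}{c_\epsilon} \le \frac{1}{\alpha}\beta_s + \frac{\epsilon}{\alpha}$, i.e. the error term in epoch $s$ is bounded by $\frac{1}{\alpha}$ times the average squared stochastic gradient from epoch $s-1$ (plus $\epsilon/\alpha$). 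Now I sum the epoch inequality over $s = 1, \dots, S$ and telescope the left side to $\mathbb{E}(f({\tilde x}^S) - f({x}_0)) \ge f^* - f({x}_0)$. The $\beta_s$ term, when summed, becomes $\sum_{s=1}^S \beta_s = \beta_1 + \sum_{s=2}^S \frac1m\sum_t \|{v}_{t-1}^{s-1}\|^2$ — a \emph{shifted} copy of the accumulated stochastic-gradient norms $\frac{1}{m}\sum_{s=1}^{S}\sum_t\|{v}_t^s\|^2$ that appears with a negative coefficient $-\phi$ on the right side. Because the coefficient in front of the $\beta$-sum is $\frac{\psi}{2\alpha}$ and the sum is shifted by one epoch, I can absorb it into the $-\phi$ term provided $\phi > 0$ (so the negative sum dominates), with only the boundary term $\frac{\psi}{2\alpha}\beta_1 \le \frac{\psi}{2\alpha} S\epsilon$ left over — this is exactly why the hypothesis $\beta_1 \le S\epsilon$ appears. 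The main obstacle is precisely this bookkeeping: making the shifted-sum absorption rigorous while tracking that $\psi$ is chosen large enough (namely $\psi = \frac{2\eta^2 L^2 m}{B} + \frac2\alpha + 2$) to dominate the per-epoch $\|{v}\|^2$ coefficients, so that after absorption the surviving positive terms are exactly $\frac{\psi(f({x}_0) - f^*)}{\phi\eta K}$ (from dividing the telescoped function gap by $\phi\eta K$ with $K = Sm$), $\frac{\psi \epsilon}{2\phi\alpha}$ (from the $\beta_1$ boundary term and the $\epsilon/\alpha$ error terms folded through the $\phi$-division), and $\frac{4\epsilon}{\alpha}$ (from the direct $\|\nabla f\|^2 \le 2\|{v}\|^2 + 2\|{v} - \nabla f\|^2$ conversion of the $c_\epsilon\sigma^2\epsilon^{-1}$ branch). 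Finally, since ${x}_\zeta$ is drawn uniformly from all $K = Sm$ iterates, $\mathbb{E}\|\nabla f({x}_\zeta)\|^2 = \frac{1}{K}\sum_{s,t}\mathbb{E}\|\nabla f({x}_t^s)\|^2$, which is exactly the quantity the summed inequality upper-bounds, giving the claimed result.
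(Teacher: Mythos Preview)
Your overall strategy matches the paper's: establish the SPIDER variance bound $\mathbb{E}_{0,s}\|{v}_t^s - \nabla f({x}_t^s)\|^2 \le \frac{\eta^2 L^2}{B}\sum_{j<t}\mathbb{E}_{0,s}\|{v}_j^s\|^2 + \frac{\sigma^2 I_{(N_s<n)}}{N_s}$, telescope within and across epochs, and absorb the history-gradient term $\frac{\sigma^2 I_{(N_s<n)}}{N_s} \le \frac{\beta_s}{\alpha} + \frac{\epsilon}{\alpha}$ as a one-epoch-shifted copy of $\frac{1}{m}\sum_t\|{v}_t^s\|^2$. That is exactly the paper's argument.

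However, your bookkeeping diverges from the paper in a way that muddles where $\phi$ and $\psi$ come from. The paper does \emph{not} use the polarization identity; it uses Young's inequality $\langle \nabla f - v, -v\rangle \le \tfrac12\|\nabla f - v\|^2 + \tfrac12\|v\|^2$, so the per-step descent bound is purely in terms of $\|{v}_t^s\|^2$ with no $\|\nabla f\|^2$ term at all. After telescoping and absorbing the shifted $\beta_s$-sum, this directly yields
\[
\phi\cdot\frac{1}{K}\sum_{s,t}\mathbb{E}\|{v}_t^s\|^2 \le \frac{f({x}_0)-f^*}{\eta K} + \frac{\epsilon}{2\alpha},
\]
and the $\tfrac{1}{2\alpha}$ inside $\phi$ arises \emph{only} from that shifted-sum absorption --- not from any ``$\tfrac12 \ge \tfrac{1}{2\alpha}$-type slack'' introduced within a single epoch as you suggest. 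Then, in a completely separate second pass, the paper writes $\mathbb{E}\|\nabla f({x}_\zeta)\|^2 \le \frac{2}{K}\sum_{s,t}\mathbb{E}\|{v}_t^s - \nabla f({x}_t^s)\|^2 + \frac{2}{K}\sum_{s,t}\mathbb{E}\|{v}_t^s\|^2$, applies the variance bound and the $\beta_s$-absorption once more, and collects the $\|{v}\|^2$ coefficient as $\psi = \frac{2\eta^2 L^2 m}{B} + \frac{2}{\alpha} + 2$ with residual $\frac{4\epsilon}{\alpha}$. Dividing the first display by $\phi$ and multiplying by $\psi$ gives the theorem.

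Your polarization route keeps $-\tfrac{\eta}{2}\|\nabla f\|^2$ in the descent inequality, which actually lets you bound $\mathbb{E}\|\nabla f({x}_\zeta)\|^2$ in a single pass (and would give a constant like $2$ rather than $\psi/\phi$ in front of the function gap). That is a valid alternative but proves a different (tighter) inequality than the one stated. Your attempt to then also invoke $\|\nabla f\|^2 \le 2\|{v}\|^2 + 2\|{v}-\nabla f\|^2$ on top of polarization is redundant and is why the roles of $\phi$ and $\psi$ become circular in your write-up (``absorb into $-\phi$'' when $\phi$ was supposed to be produced by that absorption). To reproduce the stated constants exactly, drop the polarization identity and follow the two-pass structure: first bound $\sum\|{v}\|^2$, then convert to $\|\nabla f\|^2$.
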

To guarantee the convergence,  AbaSPIDER allows a smaller mini-batch size $B$ than AbaSVRG, because AbaSPIDER requires $B\geq \Theta( m\eta^2)$ (see Theorem~\ref{th_spider}) to guarantee $\phi$ to be positive, whereas
AbaSVRG requires $B\geq \Theta( m^2\eta^2)$ (see Theorem~\ref{th_svrg}). 
Thus, to achieve the same-level of target accuracy, AbaSPIDER uses fewer mini-batch samples than AbaSVRG, and thus achieves a lower worst-case SFO complexity, as can be seen in the following corollary. 
\begin{corollary}\label{co:spider}
	Under the setting of Theorem~\ref{th_spider}, for any mini-batch size $B\leq n^{1/2}\wedge \epsilon^{-1/2}$, if we set the epoch length $m = (n \wedge \frac{1}{\epsilon})B^{-1}$,  the stepsize 
	$\eta =\frac{1}{4L}\sqrt{\frac{B}{m}}$ and  $c_\beta, c_\epsilon \geq 16$, then 
	to obtain an $\epsilon$-accurate solution ${x}_\zeta$,  the total  SFO complexity of AbaSPIDER is given by
	\begin{align*}
	&\underbrace{\sum_{s=1}^S\min\bigg\{{\color{blue}{  \frac{c_\beta\sigma^2 }{\sum_{t=1}^m \|{v}_{t-1}^{s-1}\|^2/m}      }}, c_\epsilon\sigma^2\epsilon^{-1},n\bigg\} + KB }_{\text{ \normalfont complexity of AbaSPIDER}}
	 \nonumber \\
	 &\quad {\color{blue}< } \underbrace{S\min\big\{ c_\epsilon\sigma^2\epsilon^{-1},n\big\} + KB }_{\text{ \normalfont complexity of vanilla SPIDER}} 
	=\mathcal{O}\big(\epsilon^{-1}\big(n \wedge \epsilon^{-1}\big)^{1/2}\big).
	\end{align*}
\end{corollary}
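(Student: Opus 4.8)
\emph{Step 1: verify the hypotheses of Theorem~\ref{th_spider} and fix $K$, $S$.} The plan is to use Theorem~\ref{th_spider} to determine how many epochs are needed for $\epsilon$-accuracy, and then count stochastic first-order oracle (SFO) calls. With $\eta=\frac{1}{4L}\sqrt{B/m}$ one has $L\eta=\frac14\sqrt{B/m}$ and $\frac{\eta^2L^2m}{B}=\frac1{16}$, so $\psi=\frac18+\frac{2}{\alpha}+2=\Theta(1)$ and $\phi=\frac12-\frac1{2\alpha}-\frac18\sqrt{B/m}-\frac1{32}$. The role of the regime $B\le n^{1/2}\wedge\epsilon^{-1/2}$ is that, with $m=(n\wedge\epsilon^{-1})B^{-1}$, it forces $B/m=B^2(n\wedge\epsilon^{-1})^{-1}\le1$ and $m\ge B\ge1$, hence $\phi\ge\frac{11}{32}-\frac1{2\alpha}>0$ once one picks $\alpha=16\le c_\beta\wedge c_\epsilon$. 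In the bound of Theorem~\ref{th_spider} the last two terms $\frac{\psi\epsilon}{2\phi\alpha}+\frac{4\epsilon}{\alpha}$ are then at most a fixed fraction of $\epsilon$, so it suffices to take $K=\Theta\!\big(\tfrac{f(x_0)-f^*}{\eta\epsilon}\big)=\Theta\!\big(\tfrac{L}{\epsilon}\sqrt{m/B}\big)=\Theta\!\big(\epsilon^{-1}(n\wedge\epsilon^{-1})^{1/2}B^{-1}\big)$ to make $\mathbb{E}\|\nabla f(x_\zeta)\|^2\le\epsilon$; correspondingly $S=K/m=\Theta\!\big(\epsilon^{-1}(n\wedge\epsilon^{-1})^{-1/2}\big)$, and the requirement $\beta_1\le S\epsilon$ is met by choosing the input $\beta_1$ sufficiently small.

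\emph{Step 2: count SFO calls and compare with vanilla SPIDER.} Epoch $s$ spends $N_s=\min\{c_\beta\sigma^2\beta_s^{-1},c_\epsilon\sigma^2\epsilon^{-1},n\}$ calls on the reference gradient $v_0^s=\nabla f_{\mathcal{N}_s}(\tilde x^{s-1})$ and $\Theta(mB)$ calls in the inner loop (two mini-batch gradients per inner step), where $\beta_s=\frac1m\sum_{t=1}^m\|v_{t-1}^{s-1}\|^2$ for $s\ge2$ and $\beta_1$ is the input. Summing over $s=1,\dots,S$ and using $K=Sm$ gives the total cost $\sum_{s=1}^S N_s+\Theta(KB)$, which is precisely the first expression in the corollary. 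Since $N_s\le\min\{c_\epsilon\sigma^2\epsilon^{-1},n\}$ for every $s$, with strict inequality whenever the adaptive term $c_\beta\sigma^2\beta_s^{-1}$ is the active one (e.g.\ in the early epochs, where the iterates $v_{t-1}^{s-1}$ still have large norm), the claimed strict comparison against vanilla SPIDER follows termwise.

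\emph{Step 3: worst-case order.} Bounding $N_s\le\min\{c_\epsilon\sigma^2\epsilon^{-1},n\}=\Theta(n\wedge\epsilon^{-1})$ and using the value of $S$ from Step~1, $\sum_{s=1}^S N_s=\mathcal{O}\!\big(S(n\wedge\epsilon^{-1})\big)=\mathcal{O}\!\big(\epsilon^{-1}(n\wedge\epsilon^{-1})^{1/2}\big)$, while $KB=\Theta\!\big(\epsilon^{-1}(n\wedge\epsilon^{-1})^{1/2}\big)$ as well, so the two contributions balance and the total is $\mathcal{O}\!\big(\epsilon^{-1}(n\wedge\epsilon^{-1})^{1/2}\big)$, uniformly over $B$ in the allowed range. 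The one genuinely delicate point is the joint bookkeeping in Step~1 — choosing $K$ (hence $S=K/m$) so that the accuracy bound is met while simultaneously respecting $\phi>0$, $\beta_1\le S\epsilon$, $B\le n^{1/2}\wedge\epsilon^{-1/2}$, and the integrality of $m$ and $S$; the termwise complexity comparison and the big-$\mathcal{O}$ count are routine once that is settled.
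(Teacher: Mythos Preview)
Your proposal is correct and follows essentially the same approach as the paper: verify that the parameter choices yield $\phi\ge\tfrac{5}{16}$ and $\psi\le\tfrac{9}{4}$ in Theorem~\ref{th_spider} (using $B/m\le 1$ from the constraint $B\le n^{1/2}\wedge\epsilon^{-1/2}$), read off $K=\Theta(\sqrt{m/B}/\epsilon)$, and then bound $\sum_s N_s + KB$ by $S(n\wedge\epsilon^{-1})+KB$ with $mB=n\wedge\epsilon^{-1}$. Your explicit computation of $S=K/m$ and the remark on integrality/$\beta_1$ are slightly more detailed than the paper's version, but the argument is the same.
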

Corollary~\ref{co:spider} shows that for  a \textit{wide range} of mini-batch size $B$ (as long as {\small $B\leq n^{1/2}\wedge \epsilon^{-1/2}$}), AbaSPIDER achieves the near-optimal worst-case complexity $\mathcal{O}\big(\frac{1}{\epsilon}(n \wedge \frac{1}{\epsilon})^{1/2}\big)$ under a proper selection of $m$ and $\eta$. Thus, our choice of mini-batch size is much less restrictive than $B=n^{1/2} \wedge \epsilon^{-1/2}$ used by SpiderBoost~\cite{wang2019spiderboost}  to achieve the optimal complexity, which can be very large in practice. 
More importantly, our practical complexity can be much better than those of SPIDER~\cite{fang2018spider} and SpiderBoost~\cite{wang2019spiderboost} with a fixed batch size due to the  batch size adaptation. 

Similarly to the argument at the end of Section~\ref{sec:abasvrgss}, the parameters $c_\beta, c_\epsilon$ for AbaSPIDER   
in Corollary~\ref{co:spider} can be chosen more flexibly, e.g., $c_\beta, c_\epsilon>0.5$. Hence, the choices of $c_\beta, c_\epsilon$ in the experiments are consistent with our theory for AbaSPIDER.

\allowdisplaybreaks
\newcommand\numberthis{\addtocounter{equation}{1}\tag{\theequation}}

\newcommand{\numleq}[1]{\overset{\text{(#1)}}{\leq}}
\newcommand{\numequ}[1]{\overset{\text{(#1)}}{=}}
\newcommand{\numgeq}[1]{\overset{\text{(#1)}}{\geq}} 
\def\defeq{\mathrel{\mathop:}=}

\newcommand{\norml}[1]{\left\|#1\right\|} 
\newcommand{\inner}[2]{\left\langle #1, #2 \right\rangle}
\section{Batch Size Adaptation for Policy Gradient}\label{rlyoudiandiao}
 In this section, we demonstrate an important application of our proposed batch-size adaptation scheme to variance reduced policy gradient algorithms in RL. 

 \subsection{Problem Formulation}
 Consider a discrete-time Markov decision process (MDP) $\mathcal{M} = \{\mathcal{S}, \mathcal{A}, \mathcal{P}, \mathcal{R}, \gamma, \rho\}$, where $\mathcal{S}$ denotes the state space; $\mathcal{A}$ denotes the action space; $\mathcal{P}$ denotes the Markovian transition model, $\mathcal{P}(s^\prime |s,a)$ denotes the transition probability from state-action pair $(s,a)$ to state $s^\prime$; $\mathcal{R} \in [-R,R]$ denotes the reward function, $\mathcal{R}(s,a)$ denotes the reward at state-action pair $(s,a)$; $\gamma \in \left[0,1\right)$ denotes the discount factor; and $\rho$ denotes the initial state distribution. The agent's decision strategy  is captured by the policy $\pi:=\pi(\cdot|s)$, which represents the density function over space $\mathcal{A}$ at state $s$. Assume that the policy is parameterized by $\theta\in \mathbb{R}^d$. Then, the policy class can be represented as $\Pi = \{\pi_{\theta} | \theta \in \mathbb{R}^d\}$. 
 
We consider a MDP problem with a finite horizon $H$. Then, a trajectory $\tau$ consists of a sequence of states and actions $(s_0, a_0, \cdots, s_{H-1}, a_{H-1} )$ observed by following a policy $\pi_{\theta}$ and $s_0 \sim \rho$. The total reward of such a trajectory $\tau$ is given by $\mathcal{R}(\tau) = \sum_{t=0}^{H} \gamma^t \mathcal{R}(s_t,a_t)$. For a give policy $\pi_{\theta}$, the corresponding expected reward is given by $J(\theta) = \E_{\tau  \sim p(\cdot| \theta)} \mathcal{R}(\tau)$, where $p(\cdot| \theta)$ represents the probability distribution of the trajectory $\tau$ by following the policy $\pi_{\theta}$. The goal of the problem is to find a policy that achieves the maximum accumulative reward by solving 
 \begin{align}
 \max_{\theta \in \mathbb{R}^d} J(\theta), \quad \text{where } J(\theta) =   \E_{\tau  \sim p(\cdot| \theta)}\left[ \mathcal{R}(\tau)\right]. \tag{Q}
 \end{align}
 
  \begin{figure*}[h]  
	\begin{minipage}{0.5\linewidth}
		\begin{algorithm}[H]
			\small
			\caption{AbaSVRPG} \label{SVRPG} 
			\begin{algorithmic}[1]
				\STATE 	{\bf Input:}  $\eta, \theta_0, \epsilon, m, \alpha,  \beta>0$ 
				\FOR{ $k=0, 1, \ldots, K$}
				\IF { $\textrm{mod}(k,m) = 0$ } 
				\STATE Sample  $\{\tau_i\}_{i=1}^{N}$  from $p(\cdot|\theta_{k})$, {\color{blue} where $N$ is given by~\eqref{Batch_size} }
				\STATE  $v_k = \frac{1}{N} \sum_{i=1}^{N} g(\tau_i| \theta_{k})$
				\STATE $\tilde{\theta} = \theta_{k}$ and  $ \tilde{v} = v_k$
				\ELSE 
				\STATE Draw $\{\tau_i\}_{i=1}^{B}$ samples from $p(\cdot| \theta_k)$
				\STATE $v_k = \frac{1}{B} \sum_{i=1}^{B}  \big( g(\tau_i|\theta_k) -   \omega(\tau_i| \theta_{k}, \tilde{\theta}) g(\tau_i|\tilde{\theta} ) \big) + \tilde{v} $
				\ENDIF
				\STATE {$\theta_{k+1} = \theta_k + \eta  v_k$}
				\ENDFOR
				\STATE 	{ {\bf Output:} $\theta_\xi$   from $\{\theta_0, \cdots, \theta_{K}\}$ uniformly at random.}
			\end{algorithmic}
		\end{algorithm} 
	\end{minipage}  
	\begin{minipage}{0.5\linewidth} 
		\begin{algorithm}[H]
				\small
			\caption{AbaSPIDER-PG}\label{SpiderPG}
			\begin{algorithmic}[1]
				\STATE {\bf Input:}  $\eta, \theta_0, \epsilon, m, \alpha,  \beta>0$ 
				\FOR{ $k=0, 1, \ldots, K $}
				\IF { $\textrm{mod}(k,m) = 0$ }
				\STATE Sample $\{\tau_i\}_{i=1}^{N}$  from $p(\cdot| \theta_k)$, {\color{blue} where $N$ is given by~\eqref{Batch_size} } 
				\STATE   $v_{k} = \frac{1}{N} \sum_{i=1}^{N} g(\tau_i|\theta_k)$
				\ELSE 
				\STATE   Draw $\{\tau_i\}_{i=1}^{B}$ samples from $p(\cdot| \theta_k)$.
				\STATE {$v_k = \frac{1}{B} \sum_{i=1}^{B} \big( g(\tau_i| \theta_k)-$ 
			\\	\quad \quad \quad $ \omega(\tau_i| \theta_{k}, \theta_{k-1}) g(\tau_i| \theta_{k-1}) \big) + v_{k-1}$}
				\ENDIF
				\STATE {$\theta_{k+1} = \theta_k + \eta  v_k$}
				\ENDFOR
				\STATE 	{ {\bf Output:} $\theta_\xi$   from $\{\theta_0, \cdots, \theta_{K }\}$ uniformly at random.}
			\end{algorithmic}
		\end{algorithm}
	\end{minipage} 
\end{figure*} 

\vspace{-0.4cm}
\subsection{Preliminaries of Policy Gradient and Variance Reduction}
\vspace{-0.1cm}
Policy gradient is a popular approach to solve the problem (Q), which iteratively updates the value of $\theta$ based on the trajectory gradient of the above objective function. Since the distribution $p(\cdot| \theta)$ of $\tau$ is unknown because the MDP is unknown, policy gradient adopts the trajectory gradient (denoted by $g(\tau|\theta)$) based on the {\em sampled} trajectory $\tau$ for its update. Two types of commonly used trajectory gradients, namely REINFORCE \citep{Williams1992} and G(PO)MDP \citep{Baxter2001}, are introduced in \Cref{app:pg}. A key difference of such policy gradient algorithms from SGD in conventional optimization is that the sampling distribution $p(\cdot| \theta)$ changes as the policy parameter $\theta$ is iteratively updated, and hence trajectories here are sampled by a varying distributions during the policy gradient iteration. 
 
This paper focuses on the following two variance reduced policy gradient algorithms, which were developed recently to improve the computational efficiency of policy gradient algorithms. First, \citealt{Papini2018} proposed a stochastic variance reduced policy gradient (SVRPG) algorithm by adopting the SVRG structure in conventional optimization. In particular, SVRPG continuously adjusts the gradient estimator by {\em importance sampling} due to the iteratively changing trajectory distribution. 
 Furthermore,  \citealt{xu2019sample} applied the SARAH/SPIDER estimator in conventional optimization to develop 
a stochastic recursive variance reduced policy gradient (SRVR-PG) algorithm, which we refer to as SPIDER-PG in this paper.

\vspace{-0.2cm}
\subsection{Proposed Algorithms with Batch-size Adaptation}

 

Both  variance reduced policy gradient algorithms SVRPG and SPIDER-PG 
choose a large batch size $N$ for estimating the policy gradient at the beginning of each epoch. As the result, they  often  run slowly in practice, and do not show significant advantage over the vanilla policy gradient algorithms. This motivates us to apply our developed batch-size adaptation scheme to reduce their computational complexity.

Thus, we propose AbaSVRPG and AbaSPIDER-PG algorithms, as outlined in Algorithms~\ref{SVRPG} and~\ref{SpiderPG}. More specifically, we adapt the batch size $N$ 
based on  the average of the trajectory gradients in the preceding epoch as 
\begin{align}\label{Batch_size}
N = \frac{\alpha \sigma^2}{\frac{\beta}{m} \sum_{i=n_k-m}^{n_k-1} \norml{v_i}^2 + \epsilon},   
\end{align}
where  $k$ denotes the iteration number, $n_k = \floor{k/m}\times m$, and $\norml{v_{-1}} = \cdots =  \norml{v_{-m}} = 0$. 


\vspace{-0.2cm}
\subsection{Assumptions and Definitions}\label{assum:rl}
We take the following standard assumptions, as also adopted by~\citealt{xu2019improved,xu2019sample,Papini2018}.
\begin{Assumption} \label{g_assumption}
	The trajectory gradient $g(\tau | \theta)$ is an unbiased gradient estimator, i.e., $\E_{\tau \sim p(\cdot|\theta) } [g(\tau|\theta)] = \nabla J(\theta)$.
\end{Assumption}
Note that the commonly used trajectory gradients $g(\tau | \theta)$ such as REINFORCE and G(PO)MDP as given in \Cref{app:pg} satisfy \Cref{g_assumption}.

\begin{Assumption} \label{assumption}
	For any state-action pair $(s,a)$, at any value of $\theta$, and for $1 \leq
	i,j \leq d$, there exist constants $0 \le G,H, R < \infty$ such that $|\nabla_{\theta_i} \log \pi_{\theta} (a|s)| \leq  G$ and 
	\begin{align*}
	|\mathcal{R}(s,a)| \leq  R,\;\Big| \frac{\partial^2}{\partial \theta_i \partial \theta_j} \log \pi_{\theta} (a|s)\Big|  \leq H.
	\end{align*}
\end{Assumption}  
	\vspace{-0.3cm}

\Cref{assumption} assumes that the reward function $\mathcal{R}$, and the gradient and Hessian of $\log \pi_{\theta} (a|s)$ are bounded. 
\begin{Assumption} \label{assumption_bound_variance}
	The estimation variance of the trajectory gradient is bounded, i.e., 
	there exists a constant $\sigma^2 < \infty$ such that, for any $\theta \in \mathbb{R}^d$:
	\begin{align*}
	\Var[g(\tau|\theta)] = \E_{\tau \sim p(\cdot|\theta) }   \norml{g(\tau|\theta) - \nabla J(\theta) }^2   \leq \sigma^2.
	\end{align*}
\end{Assumption}
	\vspace{-0.3cm}

Since the problem (Q) in general is nonconvex, we take the following standard convergence criterion.
 \begin{Definition}
	We say that $\bar{\theta}$ is an $\epsilon$-accurate stationary point for the problem (Q) if $\E \norml{\nabla J(\bar\theta)}^2 \leq \epsilon$.
\end{Definition}
To measure the computational complexity of various  policy gradient methods, we take the stochastic trajectory-gradient oracle (STO) complexity as the metric, which meansures the number of trajectory-gradient computations to attain an $\epsilon$-accurate stationary point.

\subsection{Convergence Analysis for AbaSVRPG} 
In this subsection, we provide the convergence and complexity analysis for AbaSVRPG algorithm. Since the batch size of AbaSVRPG is adaptive to the trajectory gradients calculated in the previous epoch, we will adopt our new analysis framework to bound the change of the function value for each epoch by the trajectory gradients in the preceding epoch due to the batch size dependence. The challenge here arises due to the fact that the sampling distribution is time varying as the policy parameter is updated due to the iteration. Hence, the bound should be tightly developed in order to ensure the decrease of the accumulative change of the objective value over the entire execution of the algorithm. Such bounding procedure is very different from the convergence proofs for vanilla SVRPG in \citealt{Papini2018,xu2019improved}.

%


The following theorem characterizes the  convergence of AbaSVRPG. Let $\theta^* \defeq \arg  \max_{\theta \in \mathbb{R}^d}J(\theta) $.
\begin{Theorem}\label{SVRPG_conv}  
	Suppose \Cref{g_assumption}, \ref{assumption}, and \ref{assumption_bound_variance} hold.
Choose 
$\eta = \frac{1}{2L},  m =  \big( \frac{L^2\sigma^2}{Q\epsilon} \big)^{\frac{1}{3}} , B= \big( \frac{Q\sigma^4}{L^2\epsilon^2} \big)^{\frac{1}{3}}, \alpha = 48 \text{ and }  \beta = 6$, where $L>0$ is a Lipschitz constant given in Lemma~\ref{bounded_from_papini} in \Cref{em_verify}, and $Q$ is the constant given in \Cref{lispchitz_g} in \Cref{em_verify}. 
	Then, the output $\theta_\xi$ of AbaSVRPG  satisfies 
	\begin{align}
	\E \norml{\nabla J(\theta_{\xi})}^2 \leq   \frac{88L}{K+1} \left( J(\theta^*) -  J(\theta_{0})\right)    +  \frac{\epsilon}{2},
	\end{align}
	where $K$ denotes the total number of iterations.  
\end{Theorem}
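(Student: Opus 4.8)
The plan is to lift the ``function-value-bounded-by-previous-epoch-gradients'' framework developed for AbaSVRG to the policy-gradient setting; the new wrinkle is that the sampling distribution $p(\cdot\mid\theta)$ drifts with $\theta$, so the recursive estimator $v_k$ is \emph{biased} within an epoch. First I would record the per-iteration ascent inequality implied by the $L$-smoothness of $J$ (Lemma~\ref{bounded_from_papini}) and the update $\theta_{k+1}=\theta_k+\eta v_k$, and split $\langle\nabla J(\theta_k),v_k\rangle=\tfrac12\|\nabla J(\theta_k)\|^2+\tfrac12\|v_k\|^2-\tfrac12\|v_k-\nabla J(\theta_k)\|^2$, so that with $\eta=1/(2L)$ the coefficient of $\|v_k\|^2$ becomes the positive quantity $\eta/4$:
\[
J(\theta_{k+1})\ \geq\ J(\theta_k)+\tfrac{\eta}{2}\|\nabla J(\theta_k)\|^2+\tfrac{\eta}{4}\|v_k\|^2-\tfrac{\eta}{2}\|v_k-\nabla J(\theta_k)\|^2 .
\]

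The core estimate is a bound on $\E\|v_k-\nabla J(\theta_k)\|^2$ for $k$ inside an epoch $s$ (iterations $sm\le k<(s+1)m$). Conditioning on the epoch start, the importance-sampling identity $\E_{\tau\sim p(\cdot\mid\theta_k)}[\omega(\tau\mid\theta_k,\tilde\theta)\,g(\tau\mid\tilde\theta)]=\nabla J(\tilde\theta)$ shows the only bias is the snapshot error $\tilde v-\nabla J(\tilde\theta)$, whose second moment is at most $\sigma^2/N_s$ (Assumption~\ref{assumption_bound_variance} with a size-$N_s$ mini-batch); the remaining contribution is the variance of the correction term $g(\tau\mid\theta_k)-\omega(\tau\mid\theta_k,\tilde\theta)\,g(\tau\mid\tilde\theta)$, which is of order $\|\theta_k-\tilde\theta\|^2$ through the constant $Q$ of Lemma~\ref{lispchitz_g} (this is where Assumption~\ref{assumption}, bounding the score and Hessian of $\log\pi_\theta$, is used to control the unbounded importance weight). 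Using $\|\theta_k-\tilde\theta\|^2\le m\eta^2\sum_{j\in\mathrm{ep}\,s}\|v_j\|^2$ and summing over the epoch gives
\[
\sum_{k\in\mathrm{ep}\,s}\E\|v_k-\nabla J(\theta_k)\|^2\ \leq\ m\,\E\!\left[\tfrac{\sigma^2}{N_s}\right]+\tfrac{Qm^2\eta^2}{B}\sum_{k\in\mathrm{ep}\,s}\E\|v_k\|^2 .
\]
The choices $m=(L^2\sigma^2/(Q\epsilon))^{1/3}$, $B=(Q\sigma^4/(L^2\epsilon^2))^{1/3}$ are engineered so that $m^2/B=L^2/Q$, hence $Qm^2\eta^2/B=1/4$, and the second term is absorbed by the $\tfrac{\eta}{4}\|v_k\|^2$ slack in the ascent inequality.

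It then remains to sum the ascent inequality over an epoch and telescope over $s=0,\dots,S-1$. Here the adaptive rule \eqref{Batch_size} enters through the exact identity $\tfrac{m\sigma^2}{N_s}=\tfrac{\beta}{\alpha}\sum_{j\in\mathrm{ep}\,s-1}\|v_j\|^2+\tfrac{m\epsilon}{\alpha}$ (epoch $-1$ contributing zero), so the error introduced at epoch $s$ is charged against the squared-gradient budget of epoch $s-1$; after telescoping and using $J(\theta_{K+1})\le J(\theta^*)$,
\[
J(\theta^*)-J(\theta_0)\ \geq\ \tfrac{\eta}{2}\sum_{k=0}^{K}\E\|\nabla J(\theta_k)\|^2+\Big(\tfrac{\eta}{8}-\tfrac{\eta\beta}{2\alpha}\Big)\sum_{k=0}^{K}\E\|v_k\|^2-\tfrac{\eta(K+1)\epsilon}{2\alpha},
\]
where $K+1=Sm$. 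With $\alpha=48>4\beta=24$ the middle term is nonnegative and may be dropped; dividing by $\tfrac{\eta}{2}(K+1)$, using $\eta=1/(2L)$, and that $\theta_\xi$ is uniform on $\{\theta_0,\dots,\theta_K\}$ gives the stated bound once the absolute constants are tracked (the numbers $88L$ and $\epsilon/2$ arise from propagating the honest, looser constants through the second paragraph rather than the optimistic ones displayed above).

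I expect the main obstacle to be the core estimate of the second paragraph. The importance weight $\omega(\tau\mid\theta_k,\tilde\theta)=p(\tau\mid\tilde\theta)/p(\tau\mid\theta_k)$ is not bounded a priori, so showing that the second moments of $\omega$ and of $\omega\,g(\tau\mid\tilde\theta)$ behave like $1+\mathcal{O}(\|\theta_k-\tilde\theta\|^2)$ requires a careful expansion based on Assumption~\ref{assumption}; and the per-epoch bias $\tilde v-\nabla J(\tilde\theta)$ must be carried through all $m$ inner steps without amplification, which is exactly the constraint coupling $m,B,\eta,\alpha,\beta$. Everything after that — checking $Qm^2\eta^2/B\le 1/4$, $\alpha>4\beta$, $K+1=Sm$, and chasing constants — is routine.
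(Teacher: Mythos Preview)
Your plan is correct and in fact yields a cleaner argument than the paper's. The core variance estimate (your second paragraph) and the epoch-telescoping with the adaptive-batch identity (your third paragraph) are exactly what the paper does: its Proposition~\ref{variance_bound_SVRPG} gives $\E\|v_k-\nabla J(\theta_k)\|^2\le (k-n_k)\tfrac{Q\eta^2}{B}\sum_{i=n_k}^k\E\|v_i\|^2+\E\|v_{n_k}-\nabla J(\theta_{n_k})\|^2$, which after summing over the epoch reproduces your displayed bound, and the batch rule turns the snapshot error into $\tfrac{\beta}{\alpha m}\sum_{i\in\mathrm{ep}\,s-1}\|v_i\|^2+\tfrac{\epsilon}{\alpha}$ just as you write.

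The genuine difference is in the ascent inequality. The paper's Lemma~\ref{beginWith} uses Young's inequality $\langle\nabla J(\theta_k)-v_k,v_k\rangle\ge -\tfrac12\|\nabla J(\theta_k)-v_k\|^2-\tfrac12\|v_k\|^2$, so only $\|v_k\|^2$ (not $\|\nabla J(\theta_k)\|^2$) appears on the right; the paper must therefore finish by bounding $\E\|\nabla J(\theta_\xi)\|^2\le 2\E\|v_\xi\|^2+2\E\|\nabla J(\theta_\xi)-v_\xi\|^2$ and re-invoking the variance bound, which is where its factor $88L$ comes from. Your polarization identity delivers $\tfrac{\eta}{2}\|\nabla J(\theta_k)\|^2$ directly, so after telescoping and dropping the nonnegative $(\tfrac{\eta}{8}-\tfrac{\eta\beta}{2\alpha})\sum\|v_k\|^2$ you get $\E\|\nabla J(\theta_\xi)\|^2\le \tfrac{4L}{K+1}(J(\theta^*)-J(\theta_0))+\tfrac{\epsilon}{48}$, already stronger than the theorem's $88L$ and $\epsilon/2$. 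So your hedge about ``honest, looser constants'' is unnecessary --- your displayed constants already prove the statement. One small caution: the inequality $\|\theta_k-\tilde\theta\|^2\le m\eta^2\sum_{j\in\mathrm{ep}\,s}\|v_j\|^2$ is awkward pathwise since it references $v_j$ for $j\ge k$; use $(k-n_k)\eta^2\sum_{j=n_k}^{k-1}\|v_j\|^2$ per iteration and enlarge to $m\eta^2\sum_{j\in\mathrm{ep}\,s}$ only after summing over $k$, as the paper does.
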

\vspace{-0.2cm}

\Cref{SVRPG_conv} shows that the output of AbaSVRPG converges at a rate of $\mathcal{O}(1/K)$. Furthermore, the following corollary
captures the overall STO complexity of AbaSVRPG and its comparison with the vanilla SVRPG.
\begin{corollary}\label{cor:SVRPG_sto}
Under the setting of \Cref{SVRPG_conv},	 the overall STO complexity of {\small AbaSVRPG}  to achieve {\small $\E \norml{\nabla J(\theta_{\xi})}^2 \leq \epsilon $} is
{\small \begin{align*}
	 \underbrace{2KB + \sum_{k=0}^{n_K} \frac{\alpha \sigma^2}{\frac{\beta}{m} \sum_{i=km-m}^{km-1} \norml{v_i}^2 + \epsilon} }_{\text{\normalfont compelxity of AbaSVRPG}}  &{\color{blue} <} \underbrace{ 2KB + \sum_{k=0}^{n_K}   \frac{\alpha \sigma^2}{\epsilon}}_{\text{\normalfont compelxity of vanilla SVRPG}} \\  
	&= \mathcal{O} \left(  \epsilon^{-5/3} + \epsilon^{-1} \right).
	\end{align*} 
	}
\end{corollary}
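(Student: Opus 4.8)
\textbf{Proof proposal for Corollary~\ref{cor:SVRPG_sto}.}
The plan is to treat this as an accounting statement built on top of Theorem~\ref{SVRPG_conv}: all of the analytic difficulty already sits in that theorem (the new epoch‑wise function‑value bound under a time‑varying sampling distribution), so what remains is to (i) read off the number of iterations needed, (ii) tally trajectory‑gradient evaluations, and (iii) substitute the prescribed parameter values. First I would extract the iteration complexity from Theorem~\ref{SVRPG_conv}: since $\E\|\nabla J(\theta_\xi)\|^2 \le \frac{88L}{K+1}(J(\theta^*)-J(\theta_0)) + \frac{\epsilon}{2}$, it suffices to take $K+1 \ge \frac{176L(J(\theta^*)-J(\theta_0))}{\epsilon}$, i.e. $K = \mathcal{O}(\epsilon^{-1})$ iterations guarantee an $\epsilon$‑accurate stationary point (this is the $\mathcal{O}(1/K)$ rate already noted after the theorem).

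Next I would count STO calls per iteration from Algorithm~\ref{SVRPG}. An iteration $k$ with $\mathrm{mod}(k,m)=0$ (an epoch start) draws $N$ fresh trajectories and computes one gradient each, for $N$ STO calls, where $N$ is the adaptive batch size in~\eqref{Batch_size}; every other iteration draws $B$ trajectories and evaluates both $g(\tau_i|\theta_k)$ and $g(\tau_i|\tilde\theta)$, for $2B$ STO calls. Over $k=0,\dots,K$ there are $\floor{K/m}+1$ epoch starts and at most $K$ inner iterations, so the total STO complexity is at most $2KB + \sum_{j}N_j$ where $j$ ranges over the $\floor{K/m}+1$ epochs; this is exactly the left‑hand ``complexity of AbaSVRPG'' expression. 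The claimed strict inequality then follows from~\eqref{Batch_size}: $N_j=\frac{\alpha\sigma^2}{\frac{\beta}{m}\sum_i\|v_i\|^2+\epsilon}\le\frac{\alpha\sigma^2}{\epsilon}$ because the averaged squared stochastic gradients over the preceding epoch are nonnegative, and the bound is strict for every epoch in which some intermediate $v_i$ is nonzero (hence generically for all epochs after the first, which uses the convention $v_{-1}=\cdots=v_{-m}=0$). Summing gives $\sum_j N_j < (\floor{K/m}+1)\frac{\alpha\sigma^2}{\epsilon}$, matching the ``vanilla SVRPG'' side.

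Finally I would plug in the parameters of Theorem~\ref{SVRPG_conv}: $m=(L^2\sigma^2/(Q\epsilon))^{1/3}=\Theta(\epsilon^{-1/3})$ and $B=(Q\sigma^4/(L^2\epsilon^2))^{1/3}=\Theta(\epsilon^{-2/3})$, together with $K=\Theta(\epsilon^{-1})$ from the first step. Then $2KB=\mathcal{O}(\epsilon^{-1}\cdot\epsilon^{-2/3})=\mathcal{O}(\epsilon^{-5/3})$, and the number of epochs is $\floor{K/m}+1=\mathcal{O}(\epsilon^{-2/3})+1$, so $(\floor{K/m}+1)\frac{\alpha\sigma^2}{\epsilon}=\mathcal{O}(\epsilon^{-2/3}\cdot\epsilon^{-1})+\mathcal{O}(\epsilon^{-1})=\mathcal{O}(\epsilon^{-5/3}+\epsilon^{-1})$; adding the two contributions yields the stated $\mathcal{O}(\epsilon^{-5/3}+\epsilon^{-1})$, with the lower‑order $\epsilon^{-1}$ coming precisely from the last (possibly partial) epoch, i.e. the ``$+1$'' in the epoch count. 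I do not expect a genuine obstacle here; the only points requiring care are the per‑iteration bookkeeping (keeping the $N$‑cost epoch starts separate from the $2B$‑cost inner steps and handling the floor in the epoch count) and making the ``$<$'' honest by invoking nonnegativity (and non‑stationarity of the iterates) rather than strict positivity of every term.
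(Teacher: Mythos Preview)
Your proposal is correct and follows essentially the same approach as the paper: extract $K=\Theta(\epsilon^{-1})$ from the convergence bound of Theorem~\ref{SVRPG_conv}, bound each adaptive batch by $N\le \alpha\sigma^2/\epsilon$, count $2KB$ for inner steps plus $(\lfloor K/m\rfloor+1)\cdot \alpha\sigma^2/\epsilon$ for epoch starts, and substitute $m=\Theta(\epsilon^{-1/3})$, $B=\Theta(\epsilon^{-2/3})$. Your bookkeeping (the $2B$ cost from evaluating both $g(\tau_i|\theta_k)$ and $g(\tau_i|\tilde\theta)$, and tracing the $\epsilon^{-1}$ term to the ``$+1$'' in the epoch count) matches the paper's, and your justification of the strict ``$<$'' via nonzero intermediate $v_i$ is if anything more explicit than what the paper writes.
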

\vspace{-0.3cm}

The STO complexity improves the state-of-the-art complexity of vanilla SVRPG characterized in \citealt{xu2019improved}, especially due to the saving samples at the initial stage. The worst-case STO complexity of AbaSVRPG is $\mathcal{O}(  \epsilon^{-5/3} + \epsilon^{-1} )$, which matches that of \citealt{xu2019improved}. 

\subsection{Convergence Analysis for AbaSPIDER-PG}
In this section, we provide the convergence and complexity analysis for AbaSPIDER-PG algorithm. 
\begin{Theorem} \label{SpiderPG_conv} 
	Suppose Assumptions~\ref{g_assumption}, \ref{assumption}, and \ref{assumption_bound_variance} hold. Choose $
	\eta = \frac{1}{2L},  m = \frac{L\sigma}{\sqrt{ Q\epsilon}}  , B= \frac{\sigma \sqrt{Q}}{L\sqrt{\epsilon}}, \alpha = 48 \text{ and } \beta = 6. 
	$ 	where $L>0$ is a Lipschitz constant given in Lemma~\ref{bounded_from_papini} in \Cref{em_verify}, and $Q$ is the constant given in \Cref{lispchitz_g} in \Cref{em_verify}. Then, the  output $\theta_\xi$ of AbaSPIDER-PG satisfies 
	\begin{align*}
	\E \norml{\nabla J(\theta_{\xi})}^2 \leq  \frac{40L}{K+1} \left( J(\theta^*) -  J(\theta_{0})\right) + \frac{\epsilon}{2}.
	\end{align*}
\end{Theorem}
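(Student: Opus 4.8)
The argument runs in parallel to the proofs of \Cref{th_spider} (AbaSPIDER) and \Cref{SVRPG_conv} (AbaSVRPG), but must accommodate two RL-specific features at once: the estimator $v_k$ is of the recursive SARAH/SPIDER form, and the sampling distribution $p(\cdot|\theta)$ drifts as $\theta$ is updated, so the importance weights $\omega(\tau|\theta_k,\theta_{k-1})$ enter the variance control. The plan has three ingredients: a telescoped variance bound for $v_k$, the usual smoothness-based ascent inequality, and the adaptive-batch-size bookkeeping that converts the epoch-start variance $\sigma^2/N_s$ into a multiple of the preceding epoch's squared stochastic gradients.

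First I would bound $\E\norml{v_k-\nabla J(\theta_k)}^2$. At the start of an epoch, $v_{n_k}$ is an $N$-sample average of $g(\tau|\theta_{n_k})$, so by \Cref{assumption_bound_variance} its error is at most $\sigma^2/N$. For a within-epoch step, $v_k-v_{k-1}=\frac{1}{B}\sum_{i=1}^{B}\big(g(\tau_i|\theta_k)-\omega(\tau_i|\theta_k,\theta_{k-1})g(\tau_i|\theta_{k-1})\big)$ is a conditionally unbiased correction of $v_{k-1}$ (conditional mean $\nabla J(\theta_k)-\nabla J(\theta_{k-1})$), and its conditional variance is controlled by the Lipschitz-type estimate for the importance-weighted trajectory gradient (\Cref{lispchitz_g}, with constant $Q$), which is precisely where the boundedness of $G,H,R$ in \Cref{assumption} is used. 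This yields $\E\norml{v_k-\nabla J(\theta_k)}^2\le\E\norml{v_{k-1}-\nabla J(\theta_{k-1})}^2+\frac{Q}{B}\E\norml{\theta_k-\theta_{k-1}}^2$, and since $\theta_k-\theta_{k-1}=\eta v_{k-1}$, unrolling to the epoch start gives
\begin{align*}
\E\norml{v_k-\nabla J(\theta_k)}^2\ \le\ \frac{\sigma^2}{N_{s(k)}}+\frac{Q\eta^2}{B}\sum_{j=n_k}^{k-1}\E\norml{v_j}^2 .
\end{align*}

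Next, the $L$-smoothness of $J$ (\Cref{bounded_from_papini}) together with $\theta_{k+1}=\theta_k+\eta v_k$ and the identity $\inner{\nabla J(\theta_k)}{v_k}=\frac{1}{2}\norml{\nabla J(\theta_k)}^2+\frac{1}{2}\norml{v_k}^2-\frac{1}{2}\norml{v_k-\nabla J(\theta_k)}^2$ give a per-step ascent inequality; summing over $k=0,\dots,K$, taking expectations, and using $J(\theta^*)\ge\E[J(\theta_{K+1})]$ yields
\begin{align*}
J(\theta^*)-J(\theta_0)\ \ge\ \sum_{k=0}^{K}\Big(\frac{\eta}{2}\E\norml{\nabla J(\theta_k)}^2+\Big(\frac{\eta}{2}-\frac{L\eta^2}{2}\Big)\E\norml{v_k}^2-\frac{\eta}{2}\E\norml{v_k-\nabla J(\theta_k)}^2\Big).
\end{align*}
The crucial step is to control $\sum_k\E\norml{v_k-\nabla J(\theta_k)}^2$ via the adaptive rule. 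Since $N_s$ is determined by the iterates of epoch $s-1$, the definition \eqref{Batch_size} gives the pathwise identity $m\,\sigma^2/N_s=\frac{\beta}{\alpha}\sum_{i\in\text{epoch }s-1}\norml{v_i}^2+\frac{m\epsilon}{\alpha}$, which persists under expectation. Summing the telescoped variance bound over all $k$, the epoch-start contributions $\sum_s m\,\sigma^2/N_s$ collapse (using $v_{-1}=\dots=v_{-m}=0$ for the first epoch) into $\frac{\beta}{\alpha}\sum_{k=0}^{K}\E\norml{v_k}^2+\frac{(K+1)\epsilon}{\alpha}$ up to lower-order terms, while the drift terms contribute at most $\frac{Qm\eta^2}{B}\sum_{k=0}^{K}\E\norml{v_k}^2$. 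Substituting $\eta=\frac{1}{2L}$, $\alpha=48$, $\beta=6$, and noting that the prescribed $m$ and $B$ force $m/B=L^2/Q$ (hence $\frac{Qm\eta^2}{B}=\frac{1}{4}$), one checks that the net coefficient of $\sum_k\E\norml{v_k}^2$ is nonnegative, so that term drops; what remains rearranges to $\frac{1}{K+1}\sum_k\E\norml{\nabla J(\theta_k)}^2\le\frac{2(J(\theta^*)-J(\theta_0))}{\eta(K+1)}+\frac{\epsilon}{\alpha}$, and since $\theta_\xi$ is uniform over $\{\theta_0,\dots,\theta_K\}$ this is exactly the asserted bound once the constants are tracked carefully (the leading factor absorbed into $40L$ and the residual error into $\epsilon/2$).

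I expect the main obstacle to be establishing the variance recursion in the RL setting: one must control the conditional variance of the recursive, importance-sampled estimator by $\norml{\theta_k-\theta_{k-1}}^2$ even though $p(\cdot|\theta_k)\ne p(\cdot|\theta_{k-1})$, which needs the Lipschitz-type bound on $\omega(\tau|\cdot,\cdot)g(\tau|\cdot)$ and the full strength of \Cref{assumption}; this is where the RL analysis genuinely departs from the finite-sum analysis behind \Cref{th_spider}. A secondary, bookkeeping-type obstacle is making the telescoping of the adaptive-batch-size terms align precisely with the global sum $\sum_k\E\norml{v_k}^2$, which requires careful treatment of the epoch boundaries and of the measurability of $N_s$ with respect to the past iterates.
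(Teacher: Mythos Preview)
Your proposal is correct and in fact yields sharper constants than the paper's own argument. The variance recursion you set up for $v_k$ via \Cref{lispchitz_g} is exactly the paper's \Cref{variance_bound_SpiderPG}, and the adaptive-batch-size bookkeeping that converts $m\sigma^2/N_s$ into $\tfrac{\beta}{\alpha}\sum_{i\in\text{epoch }s-1}\norml{v_i}^2+\tfrac{m\epsilon}{\alpha}$ matches the paper step for step.

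The genuine difference is in the ascent inequality. The paper uses Young's inequality (its \Cref{beginWith}) to obtain $\E J(\theta_{k+1})-\E J(\theta_k)\ge(\tfrac{\eta}{2}-\tfrac{L\eta^2}{2})\E\norml{v_k}^2-\tfrac{\eta}{2}\E\norml{v_k-\nabla J(\theta_k)}^2$, which only tracks $\norml{v_k}^2$; it must therefore first bound $\sum_k\E\norml{v_k}^2$ and then convert back via $\E\norml{\nabla J(\theta_\xi)}^2\le 2\E\norml{\nabla J(\theta_\xi)-v_\xi}^2+2\E\norml{v_\xi}^2$, re-invoking the variance bound a second time. You instead use the exact polarization identity $\langle\nabla J(\theta_k),v_k\rangle=\tfrac{1}{2}\norml{\nabla J(\theta_k)}^2+\tfrac{1}{2}\norml{v_k}^2-\tfrac{1}{2}\norml{v_k-\nabla J(\theta_k)}^2$, which keeps $\norml{\nabla J(\theta_k)}^2$ in the telescoped sum from the outset. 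After substituting the variance bound and checking that the residual coefficient of $\sum_k\E\norml{v_k}^2$ is $\tfrac{1}{8L}-\tfrac{1}{16L}-\tfrac{1}{32L}=\tfrac{1}{32L}>0$ under the stated parameters, you can drop that term and read off $\E\norml{\nabla J(\theta_\xi)}^2\le\tfrac{4L}{K+1}(J(\theta^*)-J(\theta_0))+\tfrac{\epsilon}{48}$ directly. This is strictly tighter than the theorem's stated bound (and than the $88L$ the paper's own computation actually produces), so your route is both shorter and quantitatively better; the paper's approach has the mild advantage of isolating a reusable bound on $\sum_k\E\norml{v_k}^2$, which it recycles for the STO-complexity corollary.
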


\begin{corollary}\label{cor:SpiderPG_sto}
Under the setting of \Cref{SpiderPG_conv}, the total STO complexity of {\small AbaSPIDER-PG} to achieve {\small $\mathbb{E}\|\nabla J(\theta_\xi)\|^2 \le \epsilon$} is 
{\small
	\begin{align*}
	\underbrace{2KB + \sum_{k=0}^{n_K} \frac{\alpha \sigma^2}{\frac{\beta}{m} \sum_{i=km-m}^{km-1} \norml{v_i}^2 + \epsilon} }_{\text{\normalfont compelxity of AbaSPIDER-PG}}  &< \underbrace{ 2KB + \sum_{k=0}^{n_K}   \frac{\alpha \sigma^2}{\epsilon}}_{\text{\normalfont compelxity of vanilla SPIDER-PG}} \\  
	&= \mathcal{O}(  \epsilon^{-3/2} + \epsilon^{-1} ).
	\end{align*}  }
\end{corollary}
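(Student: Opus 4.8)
The plan is to read off the complexity directly from the convergence rate established in Theorem~\ref{SpiderPG_conv} together with the prescribed parameter values, handling separately the samples spent in the inner ($B$-batch) iterations and those spent at the epoch starts (the adaptive batch $N$ of \eqref{Batch_size}).

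I would first dispose of the strict inequality. By \eqref{Batch_size}, the batch used at the start of epoch $j$ is $N_j = \alpha\sigma^2\big(\tfrac{\beta}{m}\sum_{i=jm-m}^{jm-1}\norml{v_i}^2 + \epsilon\big)^{-1}$, and since $\tfrac{\beta}{m}\sum_{i=jm-m}^{jm-1}\norml{v_i}^2 \ge 0$ we get $N_j \le \alpha\sigma^2/\epsilon$ along every sample path, strictly so for any epoch in which some stored recursive estimator $v_i$ is nonzero. Each non-epoch-start iteration of AbaSPIDER-PG evaluates a trajectory gradient at both $\theta_k$ and $\theta_{k-1}$ on a batch of $B$ trajectories, contributing at most $2KB$ in total, a cost shared with vanilla SPIDER-PG. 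Summing $N_j$ over the $\lfloor K/m\rfloor + 1$ epoch starts and adding $2KB$ then produces the displayed ``complexity of AbaSPIDER-PG'' and shows it is bounded by --- and generically strictly below --- the ``complexity of vanilla SPIDER-PG'', which uses the fixed batch $\alpha\sigma^2/\epsilon$ throughout.

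Next I would establish the $\mathcal{O}(\epsilon^{-3/2}+\epsilon^{-1})$ bound. Theorem~\ref{SpiderPG_conv} gives $\E\norml{\nabla J(\theta_\xi)}^2 \le \frac{40L}{K+1}(J(\theta^*)-J(\theta_0)) + \epsilon/2$; by the bounded-reward part of Assumption~\ref{assumption}, $|J(\theta)| \le R/(1-\gamma)$ for every $\theta$, so $J(\theta^*)-J(\theta_0)$ is a finite constant independent of $\epsilon$, and it suffices to take $K+1 = \Theta(\epsilon^{-1})$ for the right-hand side to be at most $\epsilon$. Plugging in $m = L\sigma/\sqrt{Q\epsilon} = \Theta(\epsilon^{-1/2})$ and $B = \sigma\sqrt{Q}/(L\sqrt{\epsilon}) = \Theta(\epsilon^{-1/2})$, the inner-loop cost is $2KB = \Theta(\epsilon^{-3/2})$, while the number of epochs is $\lfloor K/m\rfloor + 1 \le K/m + 1 = \Theta(\epsilon^{-1/2})$, so the epoch-start cost is at most $(K/m + 1)\,\alpha\sigma^2/\epsilon = \tfrac{K}{m}\cdot\tfrac{\alpha\sigma^2}{\epsilon} + \tfrac{\alpha\sigma^2}{\epsilon} = \Theta(\epsilon^{-3/2}) + \Theta(\epsilon^{-1})$. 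Adding the two contributions yields the claim.

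Since this is a corollary of a convergence rate already in hand, I do not expect a genuine obstacle; the only points needing care are (i) bounding $J(\theta^*)-J(\theta_0)$ by a problem constant via Assumption~\ref{assumption}, so that $K = \Theta(\epsilon^{-1})$ really does suffice, and (ii) the epoch-counting bookkeeping --- it is exactly the ``$+1$'' (equivalently the ceiling) in the number of epochs that produces the lower-order $\mathcal{O}(\epsilon^{-1})$ summand, which is anyway dominated by $\mathcal{O}(\epsilon^{-3/2})$. An identical computation with the AbaSVRPG parameters $m = \Theta(\epsilon^{-1/3})$, $B = \Theta(\epsilon^{-2/3})$ reproduces Corollary~\ref{cor:SVRPG_sto}.
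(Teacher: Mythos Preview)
Your proposal is correct and follows essentially the same route as the paper: bound each adaptive outer-loop batch by $\alpha\sigma^2/\epsilon$, read off $K=\Theta(\epsilon^{-1})$ from the convergence rate in Theorem~\ref{SpiderPG_conv}, and then combine $2KB=\Theta(\epsilon^{-3/2})$ with $(\lceil K/m\rceil)\cdot\alpha\sigma^2/\epsilon \le (K/m)\cdot\alpha\sigma^2/\epsilon + \alpha\sigma^2/\epsilon = \Theta(\epsilon^{-3/2})+\Theta(\epsilon^{-1})$. Your explicit appeal to Assumption~\ref{assumption} to ensure $J(\theta^*)-J(\theta_0)$ is a finite problem constant is a nice touch that the paper leaves implicit.
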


\vspace{-0.3cm}
\Cref{cor:SpiderPG_sto} shows that the worst-case STO complexity of AbaSPIDER-PG is $\mathcal{O}(  \epsilon^{-3/2} + \epsilon^{-1} )$, which orderwisely outperforms that of AbaSVRPG in \Cref{cor:SVRPG_sto}, by a factor  of $\mathcal{O}(  \epsilon^{-1/6})$. This is due to the fact that AbaSPIDER-PG avoids the variance accumulation problem of AbaSVRPG by continuously using the gradient information from the immediate preceding step (see Appendix~\ref{sec_variance_bound_SpiderPG}  for more details).

    \begin{figure*}[h]
   	\centering     
   	\subfigure{\includegraphics[width=41mm]{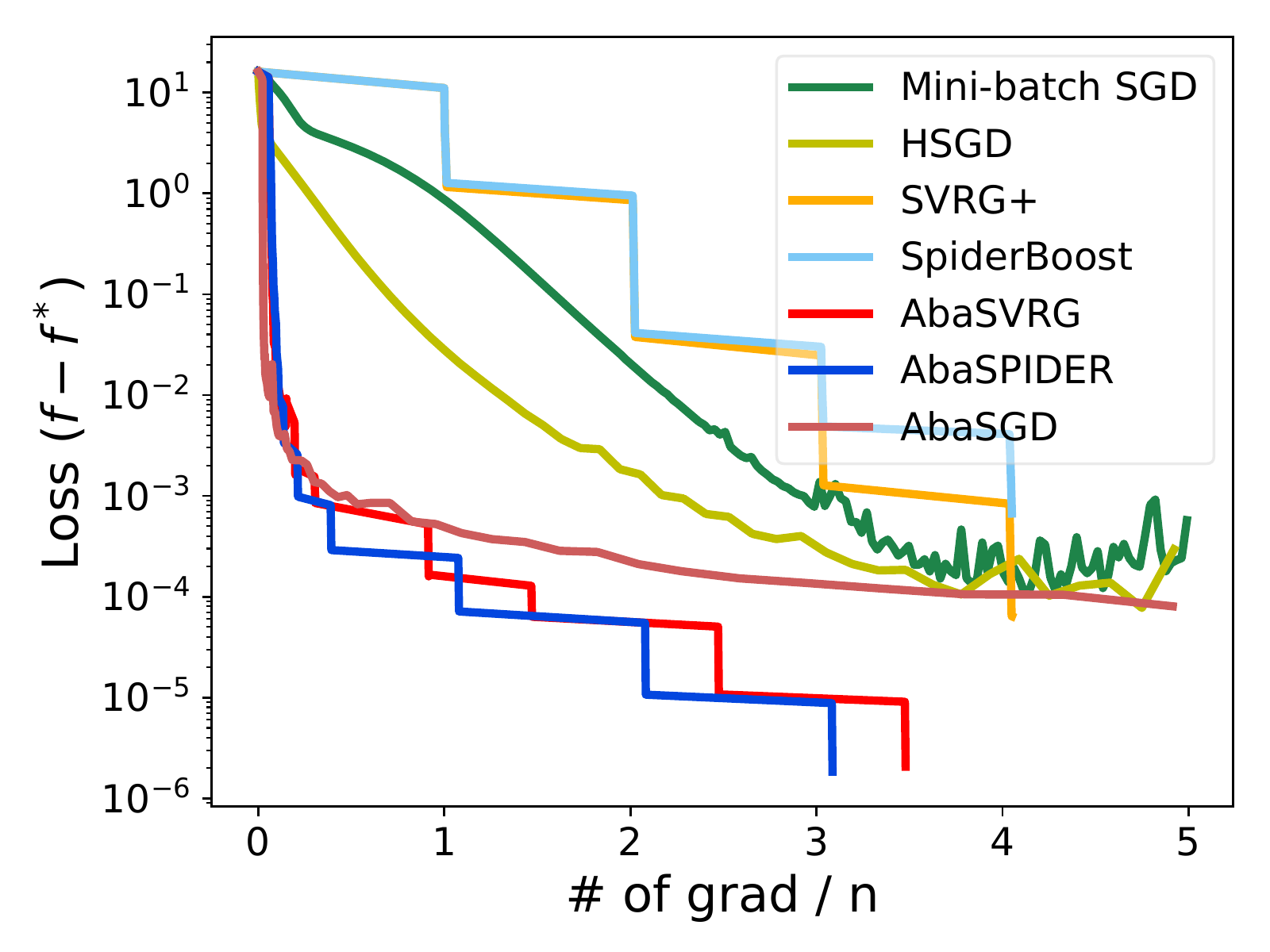}} 
   	\subfigure{\includegraphics[width=41mm]{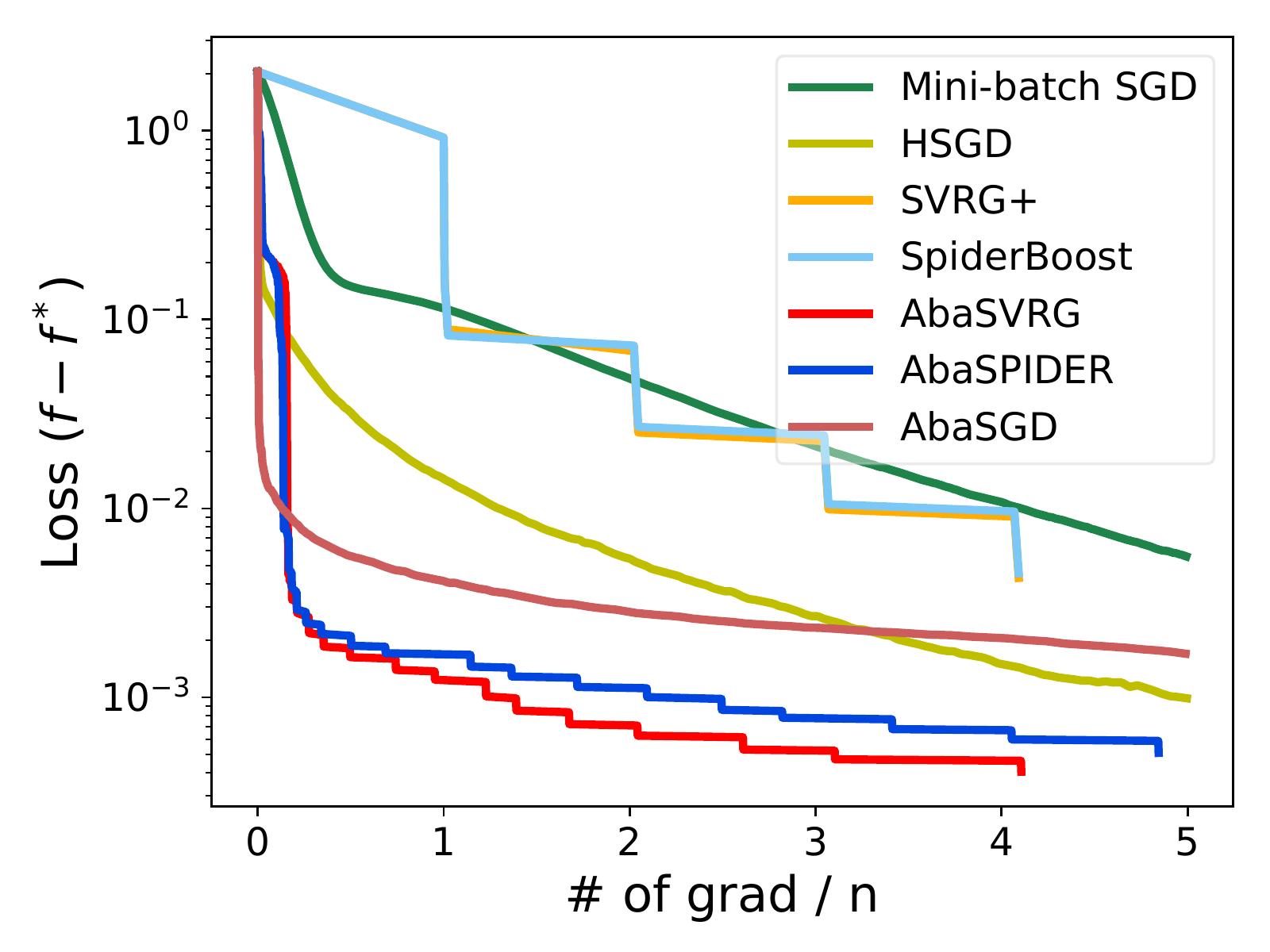}}
   	\subfigure{\includegraphics[width=41mm]{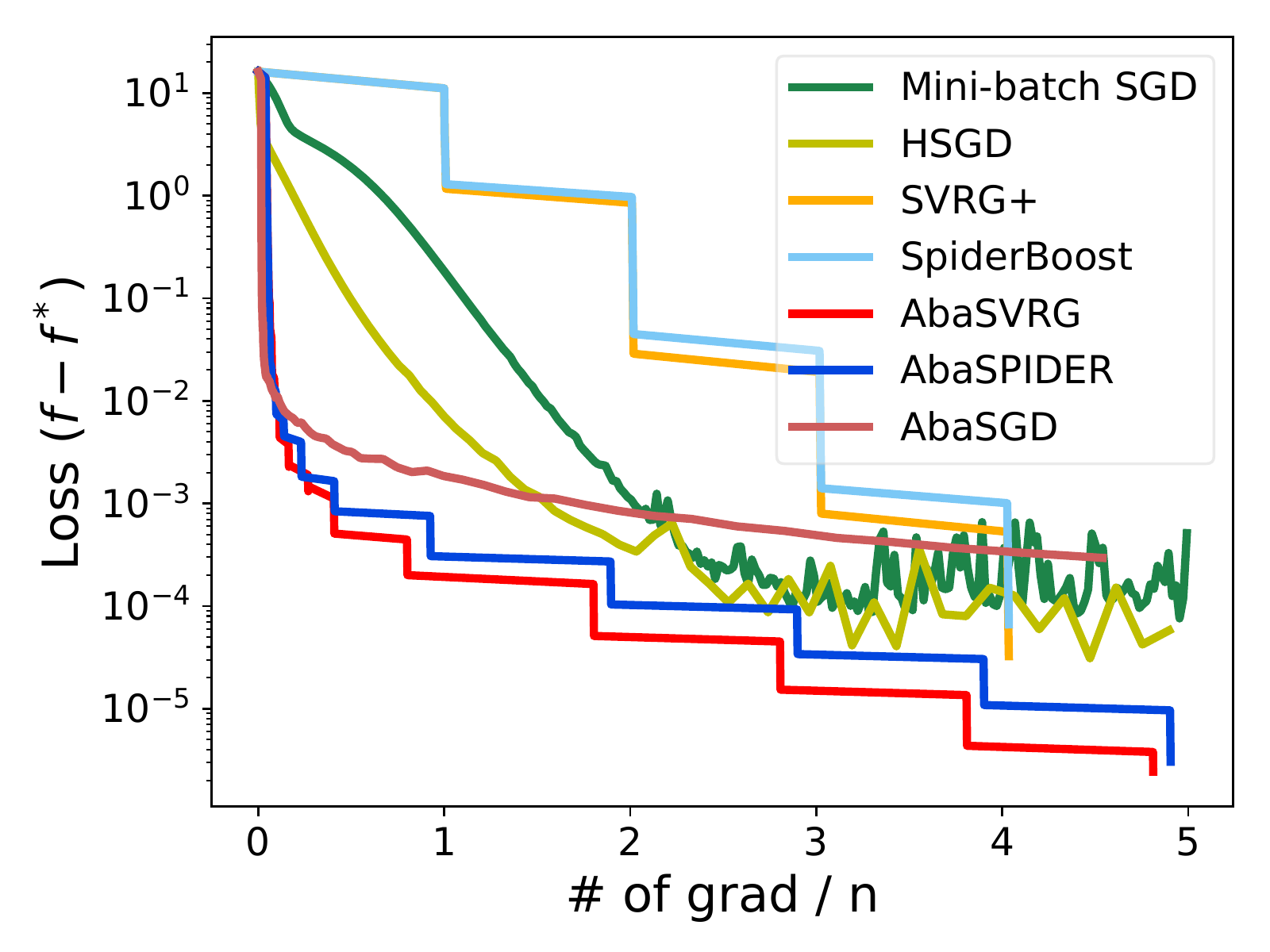}} 
   	\subfigure{\includegraphics[width=41mm]{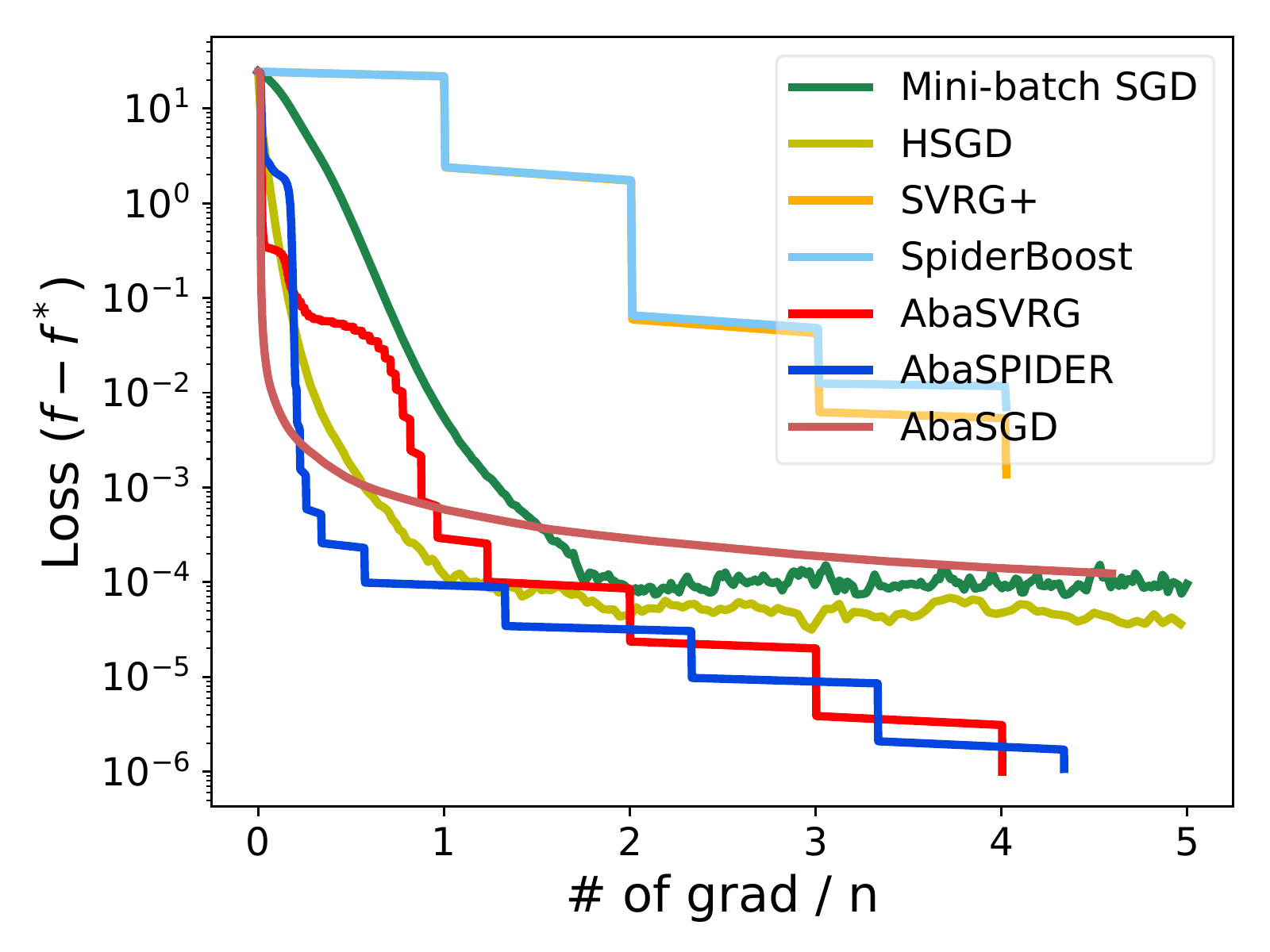}}
   	\caption{Comparison of various algorithms for  logistic regression problem over four datasets.  From left to right: a8a, ijcnn1,  a9a, w8a. }\label{figure:result1}
   \end{figure*}
 
 \begin{figure*}[h]
	\centering    
	\subfigure[dataset: a9a ]{\label{fig2:a}\includegraphics[width=41mm]{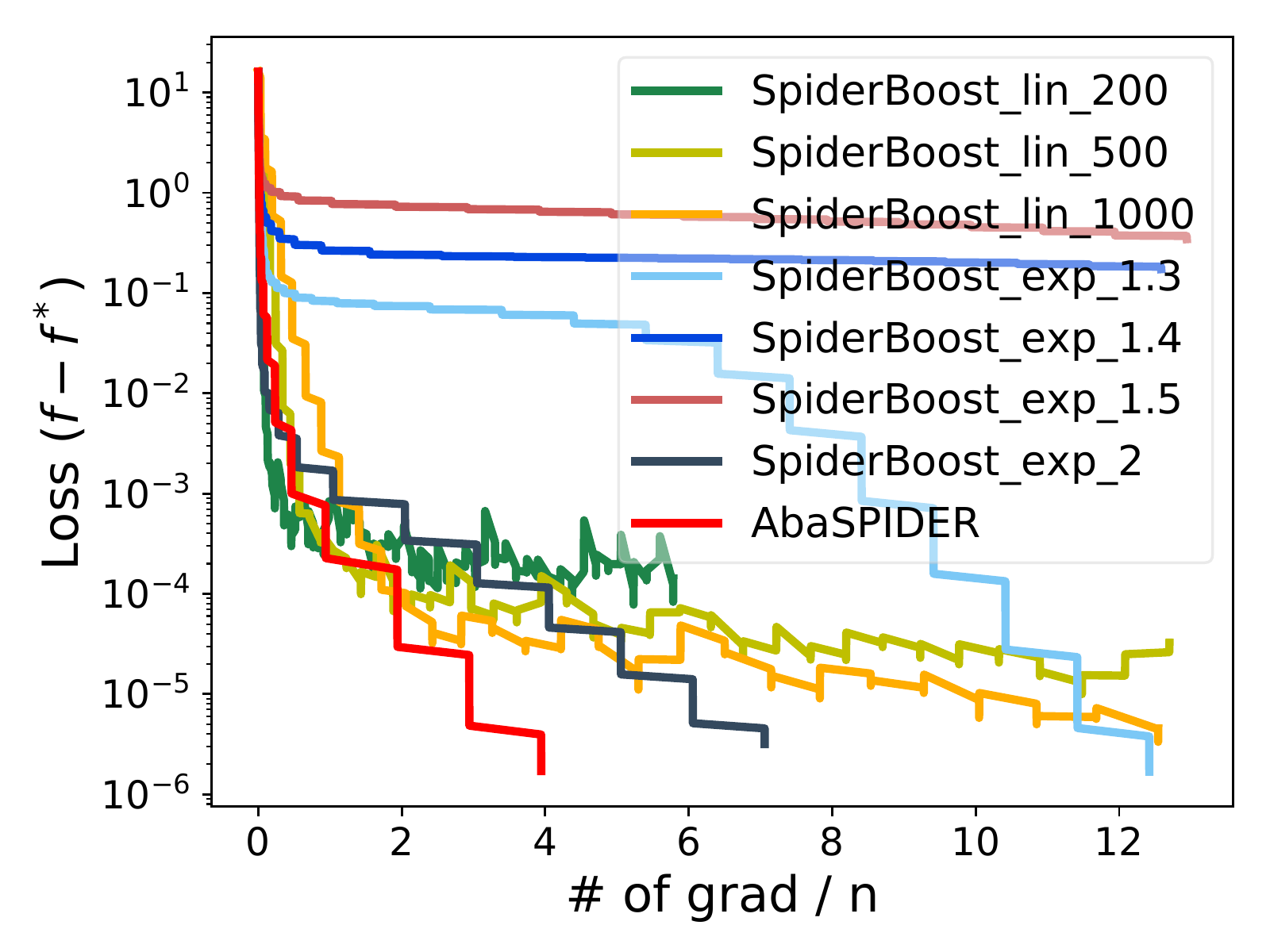}\includegraphics[width=41mm]{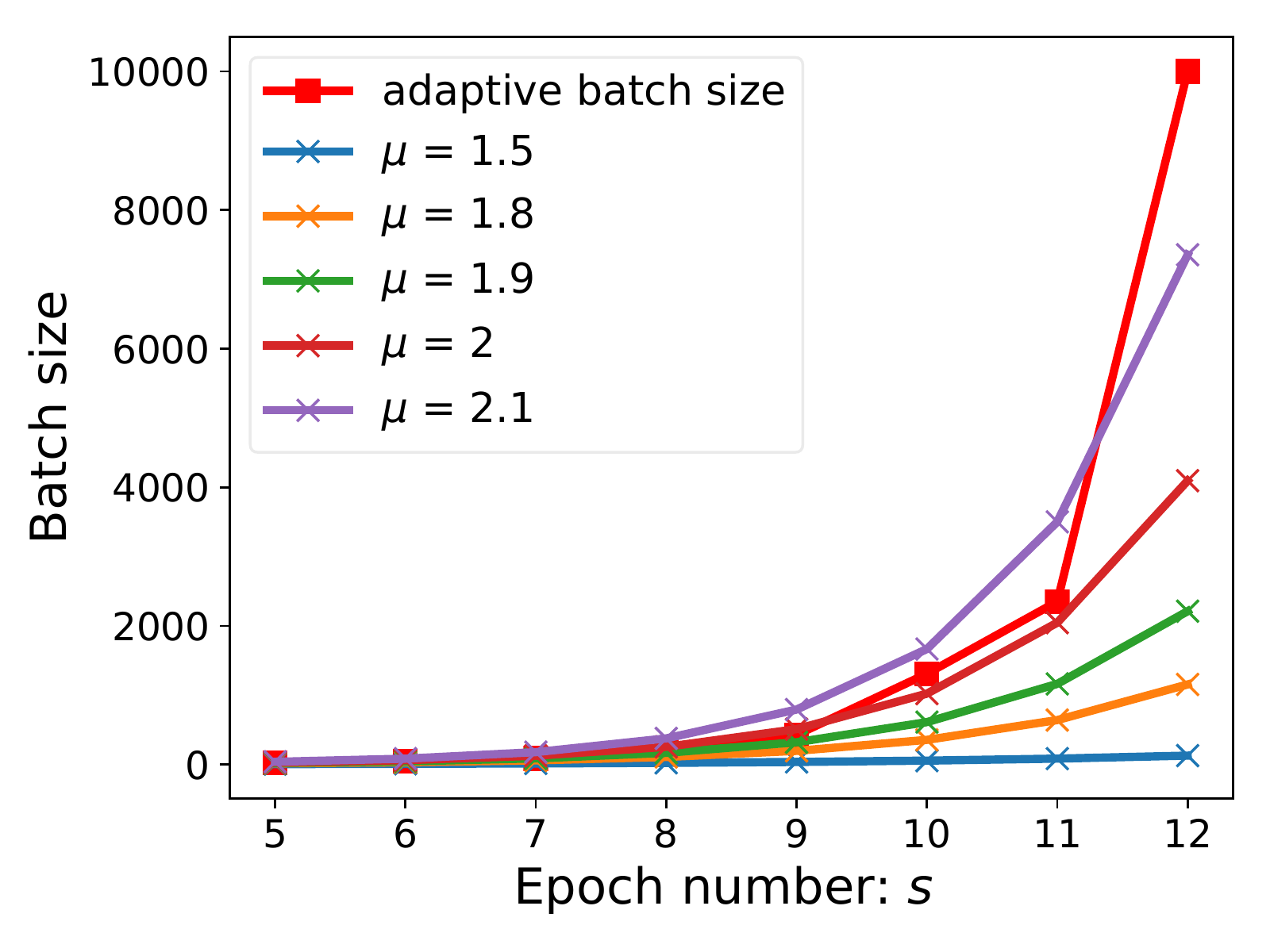}} 
	\subfigure[dataset: w8a ]{\label{fig2:b}\includegraphics[width=41mm]{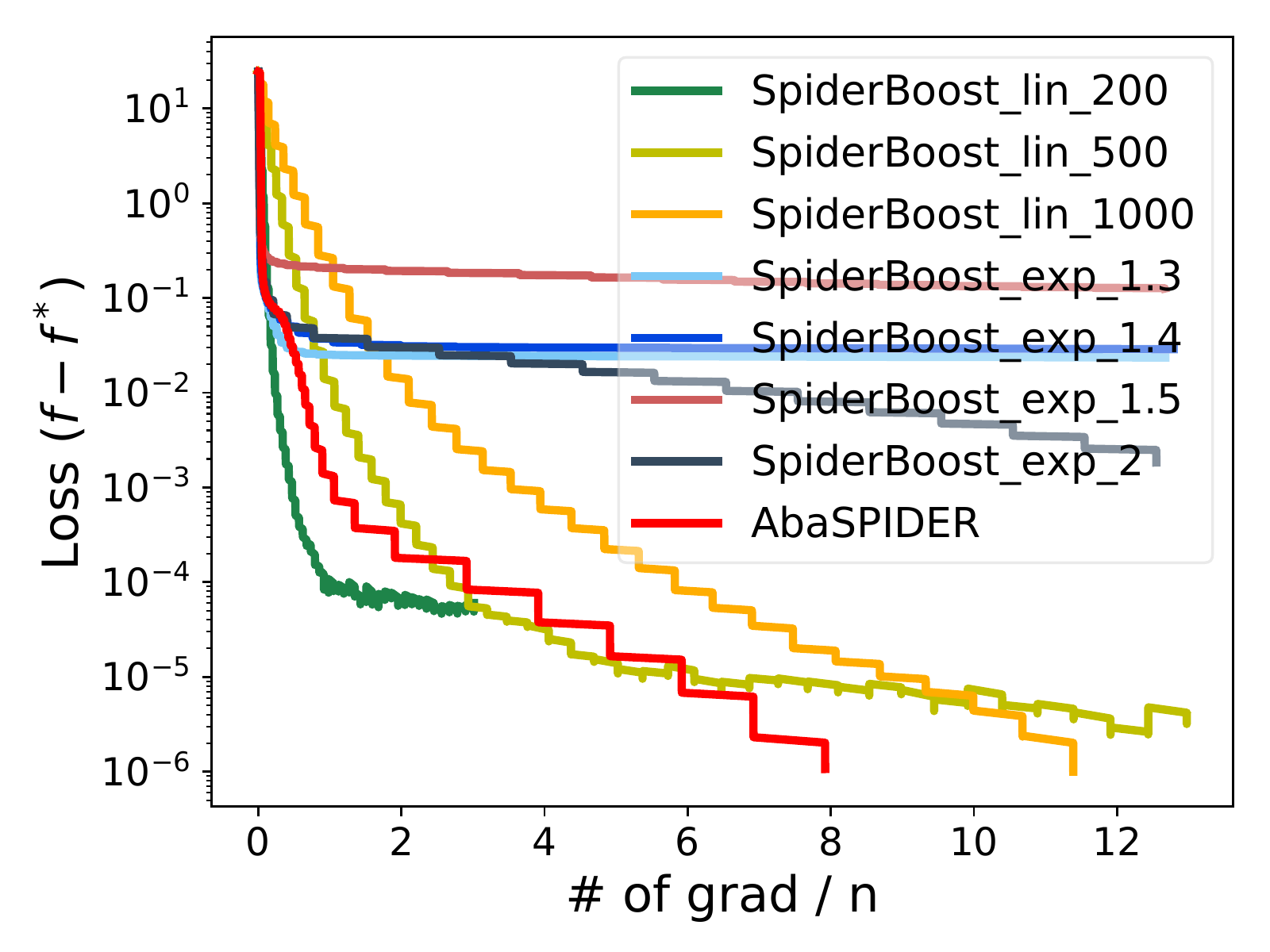}\includegraphics[width=41mm]{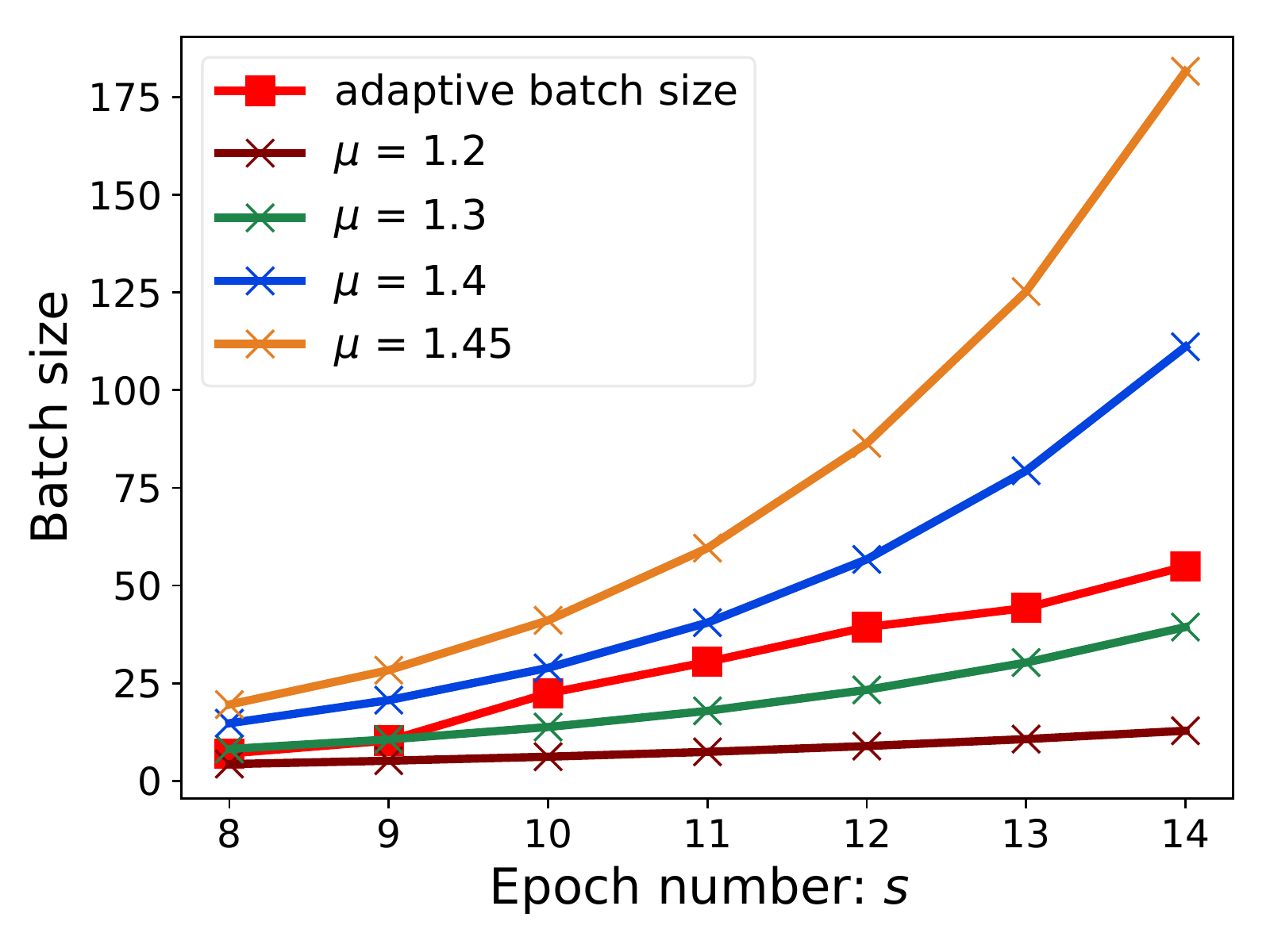}}  
	\caption{Comparison of our gradient-based
		 adaptive batch size and  exponentially and linearly increased  batch sizes. 
		For each dataset, the left figure plots  loss v.s.  $\#$ of gradient evaluations and the right figure plots adaptive batch size and exponentially increasing batch sizes v.s. epoch number $s$.
	}\label{figure:result2}
\end{figure*}
 \section{Experiments} 

In this section, we compare our proposed batch-size adaptation algorithms with their corresponding vanilla algorithms in both conventional nonconvex optimization and reinforcement learning problems.

 
 \subsection{Nonconvex Optimization}\label{diaodehen}

We compare our proposed AbaSVRG and AbaSPIDER with the state-of-the-art algorithms including mini-batch SGD \cite{ghadimi2013stochastic}, HSGD~\cite{zhou2018new}, AbaSGD\footnote{An improved SGD algorithm with batch size adapting to the history gradients.}~\cite{sievert2019improving}, SVRG+~\cite{li2018simple}, and SpiderBoost~\cite{wang2019spiderboost} for two nonconvex optimization problems, i.e., nonconvex logistic regression and training multi-layer neural networks. Due to the space limitations, the detailed hyper-parameter settings for all algorithms and the results on training neural networks are relegated to Appendix~\ref{ex:additional}.

     \begin{figure*}[h]
	\vspace{-0cm}
	\centering     
	\subfigure{\includegraphics[width=42.2mm]{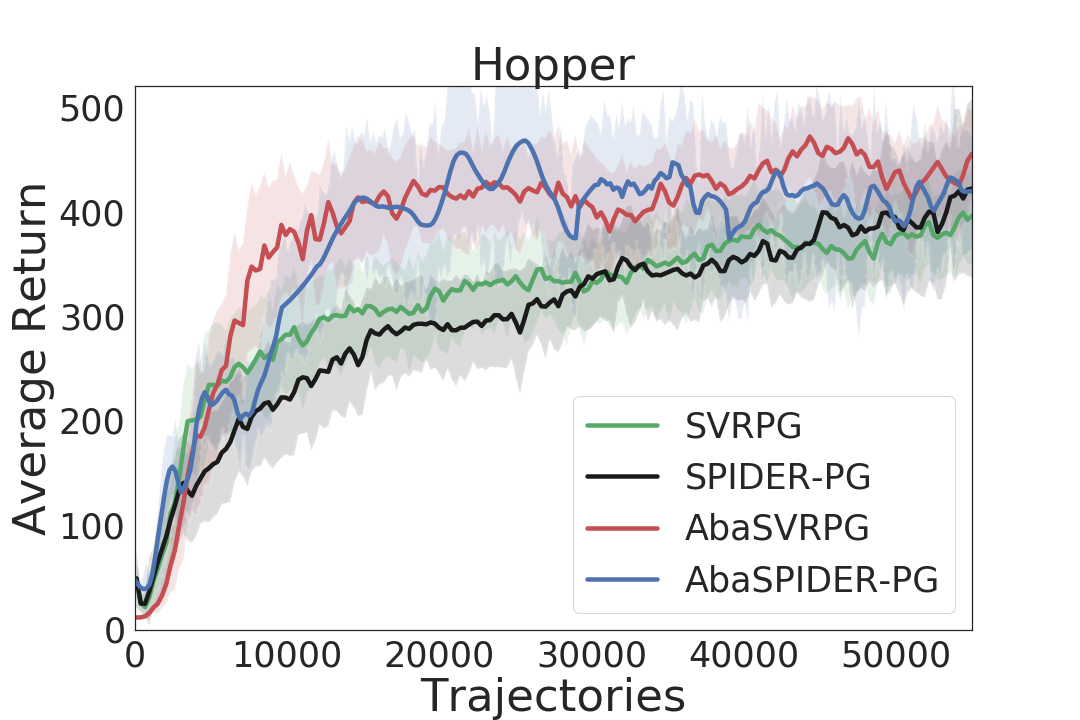}} 
	\subfigure{\includegraphics[width=42.2mm]{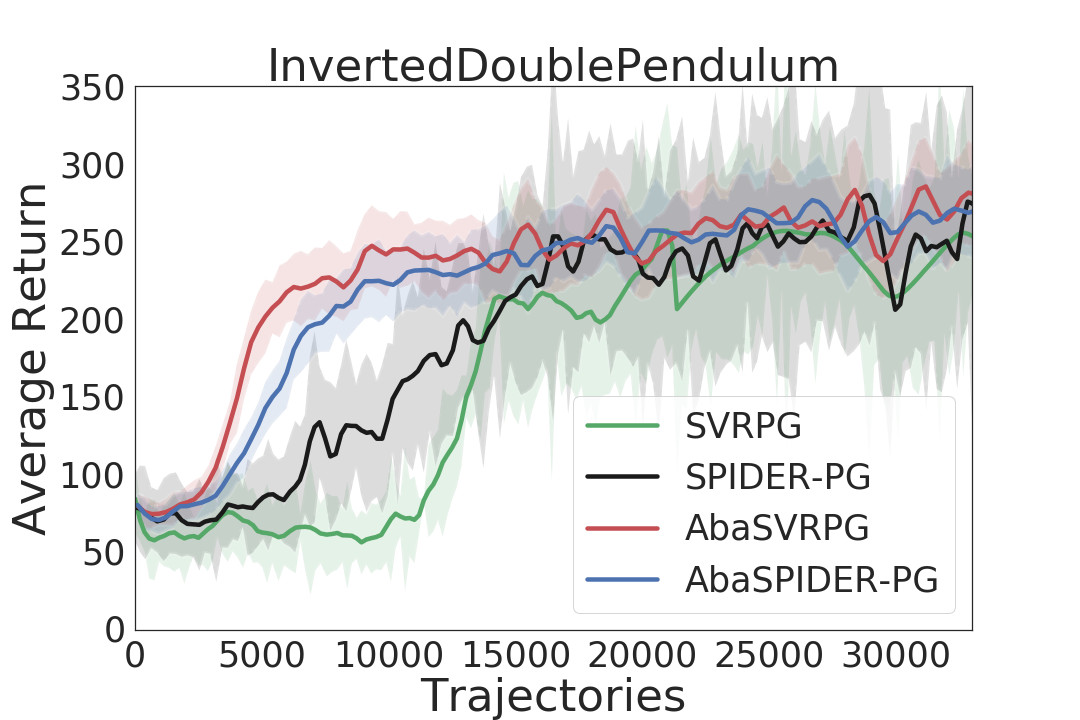}}
	\subfigure{\includegraphics[width=42.2mm]{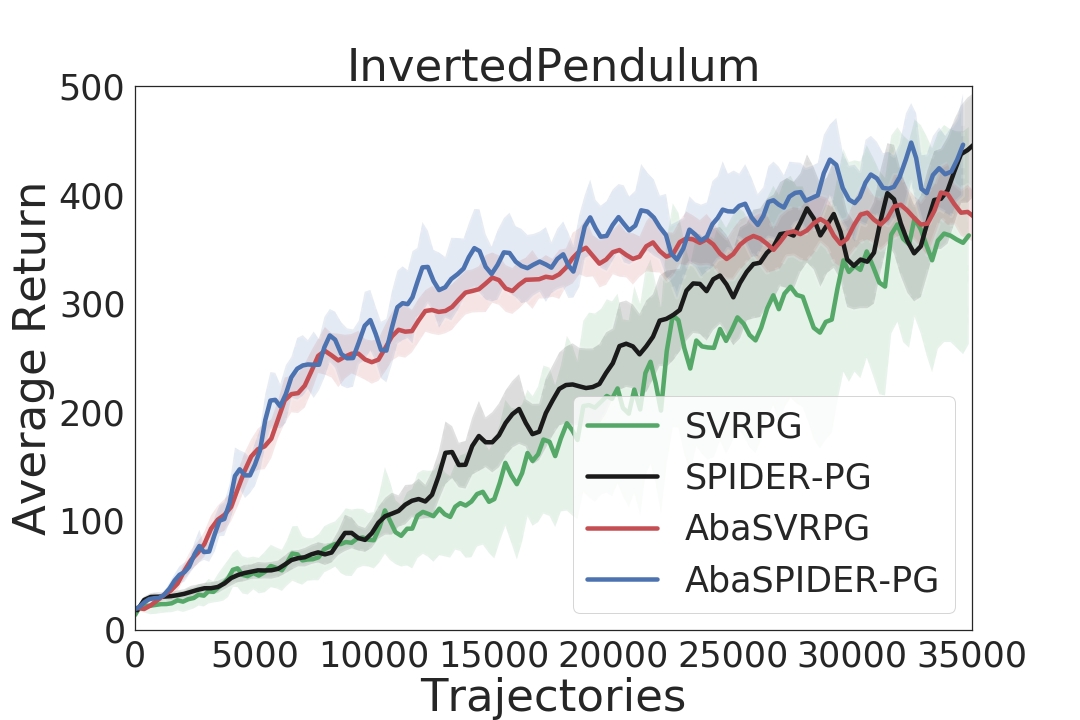}} 
	\subfigure{\includegraphics[width=42.2mm]{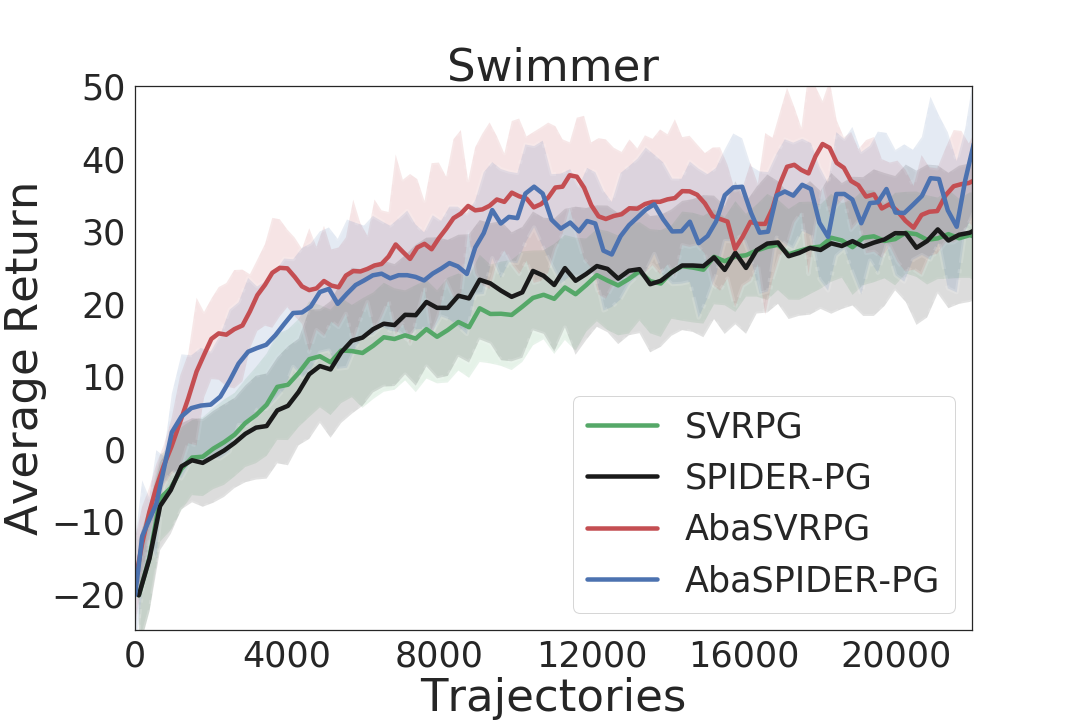}}
	\caption{Comparison of various algorithms for reinforcement learning on four tasks. }\label{figure:rl}
\end{figure*}

We consider the following nonconvex logistic regression problem with two classes
$\min_{{w}\in \mathbb{R}^d} \frac{1}{n} \sum_{i=1}^{n} \ell({w}^T {x}_i, y_i)+ \alpha \sum_{i=1}^{d} \frac{w_i^2}{1  +w_i^2 },$
where ${x}_i\in \mathbb{R}^d$ denote the features, $y_i\in \{\pm 1 \}$ are  labels,  $\ell$ is the cross-entropy loss, and we set $\alpha=0.1$. For this problem, we use four datasets from LIBSVM \cite{chang2011libsvm}: a8a, w8a, a9a, ijcnn1. 
    
As can be seen from Fig.~\ref{figure:result1} and Fig.~\ref{figure:result11} (in Appendix~\ref{apen:b2}),  AbaSVRG and AbaSPIDER  converge much faster than all other algorithms in terms of the total number of gradient evaluations (i.e., SFO complexity) on all four datasets. It can be seen that both of them take the advantage of sample-efficient SGD-like updates (due to the small batch size) at the initial stage and attain high accuracy provided by variance-reduced methods at the final stage. This is consistent with the choice of our batch-size adaptation  scheme. 

We then evaluate the performance of our history-gradient based batch-size adaptation scheme with the other two commonly used prescribed adaptation schemes, i.e., exponential increase of batch size $N_s=\mu^s$ and linear increase of batch size $N_s = \nu(s+1)$.  
 Let SpiderBoost$\_$exp$\_ \mu$ and SpiderBoost$\_$lin$\_ \nu$ denote SpiderBoost algorithms with exponentially and linearly increasing batch sizes under parameters $\mu$ and $\nu$, respectively. As shown in Fig.~\ref{figure:result2}, our adaptive batch size scheme achieves the best performance for a9a dataset, and performs better than all other algorithms for w8a dataset except SpiderBoost\_lin\_200, which, however, does not converge in the high-accuracy regime. Furthermore, the performance of prescribed batch-size adaptation can be problem specific. For example, exponential increase of batch size (with $\mu=2$ and $\mu =2.1$) performs better than linear increase of batch size for a9a dataset, but worse for w8a dataset, whereas our scheme adapts to the optimization path, and hence performs the best in both cases.

\subsection{Reinforcement Learning}

We compare our proposed AbaSVRPG and AbaSPIDER-PG with vanilla SVRPG~\cite{Papini2018} and SPIDER-PG~\cite{xu2019sample} on four benchmark tasks in reinforcement learning, i.e., InvertedPendulum, InvertedDoublePendulum, Swimmer and Hopper. We apply the Gaussian policy which is constructed using a two-layer neural network (NN) with the number of hidden weights being task-dependent. We also include the setup of adaptive standard deviation. The experimental results are averaged over $20$ trails with different random seeds, and selections of random seeds are consistent for different algorithms within each task for a fair comparison. Further details about the hyper-parameter setting and task environments are provided in Appendix~\ref{append:rl}. 

It can be seen from \Cref{figure:rl} that the proposed AbaSVRPG and AbaSPIDER-PG converge much faster than the vanilla SVRPG and SPIDER-PG (without batch size adaptation) on all four tasks. Such an acceleration is more significant at the initial stage of optimization procedure due to the large trajectory gradient that suggests small batch size. 
\section{Conclusion}
In this paper, we propose a novel scheme for adapting the batch size via history gradients, based on which we develop  AbaSVRG and AbaSPIDER for conventional optimization and AbaSVRPG and AbaSPIDER-PG for reinforcement learning. We show by theory and experiments that the proposed algorithms achieve improved computational complexity than their vanilla counterparts (without batch size adaptation). Extensive experiments demonstrate the promising performances of proposed algorithms. 
We anticipate that such a scheme can be applied to a wide range of other stochastic algorithms to accelerate their theoretical and practical performances.  

\section*{Acknowledgements}
The work was supported in part by the U.S. National Science Foundation under the grants CCF-1761506, ECCS-1818904, CCF-1900145, and CCF-1909291.


\bibliographystyle{icml2020}
\bibliography{refs}

\begin{thebibliography}{42}
\providecommand{\natexlab}[1]{#1}
\providecommand{\url}[1]{\texttt{#1}}
\expandafter\ifx\csname urlstyle\endcsname\relax
  \providecommand{\doi}[1]{doi: #1}\else
  \providecommand{\doi}{doi: \begingroup \urlstyle{rm}\Url}\fi

\bibitem[Allen-Zhu \& Hazan(2016)Allen-Zhu and Hazan]{Allen_Zhu2016}
Allen-Zhu, Z. and Hazan, E.
\newblock Variance reduction for faster non-convex optimization.
\newblock In \emph{Proc. International Conference on Machine Learning (ICML)},
  pp.\  699--707, 2016.

\bibitem[Baxter \& Bartlett(2001)Baxter and Bartlett]{Baxter2001}
Baxter, J. and Bartlett, P.~L.
\newblock Infinite-horizon policy-gradient estimation.
\newblock \emph{Journal of Artificial Intelligence Research}, 15\penalty0 (1),
  November 2001.

\bibitem[Chang \& Lin(2011)Chang and Lin]{chang2011libsvm}
Chang, C.-C. and Lin, C.-J.
\newblock Libsvm: a library for support vector machines.
\newblock \emph{ACM Transactions on Intelligent Systems and Technology},
  2\penalty0 (3):\penalty0 27, 2011.

\bibitem[De et~al.(2016)De, Yadav, Jacobs, and Goldstein]{de2016big}
De, S., Yadav, A., Jacobs, D., and Goldstein, T.
\newblock Big batch {SGD}: Automated inference using adaptive batch sizes.
\newblock \emph{arXiv preprint arXiv:1610.05792}, 2016.

\bibitem[De et~al.(2017)De, Yadav, Jacobs, and Goldstein]{de2017automated}
De, S., Yadav, A., Jacobs, D., and Goldstein, T.
\newblock Automated inference with adaptive batches.
\newblock In \emph{Proc. Artificial Intelligence and Statistics (AISTATS)},
  pp.\  1504--1513, 2017.

\bibitem[Defazio et~al.(2014)Defazio, Bach, and Lacoste-Julien]{Defazio2014}
Defazio, A., Bach, F., and Lacoste-Julien, S.
\newblock {SAGA: A fast incremental gradient method with support for
  non-strongly convex composite objectives}.
\newblock In \emph{Proc. Advances in Neural Information Processing Systems
  (NIPS)}, pp.\  1646--1654. 2014.

\bibitem[Devarakonda et~al.(2017)Devarakonda, Naumov, and
  Garland]{devarakonda2017adabatch}
Devarakonda, A., Naumov, M., and Garland, M.
\newblock Adabatch: adaptive batch sizes for training deep neural networks.
\newblock \emph{arXiv preprint arXiv:1712.02029}, 2017.

\bibitem[Duan et~al.(2016)Duan, Chen, Houthooft, Schulman, and
  Abbeel]{duan2016benchmarking}
Duan, Y., Chen, X., Houthooft, R., Schulman, J., and Abbeel, P.
\newblock Benchmarking deep reinforcement learning for continuous control.
\newblock In \emph{International Conference on Machine Learning (ICML)}, pp.\
  1329--1338, 2016.

\bibitem[Fang et~al.(2018)Fang, Li, Lin, and Zhang]{fang2018spider}
Fang, C., Li, C.~J., Lin, Z., and Zhang, T.
\newblock {SPIDER}: Near-optimal non-convex optimization via stochastic
  path-integrated differential estimator.
\newblock In \emph{Proc. Advances in Neural Information Processing Systems
  (NeurIPS)}, pp.\  689--699, 2018.

\bibitem[Friedlander \& Schmidt(2012)Friedlander and
  Schmidt]{friedlander2012hybrid}
Friedlander, M.~P. and Schmidt, M.
\newblock Hybrid deterministic-stochastic methods for data fitting.
\newblock \emph{SIAM Journal on Scientific Computing}, 34\penalty0
  (3):\penalty0 A1380--A1405, 2012.

\bibitem[Ghadimi \& Lan(2013)Ghadimi and Lan]{ghadimi2013stochastic}
Ghadimi, S. and Lan, G.
\newblock Stochastic first-and zeroth-order methods for nonconvex stochastic
  programming.
\newblock \emph{SIAM Journal on Optimization}, 23\penalty0 (4):\penalty0
  2341--2368, 2013.

\bibitem[Harikandeh et~al.(2015)Harikandeh, Ahmed, Virani, Schmidt,
  Kone{\v{c}}n{\`y}, and Sallinen]{harikandeh2015stopwasting}
Harikandeh, R., Ahmed, M.~O., Virani, A., Schmidt, M., Kone{\v{c}}n{\`y}, J.,
  and Sallinen, S.
\newblock Stop wasting my gradients: practical {SVRG}.
\newblock In \emph{Proc. Advances in Neural Information Processing Systems
  (NIPS)}, pp.\  2251--2259, 2015.

\bibitem[Johnson \& Zhang(2013)Johnson and Zhang]{johnson2013accelerating}
Johnson, R. and Zhang, T.
\newblock Accelerating stochastic gradient descent using predictive variance
  reduction.
\newblock In \emph{Proc. Advances in Neural Information Processing Systems
  (NIPS)}, pp.\  315--323, 2013.

\bibitem[Lei \& Jordan(2019)Lei and Jordan]{lei2019adaptivity}
Lei, L. and Jordan, M.~I.
\newblock On the adaptivity of stochastic gradient-based optimization.
\newblock \emph{arXiv preprint arXiv:1904.04480}, 2019.

\bibitem[Lei et~al.(2017)Lei, Ju, Chen, and Jordan]{lei2017non}
Lei, L., Ju, C., Chen, J., and Jordan, M.~I.
\newblock Non-convex finite-sum optimization via {SCSG} methods.
\newblock In \emph{Proc. Advances in Neural Information Processing Systems
  (NIPS)}, pp.\  2348--2358, 2017.

\bibitem[Li \& Li(2018)Li and Li]{li2018simple}
Li, Z. and Li, J.
\newblock A simple proximal stochastic gradient method for nonsmooth nonconvex
  optimization.
\newblock In \emph{Proc. Advances in Neural Information Processing Systems
  (NeurIPS)}, pp.\  5564--5574, 2018.

\bibitem[Nesterov \& Polyak(2006)Nesterov and Polyak]{nesterov2006cubic}
Nesterov, Y. and Polyak, B.~T.
\newblock Cubic regularization of newton method and its global performance.
\newblock \emph{Mathematical Programming}, 108\penalty0 (1):\penalty0 177--205,
  2006.

\bibitem[Nguyen et~al.(2017{\natexlab{a}})Nguyen, Liu, Scheinberg, and
  Tak{\'a}{\v{c}}]{nguyen2017sarah}
Nguyen, L.~M., Liu, J., Scheinberg, K., and Tak{\'a}{\v{c}}, M.
\newblock {SARAH}: A novel method for machine learning problems using
  stochastic recursive gradient.
\newblock In \emph{Proc. International Conference on Machine Learning (ICML)},
  pp.\  2613--2621, 2017{\natexlab{a}}.

\bibitem[Nguyen et~al.(2017{\natexlab{b}})Nguyen, Liu, Scheinberg, and
  Tak{\'a}{\v{c}}]{nguyen2017stochastic}
Nguyen, L.~M., Liu, J., Scheinberg, K., and Tak{\'a}{\v{c}}, M.
\newblock Stochastic recursive gradient algorithm for nonconvex optimization.
\newblock \emph{arXiv preprint arXiv:1705.07261}, 2017{\natexlab{b}}.

\bibitem[Nguyen et~al.(2019)Nguyen, van Dijk, Phan, Nguyen, Weng, and
  Kalagnanam]{nguyen2019finite}
Nguyen, L.~M., van Dijk, M., Phan, D.~T., Nguyen, P.~H., Weng, T.-W., and
  Kalagnanam, J.~R.
\newblock Finite-sum smooth optimization with {SARAH}.
\newblock \emph{arXiv preprint arXiv:1901.07648}, 2019.

\bibitem[Papini et~al.(2018)Papini, Binaghi, Canonaco, Pirotta, and
  Restelli]{Papini2018}
Papini, M., Binaghi, D., Canonaco, G., Pirotta, M., and Restelli, M.
\newblock Stochastic variance-reduced policy gradient.
\newblock In \emph{Proc. International Conference on Machine Learning (ICML)},
  pp.\  4026--4035, 2018.

\bibitem[Polyak(1963)]{polyak1963gradient}
Polyak, B.~T.
\newblock Gradient methods for the minimisation of functionals.
\newblock \emph{USSR Computational Mathematics and Mathematical Physics},
  3\penalty0 (4):\penalty0 864--878, 1963.

\bibitem[Reddi et~al.(2016{\natexlab{a}})Reddi, Hefny, Sra, Poczos, and
  Smola]{reddi2016stochastic}
Reddi, S.~J., Hefny, A., Sra, S., Poczos, B., and Smola, A.
\newblock Stochastic variance reduction for nonconvex optimization.
\newblock In \emph{Proc. International Conference on Machine Learning (ICML)},
  pp.\  314--323, 2016{\natexlab{a}}.

\bibitem[Reddi et~al.(2016{\natexlab{b}})Reddi, Sra, Poczos, and
  Smola]{Reddi2016}
Reddi, S.~J., Sra, S., Poczos, B., and Smola, A.
\newblock Proximal stochastic methods for nonsmooth nonconvex finite-sum
  optimization.
\newblock In \emph{Proc. Advances in Neural Information Processing Systems
  (NIPS)}, pp.\  1145--1153. 2016{\natexlab{b}}.

\bibitem[Robbins \& Monro(1951)Robbins and Monro]{robbins1951}
Robbins, H. and Monro, S.
\newblock A stochastic approximation method.
\newblock \emph{The Annals of Mathematical Statistics}, 22\penalty0
  (3):\penalty0 400--407, 1951.

\bibitem[Roux et~al.(2012)Roux, Schmidt, and Bach]{Nicolas2012}
Roux, N.~L., Schmidt, M., and Bach, F.~R.
\newblock A stochastic gradient method with an exponential convergence rate for
  finite training sets.
\newblock In \emph{Proc. Advances in Neural Information Processing Systems
  (NIPS)}, pp.\  2663--2671. 2012.

\bibitem[{S. Sutton} et~al.(2000){S. Sutton}, McAllester, Singh, and
  Mansour]{Richard2000}
{S. Sutton}, R., McAllester, D., Singh, S., and Mansour, Y.
\newblock Policy gradient methods for reinforcement learning with function
  approximation.
\newblock In \emph{Proc. Neural Information Processing Systems (NIPS)}, 2000.

\bibitem[Shen et~al.(2019)Shen, Ribeiro, Hassani, Qian, and
  Mi]{shen2019hessian}
Shen, Z., Ribeiro, A., Hassani, H., Qian, H., and Mi, C.
\newblock Hessian aided policy gradient.
\newblock In \emph{Proc. International Conference on Machine Learning (ICML)},
  2019.

\bibitem[Sievert \& Charles(2019)Sievert and Charles]{sievert2019improving}
Sievert, S. and Charles, Z.
\newblock Improving the convergence of {SGD} through adaptive batch sizes.
\newblock \emph{arXiv preprint arXiv:1910.08222}, 2019.

\bibitem[Smith et~al.(2018)Smith, Kindermans, Ying, and Le]{smith2017don}
Smith, S.~L., Kindermans, P.-J., Ying, C., and Le, Q.~V.
\newblock Don't decay the learning rate, increase the batch size.
\newblock \emph{Proc. International Conference on Learning Representations
  (ICLR)}, 2018.

\bibitem[Wang et~al.(2019)Wang, Ji, Zhou, Liang, and
  Tarokh]{wang2019spiderboost}
Wang, Z., Ji, K., Zhou, Y., Liang, Y., and Tarokh, V.
\newblock {SpiderBoost} and momentum: Faster variance reduction algorithms.
\newblock In \emph{Proc. Advances in Neural Information Processing Systems
  (NeurIPS)}, pp.\  2403--2413, 2019.

\bibitem[Weaver \& Tao(2001)Weaver and Tao]{Weaver2001}
Weaver, L. and Tao, N.
\newblock The optimal reward baseline for gradient-based reinforcement
  learning.
\newblock In \emph{Proc. Conference on Uncertainty in Artificial Intelligence
  (UAI)}, pp.\  538--545, 2001.

\bibitem[Williams(1992)]{Williams1992}
Williams, R.~J.
\newblock Simple statistical gradient-following algorithms for connectionist
  reinforcement learning.
\newblock \emph{Machine Learning}, 8\penalty0 (3):\penalty0 229--256, May 1992.

\bibitem[Wu et~al.(2018)Wu, Rajeswaran, Duan, Kumar, Bayen, Kakade, Mordatch,
  and Abbeel]{wu2018variance}
Wu, C., Rajeswaran, A., Duan, Y., Kumar, V., Bayen, A.~M., Kakade, S.,
  Mordatch, I., and Abbeel, P.
\newblock Variance reduction for policy gradient with action-dependent
  factorized baselines.
\newblock In \emph{Proc. International Conference on Learning Representations
  (ICLR)}, 2018.

\bibitem[Xu et~al.(2019{\natexlab{a}})Xu, Gao, and Gu]{xu2019improved}
Xu, P., Gao, F., and Gu, Q.
\newblock An improved convergence analysis of stochastic variance-reduced
  policy gradient.
\newblock \emph{arXiv preprint arXiv:1905.12615}, 2019{\natexlab{a}}.

\bibitem[Xu et~al.(2019{\natexlab{b}})Xu, Gao, and Gu]{xu2019sample}
Xu, P., Gao, F., and Gu, Q.
\newblock Sample efficient policy gradient methods with recursive variance
  reduction.
\newblock \emph{arXiv preprint arXiv:1909.08610}, 2019{\natexlab{b}}.

\bibitem[Yuan et~al.(2018)Yuan, Li, Tang, and Zhou]{yuan2018policy}
Yuan, H., Li, C.~J., Tang, Y., and Zhou, Y.
\newblock Policy optimization via stochastic recursive gradient algorithm.
\newblock 2018.

\bibitem[Zhong et~al.(2017)Zhong, Song, Jain, Bartlett, and
  Dhillon]{zhong2017recovery}
Zhong, K., Song, Z., Jain, P., Bartlett, P.~L., and Dhillon, I.~S.
\newblock Recovery guarantees for one-hidden-layer neural networks.
\newblock In \emph{Proc. International Conference on Machine Learning (ICML)},
  pp.\  4140--4149, 2017.

\bibitem[Zhou et~al.(2018{\natexlab{a}})Zhou, Xu, and Gu]{zhou2018stochastic}
Zhou, D., Xu, P., and Gu, Q.
\newblock Stochastic nested variance reduced gradient descent for nonconvex
  optimization.
\newblock In \emph{Proc. Advances in Neural Information Processing Systems
  (NeurIPS)}, pp.\  3921--3932, 2018{\natexlab{a}}.

\bibitem[Zhou et~al.(2018{\natexlab{b}})Zhou, Yuan, and Feng]{zhou2018new}
Zhou, P., Yuan, X., and Feng, J.
\newblock New insight into hybrid stochastic gradient descent: Beyond
  with-replacement sampling and convexity.
\newblock In \emph{Proc. Advances in Neural Information Processing Systems
  (NeurIPS)}, pp.\  1234--1243, 2018{\natexlab{b}}.

\bibitem[Zhou \& Liang(2017)Zhou and Liang]{zhou2017characterization}
Zhou, Y. and Liang, Y.
\newblock Characterization of gradient dominance and regularity conditions for
  neural networks.
\newblock \emph{arXiv preprint arXiv:1710.06910}, 2017.

\bibitem[Zhou et~al.(2016)Zhou, Zhang, and Liang]{zhou2016geometrical}
Zhou, Y., Zhang, H., and Liang, Y.
\newblock Geometrical properties and accelerated gradient solvers of non-convex
  phase retrieval.
\newblock In \emph{54th Annual Allerton Conference on Communication, Control,
  and Computing (Allerton)}, pp.\  331--335, 2016.

\end{thebibliography}

\newpage
\onecolumn
\appendix
{\Large{\bf Supplementary Materials}} 
\section{Further Details on Policy Gradient}\label{app:pg}

This paper considers the policy gradient algorithms that can adopt the following two types of trajectory gradients, namely REINFORCE \citep{Williams1992} and G(PO)MDP \citep{Baxter2001}. We note that
 \begin{align*}
 \nabla J(\theta) &= \nabla \E_{\tau  \sim p(\cdot| \theta)}\left[ \mathcal{R}(\tau)\right] =  \E_{\tau  \sim p(\cdot| \theta)}\left[\mathcal{R}(\tau)  \nabla \log p(\tau| \theta)  \right],
 \end{align*}
where $p(\tau| \theta) = \rho (s_0) \pi_{\theta}(a_0|s_0) \prod_{i=0}^{H-1} \mathcal{P}(s_{i+1}|s_{i},a_i) \pi_{\theta}(a_{i+1}|s_{i+1})$. 
REINFORCE constructs the trajectory gradient as
 \begin{small} 
 \begin{align*}
g(\tau | \theta) =  \underbrace{\Big(\sum_{t=0}^{H} \gamma^t \mathcal{R}(s_t,a_t) - b(s_t,a_t) \Big)  \Big(\sum_{t=0}^{H} \nabla \log  \pi_{\theta}(a_t|s_t) \Big)}_{ \mathcal{R}(\tau) \nabla \log p(\tau| \theta) },
 \end{align*}
 \end{small}
 \hspace{-0.13cm}where $b: \mathcal{S} \times \mathcal{A} \rightarrow \mathcal{R}$ is a bias. G(PO)MDP  enhances the  trajectory gradient of REINFORCE by  further utilizing  the fact that the reward at time $t$ does not depend  on the action implemented after time $t$. Thus, G(PO)MDP constructs the trajectory gradient as
 \begin{align*}
g(\tau | \theta) =    \sum_{t=0}^{H} \left( \gamma^t \mathcal{R}(s_t,a_t) - b(s_t,a_t)\right)    \sum_{i=0}^{t} \nabla \log  \pi_{\theta}(a_i|s_i) . 
 \end{align*}
 Note that REINFORCE and G(PO)MDP are both unbiased gradient estimators, i.e., $\E_{\tau \sim p(\cdot|\theta) } [g(\tau|\theta)] = \nabla J(\theta)$. 
 
%

\vspace{-0.1cm}
\section{Further Specification of Experiments and Additional Results}\label{ex:additional}
\subsection{ Hyper-parameter Configuration of Algorithms for Nonconvex Optimization}\label{apen:b1} 
To implement HSGD, we follow~\citealt{zhou2018new} and  choose the linearly increasing mini-batch size at the $t^{th}$ iteration to be $c_b(t+1)$, and tune $c_b$ to the  best. We set the epoch length $m=10$ for all variance-reduced algorithms, because $m=10$ works best for all variance-reduced algorithms for a fair comparison. 
$\epsilon$ is the target accuracy predetermined by users, typically dependent on specific applications. 
Specifically, we choose $\epsilon = 1e^{-3}$ for the logistic regression and $\epsilon = 1e^{-2}$  for the neural network training, respectively.
We choose the batch size to be $\min\{n,c_1\epsilon^{-1}\}$ for SVRG+ and SpiderBoost, and $\min\{n,c_1\epsilon^{-1},c_2\beta_s^{-1}\}$ for AbaSPIDER and AbaSVRG,  where $\beta_s= \frac{1}{m}\sum_{t=1}^m \|{v}_{t-1}^{s-1}\|^2$ as given in Subsection~\ref{sec:algorithm}.

\subsection{Additional Results for Nonconvex Logistic Regression}\label{apen:b2}
For logistic regression, we use four datasets: a8a  ($n=22696, d=123$),  a9a ($n=32561, d=123$), w8a  ($n=43793, d=300$) and ijcnn1 ($n=49990, d=22$).
 We select the stepsize $\eta$ from $\{0.1k, k=1,2,...,15 \}$ and the mini-batch size $B$ from $\{10, 28, 64, 128, 256, 512, 1024\}$ for all algorithms, and we present the best performance among these parameters. For all variance-reduced algorithms, we select constants $c_1$ and $c_2$ from $\{1,2,3,...,10\}$, and present the best performance among these parameters. 
 For HSGD algorithm, we select $c_b$ in its linearly increasing batch size $c_b(t+1)$ from $\{1, 5, 10, 40, 100, 400, 1000\}$, and present the best performance among these parameters. For AbaSGD, we set its batch size as $\min \Big\{ \frac{c_\beta}{\sum_{i=1}^{5}\|{v}_{t-i}\|^2/5},\frac{ c_\epsilon}{\epsilon} , n\Big\} $, and select the best $c_\beta$ and $c_\epsilon$ from $\{1,2,3,...,10\}$. 
  \begin{figure*}[h]
	\centering     
	\subfigure[dataset: a8a ]{\includegraphics[width=42mm]{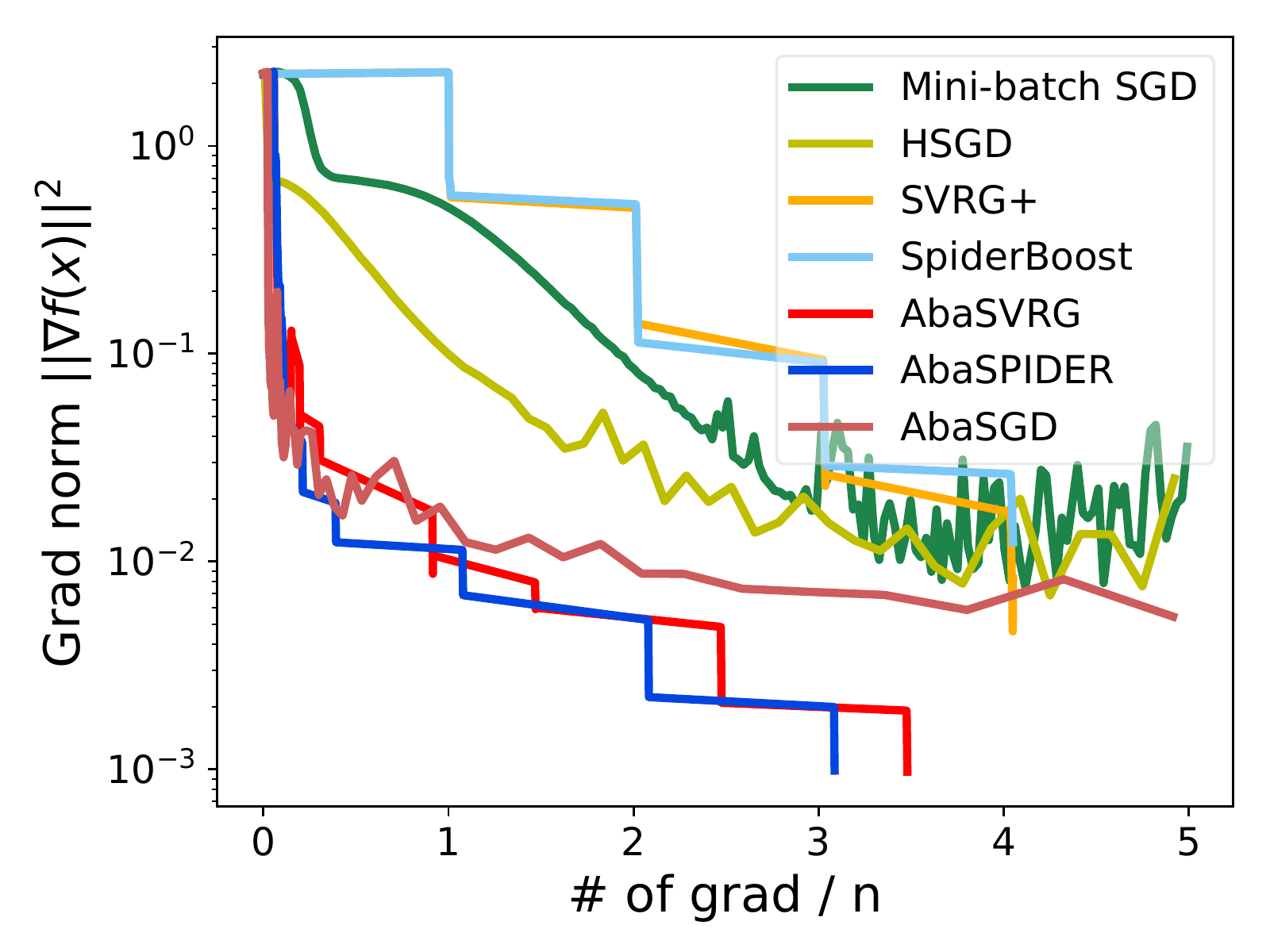}} 
	\subfigure[dataset: ijcnn1 ]{\includegraphics[width=42mm]{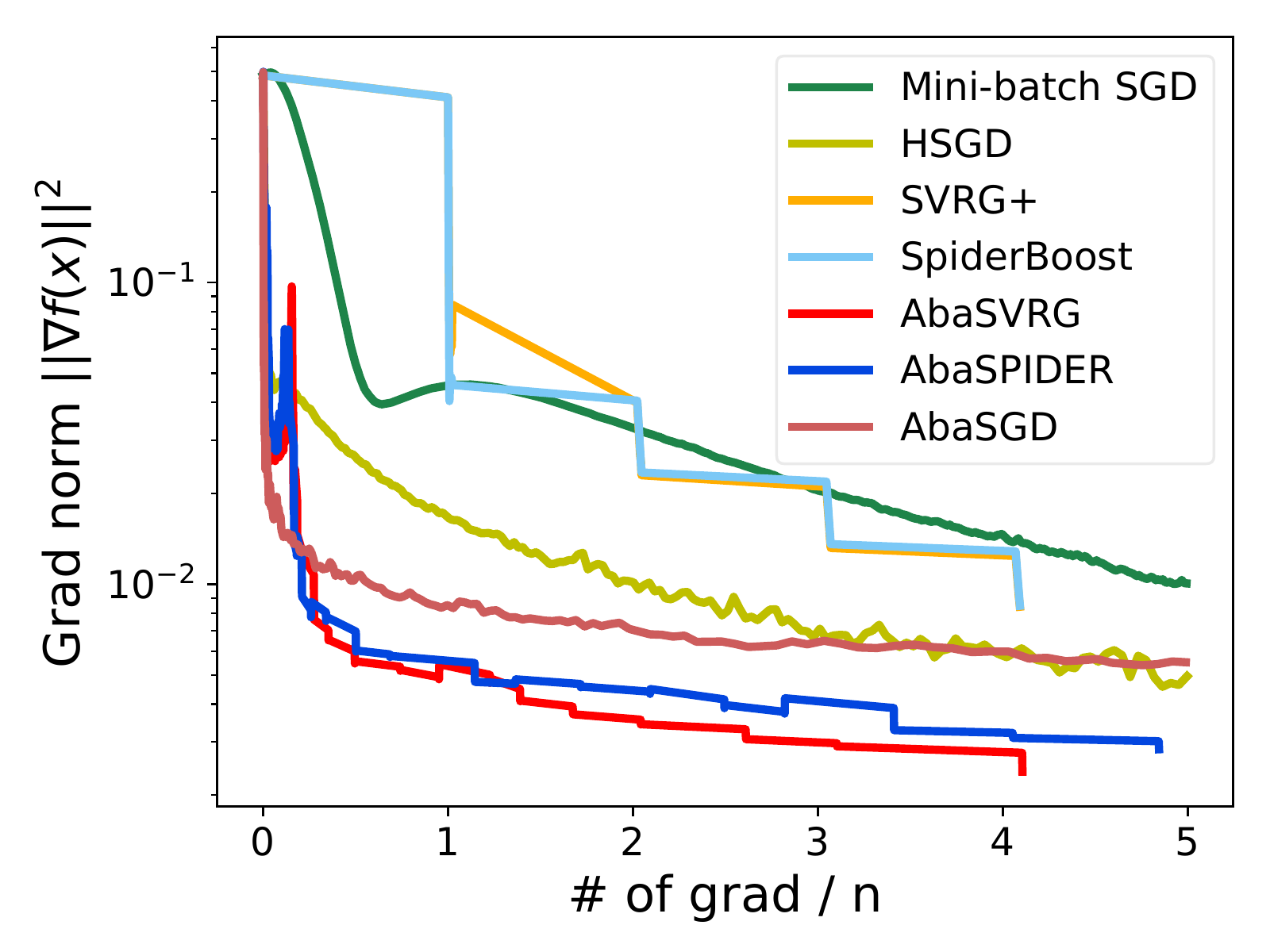}} 
	\subfigure[dataset: a9a ]{\includegraphics[width=42mm]{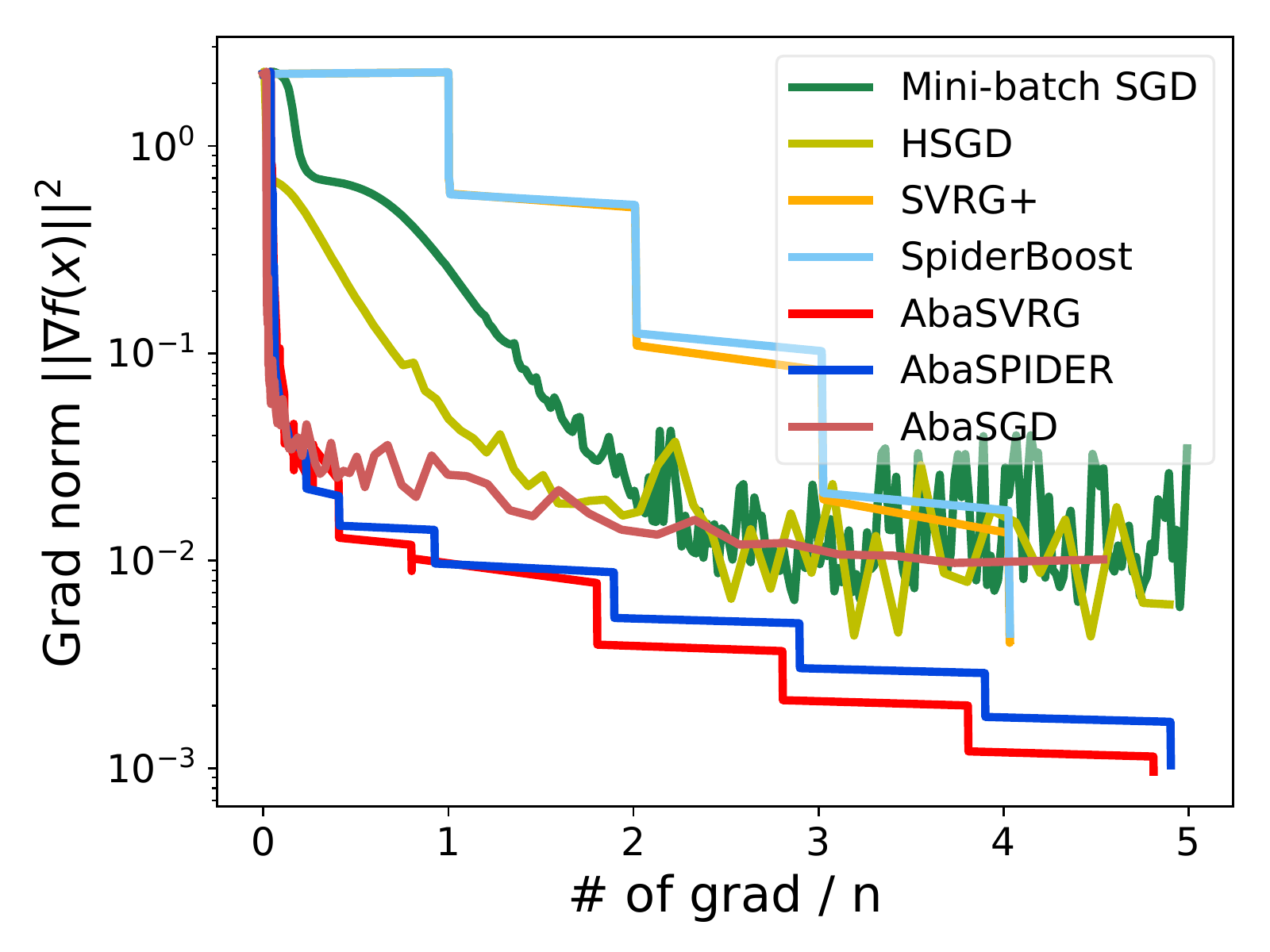}}  
	\subfigure[dataset: w8a ]{\includegraphics[width=42mm]{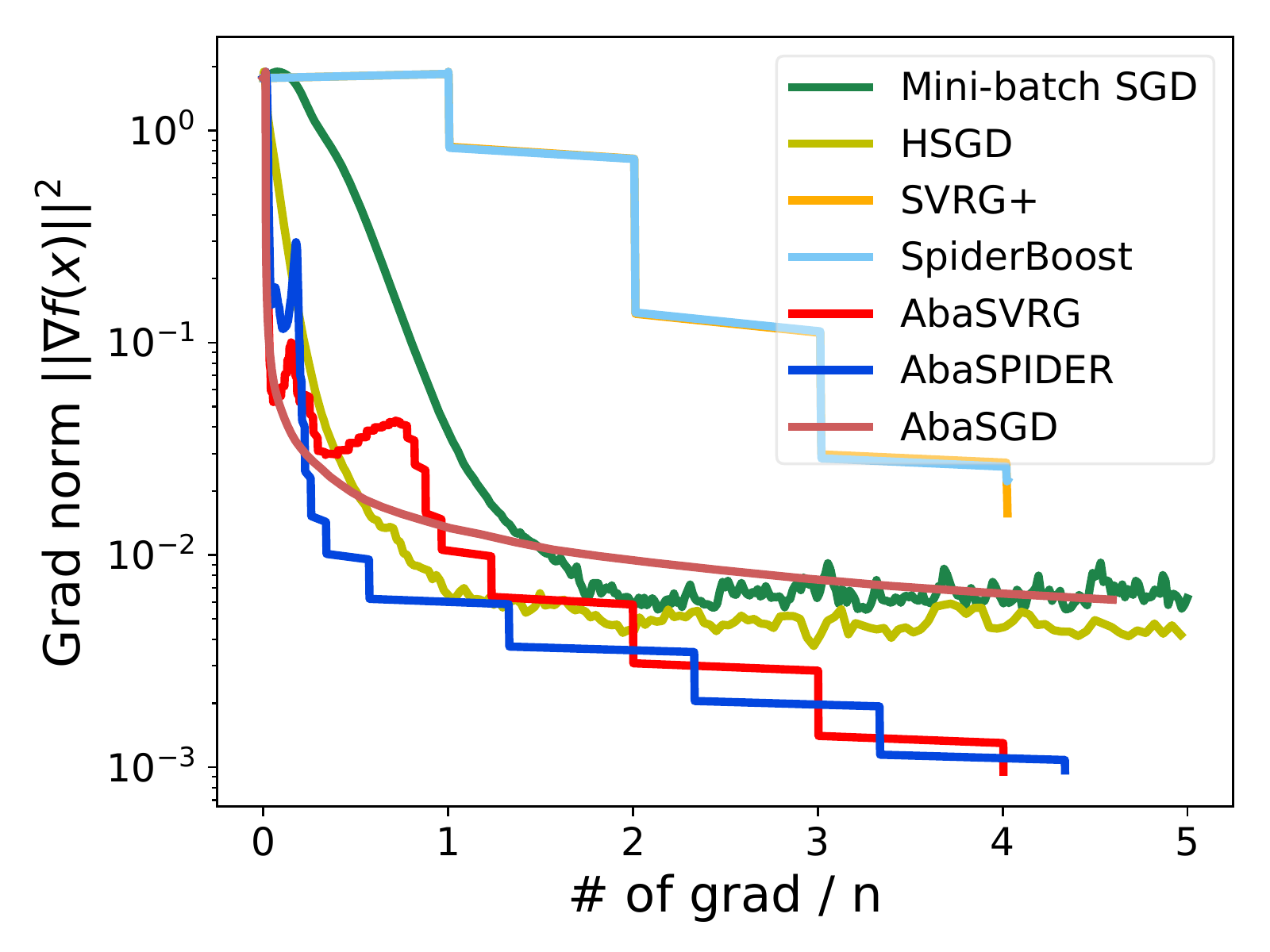}} 
	\vspace{-0.2cm}
	\caption{Comparison of different algorithms for  logistic regression problem on four datasets. All figures plot gradient norm v.s. $\#$ of gradient evaluations.}\label{figure:result11}
\end{figure*}

As shown in Fig.~\ref{figure:result11},  AbaSVRG and AbaSPIDER  converge much faster than all other algorithms in terms of the total number of gradient evaluations  on all four datasets. It can be seen that both of them take the advantage of sample-efficient SGD-like updates (due to the small batch size) at the initial stage and attain high accuracy provided by variance-reduced methods at the final stage. This is consistent with the choice of our batch-size adaptation  scheme. 


\subsection{Results for Training Multi-Layer Neural Networks}\label{appen:trainNeural}
In this subsection, we compare our proposed algorithms with other competitive algorithms as specified in \Cref{diaodehen} for training a three-layer ReLU neural network with a cross entropy loss on the dataset of MNIST ($n=60000, d=780$). The neural network has a size of $(d_{\text{in}},100,100,d_{\text{out}})$, where $d_{\text{in}}$ and $d_{\text{out}}$ are the input and output dimensions and $100$ is the number of neurons in the two hidden layers. 
We select the stepsize $\eta$ from $\{10^{-4} k, k=1,2,...,15 \}$ and the mini-batch size $B$ from $\{64, 96, 128, 256, 512\}$ for all algorithms, and we present the best performance among these parameters. For all variance-reduced algorithms, we set $c_1=1$ and select the best $c_2$ from $\{10^3,5 \times 10^3,10^4 \}$. For HSGD algorithm, we select $c_b$  from $\{1, 10, 50, 100, 500, 1000\}$, and present the best performance among these parameters. 
 \vspace{-0.1cm}
   \begin{figure*}[h]
	\centering     
	\subfigure{\includegraphics[width=48mm]{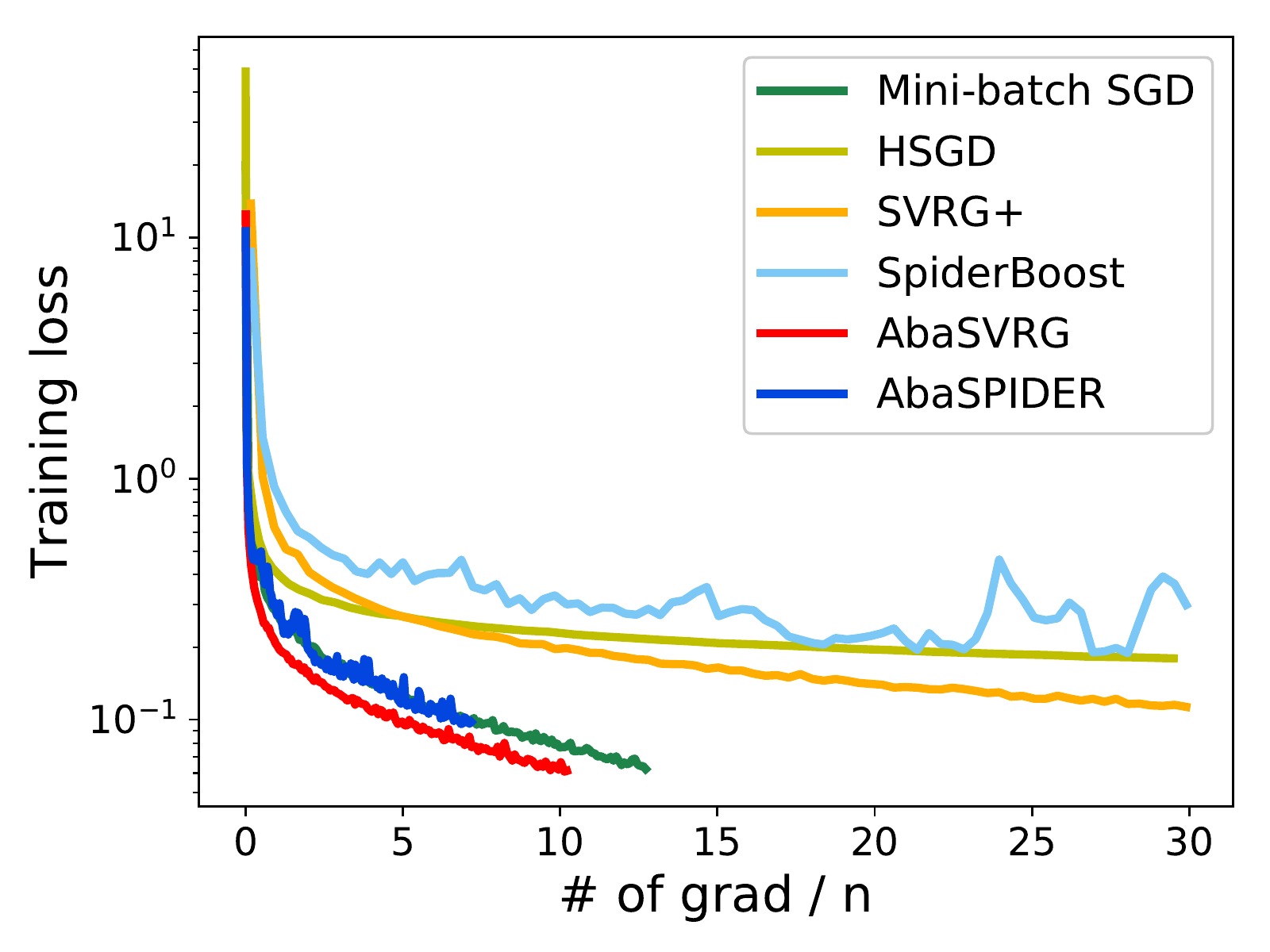}} 
        \subfigure{\includegraphics[width=48mm]{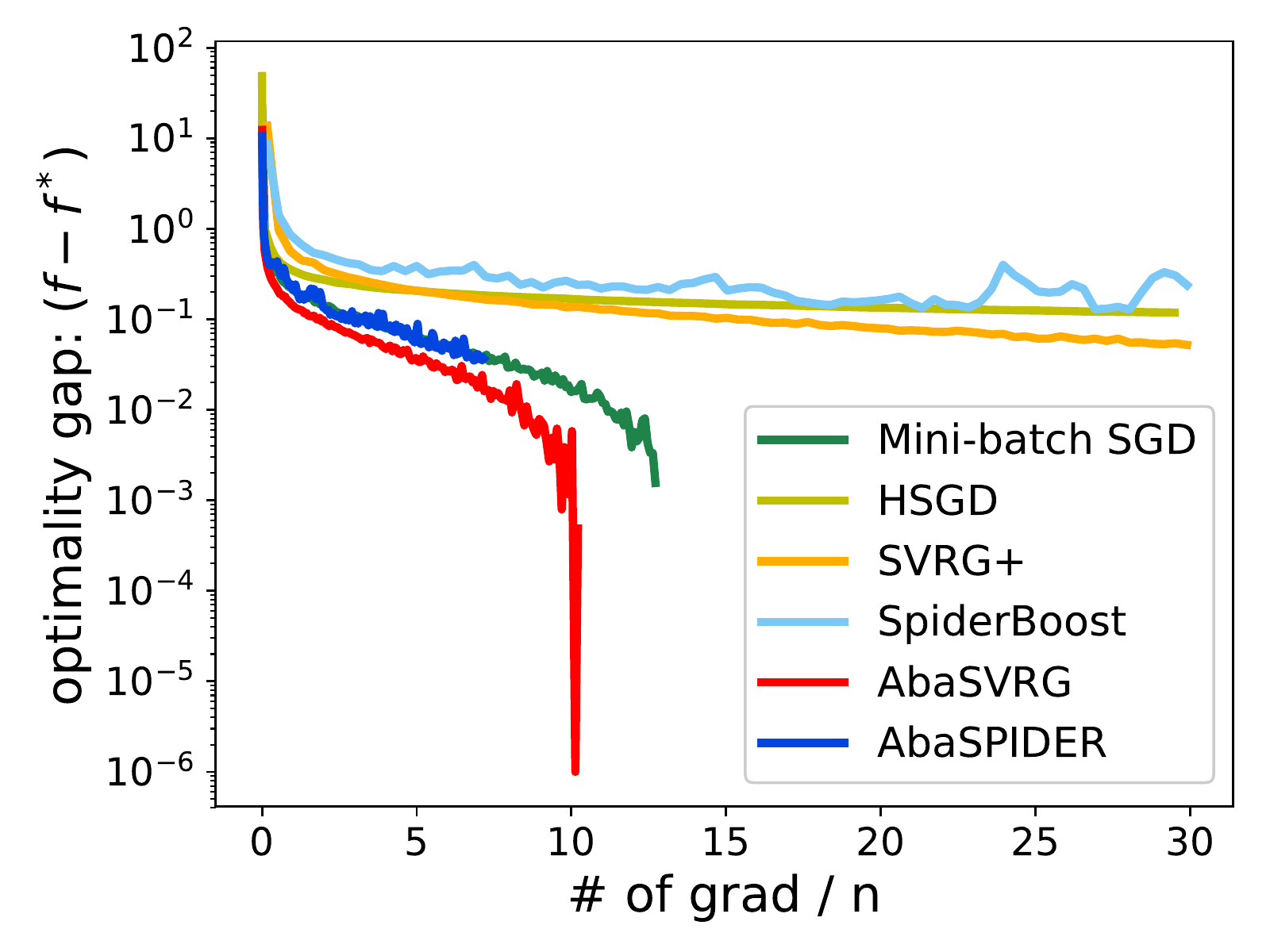}}
        \subfigure{\includegraphics[width=48mm]{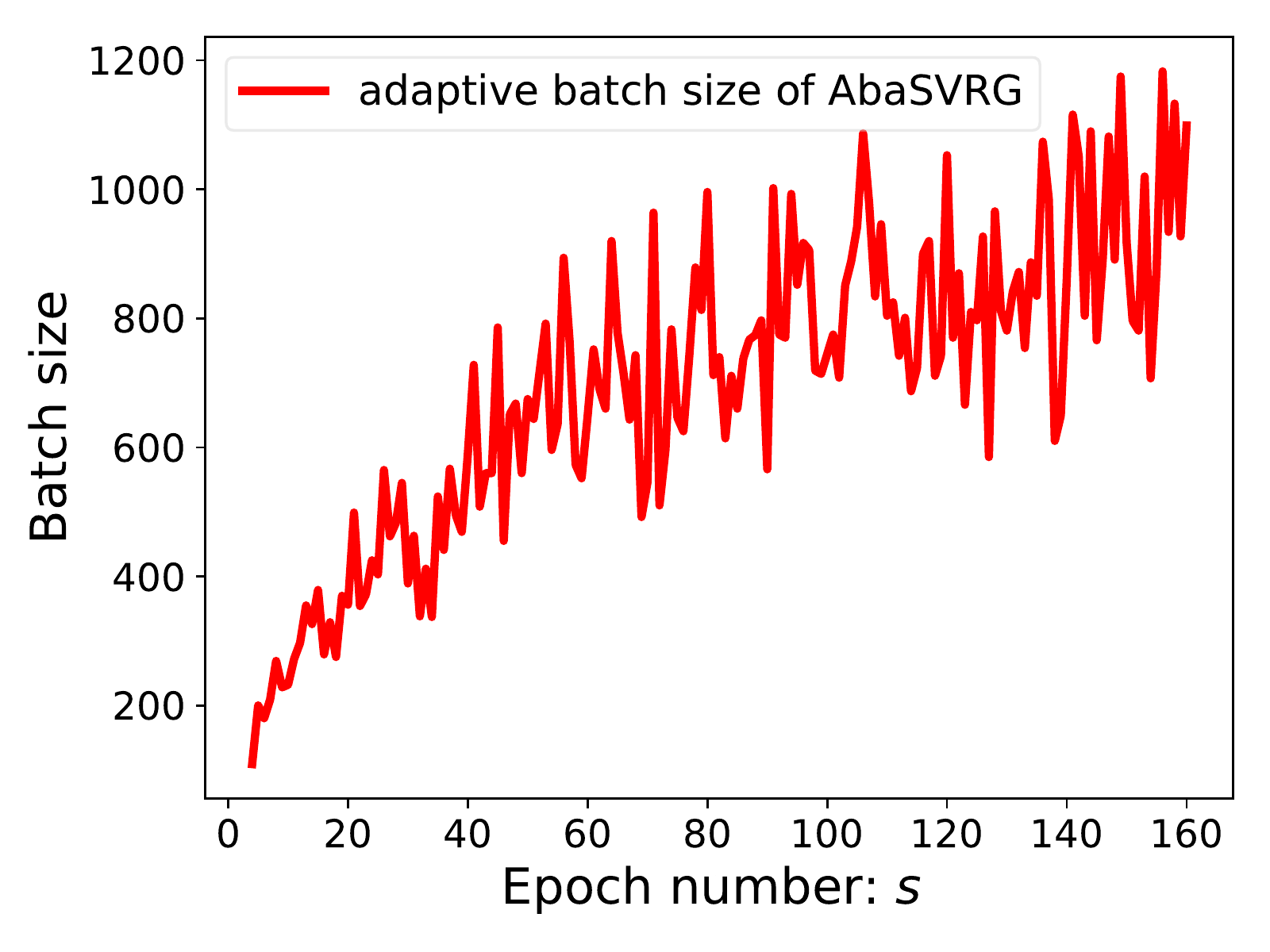}} 
	\vspace{-0.1cm}
	\caption{Comparison of various algorithms for  training a three-layer neural network on MNIST.}\label{figure:result3}
	 \vspace{-0.1cm}
\end{figure*}

 As shown in Fig.~\ref{figure:result3}, our AbaSVRG achieves the best performance among all competing algorithms, and AbaSPIDER performs similarly to mini-batch SGD for decreasing training loss, but converges faster in terms of gradient norm. Interestingly, the batch-size adaptation used by AbaSVRG increases the batch size slower than both exponential and linear increase of the batch size, and its scaling is close to the \textit{logarithmical} increase as shown in the right-most plot in \Cref{figure:result3}. Such an observation further demonstrates that our gradient-based batch-size adaptation scheme can also adapt to the neural network landscape with a differently (i.e., more slowly) increased batch size from that for nonconvex regression problem over a9a and w8a datasets.      

\subsection{Experimental Details for Reinforcement Learning}\label{append:rl}


 The hyper-parameters listed in Table~\ref{tab:rl experiment param} are the same among all methods on
each task. For the proposed AbaSVRPG and AbaSPIDER-PG, we adopt the same hyper-parameter of $\alpha\sigma^2=1$ and $ \beta=1000$ in all experiments.

\begin{table}[H]
	\centering
	\small
	\caption{Parameters used in the RL experiments}
	\label{tab:rl experiment param}
	\begin{tabular}{|l||l|l|l|l|}
		\hline
		Task                         & InvertedPendulum & InvertedDoublePendulum     & Swimmer           & Hopper            \\ \hline
		Horizon                      & 500              & 500                        & 500               & 500               \\ \hline
		Discount Factor $\gamma$     & 0.99             & 0.99                       & 0.99              & 0.99              \\ \hline
		$q$                          & 10               & 10                         & 10                & 10                \\ \hline
		$N$                          & 100              & 100                        & 50                & 50                \\ \hline
		$B$                          & 20               & 20                         & 20                & 20                 \\ \hline
		$\epsilon$                & 0.01            & 0.01                    & 0.01               & 0.01     \\ \hline
		Step Size                    & 0.001            & 0.001                      & 0.0001            & 0.001            \\ \hline
		NN Hidden Weights            & $16 \times 16$   & $16 \times 16$             & $32 \times 32$    & $64 \times 64$    \\ \hline
		NN Activation                & tanh             & tanh                       & tanh              & tanh              \\ \hline
		Baseline                     & No               & No                         & Yes               & Yes               \\ \hline
	\end{tabular}
\end{table}

  \begin{figure*}[h]
	\centering     
	\subfigure[Hopper ]{\includegraphics[width=30.5mm]{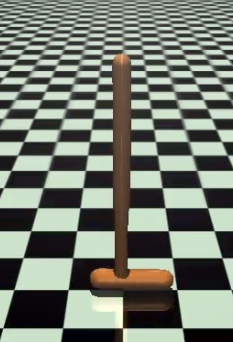}} 
	\subfigure[Inverteddoublependulum]{\includegraphics[width=33mm]{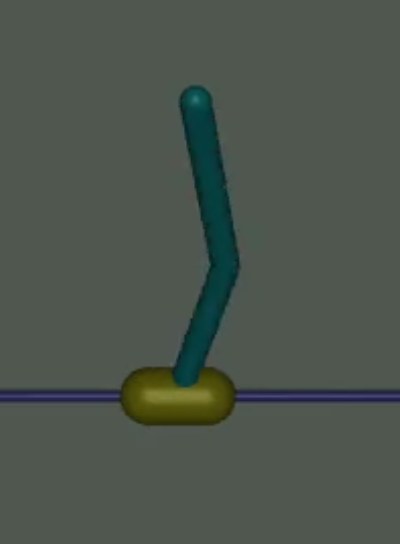}} 
	\subfigure[Invertedpendulum]{\includegraphics[width=38mm]{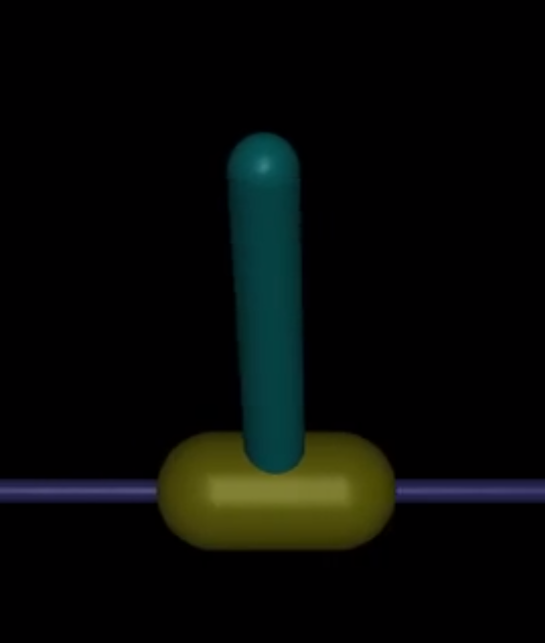}}  
	\subfigure[Swimmer]{\includegraphics[width=38mm]{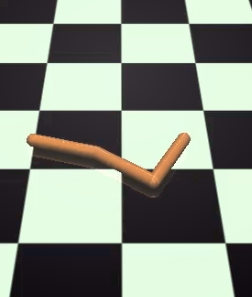}} 
	\vspace{-0.2cm}
	\caption{Task Environments.}
	\label{fig:envs}
\end{figure*}

Figure \ref{fig:envs} illustrates all task environments. The problem setup regarding each task is summarized as follows:
\begin{enumerate}
	\item \textit{InvertedPendulum}: A cart is moving along a track with zero friction and a pole is attached through an un-actuated joint. The pendulum is balanced by controlling the velocity of the cart. The action space is continuous with $a \in [-1,1]$ (with $-1$ for pushing cart to the left and $1$ for pushing cart to the right). For a single episode with time step $h$ enumerated from 1 to 500, the episode is terminated when the pole angle $\theta_h > 0.2 rad$, and otherwise a reward of value $1$ is awarded.
	
	\item \textit{InvertedDoublePendulum}: The setup of this task is similar to that at the InvertedPendulum. The only difference is that another pendulum is added to the end of the previous pendulum through an unactuated rotational joint.
	
	\item \textit{Swimmer}: The agent is a 3-link robot defined in Mujoco with the state-space dimension of 13. It is actuated by two joints to swim in a viscous fluid. For a single episode with time step $h$ enumerated from 1 to 500, the reward function encourages the agent to move forward as fast as possible while maintaining energy efficiency. That is, given forward velocity $v_x$ and joint action $a$, $r(v_x, a) = v_x^2 - 10^{-4} \|a\|_2^2$.
	
	\item \textit{Hopper}: A two-dimensional single-legged robot is trained to hop forward. The system has the state-space dimension of $11$ and action space dimension of 3. For a single episode with time step $h$ enumerated from 1 to 600, we have forward velocity $v_x$ and commanded action $a$. The episode terminates early (before $h$ reaches $500$) when the tilting angle of upper body or the height position for center of mass drops below a certain preset threshold. The reward function encourages the agent to move forward as fast as possible in an energy efficient manner. It also gets one alive bonus for every step it survives without triggering any of the termination threshold.
\end{enumerate}

For the tasks of Swimmer and Hopper, we also include the linear baseline for value function approximation~\cite{duan2016benchmarking}.

\section{Convergence of AbaSVRG and AbaSPIDER  under Local PL Geometry}\label{se:5}
Many nonconvex machine learning problems (e.g., phase retrieval~\cite{zhou2016geometrical}) and deep learning (e.g., neural networks~\cite{zhong2017recovery,zhou2017characterization}) problems have been shown to satisfy the following Polyak-{\L}ojasiewicz  (PL) (i.e., gradient dominance) condition in  local regions  near local or  global minimizers.  
\begin{Definition}[\cite{polyak1963gradient,nesterov2006cubic}]\label{d1}
	Let ${x}^*=\arg\min_{{x}\in\mathbb{R}^d}f({x})$. Then, the function $f$ is said to be $\tau$-gradient dominated if for any ${x}\in\mathbb{R}^d$, 
	$f({x})-f({x}^*)\leq \tau \|\nabla f({x})\|^2.$
\end{Definition}
In this section, we  explore whether our proposed AbaSVRG and AbaSPIDER with batch size adaptation can attain  a faster linear convergence rate if the iterate enters the local PL regions.   All the proofs are provided in Appendix~\ref{youdiandou}.  
\vspace{-0.1cm}
\subsection{AbaSVRG: Convergence  under PL Geometry without Restart}
\vspace{-0.1cm}
The following theorem provides the convergence and complexity  for AbaSVRG under the PL condition. 
\begin{Theorem}\label{svrg:pl}
	Let $\eta =\frac{1}{c_\eta L}$, $B=m^2$ with $\frac{8L\tau}{c_\eta -2}\leq m <4L\tau$, $\beta_1 \leq \epsilon\big(\frac{1}{\gamma}\big)^{m(S-1)} $, and $c_\beta=c_\epsilon= \Big(2\tau+ \frac{2\tau}{1-\exp(\frac{-4}{c_\eta(c_\eta -2)})}\Big)\vee \frac{16c_\eta L\tau }{m} $, where constants $c_\eta>4$ and $\gamma = 1-\frac{1}{8L\tau}<1$. Then under the PL condition, the final iterate ${\tilde x}^S$ of AbaSVRG satisfies 
	$$\mathbb{E}(f({\tilde x}^S) -f({x}^*) ) \leq \gamma^{K}(f({x}_{0}) - f({x}^*))+\frac{\epsilon}{2}.$$ 
	To obtain an $\epsilon$-accurate solution ${\tilde x}^S$,  the total number of SFO calls is given by 
	\begin{align}\label{eq:smin}
	\sum_{s=1}^S\min\left\{ \frac{c_\beta\sigma^2 }{\sum_{t=1}^m \|{v}_{t-1}^{s-1}\|^2/m}, c_\epsilon \sigma^2\epsilon^{-1},n\right\} + KB \leq  \mathcal{O}\Big( \left(\frac{\tau}{\epsilon}\wedge n\right) \log \frac{1}{\epsilon} + \tau^{3} \log \frac{1}{\epsilon}\Big).
	\end{align}
\end{Theorem}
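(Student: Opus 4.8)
The plan is to upgrade the per-epoch descent that already underlies the proof of Theorem~\ref{th_svrg} into a geometric contraction by invoking the PL condition of Definition~\ref{d1}, and then unroll the resulting recursion over the $S$ epochs. Write $D_s:=\mathbb{E} f(\tilde x^s)-f(x^*)$ and $A_s:=\sum_{t=0}^{m-1}\mathbb{E}\|v^s_t\|^2$. From the AbaSVRG analysis, each epoch $s$ obeys
\begin{align*}
D_s\le D_{s-1}-\phi\,A_s+m\psi\,\frac{\sigma^2 I_{(N_s<n)}}{N_s},
\end{align*}
and on $\{N_s<n\}$ the adaptation rule gives $\sigma^2/N_s=\max\{\beta_s/c_\beta,\ \epsilon/c_\epsilon\}\le\beta_s/c_\beta+\epsilon/c_\epsilon$; since $\beta_s=\tfrac1m\sum_{t=0}^{m-1}\|v^{s-1}_t\|^2$ the error becomes $\tfrac{\psi}{c_\beta}A_{s-1}+\tfrac{m\psi\epsilon}{c_\epsilon}$ (for $s=1$ the term $\tfrac{\psi}{c_\beta}A_0$ is replaced by $\tfrac{m\psi\beta_1}{c_\beta}$). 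Thus $D_s\le D_{s-1}-\phi A_s+\tfrac{\psi}{c_\beta}A_{s-1}+\tfrac{m\psi\epsilon}{c_\epsilon}$ for $s\ge2$ --- a two-term recursion in $(D_s,A_s)$.

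Next I would extract a contraction from $\phi A_s$ via PL. Peeling off the SVRG variance, $\mathbb{E}\|v^s_t\|^2\ge\tfrac12\mathbb{E}\|\nabla f(x^s_t)\|^2-\mathbb{E}\|v^s_t-\nabla f(x^s_t)\|^2$ with $\mathbb{E}\|v^s_t-\nabla f(x^s_t)\|^2\le\tfrac{L^2}{B}\mathbb{E}\|x^s_t-\tilde x^{s-1}\|^2+\sigma^2I_{(N_s<n)}/N_s$, and using the displacement bound $\sum_t\mathbb{E}\|x^s_t-\tilde x^{s-1}\|^2\le\tfrac{\eta^2m^2}{2}A_s$, the displacement contributes only an $\mathcal{O}(c_\eta^{-2})$ fraction of $A_s$ once $B=m^2$ and $\eta=1/(c_\eta L)$. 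What survives is $\phi A_s\gtrsim\sum_{t=0}^{m-1}\mathbb{E}\|\nabla f(x^s_t)\|^2$ minus a batch-size error; combining this with the inner-loop descent $\mathbb{E} f(x^s_{t+1})\le\mathbb{E} f(x^s_t)-c_0\eta\,\mathbb{E}\|\nabla f(x^s_t)\|^2+(\text{variance/bias})$ and applying PL step by step, $\mathbb{E}\|\nabla f(x^s_t)\|^2\ge\tfrac1\tau\mathbb{E}(f(x^s_t)-f(x^*))$, yields a per-inner-step contraction that compounds over the $m$ steps so that $\phi A_s$ dominates $(1-\gamma^m)D_{s-1}$ with $\gamma=1-\tfrac1{8L\tau}$; the windows $\tfrac{8L\tau}{c_\eta-2}\le m<4L\tau$ and $c_\eta>4$ are exactly what make these constants fit. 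Keeping a reserve fraction $\xi A_s$ and taking $c_\beta$ large enough that $\tfrac{\psi}{c_\beta}\le\gamma^m\xi$, the Lyapunov potential $G_s:=D_s+\xi A_s$ satisfies $G_s\le\gamma^m G_{s-1}+\tfrac{m\psi\epsilon}{c_\epsilon}$.

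Unrolling this over $s$ gives $G_S\le\gamma^{K}(f(x_0)-f(x^*))+\tfrac{m\psi\epsilon}{c_\epsilon(1-\gamma^m)}+\gamma^{m(S-1)}\tfrac{m\psi\beta_1}{c_\beta}$, with $K=Sm$ and $\tilde x^0=x_0$. The hypothesis $\beta_1\le\epsilon\gamma^{-m(S-1)}$ kills the boundary term, and $c_\beta=c_\epsilon$ as in the statement is calibrated so the geometric remainder is $\le\epsilon/2$: the branch $2\tau+\tfrac{2\tau}{1-\exp(-4/(c_\eta(c_\eta-2)))}$ is precisely $\Theta(m/(1-\gamma^m))$ on the allowed $m$-interval, while the $\mathcal{O}(1)$ branch $\tfrac{16c_\eta L\tau}{m}$ keeps the per-epoch variance small enough to preserve $\phi>0$. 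Since $D_S\le G_S$, this is the claimed bound. For~\eqref{eq:smin}: forcing $\gamma^K(f(x_0)-f(x^*))\le\epsilon/2$ needs $K=\mathcal{O}(L\tau\log\tfrac1\epsilon)$, so $S=K/m=\mathcal{O}(\tfrac{L\tau}{m}\log\tfrac1\epsilon)$; the mini-batch cost is $KB=Km^2=\mathcal{O}(L\tau m^2\log\tfrac1\epsilon)=\mathcal{O}(\tau^3\log\tfrac1\epsilon)$ since $m<4L\tau$, and each snapshot uses $N_s\le\min\{c_\epsilon\sigma^2\epsilon^{-1},n\}=\mathcal{O}(\tfrac\tau\epsilon\wedge n)$, so $\sum_{s=1}^S N_s=\mathcal{O}\big(\tfrac{L\tau}{m}(\tfrac\tau\epsilon\wedge n)\log\tfrac1\epsilon\big)=\mathcal{O}\big((\tfrac\tau\epsilon\wedge n)\log\tfrac1\epsilon\big)$ because $m\ge\tfrac{8L\tau}{c_\eta-2}$ makes $\tfrac{L\tau}{m}=\mathcal{O}(1)$; adding these yields~\eqref{eq:smin}.

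The crux is the middle step. Because the batch-size error of epoch $s$ rides on the epoch-$(s-1)$ gradients $A_{s-1}$ rather than the current ones, plain telescoping of the one-step recursion does not close; the budget $\phi A_s$ must be split simultaneously to (i) generate the $\gamma^m$ contraction through the inner-loop PL argument, (ii) keep the reserve $\xi A_s$ that the potential $G_s$ needs to absorb the next epoch's dependence, and (iii) swallow the displacement and variance leftovers, which --- thanks to $B=m^2$ --- scale only like $m^2/(c_\eta^2B)=c_\eta^{-2}$. Making all three fit at once is what dictates the narrow window for $m$, the bound $c_\eta>4$, and the two-branch formula for $c_\beta=c_\epsilon$; I expect this constant-balancing, rather than any single inequality, to be the real work.
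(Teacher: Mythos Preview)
Your high-level plan---per-step PL contraction inside each epoch, plus a reserve of $\|v^s_t\|^2$ to absorb the cross-epoch $A_{s-1}$ dependence---is the paper's, and your Lyapunov repackaging $G_s=D_s+\xi A_s$ is a legitimate equivalent of its telescoping; the SFO count at the end also matches. Where the logic tangles is the middle step. You begin from the \emph{unweighted} Theorem~\ref{th_svrg} epoch descent $D_s\le D_{s-1}-\phi A_s+\cdots$ and then try to ``extract'' $\phi A_s\gtrsim(1-\gamma^m)D_{s-1}$ from PL. That inequality is not available from the pieces you list: PL gives $\|\nabla f(x^s_t)\|^2\ge\tfrac1\tau(f(x^s_t)-f^*)$, but you have no \emph{lower} bound on $f(x^s_t)-f^*$ in terms of $D_{s-1}$ (the inner descent gives only upper bounds). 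To recover $(1-\gamma^m)D_{s-1}$ you would already need $D_s\le\gamma^m D_{s-1}$, which is the goal; and the inner-loop descent you quote, $\mathbb{E} f(x^s_{t+1})\le\mathbb{E} f(x^s_t)-c_0\eta\|\nabla f(x^s_t)\|^2+\cdots$, carries no $-\|v^s_t\|^2$ term, so it is not clear where the ``reserve fraction $\xi A_s$'' comes from.

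The paper avoids both problems by injecting PL \emph{into} the one-step smoothness bound rather than after the epoch sum. Writing $\langle\nabla f(x^s_{t-1}),-\eta v^s_{t-1}\rangle=-\tfrac\eta2\|\nabla f(x^s_{t-1})\|^2-\tfrac\eta2\|v^s_{t-1}\|^2+\tfrac\eta2\|\nabla f(x^s_{t-1})-v^s_{t-1}\|^2$ and applying PL to the first term yields directly
\[
\mathbb{E}_{0,s}\big(f(x^s_t)-f^*\big)\le\gamma\,\mathbb{E}_{0,s}\big(f(x^s_{t-1})-f^*\big)-\Big(\tfrac\eta2-\tfrac{L\eta^2}{2}\Big)\mathbb{E}_{0,s}\|v^s_{t-1}\|^2+\tfrac\eta2\,\mathbb{E}_{0,s}\|\nabla f(x^s_{t-1})-v^s_{t-1}\|^2,
\]
with $\gamma=1-\eta/(2\tau)$: the contraction factor and the $-\|v\|^2$ reserve appear simultaneously. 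Telescoping over $t$ with $\gamma$-weights and Lemma~\ref{le:variance}, then verifying $(\tfrac\eta4-\tfrac{L\eta^2}{2})\gamma^m\ge\eta^2L^2m\tau/B$ (this is where $B=m^2$ and $m\ge 8L\tau/(c_\eta-2)$ enter), gives
\[
\mathbb{E}(f(\tilde x^s)-f^*)\le\gamma^m\,\mathbb{E}(f(\tilde x^{s-1})-f^*)-\tfrac\eta4\sum_{t=0}^{m-1}\gamma^{m-1-t}\mathbb{E}\|v^s_t\|^2+\tfrac{\tau}{\alpha}(\beta_s+\epsilon).
\]
Now telescope over $s$: the retained sum at epoch $s$ contributes at least $(\eta/4)\gamma^{2m}A_s$ against the positive $(\tau/(\alpha m))A_s$ arriving from $\beta_{s+1}$, and the condition $(\eta/4)\gamma^{2m}\ge\tau/(\alpha m)$ is the branch $\alpha\ge 16c_\eta L\tau/m$; the $\epsilon$ terms sum to at most $\epsilon/2$ via the other branch. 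No separate inequality $\phi A_s\ge(1-\gamma^m)D_{s-1}$ is needed, and the Theorem~\ref{th_svrg} bound is not used at all. If you want to keep your Lyapunov formulation, derive the displayed per-epoch inequality first and then set $G_s=D_s+\tfrac\eta4\gamma^m A_s$; the detour through the unweighted epoch descent is what created the apparent circularity.
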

Our proof of Theorem~\ref{svrg:pl} is different from and more challenging than the previous techniques developed  in~\citealt{reddi2016stochastic,Reddi2016,li2018simple} for SVRG-type algorithms, because we need to handle the adaptive batch size $N_s$ with the dependencies on the  iterations at the previous epoch. In addition, we do not need \textit{extra} assumptions for proving the convergence under PL condition, whereas~\citealt{Reddi2016} and~\citealt{li2018simple} require $\tau \geq n^{1/3}$ and $\tau\geq n^{1/2}$, respectively. As a result, Theorem~\ref{svrg:pl} can be applied to any condition number regime.  For the small condition number regime where $1\leq \tau\leq\Theta(n^{1/3})$, the worst-case complexity of AbaSVRG outperforms the result achieved by SVRG~\cite{Reddi2016}.
Furthermore, the actual complexity of our AbaSVRG can be much lower than the worst-case complexity due to the  adaptive batch size. 


\subsection{AbaSPIDER: Convergence  under PL Geometry without Restart}
The following theorem shows that AbaSPIDER  achieves a linear convergence rate under the PL condition without restart. 
Our analysis can be of independent interest for other SPIDER-type methods.
\begin{Theorem}\label{spider_pl}
	Let $\eta =\frac{1}{c_\eta L}$, $B=m$ with $\frac{8L\tau}{c_\eta -2}\leq m <4L\tau$, $\beta_1 \leq \epsilon\big(\frac{1}{\gamma}\big)^{m(S-1)} $, and $c_\beta=c_\epsilon=\Big(2\tau+ \frac{2\tau}{1-\exp(\frac{-4}{c_\eta(c_\eta -2)})}\Big)\vee \frac{16c_\eta L\tau}{m} $, where  constants $c_\eta>4$ and $\gamma = 1-\frac{1}{8L\tau}$. Then under the PL condition, the final iterate ${\tilde x}^S$ of AbaSPIDER satisfies 
	$$\mathbb{E}(f({\tilde x}^S) -f({x}^*) ) \leq \gamma^{K}\mathbb{E}(f({x}_{0}) - f({x}^*))  +\frac{\epsilon}{2}.$$
	To obtain an $\epsilon$-accurate solution ${\tilde x}^S$,  the total number of SFO calls is given by 
	\begin{align*}
	\sum_{s=1}^S\min\left\{ \frac{c_\beta\sigma^2 }{\sum_{t=1}^m \|{v}_{t-1}^{s-1}\|^2/m} , c_\epsilon \sigma^2\epsilon^{-1},n\right\} + KB \leq\mathcal{O}\left( \left(\frac{\tau}{\epsilon}\wedge n\right) \log \frac{1}{\epsilon} + \tau^{2} \log \frac{1}{\epsilon}\right).
	\end{align*}
\end{Theorem}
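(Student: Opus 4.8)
The plan is to adapt the PL-geometry argument for AbaSVRG (Theorem~\ref{svrg:pl}) to the SPIDER/SpiderBoost setting, exploiting the fact that the per-epoch descent inequality for AbaSPIDER (the analogue of the bound displayed in Section~\ref{sec:algorithm}) has the same structure but with $\psi$ now depending on $m/B$ rather than $m^2/B$; this is precisely what allows $B=m$ instead of $B=m^2$. First I would recall the one-epoch bound established in the proof of Theorem~\ref{th_spider}, namely that
\begin{align*}
\mathbb{E}\big(f(\tilde x^{s}) - f(\tilde x^{s-1})\big) \leq -\phi\eta \sum_{t=0}^{m-1}\mathbb{E}\|v_t^s\|^2 + \frac{\psi\eta m\, I_{(N_s<n)}}{N_s},
\end{align*}
with $\psi = \frac{2\eta^2L^2m}{B}+\frac{2}{\alpha}+2$ and $\phi = \frac12 - \frac{1}{2\alpha} - \frac{L\eta}{2} - \frac{\eta^2L^2m}{2B}$. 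Plugging in $\eta = 1/(c_\eta L)$, $B=m$, and $m < 4L\tau$ (so that $\eta^2L^2m/B = 1/c_\eta^2 < 1$) makes $\phi > 0$ and $\psi = \Theta(1)$, with the constants controlled by $c_\eta > 4$ exactly as in the statement. The next step is to convert the sum of stochastic-gradient norms into a contraction: one shows $\sum_{t=0}^{m-1}\mathbb{E}\|v_t^s\|^2 \gtrsim \sum_{t=0}^{m-1}\mathbb{E}\|\nabla f(x_t^s)\|^2$ up to the variance term, then invokes the PL inequality $\|\nabla f(x_t^s)\|^2 \geq \frac{1}{\tau}(f(x_t^s)-f(x^*))$, and finally telescopes/combines with the descent bound to get a per-iteration multiplicative decrease by $\gamma = 1 - \frac{1}{8L\tau}$, yielding $\mathbb{E}(f(\tilde x^S)-f(x^*)) \leq \gamma^K(f(x_0)-f(x^*)) + (\text{accumulated variance terms})$.

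The delicate part — and the main obstacle — is handling the history-dependent batch size $N_s = \min\{c_\beta\sigma^2\beta_s^{-1},\, c_\epsilon\sigma^2\epsilon^{-1},\, n\}$ inside this contraction. Because $\beta_s = \frac1m\sum_{t=1}^m\|v_{t-1}^{s-1}\|^2$ is the average stochastic gradient of the \emph{previous} epoch, the variance term $\psi\eta m/N_s$ at epoch $s$ is, up to constants, $\psi\eta m \beta_s/(c_\beta\sigma^2) = \psi\eta\sum_{t=0}^{m-1}\|v_t^{s-1}\|^2/(c_\beta\sigma^2)$ whenever the first branch is active. So the variance injected at epoch $s$ is proportional to the gradient energy spent at epoch $s-1$, which in turn (by the descent bound at epoch $s-1$) is bounded by the function-value drop there. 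The plan is to set up a coupled recursion on the shifted Lyapunov quantity $\mathbb{E}(f(\tilde x^{s})-f(x^*))$ that absorbs this one-epoch-lagged feedback: choosing $c_\beta=c_\epsilon$ large enough — specifically at least $2\tau + \frac{2\tau}{1-\exp(-4/(c_\eta(c_\eta-2)))}$ and at least $16c_\eta L\tau/m$ — ensures the feedback coefficient is strictly less than one, so the geometric-series summation of the lagged variance terms converges and is dominated by the main $\gamma^K$ term plus an $O(\epsilon)$ residue, where the condition $\beta_1 \leq \epsilon(1/\gamma)^{m(S-1)}$ kills the last boundary contribution from $\beta_1$. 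This is essentially the same bookkeeping as in the AbaSVRG PL proof, but one must be careful that the SpiderBoost recursive estimator's variance accumulates \emph{within} an epoch as $\sum_{j}\|x_{t}^s - x_{t-1}^s\|^2$, so the chain $v_t^s \to x_t^s \to$ function value needs the $B=m$, $\eta=1/(c_\eta L)$ scaling to keep the intra-epoch variance growth bounded by a constant — this is where the $m < 4L\tau$ upper bound and the $m \geq 8L\tau/(c_\eta-2)$ lower bound both get used.

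For the complexity count, once the linear rate $\gamma = 1-\frac{1}{8L\tau}$ is established, reaching accuracy $\epsilon$ requires $K = Sm = O(L\tau\log(1/\epsilon))$ total inner iterations, i.e.\ $S = O\big(\frac{L\tau}{m}\log\frac1\epsilon\big)$ epochs. Each epoch costs $N_s + mB = N_s + m^2$ oracle calls (using $B=m$). Summing $mB$ over all epochs gives $Km = O(m L\tau\log\frac1\epsilon)$; since $m = \Theta(L\tau)$ is forced by the two-sided bound $\frac{8L\tau}{c_\eta-2}\leq m < 4L\tau$, this is $O(\tau^2\log\frac1\epsilon)$ (absorbing $L$ into the constant or carrying it explicitly). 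The snapshot cost $\sum_s N_s$ is bounded in the worst case by $S\cdot\min\{c_\epsilon\sigma^2\epsilon^{-1},n\} = O\big(\frac{L\tau}{m}\log\frac1\epsilon \cdot (\tau\epsilon^{-1}\wedge n)\big) = O\big((\frac{\tau}{\epsilon}\wedge n)\log\frac1\epsilon\big)$, again using $m=\Theta(L\tau)$ to cancel the $L\tau/m$ factor. Adding the two pieces gives the claimed $\mathcal{O}\big((\frac{\tau}{\epsilon}\wedge n)\log\frac1\epsilon + \tau^2\log\frac1\epsilon\big)$, which improves on the AbaSVRG bound $\tau^3\log\frac1\epsilon$ by one factor of $\tau$ exactly because $B=m$ rather than $B=m^2$, so $Km = O(\tau^2\log\frac1\epsilon)$ instead of $Km^2 = O(\tau^3\log\frac1\epsilon)$. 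The only genuinely new estimate relative to the AbaSVRG proof is the SpiderBoost intra-epoch variance bound; everything else is re-running the coupled-recursion bookkeeping with the new constants, and I would relegate the routine telescoping to the appendix.
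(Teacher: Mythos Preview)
Your complexity bookkeeping in the last two paragraphs is fine, but the core analytic step has a gap. You propose to start from the \emph{epoch-level} nonconvex descent bound of Theorem~\ref{th_spider},
\[
\mathbb{E}\big(f(\tilde x^{s}) - f(\tilde x^{s-1})\big) \leq -\phi\eta \sum_{t=0}^{m-1}\mathbb{E}\|v_t^s\|^2 + \frac{\psi\eta m\, I_{(N_s<n)}}{N_s},
\]
and then ``convert the sum of stochastic-gradient norms into a contraction'' by lower-bounding $\sum_t\|v_t^s\|^2$ via $\sum_t\|\nabla f(x_t^s)\|^2$ and PL. But once you have already telescoped over $t$, you hold only the aggregate $\sum_{t=0}^{m-1}\mathbb{E}(f(x_t^s)-f(x^*))$, and there is no clean way to lower-bound that sum by $m\,\mathbb{E}(f(\tilde x^{s})-f(x^*))$ or by $m\,\mathbb{E}(f(\tilde x^{s-1})-f(x^*))$ in the stochastic setting (function values need not be monotone within the epoch). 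So you cannot actually extract the claimed \emph{per-iteration} multiplicative factor $\gamma=1-\tfrac{1}{8L\tau}$ from the epoch-aggregated bound; at best you would recover an epoch-level contraction with different constants, and not the specific $\gamma^K$ in the statement.

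The paper's proof avoids this by applying PL \emph{before} telescoping: from the one-step smoothness bound it writes
\[
\mathbb{E}_{0,s}\big(f(x_t^s)-f(x^*)\big)\le\Big(1-\tfrac{\eta}{2\tau}\Big)\mathbb{E}_{0,s}\big(f(x_{t-1}^s)-f(x^*)\big)-\Big(\tfrac{\eta}{2}-\tfrac{L\eta^2}{2}\Big)\mathbb{E}_{0,s}\|v_{t-1}^s\|^2+\tfrac{\eta}{2}\mathbb{E}_{0,s}\|\nabla f(x_{t-1}^s)-v_{t-1}^s\|^2,
\]
then plugs in the SPIDER variance bound (Lemma~\ref{le:spider}) and telescopes over $t$ with the geometric weights $\gamma^{m-1-t}$. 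This produces a $\gamma^m$ contraction per epoch together with a residual term $-\big((\tfrac{\eta}{4}-\tfrac{L\eta^2}{2})\gamma^m-\tfrac{\eta^2L^2\tau}{B}\big)\sum_t\mathbb{E}\|v_t^s\|^2$, and the parameter window $B=m$, $\tfrac{8L\tau}{c_\eta-2}\le m<4L\tau$ is used precisely to make that coefficient nonnegative (via $\gamma^m\ge(1-\tfrac{1}{2m})^m\ge\tfrac12$). The lagged-$\beta_s$ feedback is then absorbed exactly as you describe, but against the weighted sum $\sum_t\gamma^{m-1-t}\mathbb{E}\|v_t^s\|^2$ rather than the unweighted one. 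In short: redo the per-iteration analysis with PL built in, rather than trying to post-process the nonconvex epoch bound.
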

As shown in Theorem~\ref{spider_pl}, AbaSPIDER achieves a lower worst-case SFO complexity than AbaSVRG by a factor of $\Theta(\tau)$, and matches the best result provided by Prox-SpiderBoost-gd~\cite{wang2019spiderboost}. 
However, Prox-SpiderBoost-gd is a variant of Prox-SpiderBoost with algorithmic modification, and has not been shown to achieve the near-optimal complexity for general nonconvex optimization. In addition, AbaSPIDER has a much lower complexity in practice due to the adaptive batch size.

\section{An analysis for  SGD with Adaptive Mini-Batch Size}\label{apen:sgd}
Recently, \citealt{sievert2019improving} proposed an improved SGD algorithm by adapting the batch size to the gradient norms in preceding steps. However, they do not show performance guarantee for their proposed algorithm. In this section, we aim to fill this gap by providing an analysis for adaptive batch size SGD (AbaSGD) with mini-batch size depending on the stochastic gradients in the preceding $m$ steps.
as shown in Algorithm~\ref{alg:AbaSGD}. 
To simplify notations, we set norms of the stochastic gradients before the algorithm starts to be $\|\mathbf{v}_{-1}\| = \|\mathbf{v}_{-2}\| = \cdots  =  \|\mathbf{v}_{-m}\|=\alpha_0$ and let $\mathbb{E}_t (\cdot)= \mathbb{E}(\cdot \, |\, \mathbf{x}_0,...,\mathbf{x}_t )$.

\begin{algorithm}[H]
	\caption{AbaSGD}
	\label{alg:AbaSGD}
	\small
	\begin{algorithmic}[1]
		\STATE {\bfseries Input:} $\mathbf{x}_0$,  stepsize $\eta$,  $m >0$,  $\alpha_0>0$. 
		\FOR{$t=0, 1, ..., T$} 
		\STATE {Set  $\color{blue}{|B_t| =\min \left\{ \frac{2\sigma^2}{\sum_{i=1}^{m}\|\mathbf{v}_{t-i}\|^2/m},\frac{ 24\sigma^2}{\epsilon} , n\right\}     }.$  }
		\IF{$|B_t| = n$ }  
		\STATE{Compute  $\mathbf{v}_{t}  = \nabla f (\mathbf{x}_t)$}
		\ELSE \STATE{Sample $B_t$ from $[n]$ with  replacement,  and compute $\mathbf{v}_{t}  = \nabla f_{B_t} (\mathbf{x}_t)$}    
	    \ENDIF  
		\STATE{$\mathbf{x}_{t+1} =\mathbf{x}_{t} -\eta\mathbf{v}_{t} $} 
		\ENDFOR
		\STATE {\bfseries Output:} choose $\mathbf{x}_{\zeta}$ from $\{ \mathbf{x}_{i} \}_{i=0,...,T} $ uniformly at random
	\end{algorithmic}
\end{algorithm}
\begin{Theorem}\label{th_abasgd}
	Let Assumption~\ref{assum1} hold, $\epsilon>0$ and choose a stepsize $\eta$ such that  
	\begin{align*}
		\phi = \eta-\frac{L\eta^2}{2} > 0. 
	\end{align*}
	Then, the output $\mathbf{x}_\zeta$ returned by AbaSGD satisfies 
	\begin{align*}
	\mathbb{E}\|\nabla f(\mathbf{x}_\zeta)\|^2   \leq  \frac{ 2\left( f(\mathbf{x}_{0 } )	 -   f^*  \right)   +  {\eta m}   \alpha_0^2}{2T\phi} + \frac{ \eta}{12 \phi}\epsilon, 
	\end{align*} 
	where $f^*=\inf_{\mathbf{x}\in\mathbb{R}^d} f(\mathbf{x})$, and $T$ is the total number of iterations.
\end{Theorem}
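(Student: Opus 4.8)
\textbf{Proof proposal for Theorem~\ref{th_abasgd}.}

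The plan is to follow the standard descent-lemma analysis for SGD, but to carefully track the extra term that arises because the mini-batch size $|B_t|$ at step $t$ depends on the stochastic gradients $\mathbf{v}_{t-1},\dots,\mathbf{v}_{t-m}$ from the preceding $m$ steps rather than on $\mathbf{x}_t$ alone. First I would apply $L$-smoothness to obtain the one-step inequality $f(\mathbf{x}_{t+1}) \le f(\mathbf{x}_t) - \eta \langle \nabla f(\mathbf{x}_t), \mathbf{v}_t\rangle + \tfrac{L\eta^2}{2}\|\mathbf{v}_t\|^2$. Taking $\mathbb{E}_t(\cdot)$ conditioned on $\mathbf{x}_0,\dots,\mathbf{x}_t$ (so that $|B_t|$ is fixed, being a deterministic function of the already-revealed iterates): the cross term gives $-\eta\|\nabla f(\mathbf{x}_t)\|^2$ since $\mathbf{v}_t$ is unbiased, and for the last term I would use $\mathbb{E}_t\|\mathbf{v}_t\|^2 = \|\nabla f(\mathbf{x}_t)\|^2 + \mathbb{E}_t\|\mathbf{v}_t - \nabla f(\mathbf{x}_t)\|^2 \le \|\nabla f(\mathbf{x}_t)\|^2 + \sigma^2/|B_t|$ (using sampling with replacement and Assumption~\ref{assum1}(3)), with the variance term vanishing when $|B_t|=n$, which is consistent with the indicator-free bound $\sigma^2/|B_t|$. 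This yields $\mathbb{E}_t f(\mathbf{x}_{t+1}) \le f(\mathbf{x}_t) - \phi \|\nabla f(\mathbf{x}_t)\|^2 + \tfrac{L\eta^2}{2}\cdot\tfrac{\sigma^2}{|B_t|}$ with $\phi = \eta - \tfrac{L\eta^2}{2}$.

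The key step is to control $\sigma^2/|B_t|$ using the definition of $|B_t|$. By construction $|B_t| \ge \dfrac{2\sigma^2}{\frac1m\sum_{i=1}^m \|\mathbf{v}_{t-i}\|^2}$ and $|B_t| \ge \dfrac{24\sigma^2}{\epsilon}$, hence
\begin{align*}
\frac{\sigma^2}{|B_t|} \le \frac{1}{2}\Big(\frac{1}{2m}\sum_{i=1}^m \|\mathbf{v}_{t-i}\|^2\Big) + \frac{1}{2}\cdot\frac{\epsilon}{24} = \frac{1}{4m}\sum_{i=1}^m \|\mathbf{v}_{t-i}\|^2 + \frac{\epsilon}{48},
\end{align*}
where I split $\sigma^2/|B_t|$ as a convex combination using the two lower bounds. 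Substituting and taking full expectations, then summing over $t = 0,\dots,T$, the key is that the history term telescopes: $\sum_{t=0}^{T}\sum_{i=1}^m \mathbb{E}\|\mathbf{v}_{t-i}\|^2 \le m\sum_{t=-m}^{T}\mathbb{E}\|\mathbf{v}_t\|^2 = m\Big(m\alpha_0^2 + \sum_{t=0}^{T}\mathbb{E}\|\mathbf{v}_t\|^2\Big)$, using the initialization convention $\|\mathbf{v}_{-1}\|=\dots=\|\mathbf{v}_{-m}\|=\alpha_0$. Finally I would again bound $\mathbb{E}\|\mathbf{v}_t\|^2 \le \mathbb{E}\|\nabla f(\mathbf{x}_t)\|^2 + \sigma^2/|B_t| \le \mathbb{E}\|\nabla f(\mathbf{x}_t)\|^2 + \tfrac{1}{4m}\sum_i\mathbb{E}\|\mathbf{v}_{t-i}\|^2 + \tfrac{\epsilon}{48}$ to re-express the telescoped sum in terms of $\sum_t\mathbb{E}\|\nabla f(\mathbf{x}_t)\|^2$; the coefficient $\tfrac14$ on the history term (which becomes $\tfrac{L\eta^2}{2}\cdot\tfrac14 \cdot$ something after the telescoping, and note $\tfrac{L\eta^2}{2}<\eta=\phi+\tfrac{L\eta^2}{2}$, so this coefficient is genuinely small) is what makes the recursion absorbable into $\phi\sum_t\mathbb{E}\|\nabla f(\mathbf{x}_t)\|^2$ on the left. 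Rearranging, dividing by $T\phi$ (the sum by $T$, since the output $\mathbf{x}_\zeta$ is uniform over $T+1$ points but the bound is stated with $T$), and using $f(\mathbf{x}_{T+1}) \ge f^*$ gives the claimed bound, with $\tfrac{\eta m\alpha_0^2}{2T\phi}$ from the initialization term and $\tfrac{\eta}{12\phi}\epsilon$ collecting the $\epsilon/48$ contributions.

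The main obstacle I anticipate is the bookkeeping in the telescoping/self-referential step: because $\mathbb{E}\|\mathbf{v}_t\|^2$ itself contains a $\sigma^2/|B_t|$ term which again contains $\frac1m\sum_i\|\mathbf{v}_{t-i}\|^2$, one must be careful that the cascade of substitutions converges and that the accumulated constant in front of $\sum_t \mathbb{E}\|\nabla f(\mathbf{x}_t)\|^2$ stays strictly below $1$ (after normalization by $\phi$) so the gradient-norm sum can be moved to the left-hand side; this is exactly the technical heart of the ``new convergence analysis framework'' advertised in the introduction, specialized to the simpler SGD setting. The constants $2$ and $24$ in the batch-size rule, and the requirement $\phi>0$, are presumably tuned precisely so that this absorption works with room to spare. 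A secondary care point is the double-counting in the telescoping sum $\sum_{t=0}^T\sum_{i=1}^m\|\mathbf{v}_{t-i}\|^2$: each $\|\mathbf{v}_j\|^2$ with $j\in\{0,\dots,T\}$ appears for at most $m$ values of $t$, which is where the factor $m$ (and hence the constant in $\tfrac14\cdot\tfrac{1}{m}\cdot m = \tfrac14$) comes from, so the history weight $\tfrac1m$ in the batch-size denominator is essential and not cosmetic.
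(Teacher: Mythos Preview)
There is a concrete error in your bound on $\sigma^2/|B_t|$. By the algorithm, $|B_t| = \min\{a, b, n\}$ with $a = 2\sigma^2\big/\big(\tfrac{1}{m}\sum_{i}\|\mathbf{v}_{t-i}\|^2\big)$ and $b = 24\sigma^2/\epsilon$, so $|B_t| \le a$ and $|B_t|\le b$, \emph{not} $\ge$ as you write; your convex-combination step is therefore in the wrong direction (and $\max\{p,q\}\ge \tfrac12(p+q)$, not $\le$). The correct route, which is what the paper uses, is: when $|B_t|<n$ one has $|B_t|=\min\{a,b\}$, hence $\sigma^2/|B_t| = \max\{\sigma^2/a,\sigma^2/b\} \le \sigma^2/a + \sigma^2/b = \tfrac{1}{2m}\sum_{i}\|\mathbf{v}_{t-i}\|^2 + \tfrac{\epsilon}{24}$, and when $|B_t|=n$ the variance term vanishes. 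So your $\tfrac{1}{4}$ should be $\tfrac{1}{2}$ and your $\tfrac{\epsilon}{48}$ should be $\tfrac{\epsilon}{24}$. With this fix your recursion can still be closed, but the constants you produce will not match the stated bound.

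More substantively, the paper sidesteps your self-referential recursion entirely by tracking $\|\mathbf{v}_t\|^2$ rather than $\|\nabla f(\mathbf{x}_t)\|^2$ in the descent inequality. Writing $-\eta\langle\nabla f(\mathbf{x}_t),\mathbf{v}_t\rangle = \eta\|\nabla f(\mathbf{x}_t)-\mathbf{v}_t\|^2 - \eta\langle\nabla f(\mathbf{x}_t)-\mathbf{v}_t,\nabla f(\mathbf{x}_t)\rangle - \eta\|\mathbf{v}_t\|^2$ and using that the cross term has zero conditional expectation (since $|B_t|$ is measurable with respect to the past) yields
\[
\mathbb{E} f(\mathbf{x}_{t+1}) \le \mathbb{E} f(\mathbf{x}_t) - \Big(\eta-\tfrac{L\eta^2}{2}\Big)\mathbb{E}\|\mathbf{v}_t\|^2 + \eta\,\mathbb{E}\|\nabla f(\mathbf{x}_t)-\mathbf{v}_t\|^2.
\]
After bounding the variance by the corrected history bound and telescoping over $t$, the history contribution is exactly $\tfrac{\eta}{2}\sum_{t}\mathbb{E}\|\mathbf{v}_t\|^2$ (plus the $\alpha_0$ and $\epsilon$ terms), which is absorbed \emph{directly} into the left-hand side $-(\eta-\tfrac{L\eta^2}{2})\sum_{t}\mathbb{E}\|\mathbf{v}_t\|^2$ with no further substitution. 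Only at the very end is the bound converted to $\|\nabla f\|^2$ via Jensen, $\|\nabla f(\mathbf{x}_t)\|^2 = \|\mathbb{E}_t\mathbf{v}_t\|^2 \le \mathbb{E}_t\|\mathbf{v}_t\|^2$. Your route---bounding in terms of $\|\nabla f(\mathbf{x}_t)\|^2$, then re-expanding $\|\mathbf{v}_t\|^2$ back into $\|\nabla f\|^2$ plus variance, then iterating---is precisely the ``cascade of substitutions'' you flag as the main obstacle; the paper's decomposition avoids it altogether.
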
 
 Theorem~\ref{th_abasgd} shows that AbaSGD achieves a $\mathcal{O}\big(\frac{1}{T}\big)$ convergence rate for nonconvex optimization by using the adaptive mini-batch size. In the following corollary, we derive the SFO complexity of AbaSGD. 
\begin{corollary}\label{co:adasgd}
	Under the setting of Theorem~\ref{th_abasgd}, we choose  
	the constant stepsize $\eta = \frac{1}{2L}$. Then, to obtain an $\epsilon$-accurate solution $\mathbf{x}_\zeta$, the total number of iterations  required by AbaSGD
	\begin{align*} 
		T = \frac{  16L\left(f(\mathbf{x}_{0 } )	 -   f^*  \right) +   4m   \alpha_0^2 }{\epsilon },
	\end{align*}
	and the total number of SFO calls required by AbaSGD is given by 
	\begin{align*}
		\sum_{t=0}^{T} |B_t| =\underbrace{ \sum_{t=0}^{T}  \min \left\{ {\color{blue}{\frac{2\sigma^2}{\sum_{i=1}^{m}\|\mathbf{v}_{t-i}\|^2/m}}},\frac{ 24\sigma^2}{\epsilon} , n\right\}}_{\text{\normalfont  complexity of AbaSGD}} \leq  \underbrace{T\min \left\{ \frac{ 24\sigma^2}{\epsilon} , n\right\}}_{\text{\normalfont  complexity of vanilla SGD}}  = \mathcal{O}  \left(\frac{1}{\epsilon^2} \wedge \frac{n}{\epsilon}\right). 
	\end{align*} 
\end{corollary}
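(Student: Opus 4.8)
The plan is to obtain both assertions as direct specializations of Theorem~\ref{th_abasgd}. First I would verify that the stepsize $\eta=\frac{1}{2L}$ satisfies the positivity requirement $\phi=\eta-\frac{L\eta^2}{2}>0$: substituting gives $\phi=\frac{1}{2L}-\frac{L}{2}\cdot\frac{1}{4L^2}=\frac{1}{2L}-\frac{1}{8L}=\frac{3}{8L}>0$, so Theorem~\ref{th_abasgd} applies and its bound is available with these explicit constants.

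Next I would plug $\eta=\frac{1}{2L}$ and $\phi=\frac{3}{8L}$ into the convergence bound of Theorem~\ref{th_abasgd}. The optimization term $\frac{2(f(\mathbf{x}_0)-f^*)+\eta m\alpha_0^2}{2T\phi}$ simplifies to $\frac{8L(f(\mathbf{x}_0)-f^*)+2m\alpha_0^2}{3T}$, and the residual term $\frac{\eta}{12\phi}\epsilon$ simplifies to $\frac{\epsilon}{9}$. Forcing the optimization term to be at most $\frac{\epsilon}{6}$ requires $3T\ge \frac{6\left(8L(f(\mathbf{x}_0)-f^*)+2m\alpha_0^2\right)}{\epsilon}$, i.e.\ exactly the stated iteration count $T=\frac{16L(f(\mathbf{x}_0)-f^*)+4m\alpha_0^2}{\epsilon}$. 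The two terms then sum to at most $\frac{\epsilon}{6}+\frac{\epsilon}{9}=\frac{5\epsilon}{18}\le\epsilon$, certifying that $\mathbf{x}_\zeta$ is $\epsilon$-accurate under this choice of $T$.

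For the SFO complexity I would bound the adaptive batch size termwise. Since $|B_t|=\min\{\frac{2\sigma^2}{\sum_{i=1}^m\|\mathbf{v}_{t-i}\|^2/m},\tfrac{24\sigma^2}{\epsilon},n\}$ is a minimum over three quantities, dropping the history-dependent first argument gives $|B_t|\le\min\{\tfrac{24\sigma^2}{\epsilon},n\}$ for every $t$; summing over the $T+1$ iterations yields $\sum_{t=0}^{T}|B_t|\le (T+1)\min\{\tfrac{24\sigma^2}{\epsilon},n\}$, which is the vanilla-SGD bound up to the harmless $+1$. The strict improvement (the blue ``$<$'') holds whenever the history term $\frac{2\sigma^2}{\sum_{i=1}^m\|\mathbf{v}_{t-i}\|^2/m}$ is active, i.e.\ smaller than $\min\{\tfrac{24\sigma^2}{\epsilon},n\}$, which is the entire point of the scheme. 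Finally, substituting $T=\mathcal{O}(\epsilon^{-1})$ (treating $L$, $m$, $\alpha_0$ and $f(\mathbf{x}_0)-f^*$ as constants) together with $\min\{\tfrac{24\sigma^2}{\epsilon},n\}=\mathcal{O}(\epsilon^{-1}\wedge n)$ delivers the worst-case order $\mathcal{O}(\epsilon^{-2}\wedge n\epsilon^{-1})$.

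This is essentially a bookkeeping argument, so I expect no genuine obstacle; the only point requiring care is the constant matching in the second step, where one must split $\epsilon$ between the optimization term and the fixed residual $\frac{\epsilon}{9}$ so that their sum stays below $\epsilon$ while producing the clean constants $16L$ and $4m$ in the formula for $T$. Forcing the optimization term down to $\frac{\epsilon}{6}$ accomplishes both, leaving slack $\frac{13\epsilon}{18}$.
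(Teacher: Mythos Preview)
Your proof is correct and follows the same approach as the paper: specialize Theorem~\ref{th_abasgd} with $\eta=\frac{1}{2L}$, verify that the resulting bound is at most $\epsilon$ for the stated $T$, and bound the SFO count by dropping the adaptive term in the minimum. The only cosmetic difference is that the paper evaluates $\phi=\frac{1}{8L}$ (i.e.\ the effective constant $\frac{\eta}{2}-\frac{L\eta^2}{2}$ that actually emerges in the proof of Theorem~\ref{th_abasgd}) rather than your $\phi=\frac{3}{8L}$ from the stated formula, and accordingly splits $\epsilon$ as $\frac{\epsilon}{2}+\frac{\epsilon}{3}=\frac{5\epsilon}{6}$ instead of your $\frac{\epsilon}{6}+\frac{\epsilon}{9}=\frac{5\epsilon}{18}$; both routes produce exactly the same $T$.
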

Corollary~\ref{co:adasgd} shows that the worst-case complexity of AbaSGD is $\mathcal{O}  \left(\frac{1}{\epsilon^2} \wedge \frac{n}{\epsilon}\right)$, which 
is at least as good as those of SGD and GD. More importantly, the actual complexity of AbaSGD can be much lower than those of GD and SGD due to the adaptive batch size.

 \newpage
{\Large{\bf Technical Proofs}}
 \section{Proofs for Results in Section~\ref{se:4}}\label{apen:svrg}
 \subsection{Proof of Theorem~\ref{th_svrg}}
To prove Theorem~\ref{th_svrg}, we first establish the following lemma to upper-bound the estimation variance $\mathbb{E}_{0,s}\|\nabla f({x}_{t-1}^s)-{v}_{t-1}^s\|^2$ for $1\leq t\leq m$, where $\mathbb{E}_{t,s}(\cdot)$  denotes $\mathbb{E}(\cdot | {x}_0^1,{x}_0^2,...,{x}_2^1,...,{x}_{t}^s)$.
\begin{lemma}\label{le:variance}
Let Assumption~\ref{assum1} hold. Then, for $1\leq t \leq m$, we have 
\begin{align}
\mathbb{E}_{0,s}\|\nabla f({x}_{t-1}^s)-{v}_{t-1}^s\|^2\leq \frac{\eta^2L^2(t-1)}{B}\mathbb{E}_{0,s}\sum_{i=0}^{t-2}\|{v}_i^s\|^2 +\frac{I_{(N_s<n)}}{N_s} \sigma^2
\end{align}
where $I_{(A)}=1$ if the event $A$ occurs and $0$ otherwise, and $\sum_{i=0}^{-1}\|{v}_i^s\|^2=0$.
\end{lemma}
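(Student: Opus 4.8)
The plan is the standard SVRG variance decomposition, adapted to the filtration used in this paper: split the estimation error into a fresh-minibatch part with zero conditional mean and a snapshot part that depends only on the outer-loop sample $\mathcal{N}_s$, argue that the cross term vanishes, and bound each piece.

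First I would use $x_0^s = \tilde x^{s-1}$, the definition $v_{t-1}^s = \nabla f_{\mathcal{B}}(x_{t-1}^s) - \nabla f_{\mathcal{B}}(\tilde x^{s-1}) + g^s$, and $g^s = \nabla f_{\mathcal{N}_s}(\tilde x^{s-1})$ to write
\[
\nabla f(x_{t-1}^s) - v_{t-1}^s = U_{t-1} + W_s,
\]
where $U_{t-1} := \bigl(\nabla f(x_{t-1}^s) - \nabla f(\tilde x^{s-1})\bigr) - \bigl(\nabla f_{\mathcal{B}}(x_{t-1}^s) - \nabla f_{\mathcal{B}}(\tilde x^{s-1})\bigr)$ and $W_s := \nabla f(\tilde x^{s-1}) - g^s$. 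Let $\mathcal{G}_{t-1}$ be the $\sigma$-field generated by $x_0^s$, the outer sample $\mathcal{N}_s$, and the inner iterates $x_1^s,\dots,x_{t-1}^s$. Since $\mathcal{B}$ is drawn fresh with replacement and independent of $\mathcal{G}_{t-1}$, and $\mathbb{E}_{\mathcal{B}}[\nabla f_{\mathcal{B}}(x)-\nabla f_{\mathcal{B}}(y)] = \nabla f(x)-\nabla f(y)$, we get $\mathbb{E}[U_{t-1}\mid\mathcal{G}_{t-1}] = 0$, while both $W_s$ and $x_{t-1}^s$ are $\mathcal{G}_{t-1}$-measurable. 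Hence $\mathbb{E}[\langle U_{t-1},W_s\rangle\mid\mathcal{G}_{t-1}] = 0$, and by the tower property (noting $\sigma(x_0^s)\subseteq\mathcal{G}_{t-1}$),
\[
\mathbb{E}_{0,s}\|\nabla f(x_{t-1}^s)-v_{t-1}^s\|^2 = \mathbb{E}_{0,s}\|U_{t-1}\|^2 + \mathbb{E}_{0,s}\|W_s\|^2.
\]

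For the first term, conditioning on $\mathcal{G}_{t-1}$ again, $U_{t-1}$ is the centered average of $B$ i.i.d. samples, so $\mathbb{E}_{\mathcal{B}}\|U_{t-1}\|^2 = \frac1B\,\mathbb{E}_i\|\nabla f_i(x_{t-1}^s)-\nabla f_i(\tilde x^{s-1}) - (\nabla f(x_{t-1}^s)-\nabla f(\tilde x^{s-1}))\|^2 \le \frac1B\,\mathbb{E}_i\|\nabla f_i(x_{t-1}^s)-\nabla f_i(\tilde x^{s-1})\|^2 \le \frac{L^2}{B}\|x_{t-1}^s-\tilde x^{s-1}\|^2$, using $\mathbb{E}\|X-\mathbb{E}X\|^2\le\mathbb{E}\|X\|^2$ and Assumption~\ref{assum1}(1). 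Since $x_{t-1}^s - \tilde x^{s-1} = -\eta\sum_{i=0}^{t-2}v_i^s$, Cauchy–Schwarz gives $\|x_{t-1}^s-\tilde x^{s-1}\|^2 \le \eta^2(t-1)\sum_{i=0}^{t-2}\|v_i^s\|^2$, hence $\mathbb{E}_{0,s}\|U_{t-1}\|^2 \le \frac{\eta^2L^2(t-1)}{B}\,\mathbb{E}_{0,s}\sum_{i=0}^{t-2}\|v_i^s\|^2$. For $W_s$: if $N_s=n$ then $\nabla f_{\mathcal{N}_s}(\tilde x^{s-1})=\nabla f(\tilde x^{s-1})$ and $W_s=0$; if $N_s<n$, the sampling-without-replacement variance bound together with Assumption~\ref{assum1}(3) gives $\mathbb{E}\|\nabla f_{\mathcal{N}_s}(\tilde x^{s-1})-\nabla f(\tilde x^{s-1})\|^2 \le \frac{n-N_s}{(n-1)N_s}\cdot\frac1n\sum_i\|\nabla f_i(\tilde x^{s-1})-\nabla f(\tilde x^{s-1})\|^2 \le \frac{\sigma^2}{N_s}$. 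Combining the two bounds yields the claim; with the convention $\sum_{i=0}^{-1}\|v_i^s\|^2=0$ this also covers $t=1$, where $x_0^s=\tilde x^{s-1}$ forces $U_0=0$.

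The main obstacle is purely one of bookkeeping the filtration: one must include $\mathcal{N}_s$ (and all inner iterates) in the conditioning $\sigma$-field \emph{before} invoking the freshness of $\mathcal{B}$, so that the cross term genuinely vanishes; beyond that, and recalling the correct sampling-without-replacement constant, the argument is routine.
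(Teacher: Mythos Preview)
Your proposal is correct and follows essentially the same route as the paper's proof: the same decomposition into a fresh-minibatch term and a snapshot term, the same argument that the cross term vanishes (the paper conditions on $x_0^1,\dots,x_0^s$ and notes $N_s$ is then fixed, rather than building the intermediate $\sigma$-field $\mathcal{G}_{t-1}$ explicitly), the same $\tfrac{1}{B}$ i.i.d.\ variance reduction plus $\mathbb{E}\|X-\mathbb{E}X\|^2\le\mathbb{E}\|X\|^2$ plus $L$-smoothness, and the same Cauchy--Schwarz step on $\|x_{t-1}^s-\tilde x^{s-1}\|^2$; for $W_s$ the paper simply cites Lemma~B.2 of Lei et~al.\ where you spell out the without-replacement constant.
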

\begin{proof}[Proof of \Cref{le:variance}]
Based on line 10 in Algorithm~\ref{alg:svrg}, we have, for $1\leq t \leq m$,
\begin{align*}
\|{v}_{t-1}^s  -& \nabla f({x}_{t-1}^s)\|^2 =\|\nabla f_{\mathcal{B}} ({x}_{t-1}^s)  -   \nabla f_{\mathcal{B}} ({\tilde x}^{s-1}) - \nabla f({x}_{t-1}^s) + \nabla f({\tilde x}^{s-1})  +{g}^s - \nabla f({\tilde x}^{s-1})\|^2.
\end{align*}
Taking the expectation $\mathbb{E}_{0,s}(\cdot)$  over the above equality yields
\begin{align}
\mathbb{E}_{0,s}&\|{v}_{t-1}^s  - \nabla f({x}_{t-1}^s)\|^2 = \mathbb{E}_{0,s} \|\nabla f_{\mathcal{B}} ({x}_{t-1}^s)  -   \nabla f_{\mathcal{B}} ({\tilde x}^{s-1}) - \nabla f({x}_{t-1}^s) + \nabla f({\tilde x}^{s-1}) \|^2 \nonumber 
\\&+ 2\underbrace{\mathbb{E}_{0,s}\langle \nabla f_{\mathcal{B}} ({x}_{t-1}^s)  -   \nabla f_{\mathcal{B}} ({\tilde x}^{s-1}) - \nabla f({x}_{t-1}^s) + \nabla f({\tilde x}^{s-1}) , {g}^s - \nabla f({\tilde x}^{s-1})  \rangle }_{(*)}\nonumber
\\ &+\mathbb{E}_{0,s} \|{g}^s - \nabla f({\tilde x}^{s-1})\|^2,
\end{align}
which, in conjunction with the fact that 
\begin{align}
(*) = \mathbb{E}_{{x}_1^s,..,{x}_{t-1}^s}\mathbb{E}_{t-1,s}\langle \nabla f_{\mathcal{B}} ({x}_{t-1}^s)  -   \nabla f_{\mathcal{B}} ({\tilde x}^{s-1}) - \nabla f({x}_{t-1}^s) + \nabla f({\tilde x}^{s-1}) , {g}^s - \nabla f({\tilde x}^{s-1})  \rangle = 0\nonumber
\end{align}
and letting $F_i := \nabla f_{i} ({x}_{t-1}^s)  -   \nabla f_{i} ({\tilde x}^{s-1}) - \nabla f({x}_{t-1}^s) + \nabla f({\tilde x}^{s-1})$,  implies that 
\begin{align}\label{eq:val}
\mathbb{E}_{0,s}&\|{v}_{t-1}^s  - \nabla f({x}_{t-1}^s)\|^2  \nonumber
\\=& \mathbb{E}_{0,s} \|\nabla f_{\mathcal{B}} ({x}_{t-1}^s)  -   \nabla f_{\mathcal{B}} ({\tilde x}^{s-1}) - \nabla f({x}_{t-1}^s) + \nabla f({\tilde x}^{s-1}) \|^2 +\mathbb{E}_{0,s} \|{g}^s - \nabla f({\tilde x}^{s-1})\|^2 \nonumber
\\=& \frac{1}{B^2} \mathbb{E}_{0,s}  \sum_{i\in \mathcal{B}} \|\nabla f_{i} ({x}_{t-1}^s)  -   \nabla f_{i} ({\tilde x}^{s-1}) - \nabla f({x}_{t-1}^s) + \nabla f({\tilde x}^{s-1})\|^2 + \mathbb{E}_{0,s} \|{g}^s - \nabla f({\tilde x}^{s-1})\|^2 \nonumber
\\&+ \frac{2}{B^2}\sum_{i<j,\, i,j\in \mathcal{B}} \mathbb{E}_{0,s} \langle F_i, F_j \rangle \nonumber
\\\overset{(i)}=&\frac{1}{B}\mathbb{E}_{0,s} \|\nabla f_{i} ({x}_{t-1}^s)  -   \nabla f_{i} ({\tilde x}^{s-1}) - \nabla f({x}_{t-1}^s) + \nabla f({\tilde x}^{s-1}) \|^2 +\mathbb{E}_{0,s} \|{g}^s - \nabla f({\tilde x}^{s-1})\|^2  \nonumber
\\ \overset{(ii)}\leq & \frac{1}{B} \mathbb{E}_{0,s}\| \nabla f_{i} ({x}_{t-1}^s)  -   \nabla f_{i} ({\tilde x}^{s-1}) \|^2+\mathbb{E}_{0,s} \|{g}^s - \nabla f({\tilde x}^{s-1})\|^2  \nonumber
\\ \overset{(iii)}\leq & \frac{1}{B} \mathbb{E}_{0,s}\| \nabla f_{i} ({x}_{t-1}^s)  -   \nabla f_{i} ({\tilde x}^{s-1}) \|^2+\frac{I_{(N_s<n)}}{N_s}\sigma^2
\end{align} 
where (i) follows from the fact that 
\begin{align}
\mathbb{E}_{0,s} \langle F_i,F_j \rangle = \mathbb{E}_{{x}_1^s,...,{x}_{t-1}^s}\left( \mathbb{E}_{t-1,s} \langle F_i,F_j \rangle\right) = \mathbb{E}_{{x}_1^s,...,{x}_{t-1}^s}\left( \langle  \mathbb{E}_{t-1,s}(F_i),\mathbb{E}_{t-1,s}(F_j) \rangle\right) =0, \nonumber
\end{align}
(ii) follows from the fact that $\mathbb{E}\|{y}-\mathbb{E}({y})\|^2\leq \mathbb{E}\|{y}\|^2$ for any ${y}\in\mathbb{R}^d$, and (iii) follows by combining Lemma B.2 in~\citealt{lei2017non} and the fact that $N_s$ is fixed given ${x}_0^1,...,{x}_{0}^{s}$. Then, we obtain from~\eqref{eq:val} that 
\begin{align}
\mathbb{E}_{0,s}\|{v}_{t-1}^s  - \nabla f({x}_{t-1}^s)\|^2 \leq& \frac{L^2}{B} \mathbb{E}_{0,s}\|{x}_{t-1}^s - {\tilde x}^{s-1}\| + \frac{I_{(N_s<n)}}{N_s}\sigma^2 \nonumber
\\=& \frac{L^2}{B}\mathbb{E}_{0,s} \Big\|\sum_{i=0}^{t-2}({x}_{i+1}^s-{x}_{i}^s)\Big\|^2 + \frac{I_{(N_s<n)}}{N_s}\sigma^2 \nonumber
\\=& \frac{\eta^2L^2}{B}\mathbb{E}_{0,s} \Big\|\sum_{i=0}^{t-2}{v}_{i}^s\Big\|^2 + \frac{I_{(N_s<n)}}{N_s}\sigma^2 \nonumber
\\\overset{(i)}\leq & \frac{\eta^2L^2(t-1)}{B}\mathbb{E}_{0,s}\sum_{i=0}^{t-2}\Big\|{v}_{i}^s\Big\|^2 + \frac{I_{(N_s<n)}}{N_s}\sigma^2,
\end{align}
where (i) follows from the  Cauchy–Schwartz inequality that $\|\sum_{i=1}^k{a}_i\|^2 \leq k\sum_{i=1}^k\|{a}_i\|^2$.
\end{proof}
\begin{proof}[{\bf Proof of Theorem~\ref{th_svrg}}]
Based on Lemma~\ref{le:variance}, we next prove Theorem~\ref{th_svrg}. 

Since the objective function $f(\cdot)$ has a $L$-Lipschitz continuous gradient, we obtain that  for $1 \leq t \leq m$, 
\begin{align}
f({x}_t^s) &\leq  f({x}_{t-1}^s) + \langle \nabla f({x}_{t-1}^s), {x}_{t}^s-{x}_{t-1}^s\rangle +\frac{L\eta^2}{2} \|{v}_{t-1}^s\|^2  \nonumber
\\= & f({x}_{t-1}^s) + \langle \nabla f({x}_{t-1}^s)-{v}_{t-1}^s, -\eta {v}_{t-1}^s\rangle - \eta\|{v}_{t-1}^s\|^2 +\frac{L\eta^2}{2} \|{v}_{t-1}^s\|^2 \nonumber
\\\overset{(i)}\leq& f({x}_{t-1}^s) + \frac{\eta}{2}\| \nabla f({x}_{t-1}^s)-{v}_{t-1}^s \|^2  +\frac{\eta}{2} \|{v}_{t-1}^s\|^2
-\Big (\eta-\frac{L\eta^2}{2} \Big)\|{v}_{t-1}^s\|^2.  \nonumber
\end{align}
where (i) follows from the inequality that $\langle {a}, {b}\rangle \leq \frac{1}{2}(\|{a}\|^2+\|{b}\|^2)$. 
Then, taking expectation $\mathbb{E}_{0,s}(\cdot)$ over the above inequality yields
\begin{align}\label{eq:s1}
\mathbb{E}_{0,s} f({x}_t^s) \leq& \mathbb{E}_{0,s} f({x}_{t-1}^s) +\frac{\eta}{2} \mathbb{E}_{0,s}\| \nabla f({x}_{t-1}^s)-{v}_{t-1}^s \|^2 -\Big (\frac{\eta}{2}-\frac{L\eta^2}{2} \Big)\mathbb{E}_{0,s}\|{v}_{t-1}^s\|^2. 
\end{align}
Combining~\eqref{eq:s1} and Lemma~\ref{le:variance} yields, for $1\leq t\leq m$  
\begin{align*}
\mathbb{E}_{0,s} f({x}_t^s) \leq & \mathbb{E}_{0,s} f({x}_{t-1}^s) +\frac{\eta^3L^2(t-1)}{2B}\mathbb{E}_{0,s}\sum_{i=0}^{t-2}\|{v}_i^s\|^2 +\frac{\eta I_{(N_s<n)}}{2N_s} \sigma^2
\\&-\Big (\frac{\eta}{2}-\frac{L\eta^2}{2} \Big)\mathbb{E}_{0,s}\|{v}_{t-1}^s\|^2.
\end{align*}
Telescoping the above inequality over $t$ from $1$ to $m$ yields
\begin{align}\label{eq:s2}
\mathbb{E}_{0,s} f({x}_m^s) \leq& \mathbb{E}_{0,s} f({x}_0^s) + \sum_{t=1}^m\frac{\eta^3L^2(t-1)}{2B}  \mathbb{E}_{0,s}  \sum_{i=0}^{t-2} \|{v}_i^s\|^2  + \frac{\eta \sigma^2 m I_{(N_s<n)}}{2N_s}  \nonumber
\\&-  \Big (\frac{\eta}{2}-\frac{L\eta^2}{2} \Big)\sum_{t=0}^{m-1}\mathbb{E}_{0,s}\|{v}_{t}^s\|^2  \nonumber
\\\overset{(i)}\leq & \mathbb{E}_{0,s} f({x}_0^s) + \frac{\eta^3L^2m^2}{2B}  \mathbb{E}_{0,s}  \sum_{i=0}^{m-1} \|{v}_i^s\|^2  + \frac{\eta \sigma^2 m I_{(N_s<n)}}{2N_s}  \nonumber
\\&- \Big (\frac{\eta}{2}-\frac{L\eta^2}{2} \Big)\sum_{t=0}^{m-1}\mathbb{E}_{0,s}\|{v}_{t}^s\|^2 ,
\end{align}
where (i) follows from the fact that $\frac{\eta^3L^2(t-1)}{2B}  \mathbb{E}_{0,s}  \sum_{i=0}^{t-2} \|{v}_i^s\|^2\leq \frac{\eta^3L^2m}{2B}  \mathbb{E}_{0,s}  \sum_{i=0}^{m-1} \|{v}_i^s\|^2$.
Recall that $N_s =\min\{c_\beta \sigma^2 \beta_s^{-1},c_\epsilon\sigma^2  \epsilon^{-1},n\}$ and $c_\beta, c_\epsilon \geq \alpha$.
 Then, we have 
\begin{align}\label{eq:bsn}
 \frac{ I_{(N_s<n)}}{N_s} \leq \frac{1}{\min\{c_\beta \sigma^2\beta_s^{-1}, c_\epsilon\sigma^2\epsilon^{-1}\}} = \max\left\{\frac{\beta_s}{c_\beta\sigma^2}, \frac{\epsilon}{c_\epsilon\sigma^2}\right\}\leq \max\left\{\frac{\beta_s}{\alpha\sigma^2}, \frac{\epsilon}{\alpha\sigma^2}\right\},
\end{align}
To explain the first inequality in~\eqref{eq:bsn}, we denote $N_s = \min(n_1, n_2, n)$ for simplicty. If $N_s \geq n$, then the indicator function $I_{(\cdot)} = 0$ and hence $I_{(\cdot)}/N_s = 0 < 1/\min(n_1, n_2)$. If $N_s < n$, then $I_{(\cdot)} = 1$ and $N_s = \min(n_1, n_2)$ and hence $I_{(\cdot)}/N_s = 1/\min(n_1,n_2)$. Combining the above two cases yields the first inequality in~\eqref{eq:bsn}. 
Combining~\eqref{eq:bsn} and~\eqref{eq:s2} yields  
\begin{align}\label{eq:s3}
\mathbb{E}_{0,s} f({x}_m^s) \overset{(i)}\leq & \mathbb{E}_{0,s} f({x}_0^s) + \frac{\eta^3L^2m^2}{2B}  \mathbb{E}_{0,s}  \sum_{i=0}^{m-1} \|{v}_i^s\|^2  + \eta m\left(  \frac{\beta_s}{2\alpha} + \frac{\epsilon}{2\alpha} \right) \nonumber
\\&-  \Big (\frac{\eta}{2}-\frac{L\eta^2}{2} \Big)\sum_{t=0}^{m-1}\mathbb{E}_{0,s}\|{v}_{t}^s\|^2 . 
\end{align}
where (i) follows from the fact that $\max(a,b)\leq a+b$. Taking the expectation of~\eqref{eq:s3} over ${x}_{0}^1,...,{x}_0^s$, we obtain
\begin{align*}
\mathbb{E} f({x}_m^s)\leq & \mathbb{E}f({x}_0^s) + \frac{\eta m}{2\alpha} \mathbb{E}\beta_s+  \frac{\eta m\epsilon}{2\alpha}-  \Big (\frac{\eta}{2}-\frac{L\eta^2}{2}- \frac{\eta^3L^2m^2}{2B}  \Big)\sum_{t=0}^{m-1}\mathbb{E}\|{v}_{t}^s\|^2 . 
\end{align*}
Recall that $\beta_1 \leq \epsilon S$ and $\beta_s = \frac{1}{m}\sum_{t=1}^m \|{v}_{t-1}^{s-1}\|^2$ for $s=2,...,S$.  Then, telescoping the above inequality over $s$ from $1$ to $S$ and noting that ${x}_m^s={x}_0^{s+1}$, we obtain 
\begin{align}\label{eq:ss}
\mathbb{E} f({x}_m^S)\leq & \mathbb{E}f({x}_0) -  \Big (\frac{\eta}{2}-\frac{L\eta^2}{2}- \frac{\eta^3L^2m^2}{2B}  \Big)\sum_{s=1}^S\sum_{t=0}^{m-1}\mathbb{E}\|{v}_{t}^s\|^2 +  \frac{\eta mS\epsilon}{2\alpha} \nonumber
\\&+ \frac{\eta mS\epsilon}{2\alpha} + \sum_{s=2}^S\frac{\eta m}{2\alpha} \mathbb{E}\left( \frac{1}{m}\sum_{t=1}^m \|{v}_{t-1}^{s-1}\|^2\right) \nonumber
\\\leq& \mathbb{E}f({x}_0) -  \Big (\frac{\eta}{2}-\frac{L\eta^2}{2}- \frac{\eta^3L^2m^2}{2B} -\frac{\eta}{2\alpha} \Big)\sum_{s=1}^S\sum_{t=0}^{m-1}\mathbb{E}\|{v}_{t}^s\|^2 + \frac{\eta mS\epsilon}{\alpha}.
\end{align}
Dividing the both sides of~\eqref{eq:ss} by $\eta Sm$ and rearranging the terms, we obtain
\begin{align}\label{eq:ss1}
\Big (\frac{1}{2}-\frac{1}{2\alpha}-\frac{L\eta}{2}- \frac{\eta^2L^2m^2}{2B} \Big)\frac{1}{Sm}\sum_{s=1}^S\sum_{t=0}^{m-1}\mathbb{E}\|{v}_{t}^s\|^2 \leq \frac{f({x}_0)- f^*}{\eta Sm} + \frac{\epsilon}{\alpha},
\end{align} 
where $f^*=\inf_{{x}\in\mathbb{R}^d} f({x})>-\infty$. Since the output ${x}_{\zeta}$ is chosen  from $\{ {x}_{t}^s\}_{t=0,...,m-1, s=1,...,S}$ uniformly at random, we have 
\begin{align}\label{eq:ss2}
Sm\, \mathbb{E}\|\nabla f({x}_\zeta)\|^2 = &\sum_{s=1}^S\sum_{t=0}^{m-1}\mathbb{E}\|\nabla f({x}_t^s)\|^2 \nonumber
\\\leq&2\sum_{s=1}^S\sum_{t=0}^{m-1}\mathbb{E}\|\nabla f({x}_t^s)-{v}_t^s\|^2 + 2\sum_{s=1}^S\sum_{t=0}^{m-1}\mathbb{E}\|{v}_t^s\|^2  \nonumber
\\= & 2\sum_{s=1}^S\sum_{t=0}^{m-1}\mathbb{E}_{{x}_0^1,...,{x}_0^s}\left(\mathbb{E}_{0,s}\|\nabla f({x}_t^s)-{v}_t^s\|^2\right) + 2\sum_{s=1}^S\sum_{t=0}^{m-1}\mathbb{E}\|{v}_t^s\|^2 \nonumber
\\\overset{(i)}\leq &  2\sum_{s=1}^S\sum_{t=0}^{m-1}\mathbb{E}_{{x}_0^1,...,{x}_0^s}\left(\frac{\eta^2 L^2 m}{B}\mathbb{E}_{0,s}\sum_{i=0}^{m-1}\|{v}_i^s\|^2+\frac{\beta_s}{\alpha}+\frac{\epsilon}{\alpha}\right) + 2\sum_{s=1}^S\sum_{t=0}^{m-1}\mathbb{E}\|{v}_t^s\|^2 \nonumber
\\\leq & 2\sum_{s=1}^S \left(\frac{\eta^2 L^2 m^2}{B}\mathbb{E}\sum_{i=0}^{m-1}\|{v}_i^s\|^2+\frac{ m \beta_s}{\alpha}+\frac{m\epsilon}{\alpha}\right) + 2\sum_{s=1}^S\sum_{t=0}^{m-1}\mathbb{E}\|{v}_t^s\|^2  \nonumber
\\ \overset{(ii)}\leq &  \left(\frac{2\eta^2 L^2 m^2}{B} + \frac{2}{\alpha}+2\right)\sum_{s=1}^S \sum_{t=0}^{m-1}\mathbb{E}\|{v}_t^s\|^2+\frac{4Sm\epsilon}{\alpha} 
\end{align}
where (i) follows from Lemma~\ref{le:variance} and~\eqref{eq:bsn}, and (ii) follows from the definition of $\beta_s$ for $s=1,...,S$.  Combining~\eqref{eq:ss1} and~\eqref{eq:ss2}  and letting $\phi = \frac{1}{2}-\frac{1}{2\alpha}-\frac{L\eta}{2}- \frac{\eta^2L^2m^2}{2B}$ and $\psi=\frac{2\eta^2 L^2 m^2}{B} + \frac{2}{\alpha}+2 $, we have 
\begin{align}
\mathbb{E}\|\nabla f({x}_\zeta)\|^2   \leq  \frac{\psi(f({x}_0)- f^*)}{\phi \eta Sm} + \frac{\psi\epsilon}{\phi \alpha} + \frac{4\epsilon}{\alpha},
\end{align} 
which finishes the proof. 
\end{proof}
\subsection{Proof of Corollary~\ref{co:svrg} }
Recall that $\eta = \frac{1}{4L}$, $B = m^2$ and $c_\beta, c_\epsilon\geq 16$. Then, we have $\alpha=16$, $\phi\geq\frac{5}{16}>\frac{1}{4}$ and $\psi\leq \frac{9}{4} $ in Theorem~\ref{th_svrg}, and thus 
\begin{align*}
\mathbb{E}\|\nabla f({x}_\zeta)\|^2   \leq  \frac{36L(f({x}_0)- f^*)}{ K} + \frac{13}{16}\epsilon.
\end{align*}
Thus, to achieve $\mathbb{E}\|\nabla f({x}_\zeta)\|^2 <\epsilon$, AbaSVRG requires at most $192L(f({x}_0)- f^*)\epsilon^{-1}=\Theta (\epsilon^{-1})$ iterations. Then, the total number of SFO calls is given by 
\begin{align*}
\sum_{s=1}^S\min\{c_\beta\sigma^2\beta_s^{-1}, c_\epsilon\sigma^2\epsilon^{-1},n\} + KB \leq &S (c_\epsilon\sigma^2\epsilon^{-1} \wedge n) + KB \leq \mathcal{O} \left( \frac{\epsilon^{-1} \wedge n }{\epsilon\sqrt{B}}+ \frac{B}{\epsilon}\right).
\end{align*}
Furthermore, if we choose  $B=n^{2/3} \wedge \epsilon^{-2/3}$, then SFO complexity of AbaSVRG becomes 
\begin{align}
\mathcal{O} \left( \frac{\epsilon^{-2/3} \wedge n^{2/3}}{\epsilon}\right) \leq \mathcal{O}\left(\frac{1}{\epsilon}\left(n \wedge \frac{1}{\epsilon}\right)^{2/3}\right).
\end{align}
\subsection{Complexity under $B=m$}\label{apen:a3}
\begin{corollary}
 Let stepsize $\eta = \frac{1}{4L\sqrt{m}}$, mini-batch size $B = m$ and $c_\beta, c_\epsilon \geq 16$. Then, to obtain an $\epsilon$-accurate solution ${x}_\zeta$,  the total number of SFO calls required by AbaSVRG is given by 
\begin{align*}
\sum_{s=1}^S\min\left\{\frac{c_\beta\sigma^2}{\beta_s}, \frac{c_\epsilon\sigma^2}{\epsilon},n\right\} + KB
\leq\mathcal{O}\left(\frac{n \wedge \epsilon^{-1}}{\sqrt{B}\epsilon} + \frac{B^{3/2}}{\epsilon}\right).
\end{align*}
If we specially choose $B=n^{1/2}\wedge \epsilon^{-1/2}$, then the worst-case complexity is $\mathcal{O}\left(\frac{1}{\epsilon}(n \wedge \frac{1}{\epsilon})^{3/4}\right).$
\end{corollary}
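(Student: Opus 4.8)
The plan is to instantiate Theorem~\ref{th_svrg} with $\eta=\frac{1}{4L\sqrt m}$, $B=m$, $c_\beta,c_\epsilon\ge 16$ and then read off the SFO count exactly as in the proof of Corollary~\ref{co:svrg}. First I would take $\alpha=16$ (admissible since $c_\beta,c_\epsilon\ge 16=\alpha$) and evaluate the constants $\phi,\psi$ of Theorem~\ref{th_svrg} under these choices. Using $\eta^2L^2=\frac{1}{16m}$ and $B=m$ one gets $\frac{L\eta}{2}=\frac{1}{8\sqrt m}\le\frac18$, $\frac{\eta^2L^2m^2}{2B}=\frac{\eta^2L^2m}{2}=\frac{1}{32}$ and $\frac{1}{2\alpha}=\frac{1}{32}$, so $\phi\ge\frac12-\frac18-\frac{1}{32}-\frac{1}{32}=\frac{5}{16}>\frac14$ (it is $m=B\ge1$ that keeps $\phi$ bounded away from $0$, so the hypothesis $\phi>0$ holds); likewise $\psi=\frac{2\eta^2L^2m^2}{B}+\frac{2}{\alpha}+2=\frac18+\frac18+2=\frac94$.

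Next I would substitute these into the bound of Theorem~\ref{th_svrg}. Since $\frac{\psi}{\phi\eta}\le\frac{9/4}{(1/4)\cdot 1/(4L\sqrt m)}=36L\sqrt m$, $\frac{\psi}{\phi\alpha}\le\frac{9}{16}$ and $\frac{4}{\alpha}=\frac14$, the output satisfies $\mathbb E\|\nabla f(x_\zeta)\|^2\le\frac{36L\sqrt m\,(f(x_0)-f^*)}{K}+\frac{13}{16}\epsilon$, so $\mathbb E\|\nabla f(x_\zeta)\|^2<\epsilon$ is guaranteed once $K\ge 192L\sqrt m\,(f(x_0)-f^*)\epsilon^{-1}=\Theta(\sqrt m\,\epsilon^{-1})$. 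The key difference from Corollary~\ref{co:svrg} is that $K$ now scales as $\sqrt m$ rather than being independent of $m$. With $m=B$ and $K=Sm$ this gives $K=\Theta(\sqrt B\,\epsilon^{-1})$ and $S=K/m=\Theta(\epsilon^{-1}B^{-1/2})$.

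Then I would add up the SFO costs. For the adaptive snapshot part, $\min\{c_\beta\sigma^2\beta_s^{-1},c_\epsilon\sigma^2\epsilon^{-1},n\}\le\min\{c_\epsilon\sigma^2\epsilon^{-1},n\}$ gives $\sum_{s=1}^S\min\{c_\beta\sigma^2\beta_s^{-1},c_\epsilon\sigma^2\epsilon^{-1},n\}\le S\min\{c_\epsilon\sigma^2\epsilon^{-1},n\}=\mathcal O\big(\tfrac{n\wedge\epsilon^{-1}}{\sqrt B\,\epsilon}\big)$, while the inner-loop cost is $KB=\mathcal O\big(\tfrac{B^{3/2}}{\epsilon}\big)$; summing yields $\mathcal O\big(\tfrac{n\wedge\epsilon^{-1}}{\sqrt B\,\epsilon}+\tfrac{B^{3/2}}{\epsilon}\big)$. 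Finally, to obtain the worst-case order I would balance the two terms: they are equal when $n\wedge\epsilon^{-1}=B^2$, i.e.\ $B=n^{1/2}\wedge\epsilon^{-1/2}$, for which each term equals $\tfrac{(n\wedge\epsilon^{-1})^{3/4}}{\epsilon}$, giving $\mathcal O\big(\tfrac1\epsilon(n\wedge\tfrac1\epsilon)^{3/4}\big)$.

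There is no real conceptual obstacle: the statement is a direct corollary of Theorem~\ref{th_svrg}, and the only care required is constant bookkeeping plus checking admissibility ($c_\beta,c_\epsilon\ge\alpha$, $\phi>0$, and choosing the input $\beta_1\le\epsilon S$, which is free). The one spot to watch is that here $\eta=\Theta(1/(L\sqrt B))$ is smaller than the $\Theta(1/L)$ step of Corollary~\ref{co:svrg}, so the iteration count picks up a factor $\sqrt B$; dropping this factor would corrupt the exponent of $B$ in both terms of the complexity and in particular turn the $B^{3/2}/\epsilon$ term back into $B/\epsilon$.
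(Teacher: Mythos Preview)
Your proposal is correct and essentially identical to the paper's own proof: you instantiate Theorem~\ref{th_svrg} with $\alpha=16$, verify $\phi=\tfrac{7}{16}-\tfrac{1}{8\sqrt m}\ge\tfrac{5}{16}$ and $\psi=\tfrac94$, deduce $K=\Theta(\sqrt m\,\epsilon^{-1})=\Theta(\sqrt B\,\epsilon^{-1})$, and then bound the SFO count by $S(c_\epsilon\sigma^2\epsilon^{-1}\wedge n)+KB$ and balance the two terms at $B=n^{1/2}\wedge\epsilon^{-1/2}$. The paper does exactly these steps with the same constants.
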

\begin{proof}
Since $\eta = \frac{1}{4L\sqrt{m}}$, $B = m$ and $c_\beta, c_\epsilon \geq 16$, we obtain $\alpha=16$, $\phi=\frac{7}{16}-\frac{1}{8\sqrt{m}}\geq \frac{5}{16}>\frac{1}{4}$ and $\psi \leq \frac{9}{4}$ in Theorem~\ref{th_svrg}, and thus 
\begin{align*}
\mathbb{E}\|\nabla f({x}_\zeta)\|^2   \leq  \frac{36L\sqrt{m}(f({x}_0)- f^*)}{ K} + \frac{13}{16}\epsilon.
\end{align*}
To achieve $\mathbb{E}\|\nabla f({x}_\zeta)\|^2 <\epsilon$, AbaSVRG requires at most $192L\sqrt{m}(f({x}_0)- f^*)\epsilon^{-1}=\Theta (\sqrt{m}\epsilon^{-1})$ iterations. Then, the total number of SFO calls is given by
\begin{align*}
\sum_{s=1}^S\min\{c_\beta\sigma^2\beta_s^{-1}, c_\epsilon\sigma^2\epsilon^{-1},n\} + KB \leq &S (c_\epsilon\sigma^2\epsilon^{-1} \wedge n) + KB \leq \mathcal{O} \left( \frac{\epsilon^{-1} \wedge n }{\epsilon\sqrt{B}}+ \frac{B^{3/2}}{\epsilon}\right).
\end{align*}
Furthermore, if we choose  $B=n^{1/2} \wedge \epsilon^{-1/2}$,  then the SFO complexity is $\mathcal{O}\left(\frac{1}{\epsilon}(n \wedge \frac{1}{\epsilon})^{3/4}\right).$
\end{proof}
\subsection{Proof of Theorem~\ref{th_spider}}
In order to prove Theorem~\ref{th_spider}, we first use the following lemma to provide an upper bound on the estimation variance $\mathbb{E}_{0,s}\|\nabla f({x}_{t}^s)-{v}_{t}^s\|^2$ for $0 \leq t \leq m-1$, where $\mathbb{E}_{t,s}(\cdot)$ denotes $\mathbb{E}(\cdot | {x}_0^1,{x}_0^2,...,{x}_2^1,...,{x}_{t}^s)$. 
\begin{lemma}[Adapted from~\citealt{fang2018spider}]\label{le:spider}
Let Assumption~\ref{assum1} hold. Then, for $0\leq t \leq m-1$, 
\begin{align}
\mathbb{E}_{0,s} \|\nabla f({x}_{t}^s)-{v}_{t}^s\|^2 \leq \frac{\eta ^2 L^2}{B}	\sum_{i=0}^{t-1}\mathbb{E}_{0,s}\|{v}_{i}^s\|^2 + 	\frac{I_{(N_s<n)}}{N_s}\sigma^2.
\end{align}
where we define the stochastic gradients before the algorithm starts to satisfy $\sum_{i=0}^{-1}\mathbb{E}_{0,s}\|{v}_{i}^s\|^2 =0$ for easy presentation. 
\end{lemma}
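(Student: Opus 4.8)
The plan is to follow the recursive SARAH/SPIDER variance analysis of \citealt{fang2018spider}, adapted to the epoch structure and to the adaptive outer batch $\mathcal{N}_s$ used here. Define the estimation error $e_t := {v}_t^s - \nabla f({x}_t^s)$. For $1\le t\le m-1$, plugging the recursive update ${v}_t^s = \nabla f_{\mathcal{B}}({x}_t^s) - \nabla f_{\mathcal{B}}({x}_{t-1}^s) + {v}_{t-1}^s$ into this definition gives
\begin{align*}
e_t = e_{t-1} + \xi_t,\qquad \xi_t := \big(\nabla f_{\mathcal{B}}({x}_t^s) - \nabla f_{\mathcal{B}}({x}_{t-1}^s)\big) - \big(\nabla f({x}_t^s) - \nabla f({x}_{t-1}^s)\big),
\end{align*}
where $\mathcal{B}$ is the mini-batch freshly drawn at inner step $t$, independently of everything generated before. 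Conditioned on the trajectory up to ${x}_t^s$, the vector $\xi_t$ has mean zero (because $\mathbb{E}_i[\nabla f_i(\cdot)]=\nabla f(\cdot)$), while $e_{t-1}$ is determined by that trajectory; hence the tower property kills the cross term, i.e.\ $\mathbb{E}_{0,s}\langle e_{t-1},\xi_t\rangle = 0$, and therefore $\mathbb{E}_{0,s}\|e_t\|^2 = \mathbb{E}_{0,s}\|e_{t-1}\|^2 + \mathbb{E}_{0,s}\|\xi_t\|^2$. Unrolling this identity down to $t=0$ gives $\mathbb{E}_{0,s}\|e_t\|^2 = \mathbb{E}_{0,s}\|e_0\|^2 + \sum_{i=1}^{t}\mathbb{E}_{0,s}\|\xi_i\|^2$.

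First I would bound the per-step increment. Since $\xi_i = \frac1B\sum_{j\in\mathcal{B}}\big[(\nabla f_j({x}_i^s)-\nabla f_j({x}_{i-1}^s)) - (\nabla f({x}_i^s)-\nabla f({x}_{i-1}^s))\big]$ is an average of $B$ i.i.d., mean-zero summands, its conditional second moment picks up the $1/B$ variance-reduction factor; combining this with $\mathbb{E}\|y-\mathbb{E}y\|^2\le\mathbb{E}\|y\|^2$, the $L$-smoothness of each $\nabla f_j$ from Assumption~\ref{assum1}(1), and ${x}_i^s-{x}_{i-1}^s=-\eta{v}_{i-1}^s$, yields
\begin{align*}
\mathbb{E}_{0,s}\|\xi_i\|^2 \le \frac1B\,\mathbb{E}_{0,s}\big\|\nabla f_j({x}_i^s)-\nabla f_j({x}_{i-1}^s)\big\|^2 \le \frac{L^2}{B}\,\mathbb{E}_{0,s}\|{x}_i^s-{x}_{i-1}^s\|^2 = \frac{\eta^2L^2}{B}\,\mathbb{E}_{0,s}\|{v}_{i-1}^s\|^2,
\end{align*}
and summing over $i=1,\dots,t$ converts $\sum_{i=1}^t\mathbb{E}_{0,s}\|{v}_{i-1}^s\|^2$ into $\sum_{i=0}^{t-1}\mathbb{E}_{0,s}\|{v}_i^s\|^2$. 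Next I would bound the base term: $e_0 = {v}_0^s-\nabla f({x}_0^s) = \nabla f_{\mathcal{N}_s}({\tilde x}^{s-1})-\nabla f({\tilde x}^{s-1})$, and since $\mathcal{N}_s$ is sampled \emph{without replacement} with cardinality $N_s$ that is fixed given ${x}_0^1,\dots,{x}_0^s$, the argument already used in the proof of Lemma~\ref{le:variance} --- combining Assumption~\ref{assum1}(3) with Lemma~B.2 of \citealt{lei2017non} --- gives $\mathbb{E}_{0,s}\|e_0\|^2 \le \frac{I_{(N_s<n)}}{N_s}\sigma^2$, which is exactly $0$ when $N_s=n$.

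Adding the two bounds reproduces the claimed inequality for $1\le t\le m-1$, and the $t=0$ case is just the base term under the stated convention $\sum_{i=0}^{-1}\mathbb{E}_{0,s}\|{v}_i^s\|^2=0$. The main obstacle, as usual for recursive estimators, is the conditioning bookkeeping: one has to be precise that the mini-batch drawn at inner step $t$ is independent of the filtration generated up to ${x}_t^s$, so that the tower property simultaneously eliminates the cross term $\langle e_{t-1},\xi_t\rangle$ and the within-batch cross terms inside $\mathbb{E}\|\xi_t\|^2$ (the latter being what produces the $1/B$ factor). Everything else is routine smoothness plus the previously-established without-replacement variance bound, so in the write-up I would cite \citealt{fang2018spider} for the structural recursion $e_t = e_{t-1}+\xi_t$ and re-derive only the parts affected by the adaptive outer batch size.
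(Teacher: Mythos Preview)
Your proposal is correct and follows essentially the same approach as the paper's own proof: the recursive decomposition $e_t=e_{t-1}+\xi_t$ with $\mathbb{E}_{0,s}\langle e_{t-1},\xi_t\rangle=0$ is precisely the content of the step the paper cites from \citealt{fang2018spider} (their A.3 and A.4), and the base-case bound via Lemma~B.2 of \citealt{lei2017non} is identical. Your write-up is simply more self-contained in re-deriving the per-step variance increment rather than citing it, but the argument is the same.
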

\begin{proof}[Proof of \Cref{le:spider}]
	Combining A.3 and A.4 in~\citealt{fang2018spider} yields, for $1\leq i \leq m-1$,
	\begin{align}
	\mathbb{E}_{i,s} \|\nabla f({x}_{i}^s)-{v}_{i}^s\|^2 \leq& \frac{L^2}{B}\|{x}_{i}^s-{x}_{i-1}^s\|^2 + \|\nabla f({x}_{i-1}^s)-{v}_{i-1}^s\|^2  \nonumber
	\\=& \frac{\eta ^2 L^2}{B}\|{v}_{i-1}^s\|^2 + \|\nabla f({x}_{i-1}^s)-{v}_{i-1}^s\|^2. \nonumber
	\end{align}
	Taking the expectation of the above inequality over ${x}_1^s,...,{x}_{i}^s$, we have 
	\begin{align*}
	\mathbb{E}_{0,s} \|\nabla f({x}_{i}^s)-{v}_{i}^s\|^2 \leq \frac{\eta ^2 L^2}{B}	\mathbb{E}_{0,s}\|{v}_{i-1}^s\|^2 + 	\mathbb{E}_{0,s}\|\nabla f({x}_{i-1}^s)-{v}_{i-1}^s\|^2.
	\end{align*}
Then, telescoping the above inequality over $i$ from $1$ to $t$ yields 	
\begin{align}\label{eq:dodo}
	\mathbb{E}_{0,s} \|\nabla f({x}_{t}^s)-{v}_{t}^s\|^2 \leq \frac{\eta ^2 L^2}{B}	\sum_{i=0}^{t-1}\mathbb{E}_{0,s}\|{v}_{i}^s\|^2 + 	\mathbb{E}_{0,s}\|\nabla f({x}_{0}^s)-{v}_{0}^s\|^2.
\end{align}
Based on Lemma B.2 in~\citealt{lei2017non}, we have 
\begin{align*}
	\mathbb{E}_{0,s}\|\nabla f({x}_{0}^s)-{v}_{0}^s\|^2 \leq \frac{I_{(N_s<n)}}{N_s}\sigma^2,
\end{align*}
	which, combined with~\eqref{eq:dodo}, finishes the proof.
\end{proof}
\begin{proof}[{\bf Proof of Theorem~\ref{th_spider}}]
Based on Lemma~\ref{le:spider}, we now prove Theorem~\ref{th_spider}.

Since the objective function $f(\cdot)$ has a $L$-Lipschitz continuous gradient, we obtain that  for $1 \leq t \leq m$, 
\begin{align*}
f({x}_t^s) \leq&  f({x}_{t-1}^s) + \langle \nabla f({x}_{t-1}^s), {x}_{t}^s-{x}_{t-1}^s\rangle +\frac{L\eta^2}{2} \|{v}_{t-1}^s\|^2  \nonumber
\\= & f({x}_{t-1}^s) + \langle \nabla f({x}_{t-1}^s)-{v}_{t-1}^s, -\eta {v}_{t-1}^s\rangle - \eta\|{v}_{t-1}^s\|^2 +\frac{L\eta^2}{2} \|{v}_{t-1}^s\|^2 \nonumber
\\\overset{(i)}\leq&f({x}_{t-1}^s) + \frac{\eta}{2}\|\nabla f({x}_{t-1}^s)-{v}_{t-1}^s\|^2 +\frac{\eta}{2}\| {v}_{t-1}^s\|^2 - \eta\|{v}_{t-1}^s\|^2 +\frac{L\eta^2}{2} \|{v}_{t-1}^s\|^2 \nonumber
\\\leq &f({x}_{t-1}^s) + \frac{\eta}{2}\|\nabla f({x}_{t-1}^s)-{v}_{t-1}^s\|^2  - \left(\frac{\eta}{2}-\frac{L\eta^2}{2}\right)\|{v}_{t-1}^s\|^2, 
\end{align*}
where (i) follows from the inequality that $\langle {a}, {b}\rangle \leq \frac{1}{2}(\|{a}\|^2+\|{b}\|^2)$. 
Then, taking expectation $\mathbb{E}_{0,s}$ over the above inequality and applying Lemma~\ref{le:spider}, we have, for $1\leq t \leq m$,
\begin{align}
\mathbb{E}_{0,s} f({x}_t^s) \leq& \mathbb{E}_{0,s} f({x}_{t-1}^s) + \frac{\eta}{2}\mathbb{E}_{0,s}\|\nabla f({x}_{t-1}^s)-{v}_{t-1}^s\|^2  - \left(\frac{\eta}{2}-\frac{L\eta^2}{2}\right)\mathbb{E}_{0,s}\|{v}_{t-1}^s\|^2 \nonumber
\\\leq&  \mathbb{E}_{0,s} f({x}_{t-1}^s)
+\frac{\eta ^3 L^2}{2B}	\sum_{i=0}^{t-2}\mathbb{E}_{0,s}\|{v}_{i}^s\|^2 + 	\frac{I_{(N_s<n)}}{N_s}\frac{\eta\sigma^2}{2}
- \left(\frac{\eta}{2}-\frac{L\eta^2}{2}\right)\mathbb{E}_{0,s}\|{v}_{t-1}^s\|^2 \nonumber
\\\overset{(i)}\leq&   \mathbb{E}_{0,s} f({x}_{t-1}^s)
+\frac{\eta ^3 L^2}{2B}	\sum_{i=0}^{m-1}\mathbb{E}_{0,s}\|{v}_{i}^s\|^2 +\max\left\{\frac{\eta\beta_s}{2\alpha}, \frac{\eta\epsilon}{2\alpha}\right\}
- \left(\frac{\eta}{2}-\frac{L\eta^2}{2}\right)\mathbb{E}_{0,s}\|{v}_{t-1}^s\|^2 \nonumber
\end{align}
where (i) follows from $t-2<m-1$ and~\eqref{eq:bsn}. Telescoping the above inequality over $t$ from $1$ to $m$ and using $\max(a,b)\leq a+b$ yield 
\begin{align*}
\mathbb{E}_{0,s} f({x}_m^s) \leq \mathbb{E}_{0,s} f({x}_0^s) -\left( \frac{\eta}{2}-\frac{L\eta^2}{2}- \frac{\eta ^3 L^2 m}{2B} \right)	\sum_{t=0}^{m-1}\mathbb{E}_{0,s}\|{v}_{t}^s\|^2 + \frac{\eta m\beta_s}{2\alpha} + \frac{\eta m\epsilon}{2\alpha}. 
\end{align*}
Taking the expectation of the above inequality over  ${x}_0^1,...,{x}_0^s$, we obtain
\begin{align}
\mathbb{E} f({x}_m^s) \leq \mathbb{E}f({x}_0^s) -\left( \frac{\eta}{2}-\frac{L\eta^2}{2}- \frac{\eta ^3 L^2 m}{2B} \right)	\sum_{t=0}^{m-1}\mathbb{E}\|{v}_{t}^s\|^2 + \frac{\eta m}{2\alpha}\mathbb{E}(\beta_s) + \frac{\eta m\epsilon}{2\alpha}. \nonumber
\end{align}
Recall that $\beta_1 \leq \epsilon S$ and $\beta_s = \frac{1}{m}\sum_{t=0}^{m-1} \|{v}_{t}^{s-1}\|^2$ for $s=2,...,S$. Then, telescoping the above inequality over $s$ from $1$ to $S$ and noting that ${x}_m^s = {x}_0^{s+1}={\tilde x}^{s}$, we have 
\begin{align*}
\mathbb{E} f({\tilde x}^S) \leq& \mathbb{E}f({x}_0) -\left( \frac{\eta}{2}-\frac{L\eta^2}{2}- \frac{\eta ^3 L^2 m}{2B} \right)\sum_{s=1}^S	\sum_{t=0}^{m-1}\mathbb{E}\|{v}_{t}^s\|^2  + \frac{\eta mS\epsilon}{2\alpha} \nonumber
\\&+\frac{\eta}{2\alpha}\sum_{s=1}^{S-1}	\sum_{t=0}^{m-1}\mathbb{E}\|{v}_{t}^s\|^2  \nonumber
\\\leq& \mathbb{E}f({x}_0) -\left( \frac{\eta}{2}-\frac{\eta}{2\alpha}-\frac{L\eta^2}{2}- \frac{\eta ^3 L^2 m}{2B} \right)\sum_{s=1}^S	\sum_{t=0}^{m-1}\mathbb{E}\|{v}_{t}^s\|^2  + \frac{\eta mS\epsilon}{2\alpha}. 
\end{align*}
Dividing the both sides of the above inequality by $\eta Sm$ and rearranging the terms, we obtain
\begin{align}\label{eq:aps}
\left( \frac{1}{2}-\frac{1}{2\alpha}-\frac{L\eta}{2}- \frac{\eta ^2 L^2 m}{2B} \right)\frac{1}{Sm}\sum_{s=1}^S	\sum_{t=0}^{m-1}\mathbb{E}\|{v}_{t}^s\|^2  \leq \frac{f({x}_0)- f^*}{\eta Sm} + \frac{\epsilon}{2\alpha}.
\end{align}
 Since the output ${x}_{\zeta}$ is chosen  from $\{ {x}_{t}^s\}_{t=0,...,m-1, s=1,...,S}$ uniformly at random,  we have 
\begin{align}\label{eq:gg1}
Sm\, \mathbb{E}\|\nabla f({x}_\zeta)\|^2 = &\sum_{s=1}^S\sum_{t=0}^{m-1}\mathbb{E}\|\nabla f({x}_t^s)\|^2 \nonumber
\\\leq&2\sum_{s=1}^S\sum_{t=0}^{m-1}\mathbb{E}\|\nabla f({x}_t^s)-{v}_t^s\|^2 + 2\sum_{s=1}^S\sum_{t=0}^{m-1}\mathbb{E}\|{v}_t^s\|^2  \nonumber
\\= & 2\sum_{s=1}^S\sum_{t=0}^{m-1}\mathbb{E}_{{x}_0^1,...,{x}_0^s}\left(\mathbb{E}_{0,s}\|\nabla f({x}_t^s)-{v}_t^s\|^2\right) + 2\sum_{s=1}^S\sum_{t=0}^{m-1}\mathbb{E}\|{v}_t^s\|^2 \nonumber
\\\overset{(i)}\leq &  2\sum_{s=1}^S\sum_{t=0}^{m-1}\mathbb{E}_{{x}_0^1,...,{x}_0^s}\left(\frac{\eta^2 L^2 }{B}\mathbb{E}_{0,s}\sum_{i=0}^{m-1}\|{v}_i^s\|^2+\frac{\beta_s}{\alpha}+\frac{\epsilon}{\alpha}\right) + 2\sum_{s=1}^S\sum_{t=0}^{m-1}\mathbb{E}\|{v}_t^s\|^2 \nonumber
\\\leq & 2\sum_{s=1}^S \left(\frac{\eta^2 L^2 m}{B}\mathbb{E}\sum_{i=0}^{m-1}\|{v}_i^s\|^2+\frac{ m \beta_s}{\alpha}+\frac{m\epsilon}{\alpha}\right) + 2\sum_{s=1}^S\sum_{t=0}^{m-1}\mathbb{E}\|{v}_t^s\|^2  \nonumber
\\ \overset{(ii)}\leq &  \left(\frac{2\eta^2 L^2 m}{B} + \frac{2}{\alpha}+2\right)\sum_{s=1}^S \sum_{t=0}^{m-1}\mathbb{E}\|{v}_t^s\|^2+\frac{4Sm\epsilon}{\alpha} 
\end{align}
where (i) follows from Lemma~\ref{le:spider} and~\eqref{eq:bsn} and (ii) follows from the definition of $\beta_s$ for $s=1,...,S$. 
Let $\phi = \frac{1}{2}-\frac{1}{2\alpha}-\frac{L\eta}{2}- \frac{\eta ^2 L^2 m}{2B}$, $\psi=\frac{2\eta^2 L^2 m}{B} + \frac{2}{\alpha}+2 $ and $K=Sm$. 
 Then, combining~\eqref{eq:gg1} and~\eqref{eq:aps}, we finish the proof. 
 
\subsection{Proof of Corollary~\ref{co:spider}}
Recall that $1\leq B\leq n^{1/2}\wedge \epsilon^{-1/2}$, $m = (n \wedge \frac{1}{\epsilon})B^{-1}$, 
$\eta =\frac{1}{4L}\sqrt{\frac{B}{m}}$ and $c_\beta, c_\epsilon\geq 16$. Then, we have $\alpha=16$, 
$m \geq  n^{1/2}\wedge \epsilon^{-1/2}\geq B$ and  $\eta \leq \frac{1}{4L}$. Thus, we obtain
\begin{align*}
\phi = \frac{1}{2}-\frac{1}{2\alpha}-\frac{L\eta}{2}- \frac{\eta ^2 L^2 m}{2B} \geq \frac{5}{16}>\frac{1}{4} \text{ and } \psi \leq \frac{9}{4},
\end{align*}
which, in conjunction with Theorem~\ref{th_spider}, implies that 
\begin{align*}
\mathbb{E}\|\nabla f({x}_\zeta)\|^2\leq  \frac{36L\sqrt{m}(f({x}_0)- f^*)}{ \sqrt{B}K} + \frac{17}{32}\epsilon.
\end{align*}
Thus, to achieve $\mathbb{E}\|\nabla f({x}_\zeta)\|^2 <\epsilon$, AbaSPIDER requires at most $\frac{384L\sqrt{m}(f({x}_0)- f^*)}{5\sqrt{B}\epsilon}=\Theta \big(\frac{\sqrt{m}}{\sqrt{B}\epsilon}\big)$ iterations. Then, the total number of SFO calls is given by 
\begin{align*}
\sum_{s=1}^S\min\{c_\beta\sigma^2\beta_s^{-1}, c_\epsilon\sigma^2\epsilon^{-1},n\} + KB \leq &S (c_\epsilon \sigma^2 \epsilon^{-1} \wedge n) + KB \leq  \mathcal{O}\left( \frac{\epsilon^{-1}\wedge n }{\epsilon\sqrt{mB}} + \frac{\sqrt{mB}}{\epsilon}   \right), 
\end{align*}
which, in conjunction with $mB = n \wedge \frac{1}{\epsilon}$, finishes the proof. 
\end{proof}

\section{Proofs for Results in \Cref{rlyoudiandiao}}\label{em_verify}
\subsection{Useful Lemmas}
In this section, we provide some useful lemmas. The following two lemmas follow directly from Assumptions in Subsection~\ref{assum:rl}.
\begin{lemma}[\cite{Papini2018}] \label{bounded_from_papini}
	Under Assumptions \ref{g_assumption} and \ref{assumption}, the following holds:
	\begin{itemize} 
		\item[(i)]
		$\nabla J$ is $L$- Lipschitz, i.e., for any $\theta_1, \theta_2 \in \mathbb{R}^d$:
		$\norml{\nabla J(\theta_{1}) - \nabla J(\theta_{2})} \leq L \norml{\theta_{1} - \theta_{2}}$.
		\item[(ii)]
		$g(\tau|\theta)$ is Lipschitz continuous with Lipschitz constant $L_g$, i.e., for any trajectory $\tau \in \mathcal{T}$:
		\begin{align*} 
		\norml{g(\tau|\theta_1) - g(\tau|\theta_2)} \leq L_g \norml{\theta_1 - \theta_2} .
		\end{align*}
		\item [(iii)]
		$g(\tau|\theta)$ and $\nabla \log(p(\tau|\theta))$ are  bounded, i.e., there exist positive constants $0 \leq \Gamma, M < \infty$ such that for any $\tau \in \mathcal{T}$ and $\theta \in \Theta$:
		\begin{align*}
		\norml{\nabla \log(p(\tau|\theta))}^2 \leq M  \quad \text{ and } \quad \norml{g(\tau|\theta)}^2 \leq \Gamma.
		\end{align*} 
	\end{itemize}	
\end{lemma}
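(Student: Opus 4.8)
The plan is to derive all three bounds by direct estimation from the explicit expressions for $g(\tau|\theta)$ and $\nabla\log p(\tau|\theta)$ recorded in Appendix~\ref{app:pg}, using the pointwise bounds of Assumption~\ref{assumption}. Since $p(\tau|\theta)=\rho(s_0)\prod_{i=0}^{H-1}\mathcal{P}(s_{i+1}|s_i,a_i)\pi_\theta(a_i|s_i)$ depends on $\theta$ only through the policy factors $\pi_\theta$, we have $\nabla\log p(\tau|\theta)=\sum_{t=0}^{H}\nabla\log\pi_\theta(a_t|s_t)$ and, for the REINFORCE estimator, $g(\tau|\theta)=c(\tau)\sum_{t=0}^{H}\nabla\log\pi_\theta(a_t|s_t)$, where $c(\tau):=\sum_{t=0}^{H}(\gamma^t\mathcal{R}(s_t,a_t)-b(s_t,a_t))$ is a scalar that, for a fixed trajectory $\tau$, does \emph{not} depend on $\theta$; the G(PO)MDP form is handled identically, with causal partial sums of the score terms in place of the full sum.

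First I would prove part~(iii). Assumption~\ref{assumption} gives $\|\nabla\log\pi_\theta(a|s)\|\le\sqrt{d}\,G$ by combining the coordinatewise bounds, hence $\|\nabla\log p(\tau|\theta)\|\le(H+1)\sqrt{d}\,G$, so $M:=(H+1)^2 d\,G^2$ works; and $|c(\tau)|\le\sum_{t=0}^{H}\gamma^t R+\sum_{t=0}^{H}|b(s_t,a_t)|\le\frac{R}{1-\gamma}+\mathcal{O}(H)$ (assuming the baseline $b$ is bounded, e.g.\ $b\equiv 0$), so $\|g(\tau|\theta)\|\le|c(\tau)|\,(H+1)\sqrt{d}\,G$ gives a uniform bound $\Gamma$ on $\|g(\tau|\theta)\|^2$, polynomial in $R,H,G,d,(1-\gamma)^{-1}$. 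For part~(ii), the key point is that $c(\tau)$ is $\theta$-free, so $g(\tau|\theta_1)-g(\tau|\theta_2)=c(\tau)\sum_{t=0}^{H}\big(\nabla\log\pi_{\theta_1}(a_t|s_t)-\nabla\log\pi_{\theta_2}(a_t|s_t)\big)$; the Hessian bound of Assumption~\ref{assumption} makes $\|\nabla^2\log\pi_\theta(a|s)\|$ uniformly bounded (its operator norm is at most $d$ times the per-entry bound, via the Frobenius norm), so each score term is Lipschitz in $\theta$ by the mean value inequality, and summing over $t$ and multiplying by $|c(\tau)|$ produces a Lipschitz constant $L_g$ for $g(\tau|\theta)$ of the same polynomial order.

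For part~(i) I would start from the likelihood-ratio identity $\nabla J(\theta)=\int\mathcal{R}(\tau)\,p(\tau|\theta)\,\nabla\log p(\tau|\theta)\,d\tau$ of Appendix~\ref{app:pg} and differentiate once more under the integral sign --- legitimate by dominated convergence thanks to the uniform bounds just established --- to obtain
\[
\nabla^2 J(\theta)=\E_{\tau\sim p(\cdot|\theta)}\Big[\mathcal{R}(\tau)\big(\nabla\log p(\tau|\theta)\,\nabla\log p(\tau|\theta)^\top+\nabla^2\log p(\tau|\theta)\big)\Big].
\]
Bounding $\|\nabla^2\log p(\tau|\theta)\|\le\widetilde{M}$ for a constant $\widetilde{M}$ obtained exactly as in part~(ii), the operator norm of $\nabla^2 J(\theta)$ is at most $\frac{R}{1-\gamma}(M+\widetilde{M})=:L$, so $\nabla J$ is $L$-Lipschitz. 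I expect the main obstacle to be not any individual inequality but the interchange of $\nabla$ and $\int$ in this last step (which is precisely why the uniform boundedness of parts~(ii)--(iii) is proved first), together with keeping the various constants mutually consistent; the rest is the triangle inequality and geometric summation.
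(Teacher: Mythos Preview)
Your proposal is correct in substance: the bounds for (iii), the Lipschitz constant for (ii) via the Hessian bound on $\log\pi_\theta$, and the Hessian formula for $J$ in (i) are all standard and your estimates go through. The paper, however, does not reprove any of this --- its entire ``proof'' is a one-line citation stating that (i), (ii), (iii) follow from Lemmas B.2, B.3, B.4 of \citealt{Papini2018}. So you have reconstructed from scratch what the cited reference presumably contains, whereas the paper simply defers to that source; your approach is self-contained but more laborious, theirs is a pure appeal to the literature.
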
 

\begin{lemma}[\cite{xu2019sample,xu2019improved} Lemma A.1]\label{var_assumption}
	For any $\theta_1,\theta_2\in\mathcal{R}^d$, let $\omega \big(\tau|\theta_1,\theta_2\big)=p(\tau |\theta_1)/p(\tau |\theta_2)$. Under  Assumptions \ref{assumption} and \ref{assumption_bound_variance}, it holds that
	\begin{align*}
	\E_{\tau \sim p(\cdot|\theta_1)} \norml{1 - \frac{p(\tau|\theta_2)}{p(\tau|\theta_1)}}^2  = \Var\big(\omega \big(\tau|\theta_1,\theta_2\big)\big)\leq \alpha \|\theta_1-\theta_2\|_2^2,
	\end{align*}
	where $\alpha$ is a positive constant.
\end{lemma}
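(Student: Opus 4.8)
The plan is to recognize the quantity on the left as the variance of an importance weight and to control it via a first-order (mean-value) expansion in the policy parameter, using the bounded-score bound from \Cref{bounded_from_papini}(iii). First I would fix a trajectory $\tau$ and set $u(\theta):=p(\tau|\theta)/p(\tau|\theta_1)$, so that $u(\theta_1)=1$ and, for $\tau\sim p(\cdot|\theta_1)$, $\E_{\tau\sim p(\cdot|\theta_1)}[u(\theta)]=1$ for every $\theta$ (since $p(\cdot|\theta)$ is a density). Hence $\E_{\tau\sim p(\cdot|\theta_1)}\norml{1-p(\tau|\theta_2)/p(\tau|\theta_1)}^2=\Var_{\tau\sim p(\cdot|\theta_1)}\big(u(\theta_2)\big)$, which is the middle expression in the statement (written there in terms of $\omega$, with arguments ordered so its mean is $1$). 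Observe also that the Markov kernels cancel in the ratio, so $u(\theta)=\prod_{h=0}^{H-1}\pi_{\theta}(a_h|s_h)/\pi_{\theta_1}(a_h|s_h)$ and $\nabla_\theta\log p(\tau|\theta)=\sum_{h=0}^{H-1}\nabla_\theta\log\pi_{\theta}(a_h|s_h)$.

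Next, with $\theta_t:=\theta_1+t(\theta_2-\theta_1)$, I would write $u(\theta_2)-1=\int_0^1\big\langle\nabla_\theta u(\theta_t),\theta_2-\theta_1\big\rangle\,dt$ and use $\nabla_\theta u(\theta)=u(\theta)\nabla_\theta\log p(\tau|\theta)$. Jensen over $t$, Cauchy--Schwarz, and $\norml{\nabla_\theta\log p(\tau|\theta)}^2\le M$ from \Cref{bounded_from_papini}(iii) then give
\begin{align*}
\big(u(\theta_2)-1\big)^2\le M\,\norml{\theta_1-\theta_2}^2\int_0^1 u(\theta_t)^2\,dt.
\end{align*}
Taking $\E_{\tau\sim p(\cdot|\theta_1)}$ of both sides and interchanging expectation with the $t$-integral reduces the lemma to a single uniform second-moment bound: it suffices to show $\E_{\tau\sim p(\cdot|\theta_1)}[u(\theta_t)^2]\le C_0$ for every $t\in[0,1]$, with $C_0$ independent of $\theta_1,\theta_2$.

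For that bound I would use the product form of $u$: conditioning on $(s_0,a_0,\dots,s_{H-1})$ and integrating the squared last factor against $\pi_{\theta_1}(\cdot|s_{H-1})$ yields $\sum_a\pi_{\theta_t}(a|s_{H-1})^2/\pi_{\theta_1}(a|s_{H-1})$, which is at most $e^{G\sqrt d\,\norml{\theta_t-\theta_1}}$ because $|\log\pi_{\theta_t}(a|s)-\log\pi_{\theta_1}(a|s)|\le G\sqrt d\,\norml{\theta_t-\theta_1}$ (a consequence of $|\partial_{\theta_i}\log\pi_\theta|\le G$ in \Cref{assumption}); iterating over the $H$ steps gives $\E_{\tau\sim p(\cdot|\theta_1)}[u(\theta_t)^2]\le e^{GH\sqrt d\,\norml{\theta_1-\theta_2}}$. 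Since $\theta_1,\theta_2$ are iterates within one epoch and $\norml{\theta_{k+1}-\theta_k}=\eta\norml{v_k}$ is bounded (trajectory gradients satisfy $\norml{g(\tau|\theta)}^2\le\Gamma$ by \Cref{bounded_from_papini}(iii)), the displacement $\norml{\theta_1-\theta_2}$ stays in a bounded set and $C_0$ may be taken to be that exponential at the corresponding diameter; then $\alpha=MC_0$ works. The main obstacle is exactly this last step: obtaining a Rényi-/$\chi^2$-type second-moment bound on the importance weight that is uniform over the parameters. This is where the bounded-score assumption and the boundedness of the reachable parameter region are indispensable, and it is the standard source of the (harmless, constant-absorbed) exponential-in-$H$ factor.
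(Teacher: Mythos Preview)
The paper does not prove this lemma at all: it is imported verbatim from \citealt{xu2019sample,xu2019improved} by citation, so there is no ``paper's own proof'' to compare against. Your mean-value/score-function expansion is in fact the standard route taken in those references, and everything up to and including the reduction
\[
\Var_{\tau\sim p(\cdot|\theta_1)}\!\big(u(\theta_2)\big)\ \le\ M\,\|\theta_1-\theta_2\|^2\int_0^1 \E_{\tau\sim p(\cdot|\theta_1)}\big[u(\theta_t)^2\big]\,dt
\]
is correct and clean.

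There is, however, a genuine gap in your closing step. The lemma is stated \emph{for any} $\theta_1,\theta_2\in\mathbb{R}^d$ with a single constant $\alpha$, whereas the second-moment bound you obtain, $\E_{p(\cdot|\theta_1)}[u(\theta_t)^2]\le e^{GH\sqrt d\,\|\theta_1-\theta_2\|}$, makes $\alpha$ depend on $\|\theta_1-\theta_2\|$. Your patch --- ``$\theta_1,\theta_2$ are iterates within one epoch, so the displacement is bounded'' --- does not rescue the lemma as stated; it proves a weaker, localized version. Two specific issues: (i) you are changing the hypothesis from ``all $\theta_1,\theta_2$'' to ``$\theta_1,\theta_2$ in a bounded set,'' which is a different statement; and (ii) the claim that $\|v_k\|\le\sqrt{\Gamma}$ because trajectory gradients are bounded is not justified for AbaSVRPG, since $v_k$ there contains the importance-weighted term $\omega(\tau_i|\theta_k,\tilde\theta)g(\tau_i|\tilde\theta)$, and $\omega$ is not a priori bounded --- that is precisely the quantity whose control the lemma is meant to supply, so invoking iterate boundedness here is circular.

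You have, in fact, correctly isolated the crux: a uniform R\'enyi-2/$\chi^2$ bound on the likelihood ratio is what is needed, and under \Cref{assumption,assumption_bound_variance} alone it does not follow for arbitrary $\theta_1,\theta_2$. In the cited works this is handled either by taking it as a standing assumption or by working with policy classes (e.g., Gaussian policies) for which the R\'enyi divergence admits a closed-form quadratic bound. If you want a self-contained argument here, the honest fix is to restrict the claim to $\|\theta_1-\theta_2\|\le R$ for a fixed radius $R$ (then your exponential is a constant and $\alpha=Me^{GH\sqrt d\,R}$ works), and verify separately --- without using this lemma --- that the pairs to which it is applied in the analysis satisfy that restriction.
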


The following lemma captures an important property for the trajectory gradients, and its proof follows directly from Lemma~\ref{var_assumption}.  
%
\begin{lemma} \label{lispchitz_g}
	Under Assumptions \ref{g_assumption}, \ref{assumption}, and \ref{assumption_bound_variance} the following inequality holds for any $\theta_1, \theta_2 \in \mathcal{R}^d$,
	\begin{align*}
	\E_{\tau \sim p(\cdot|\theta_1) } \norml{g(\tau|\theta_1) - \omega(\tau|\theta_1, \theta_2)g(\tau|\theta_2)  }^2 \leq Q \norml{\theta_1 - \theta_2}^2,
	\end{align*}
where the importance sampling function $\omega(\tau | \theta_{1}, \theta_{2}): =  {p(\tau|\theta_2)}/{p(\tau|\theta_1)}$, and the constant $Q \defeq  2(L_g^2+ \Gamma \alpha)$ with constants $L_g, \Gamma$ and $\alpha$ given in Lemmas~\ref{bounded_from_papini} and~\ref{var_assumption}. 
\end{lemma}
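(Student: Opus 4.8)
The plan is to reduce the quantity of interest to the two building blocks already established: the Lipschitz continuity of the per-trajectory gradient $g(\tau|\cdot)$ from Lemma~\ref{bounded_from_papini}(ii), and the variance bound on the importance-sampling weight from Lemma~\ref{var_assumption}. The key algebraic step is to insert $g(\tau|\theta_2)$ into the difference,
\begin{align*}
g(\tau|\theta_1) - \omega(\tau|\theta_1,\theta_2)g(\tau|\theta_2) = \big(g(\tau|\theta_1) - g(\tau|\theta_2)\big) + \big(1 - \omega(\tau|\theta_1,\theta_2)\big)g(\tau|\theta_2),
\end{align*}
and then apply $\norml{a+b}^2 \le 2\norml{a}^2 + 2\norml{b}^2$ to obtain
\begin{align*}
\norml{g(\tau|\theta_1) - \omega(\tau|\theta_1,\theta_2)g(\tau|\theta_2)}^2 \le 2\norml{g(\tau|\theta_1) - g(\tau|\theta_2)}^2 + 2\big(1 - \omega(\tau|\theta_1,\theta_2)\big)^2 \norml{g(\tau|\theta_2)}^2 .
\end{align*}

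Next I would take the expectation $\E_{\tau\sim p(\cdot|\theta_1)}$ of both sides and treat the two terms separately. The first term is controlled pointwise in $\tau$: Lemma~\ref{bounded_from_papini}(ii) gives $\norml{g(\tau|\theta_1) - g(\tau|\theta_2)} \le L_g \norml{\theta_1-\theta_2}$, so its expectation is at most $2L_g^2\norml{\theta_1-\theta_2}^2$. For the second term, Lemma~\ref{bounded_from_papini}(iii) bounds $\norml{g(\tau|\theta_2)}^2 \le \Gamma$ uniformly, hence
\begin{align*}
2\,\E_{\tau\sim p(\cdot|\theta_1)}\big(1 - \omega(\tau|\theta_1,\theta_2)\big)^2 \norml{g(\tau|\theta_2)}^2 \le 2\Gamma\,\E_{\tau\sim p(\cdot|\theta_1)}\big(1 - \omega(\tau|\theta_1,\theta_2)\big)^2 \le 2\Gamma\alpha\norml{\theta_1-\theta_2}^2 ,
\end{align*}
where the last inequality is exactly Lemma~\ref{var_assumption}, noting that $\omega(\tau|\theta_1,\theta_2) = p(\tau|\theta_2)/p(\tau|\theta_1)$ matches the ratio appearing there. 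Adding the two contributions yields $2(L_g^2 + \Gamma\alpha)\norml{\theta_1-\theta_2}^2 = Q\norml{\theta_1-\theta_2}^2$, which is the claim.

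There is essentially no deep obstacle here once the supporting lemmas are in hand; the only point requiring a little care is to keep the direction of the expectation (with $\tau$ drawn from $p(\cdot|\theta_1)$) consistent with the way the importance weight $\omega$ and the variance bound in Lemma~\ref{var_assumption} are set up, so that the weight multiplies $g(\tau|\theta_2)$ rather than $g(\tau|\theta_1)$ and Lemma~\ref{var_assumption} applies verbatim. The genuinely nontrivial work — the Lipschitzness of $g$ and the $O(\norml{\theta_1-\theta_2}^2)$ variance of the likelihood ratio, both following from the boundedness of $\nabla \log \pi_\theta$ and its Hessian in Assumption~\ref{assumption} — is already absorbed into Lemmas~\ref{bounded_from_papini} and~\ref{var_assumption}, so the remaining argument is just the split above together with the elementary $\norml{a+b}^2 \le 2\norml{a}^2 + 2\norml{b}^2$ inequality.
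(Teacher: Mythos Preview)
Your proposal is correct and follows essentially the same approach as the paper: insert $g(\tau|\theta_2)$ to split the difference, apply $\norml{a+b}^2 \le 2\norml{a}^2 + 2\norml{b}^2$, bound the first term via the Lipschitzness of $g$ from Lemma~\ref{bounded_from_papini}(ii), and bound the second term using $\norml{g(\tau|\theta_2)}^2 \le \Gamma$ together with Lemma~\ref{var_assumption}. The paper's proof is line-for-line the same argument.
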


\subsection{Proof of \Cref{SVRPG_conv}}
In this section, we provide the convergence analysis for AbaSVRPG. 
To simplify notations, 
we use $\E_k[\cdot]$ to denote
the expectation operation conditioned on all the randomness before $\theta_{k}$, i.e.,
$\E [\cdot | \theta_{0}, \cdots, \theta_{k}]$ and $n_k = \floor{k/m}\times m$. 


To prove the convergence of AbaSVRPG, we first present a general iteration analysis for an algorithm with the update rule taking the form of $\theta_{k+1} = \theta_{k} + \eta v_k$, for $k=0, 1, \cdots$. The proof of \Cref{beginWith} can be found in \Cref{proof_of_lemmas}. 
\begin{lemma} \label{beginWith}
	Let $\nabla J$ be $L$-Lipschitz, and $\theta_{k+1} = \theta_{k+1} + \eta v_k$. Then, the following inequality holds:
	\begin{align*}
	\E{J(\theta_{k+1}) } - \E J(\theta_{k}) \geq   {\left(\frac{\eta}{2} - \frac{L\eta^2}{2}\right) \E \norml{v_k}^2}  - \frac{\eta}{2} \E \norml{v_k - \nabla J( \theta_{k})}^2.
	\end{align*}
\end{lemma}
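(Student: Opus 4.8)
The statement is the standard ``ascent lemma'' for an $L$-smooth objective together with the elementary polarization identity, so the plan is short and essentially computational. First I would invoke $L$-Lipschitzness of $\nabla J$, which gives the quadratic lower bound
\begin{align*}
J(\theta_{k+1}) \geq J(\theta_{k}) + \inner{\nabla J(\theta_k)}{\theta_{k+1}-\theta_k} - \frac{L}{2}\norml{\theta_{k+1}-\theta_k}^2.
\end{align*}
Substituting the update rule $\theta_{k+1}-\theta_k = \eta v_k$ turns the right-hand side into $J(\theta_k) + \eta\inner{\nabla J(\theta_k)}{v_k} - \frac{L\eta^2}{2}\norml{v_k}^2$.

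Next I would rewrite the inner product using the identity $\inner{a}{b} = \frac12\norml{a}^2 + \frac12\norml{b}^2 - \frac12\norml{a-b}^2$ with $a=\nabla J(\theta_k)$ and $b=v_k$, which yields
\begin{align*}
J(\theta_{k+1}) \geq J(\theta_k) + \frac{\eta}{2}\norml{\nabla J(\theta_k)}^2 + \Big(\frac{\eta}{2}-\frac{L\eta^2}{2}\Big)\norml{v_k}^2 - \frac{\eta}{2}\norml{v_k - \nabla J(\theta_k)}^2.
\end{align*}
Dropping the nonnegative term $\frac{\eta}{2}\norml{\nabla J(\theta_k)}^2$ and taking expectation over all randomness then gives exactly the claimed inequality.

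There is no real obstacle here: the only point to be careful about is the direction of the inequality (since problem (Q) is a maximization, we use the lower-bound form of the descent lemma rather than the usual upper bound), and the fact that the bound holds deterministically before taking $\E$, so the inequality survives the expectation by monotonicity. The recorded Lipschitz constant $L$ is the one from Lemma~\ref{bounded_from_papini}(i), so no new assumption is needed. I would present the two displays above, note the polarization identity explicitly, and conclude.
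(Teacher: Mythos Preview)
Your proposal is correct and follows essentially the same route as the paper's proof: both start from the $L$-smoothness lower bound, substitute the update $\theta_{k+1}-\theta_k=\eta v_k$, manipulate the inner product $\eta\inner{\nabla J(\theta_k)}{v_k}$, and take expectation. The only cosmetic difference is that you use the polarization identity $\inner{a}{b}=\tfrac12\norml{a}^2+\tfrac12\norml{b}^2-\tfrac12\norml{a-b}^2$ and then drop the nonnegative $\tfrac{\eta}{2}\norml{\nabla J(\theta_k)}^2$, whereas the paper writes $\nabla J(\theta_k)=(\nabla J(\theta_k)-v_k)+v_k$ and applies Young's inequality $\inner{a}{b}\geq -\tfrac12(\norml{a}^2+\norml{b}^2)$ to the cross term; both yield the identical final bound.
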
 

Since, we do not specify the exact form of $v_k$,   \Cref{beginWith} is applicable to various algorithms such as AbaSVRPG and AbaSPIDER-PG with the same type of update rules.

We next present the variance bound of AbaSVRPG.
\begin{proposition} \label{variance_bound_SVRPG}
	Let Assuptions~\ref{g_assumption}, \ref{assumption}, and \ref{assumption_bound_variance}  hold. Then,  for $k = 0, \cdots K$, the variance of the gradient estimator $v_k$ of AbaSVRPG can be bounded as 
	\begin{align*} 
	\E  \norml{v_{k} - \nabla J( \theta_{k})}^2  \leq &(k-n_k) \frac{Q\eta^2}{B}  \sum_{i = n_k}^{k } \E \norml{ v_i }^2 +  \E  \norml{v_{n_k} -\nabla J(\theta_{n_k})}^2,
	\end{align*}
\end{proposition}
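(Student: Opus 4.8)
## Proof Proposal for Proposition~\ref{variance_bound_SVRPG}

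The plan is to establish the variance bound by a telescoping-style recursion over the inner-loop iterations within a fixed epoch. Fix $k$ and recall that $n_k = \floor{k/m}\times m$ is the starting index of the epoch containing $k$. For $k = n_k$ the claimed inequality is trivial since the first sum is empty; so I assume $k > n_k$, i.e., $\theta_k$ is produced by the SVRG-type recursive update $v_j = \frac{1}{B}\sum_{i=1}^B\big(g(\tau_i|\theta_j) - \omega(\tau_i|\theta_j,\tilde\theta)g(\tau_i|\tilde\theta)\big) + \tilde v$ with $\tilde\theta = \theta_{n_k}$ and $\tilde v = v_{n_k}$. The key is to bound the one-step increment in the variance $\E\norml{v_j - \nabla J(\theta_j)}^2$ relative to $\E\norml{v_{j-1} - \nabla J(\theta_{j-1})}^2$.

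First I would condition on the randomness up to $\theta_j$ and write $v_j - \nabla J(\theta_j) = \big(v_j - \tilde v - (\nabla J(\theta_j) - \nabla J(\tilde\theta))\big) + \big(\tilde v - \nabla J(\tilde\theta)\big)$. The crucial observation is that, conditioned on $\theta_j$, the first bracket has conditional mean zero: each summand $g(\tau_i|\theta_j) - \omega(\tau_i|\theta_j,\tilde\theta)g(\tau_i|\tilde\theta)$ has expectation $\nabla J(\theta_j) - \nabla J(\tilde\theta)$ under $\tau_i \sim p(\cdot|\theta_j)$, because the importance weight $\omega(\tau|\theta_j,\tilde\theta) = p(\tau|\tilde\theta)/p(\tau|\theta_j)$ changes the sampling distribution from $p(\cdot|\theta_j)$ to $p(\cdot|\tilde\theta)$. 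Hence the cross term vanishes in conditional expectation and I get $\E_j\norml{v_j - \nabla J(\theta_j)}^2 = \E_j\norml{v_j - \tilde v - (\nabla J(\theta_j) - \nabla J(\tilde\theta))}^2 + \norml{\tilde v - \nabla J(\tilde\theta)}^2$. For the first term, since the $B$ samples are drawn i.i.d., it equals $\frac{1}{B}$ times the per-sample variance, which by Lemma~\ref{lispchitz_g} is at most $\frac{Q}{B}\norml{\theta_j - \tilde\theta}^2 = \frac{Q}{B}\norml{\theta_j - \theta_{n_k}}^2$.

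Next I would expand $\theta_j - \theta_{n_k} = \eta\sum_{i=n_k}^{j-1} v_i$ and apply Cauchy--Schwarz: $\norml{\theta_j - \theta_{n_k}}^2 \leq \eta^2 (j - n_k)\sum_{i=n_k}^{j-1}\norml{v_i}^2$. But this would give a worse-looking bound; instead, a cleaner route is to set up the recursion $a_j := \E\norml{v_j - \nabla J(\theta_j)}^2$ and show $a_j \leq a_{j-1} + \frac{Q\eta^2}{B}\E\norml{v_{j-1}-\theta\text{-displacement}}$, i.e., to derive a telescoping inequality of the form $a_j - a_{j-1} \le \frac{Q\eta^2}{B}(\text{something})$, then sum from $n_k+1$ to $k$. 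Actually, the direct approach above already telescopes: using $\norml{v_j - \nabla J(\theta_j)}^2 \le \frac{Q}{B}\norml{\theta_j - \theta_{n_k}}^2 + \norml{\tilde v - \nabla J(\tilde\theta)}^2$ together with $\norml{\theta_j - \theta_{n_k}}^2 \le (j-n_k)\eta^2\sum_{i=n_k}^{j-1}\norml{v_i}^2 \le (k-n_k)\eta^2\sum_{i=n_k}^{k}\norml{v_i}^2$, taking total expectation yields exactly the claimed inequality $\E\norml{v_k - \nabla J(\theta_k)}^2 \le (k-n_k)\frac{Q\eta^2}{B}\sum_{i=n_k}^{k}\E\norml{v_i}^2 + \E\norml{v_{n_k} - \nabla J(\theta_{n_k})}^2$.

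The main obstacle I anticipate is handling the importance-sampling bias correctly: I must verify carefully that $\E_{\tau\sim p(\cdot|\theta_j)}[\omega(\tau|\theta_j,\tilde\theta)g(\tau|\tilde\theta)] = \nabla J(\tilde\theta)$, so that the conditional mean-zero property holds and the decomposition splits cleanly into an unbiased-noise term plus the residual $\norml{\tilde v - \nabla J(\tilde\theta)}^2$. This relies on Assumption~\ref{g_assumption} (unbiasedness of $g$) combined with the change-of-measure identity. A secondary technical point is making sure the conditioning and the tower property are applied consistently — the per-sample variance bound from Lemma~\ref{lispchitz_g} is conditional on $\theta_j$, and I need to take iterated expectations to remove all conditioning while keeping $\E\norml{v_i}^2$ on the right-hand side. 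Everything else is a routine application of Cauchy--Schwarz and the i.i.d.\ variance-of-average identity.
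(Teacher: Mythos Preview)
Your proposal is correct and follows essentially the same approach as the paper's proof: decompose $v_k - \nabla J(\theta_k)$ into a conditionally mean-zero term plus $\tilde v - \nabla J(\tilde\theta)$, use the importance-sampling identity to kill the cross term, bound the variance term via Lemma~\ref{lispchitz_g} and the i.i.d.\ average, then apply Cauchy--Schwarz to $\norml{\theta_k - \theta_{n_k}}^2 = \eta^2\norml{\sum_{i=n_k}^{k-1} v_i}^2$. Your brief detour toward a recursion $a_j - a_{j-1} \le \ldots$ is not needed here (that is the SPIDER-PG structure, where $v_j$ depends on $v_{j-1}$ rather than on a fixed $\tilde v$), and you correctly return to the direct bound, which is exactly what the paper does.
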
 
where $\norml{v_{i}} = 0$ for $i = -1, \cdots, -m$ for simple notations. 

\begin{proof}[Proof of \Cref{variance_bound_SVRPG}]
	To  bound the variance $\E \norml{v_k - \nabla J( \theta_{k})}^2$, it is sufficient to bound ${\E}_k \norml{v_k - \nabla J( \theta_{k})}^2 $ since  by the tower property of expectation we have $\E \norml{v_k - \nabla J( \theta_{k})}^2 = \E {\E}_k \norml{v_k - \nabla J( \theta_{k})}^2 $. Thus, we first bound ${\E}_k \norml{v_k - \nabla J( \theta_{k})}^2 $ for the case with $\mod(k,m) \neq 0$, and then generalize it to  the case with $\mod(k,m) = 0$.
	\begin{align*}
	{\E}_k &\norml{v_k - \nabla J( \theta_{k})}^2\\
	&\numequ{i}  {\E}_k \norml{\frac{1}{B} \sum_{i=1}^{B} g(\tau_i|\theta_k) - \frac{1}{B} \sum_{i=1}^{B} \omega(\tau_i| \theta_{k}, \tilde{\theta}) g(\tau_i|\tilde{\theta} ) + \tilde{v}- \nabla J( \theta_{k})}^2 \\
	&= {\E}_k \norml{\frac{1}{B} \sum_{i=1}^{B} g(\tau_i|\theta_k) - \frac{1}{B} \sum_{i=1}^{B} \omega(\tau_i| \theta_{k}, \tilde{\theta}) g(\tau_i|\tilde{\theta} ) + \nabla J( \tilde{\theta}) - \nabla J( \theta_{k}) + \tilde{v} -\nabla J( \tilde{\theta}) }^2 \\
	&={\E}_k \norml{\frac{1}{B} \sum_{i=1}^{B} g(\tau_i|\theta_k) - \frac{1}{B} \sum_{i=1}^{B} \omega(\tau_i| \theta_{k}, \tilde{\theta}) g(\tau_i|\tilde{\theta} ) + \nabla J( \tilde{\theta}) - \nabla J( \theta_{k})  }^2  + {\E}_k \norml{\tilde{v} -\nabla J( \tilde{\theta})}^2 \\
	&\qquad \quad + 2 {\E}_k\inner{\frac{1}{B} \sum_{i=1}^{B} g(\tau_i|\theta_k) - \frac{1}{B} \sum_{i=1}^{B} \omega(\tau_i| \theta_{k}, \tilde{\theta}) g(\tau_i|\tilde{\theta} ) + \nabla J( \tilde{\theta}) - \nabla J( \theta_{k})  }{\tilde{v} -\nabla J( \tilde{\theta})} \\
	&\numequ{ii}{\E}_k \norml{\frac{1}{B} \sum_{i=1}^{B} g(\tau_i|\theta_k) - \frac{1}{B} \sum_{i=1}^{B} \omega(\tau_i| \theta_{k}, \tilde{\theta}) g(\tau_i|\tilde{\theta} ) + \nabla J( \tilde{\theta}) - \nabla J( \theta_{k})  }^2  + {\E}_k \norml{\tilde{v} -\nabla J( \tilde{\theta})}^2 \\
	&\numleq{iii} \underset{\tau \sim p(\cdot|\theta_k)}{{\E}_k }\frac{1}{B}  \norml{  g(\tau |\theta_k) -     \omega(\tau | \theta_{k}, \tilde{\theta}) g(\tau |\tilde{\theta} )   + \nabla J( \tilde{\theta}) - \nabla J( \theta_{k})}^2  + {\E}_k \norml{\tilde{v} -\nabla J( \tilde{\theta})}^2 \\
	&\numleq{iv} \underset{\tau \sim p(\cdot|\theta_k)}{{\E}_k }\frac{1}{B}  \norml{  g(\tau |\theta_k) -     \omega(\tau | \theta_{k}, \tilde{\theta}) g(\tau |\tilde{\theta} )  }^2  + {\E}_k \norml{\tilde{v} -\nabla J( \tilde{\theta})}^2 \\ 
	&\numleq{v} \frac{Q}{B} \norml{\theta_{k} - \tilde{\theta}}^2 + {\E}_k \norml{\tilde{v} -\nabla J( \tilde{\theta})}^2 = \frac{Q}{B} \norml{\theta_{k} -\theta_{k-1} + \theta_{k-1} \cdots \theta_{n_k}}^2 + {\E}_k \norml{\tilde{v} -\nabla J( \tilde{\theta})}^2 \\
	&\numleq{vi} \frac{Q}{B} (k-n_k) \sum_{i = n_k}^{k-1} \norml{\theta_{i+1} -\theta_{i } }^2 + {\E}_k \norml{\tilde{v} -\nabla J( \tilde{\theta})}^2, 
	\end{align*}
	where (i) follows from the definition of $v_k$ in \Cref{SVRPG}, (ii) follows from the fact that $\E_k\left[ \frac{1}{B} \sum_{i=1}^{B} g(\tau_i|\theta_k) - \frac{1}{B} \sum_{i=1}^{B} \omega(\tau_i| \theta_{k}, \tilde{\theta}) g(\tau_i|\tilde{\theta} ) + \nabla J( \tilde{\theta}) - \nabla J( \theta_{k}) \right]= 0$, and thus given $\theta_{k}, \cdots, \tilde{\theta}$, the expectation of the inner product is $0$, (iii) follows from \Cref{batch_variance}, (iv) follows from the fact that $\Var(X) \leq \E \norml{X}^2$, (v) follows from \Cref{lispchitz_g} we provide in \Cref{proof_of_lemmas}, and (vi) follows from the vector inequality that $\norml{\sum_{i=1}^{m} \theta_{i}}^2 \leq m \sum_{i=1}^{m} \norml{\theta_i}^2$.
	
	Therefore, we have
	\begin{align*}
	\E  \norml{v_k - \nabla J( \theta_{k})}^2 &= \E {\E}_k \norml{v_k - \nabla J( \theta_{k})}^2 \\
	& \leq \frac{Q}{B} (k-n_k) \sum_{i = n_k}^{k-1} \E \norml{\theta_{i+1} -\theta_{i } }^2 + \E \norml{\tilde{v} -\nabla J( \tilde{\theta})}^2 \\
	& \leq \frac{Q}{B} (k-n_k) \sum_{i = n_k}^{k } \E \norml{\theta_{i+1} -\theta_{i } }^2 + \E \norml{\tilde{v} -\nabla J( \tilde{\theta})}^2  \\
	& \leq \frac{Q}{B} (k-n_k) \eta^2 \sum_{i = n_k}^{k } \E \norml{ v_i }^2 + \E \norml{\tilde{v} -\nabla J( \tilde{\theta})}^2  \\ 
	&\numequ{i} (k-n_k)  \frac{Q\eta^2 }{B} \sum_{i = n_k}^{k } \E \norml{ v_i }^2 + \E \norml{v_{n_k} -\nabla J( \theta_{n_k})}^2, 
	\end{align*}
	where (i) follows from the fact that at iteration $k$, $\tilde{v} = v_{n_k}$ and $\tilde{\theta} = \theta_{n_k}$. It is also straightforward to check that the above inequality  holds for any $k$ with $\mod(k,m) = 0$.
\end{proof}

\subsection*{Proof of \Cref{SVRPG_conv}} \label{C_SVRPG} 
	Since in \Cref{SVRPG}, $\nabla J$ is $L$-Lipschitz, and $\theta_{k+1} = \theta_{k+1} + \eta v_k$, we obtain the following inequality directly from \Cref{beginWith}:
	\begin{align} \label{svrpg_iteration}
	\E{J(\theta_{k+1}) } - \E J(\theta_{k}) \geq   {\left(\frac{\eta}{2} - \frac{L\eta^2}{2}\right) \E \norml{v_k}^2}  -  {\frac{\eta}{2} \E \norml{v_k - \nabla J( \theta_{k})}^2} .
	\end{align}

	By \Cref{variance_bound_SVRPG}, we have following variance bound:
	\begin{align}
	\E  \norml{v_k - \nabla J( \theta_{k})}^2  & \leq (k-n_k)\frac{Q\eta^2}{B}   \sum_{i = n_k}^{k } \E \norml{ v_i }^2 + \E \norml{v_{n_k} -\nabla J( \theta_{n_k})}^2 \label{svrpg_var_bound_0}
	\end{align}

	Moreover,  for $\mod(k,m) = 0$, we obtain
	\begin{align*}
	\mathbb{E} \|v_k - \nabla f(x_k)\|^2 &= \mathbb{E} \left\| \frac{1}{N}\sum_{i=1}^{N} \nabla g(\tau_i|\theta_{k}) - \nabla J(\theta_{k}) \right\|^2\\
	& \numequ{i}  \frac{1}{N } \E_{\tau \sim p(\cdot|\theta_k) }\left\|   \nabla g(\tau |\theta_{k}) - \nabla J(\theta_{k}) \right\|^2\overset{(ii )}{\leqslant} \frac{\sigma^2}{N} \\
	&\overset{(iii)}{\leqslant}  \frac{  \beta  }{ \alpha m } \sum_{i=n_k-m}^{n_k-1} \norml{v_i}^2   + \frac{\epsilon}{\alpha}.  \numberthis \label{SVRPG_para_1}
	\end{align*}
	where (i) follows from \Cref{batch_variance}, (ii) follows from \Cref{assumption_bound_variance}, and (iii) follows from the fact that $$N = \frac{\alpha \sigma^2}{\frac{\beta}{m} \sum_{i=n_k-m}^{n_k-1} \norml{v_i}^2 + \epsilon},$$  where $\alpha > 0$ and $\beta \geqslant 0$.
	
	Plugging \eqref{SVRPG_para_1} into \eqref{svrpg_var_bound_0}, we obtain
	\begin{align*}
	\E  \norml{v_k - \nabla J( \theta_{k})}^2  & \leq  (k-n_k)  \frac{Q \eta^2}{B} \sum_{i = n_k}^{k } \E \norml{ v_i }^2 + \frac{  \beta   }{ \alpha m }\sum_{i=n_k-m}^{n_k-1} \norml{v_i}^2   + \frac{\epsilon}{\alpha}. \numberthis \label{svrpg_var_bound} 
	\end{align*}
	
	Plugging \eqref{svrpg_var_bound} into \eqref{svrpg_iteration}, we obtain
	{ \small
	\begin{align*}
	\E{J(\theta_{k+1}) } - \E J(\theta_{k}) \geq   {\left(\frac{\eta}{2} - \frac{L\eta^2}{2}\right) \E \norml{v_k}^2}  - \frac{Q\eta^3}{2B} (k-n_k)  \sum_{i = n_k}^{k } \E \norml{ v_i }^2  -  \frac{  \eta \beta   }{ 2\alpha m }\sum_{i=n_k-m}^{n_k-1} \norml{v_i}^2   - \frac{\epsilon \eta}{2 \alpha} .
	\end{align*}
	}

	We note that for a given $k$, any iteration $ n_k \leq i \leq k$ shares the same $\tilde{\theta}$, and all their corresponding $n_i$ satisfies $n_i = n_k$. Thus, take the summation of the above  inequality over $k$ from $n_k$ to $k$, we obtain
	\begin{align*}
	\E&{J(\theta_{k+1}) } -  \E J(\theta_{n_k})\\
	&\geq   {\left(\frac{\eta}{2} - \frac{L\eta^2}{2}\right) \sum_{i=n_k}^{k} \E \norml{v_i}^2}  - \frac{Q\eta^3}{2B} \sum_{i=n_k}^{k} (i-n_k)  \sum_{j = n_k}^{i } \E \norml{ v_j }^2  -   \sum_{i=n_k}^{k} \frac{  \eta \beta    }{ 2\alpha m }\sum_{j=n_k-m}^{n_k-1} \norml{v_j}^2  -   \sum_{i=n_k}^{k}  \frac{\epsilon \eta}{2 \alpha}\\ 
	&\geq   {\left(\frac{\eta}{2} - \frac{L\eta^2}{2}\right) \sum_{i=n_k}^{k} \E \norml{v_i}^2}  - \frac{Q\eta^3}{2B} \sum_{i=n_k}^{k} (k-n_k)  \sum_{j = n_k}^{k } \E \norml{ v_j }^2  -     \sum_{i=n_k}^{k} \frac{  \eta \beta    }{ 2\alpha m }\sum_{j=n_k-m}^{n_k-1} \norml{v_j}^2  -   \sum_{i=n_k}^{k}  \frac{\epsilon \eta}{2 \alpha}\\
	&=   {\left(\frac{\eta}{2} - \frac{L\eta^2}{2}\right) \sum_{i=n_k}^{k} \E \norml{v_i}^2}  - \frac{Q\eta^3 (k-n_k)  (k-n_k+1) }{2B}   \sum_{j = n_k}^{k } \E \norml{ v_j }^2  -  \sum_{i=n_k}^{k} \frac{  \eta \beta   }{ 2\alpha m }\sum_{j=n_k-m}^{n_k-1} \norml{v_j}^2   -   \sum_{i=n_k}^{k}  \frac{\epsilon \eta}{2 \alpha}\\
	&=    \left(\frac{\eta}{2} - \frac{L\eta^2}{2} - \frac{Q\eta^3 (k-n_k)  (k-n_k+1) }{2B} \right)     \sum_{i = n_k}^{k } \E \norml{ v_i }^2   -   \sum_{i=n_k}^{k} \frac{  \eta \beta   }{ 2\alpha m }  \sum_{j=n_k-m}^{n_k-1} \norml{v_j}^2 -   \sum_{i=n_k}^{k}  \frac{\epsilon \eta}{2 \alpha}\\
	&\numgeq{i}  \left(\frac{\eta}{2} - \frac{L\eta^2}{2} - \frac{Q\eta^3m^2}{2B} \right)     \sum_{i = n_k}^{k } \E \norml{ v_i }^2   -   \sum_{i=n_k}^{k} \frac{  \eta \beta    }{ 2\alpha m } \sum_{j=n_k-m}^{n_k-1} \norml{v_j}^2 -   \sum_{i=n_k}^{k}  \frac{\epsilon \eta}{2 \alpha} \\
	&\numequ{ii} \phi  \sum_{i = n_k}^{k } \E \norml{ v_i }^2   -  \frac{\eta \beta (k - n_k + 1)}{2 \alpha m}  \sum_{i=n_k-m}^{n_k-1} \norml{v_i}^2    -   \sum_{i=n_k}^{k}  \frac{\epsilon \eta}{2 \alpha} \\
	&\numgeq{iii} \phi  \sum_{i = n_k}^{k } \E \norml{ v_i }^2   -  \frac{\eta \beta  }{2 \alpha  }  \sum_{i=n_k-m}^{n_k-1} \norml{v_i}^2    -   \sum_{i=n_k}^{k}  \frac{\epsilon \eta}{2 \alpha},  \numberthis \label{SVRPG_final}
	\end{align*}
	where (i) follows from the fact that $k-n_k < k-n_k +1 \leq m$,   (ii) follows from the fact  that $\phi \defeq \left(\frac{\eta}{2} - \frac{L\eta^2}{2} - \frac{Q\eta^3m^2}{2B} \right)$, and (iii) follows because $(k - n_k + 1)/m \leqslant 1$.
	
	Now, we are ready to bound $J(\theta_{K+1}) - J(\theta_{0})$.
	\begin{align*}
	\E &J(\theta_{K+1})  - \E J(\theta_{0}) \\
	&= \E   J(\theta_{K+1}) -\E J(\theta_{n_K}) +  \E J(\theta_{n_K}) \cdots + \E J(\theta_{m}) - \E J(\theta_{0}) \\
	&\numgeq{i}   \phi  \sum_{i = n_K}^{K } \E \norml{ v_i }^2  -      \frac{\eta \beta  }{2 \alpha  }  \sum_{i=n_K-m}^{n_K-1} \norml{v_i}^2    -   \sum_{i=n_K}^{K}  \frac{\epsilon \eta}{2 \alpha} +  \cdots + \phi  \sum_{i = 0}^{m-1} \E \norml{ v_i }^2  -     \frac{\eta \beta  }{2 \alpha  }  \sum_{i= -m}^{-1} \norml{v_i}^2    -   \sum_{i=0}^{m}  \frac{\epsilon \eta}{2 \alpha} \\
	&\numgeq{ii}  \phi  \sum_{i = 0}^{K }\E \norml{ v_i }^2 -     \frac{\eta \beta  }{2 \alpha  }  \sum_{i=0}^{n_K-1} \norml{v_i}^2    -   \sum_{i=0}^{K}  \frac{\epsilon \eta}{2 \alpha} \\
	&\geqslant \left(\phi  -     \frac{\eta \beta  }{2 \alpha  }  \right) \sum_{i = 0}^{K }\E \norml{ v_i }^2  -   \sum_{i=0}^{K}  \frac{\epsilon \eta}{2 \alpha},
	\end{align*}
	where (i) follows from \eqref{SVRPG_final}, and (ii) follows from the fact that we define $\norml{v_{-1}} = \cdots =  \norml{v_{-m}} = 0$.
	
	Thus, we obtain
	\begin{align*}
	 \left(\phi  -     \frac{\eta \beta  }{2 \alpha  }  \right)  \sum_{i = 0}^{K }\E \norml{ v_i }^2 & \leq 	\E J(\theta_{K+1}) - \E J(\theta_{0}) +    \sum_{i=0}^{K}  \frac{\epsilon \eta}{2 \alpha}\\
	& \numleq{i}  J(\theta^*) -  J(\theta_{0}) +    \sum_{i=0}^{K}  \frac{\epsilon \eta}{2 \alpha}, 
	\end{align*}
	where (i) follows because $\theta^* \defeq \arg  \underset{\theta \in \mathbb{R}^d}{\max} J(\theta) $.  Here, we  assume  $\left(\phi  -     \frac{\eta \beta  }{2 \alpha  }  \right) >0$  to continue our proof. Such an assumption can be satisfied by parameter tuning  as shown in \eqref{SVRPG_para_0}. Therefore, we obtain
	\begin{align}
	\sum_{i = 0}^{K }\E \norml{ v_i }^2 \leq \left(\phi  -     \frac{\eta \beta  }{2 \alpha  }  \right)^{-1}\left( {J(\theta^*) -  J(\theta_{0})} +    \sum_{i=0}^{K}  \frac{\epsilon \eta}{2 \alpha}\right).   \label{SVRPG_finite_bound}
	\end{align}
	
	With \eqref{SVRPG_finite_bound}, we next bound the gradient norm, i.e., $\norml{\nabla J(\theta_{\xi})}$, of the output of AbaSVRPG. Observe that
	\begin{align}
	\mathbb{E}  \|\nabla J(\theta_\xi)\|^2  = \mathbb{E}  \|\nabla J(\theta_\xi) - v_\xi + v_\xi\|^2 \leq 2\mathbb{E}  \|\nabla J(\theta_\xi) - v_\xi \|^2 + 2\mathbb{E}  \| v_\xi \|^2 \label{SVRPG_final_0}.
	\end{align}
	Therefore, it is sufficient to bound the two terms on the right hand side of the above inequality. First, note that
	\begin{align}
	\mathbb{E} \|v_\xi\|^2  \numequ{i} \frac{1}{K+1}\sum_{i=0}^{K} \mathbb{E} \|v_i\|^2 \numleq{ii}  \left(\phi  -     \frac{\eta \beta  }{2 \alpha  }  \right)^{-1}\left( \frac{J(\theta^*) -  J(\theta_{0})}{K+1} +     \frac{\epsilon \eta}{2 \alpha}\right), \label{SVRPGeq: 5}
	\end{align}
	where (i) follows from the fact that $\xi$ is selected uniformly at random from $\{0,\ldots,K\}$, and (ii) follows from \eqref{SVRPG_finite_bound}. On the other hand, we observe that
	\begin{align*}
	&\quad \mathbb{E}   \|\nabla J(\theta_\xi)-v_\xi\|^2\\
	  &\numequ{i} \frac{1}{K+1} \sum_{k = 0}^{K} \mathbb{E}  \|\nabla J(\theta_k)-v_k\|^2 \\ 
	&\numleq{ii}  \frac{1}{K+1} \sum_{k = 0}^{K}\left( (k-n_k)  \frac{Q \eta^2}{B} \sum_{i = n_k}^{k } \E \norml{ v_i }^2 + \frac{  \beta  }{ \alpha m }\sum_{i=n_k-m}^{n_k-1} \norml{v_i}^2    + \frac{\epsilon}{\alpha}\right) \\
	&\numequ{iii}   \frac{Q \eta^2 m}{B(K+1)} \sum_{k = 0}^{K}    \sum_{i = n_k}^{k } \E \norml{ v_i }^2  + \frac{\beta}{\alpha m(K+1)} \sum_{k = 0}^{K} \sum_{i=n_k-m}^{n_k-1} \norml{v_i}^2  + \frac{\epsilon}{\alpha} \\
	&\numequ{iv}  \frac{Q \eta^2 m}{B(K+1)} \left(\sum_{k = 0}^{m-1}    \sum_{i = 0}^{k } \E \norml{ v_i }^2 + \cdots + \sum_{k = n_K}^{K}    \sum_{i = n_K}^{k } \E \norml{ v_i }^2 \right)  + \frac{\beta}{\alpha m(K+1)} \sum_{k = 0}^{K} \sum_{i=n_k-m}^{n_k-1} \norml{v_i}^2  + \frac{\epsilon}{\alpha} \\
	&\leqslant \frac{Q \eta^2 m}{B(K+1)} \left(\sum_{k = 0}^{m-1}    \sum_{i = 0}^{m-1 } \E \norml{ v_i }^2 + \cdots + \sum_{k = n_K}^{K}    \sum_{i = n_K}^{K } \E \norml{ v_i }^2 \right)  + \frac{\beta}{\alpha m(K+1)} \sum_{k = 0}^{K} \sum_{i=n_k-m}^{n_k-1} \norml{v_i}^2  + \frac{\epsilon}{\alpha} \\
	&\numleq{v} \frac{Q \eta^2 m^2}{B(K+1)}  \sum_{i = 0}^{K } \E \norml{ v_i }^2    + \frac{\beta}{\alpha m(K+1)} \sum_{k = 0}^{K} \sum_{i=n_k-m}^{n_k-1} \norml{v_i}^2  + \frac{\epsilon}{\alpha} \\
	&\numleq{vi} \frac{Q \eta^2 m^2}{B(K+1)}  \sum_{i = 0}^{K } \E \norml{ v_i }^2    + \frac{\beta}{\alpha m(K+1)} \left(\sum_{k = 0}^{m-1} \sum_{i=-m}^{-1} \norml{v_i}^2 + \cdots + \sum_{k = n_K}^{K} \sum_{i=n_K-m}^{n_K-1} \norml{v_i}^2 \right) + \frac{\epsilon}{\alpha} \\
	&\numleq{vii}\frac{Q \eta^2 m^2}{B(K+1)}  \sum_{i = 0}^{K } \E \norml{ v_i }^2    + \frac{\beta}{\alpha  (K+1)}  \sum_{i = -m}^{n_K-1}   \norml{v_i}^2   + \frac{\epsilon}{\alpha} \\
	&\numleq{viii}   \frac{1}{K+1} \left(\frac{Q \eta^2 m^2}{B} + \frac{\beta}{\alpha} \right) \sum_{i = 0}^{K } \E \norml{ v_i }^2    + \frac{\epsilon}{\alpha} \\ 
	&\numleq{viiii} \frac{1}{K+1}  \left(\frac{Q \eta^2 m^2}{B} + \frac{\beta}{\alpha} \right)  
	\left(\phi  -     \frac{\eta \beta  }{2 \alpha  }  \right)^{-1}\left( {J(\theta^*) -  J(\theta_{0})} +    \sum_{i=0}^{K}  \frac{\epsilon \eta}{2 \alpha}\right)    + \frac{\epsilon}{\alpha} \\ 
	&=  \left(\frac{Q \eta^2 m^2}{B} + \frac{\beta}{\alpha} \right)  
	\left(\phi  -     \frac{\eta \beta  }{2 \alpha  }  \right)^{-1} \frac{\left( {J(\theta^*) -  J(\theta_{0})} \right)  }{K+1}   + \frac{\epsilon}{\alpha}  \left(1 + \frac{\eta}{2} \left(\frac{Q \eta^2 m^2}{B} + \frac{\beta}{\alpha} \right)   \left(\phi  -     \frac{\eta \beta  }{2 \alpha  }  \right)^{-1}\right) \numberthis \label{SVRPG_final_2}
	\end{align*}
	where (i) follows from the fact that $\xi$ is selected uniformly at random from $\{0,\ldots,K\}$, (ii) follows from \eqref{svrpg_var_bound}, (iii) follows from the fact that $k-n_k \leq m$, (iv) follows from the fact that for $n_k \leqslant k \leqslant n_k + m - 1$, $n_i = n_k$. (v) follows from $\sum_{k = n_k}^{n_k + m-1}    \sum_{i = n_k}^{n_k + m-1 } \E \norml{ v_i }^2 = m \sum_{i = n_k}^{n_k + m-1 } \E \norml{ v_i }^2$, (vi) follows from the same reasoning as in (iv), 	(vii) follows from $\sum_{k = n_k}^{n_k + m-1}    \sum_{i = n_k-m}^{ n_k-1 } \E \norml{ v_i }^2 = m \sum_{i = n_k-m}^{ n_k-1 } \E \norml{ v_i }^2$, (viii) follows from $\norml{v_{-1}} = \cdots =  \norml{v_{-m}} = 0$, and (viiii) follows from \cref{SVRPG_finite_bound}.  
	
	Substituting \eqref{SVRPGeq: 5}, \eqref{SVRPG_final_2} into \eqref{SVRPG_final_0}, we obtain
	\begin{align*}
	\mathbb{E} \|\nabla J(\theta_\xi)\|^2  &\le \frac{2}{K+1} \left(1 + \frac{Q \eta^2 m^2}{B} + \frac{\beta}{\alpha}\right)  \left(\phi  -     \frac{\eta \beta  }{2 \alpha  }  \right)^{-1}\left( {J(\theta^*) -  J(\theta_{0})} \right) \\
	& \qquad \qquad + \frac{2\epsilon}{\alpha} \left(  1 + \frac{\eta}{2} \left(1 + \frac{Q \eta^2 m^2}{B} + \frac{\beta}{\alpha} \right)   \left(\phi  -     \frac{\eta \beta  }{2 \alpha  }  \right)^{-1}\right)  \numberthis \label{converge_SVRPG}
	\end{align*}
	
\subsection{Proof of  \Cref{cor:SVRPG_sto}}
	
	Based on the parameter setting in \Cref{SVRPG_conv} that 
	\begin{align}
	\eta = \frac{1}{2L}, m =  \left( \frac{L^2\sigma^2}{Q\epsilon} \right)^{\frac{1}{3}} , B= \left( \frac{Q\sigma^4}{L^2\epsilon^2} \right)^{\frac{1}{3}}, \alpha = 48, \text{ and }   \beta = 6,  \label{SVRPG_para_2}
	\end{align} 
	we obtain
	\begin{align}
	\phi - \frac{\eta \beta  }{2 \alpha  }= \left(\frac{1}{4L} - \frac{1}{8L} - \frac{1}{16L} \right) - \frac{1}{32L}= \frac{1}{32L} > 0. \label{SVRPG_para_0}
	\end{align}

	Plugging \eqref{SVRPG_para_2} and \eqref{SVRPG_para_0}   into \eqref{converge_SVRPG}, we obtain 
	\begin{align*}
	\E \norml{\nabla J(\theta_{\xi})}^2 \leq  \frac{88L}{K+1} \left( J(\theta^*) -  J(\theta_{0})\right)    +  \frac{\epsilon}{2}.
	\end{align*}
	Hence, AbaSVRPG converges at a rate of $\mathcal{O}(1/K)$. Next, we bound the STO complexity. To acheive $\epsilon$ accuracy, we need 
	\begin{align*}
	\frac{88L}{K+1} \left( J(\theta^*) -  J(\theta_{0})\right)  \leq \frac{\epsilon}{2},
	\end{align*}
	which gives
	\begin{align*}
	K = \frac{176L\left(J(\theta^*) -  J(\theta_{0})\right)}{\epsilon} .
	\end{align*}
	We note that for $\mod(k,m) =  0$, the outer loop batch size 
	$ N =  \frac{\alpha \sigma^2}{\frac{\beta}{m} \sum_{i=n_k-m}^{n_k-1} \norml{v_i}^2 + \epsilon} \leq \frac{\alpha \sigma^2}{\epsilon}$. Hence, the overall STO complexity is given by
	\begin{align*}
	K\times 2B + \sum_{k=0}^{n_K} \frac{\alpha \sigma^2}{\frac{\beta}{m} \sum_{i=km-m}^{km-1} \norml{v_i}^2 + \epsilon}   &\leqslant K\times 2B + \sum_{k=0}^{n_K}   \frac{\alpha \sigma^2}{\epsilon}  \leqslant K\times 2B + \ceil{\frac{K}{m}}\times \frac{\alpha \sigma^2}{\epsilon} \\
	  &\numleq{i} 2KB + \frac{K }{m} \frac{\alpha \sigma^2}{\epsilon}   +  \frac{\alpha \sigma^2}{\epsilon}   \\
	&\numequ{ii} \mathcal{O} \left(\left(\frac{L\left(J(\theta^*) -  J(\theta_{0})\right)}{\epsilon}\right) \left( \left( \frac{Q\sigma^4}{L^2\epsilon^2} \right)^{\frac{1}{3}} + \frac{\sigma^2}{\epsilon} \left( \frac{Q\epsilon}{L^2\sigma^2} \right)^{\frac{1}{3}}  \right) + \frac{\sigma^2}{\epsilon}  \right) \\
	&= \mathcal{O} \left(\left(\frac{L\left(J(\theta^*) -  J(\theta_{0})\right)}{\epsilon}\right)  \left( \frac{Q\sigma^4}{L^2\epsilon^2} \right)^{\frac{1}{3}}   + \frac{\sigma^2}{\epsilon}  \right) \\
	&= \mathcal{O} \left(  \epsilon^{-5/3} + \epsilon^{-1} \right),
	\end{align*}
	where (i) follows from the fact that $\ceil{\frac{K}{m}}\times N \leq \frac{KN}{m}  + N $, and (ii) follows from the parameters setting of $K,B, \text{ and }m$ in \eqref{SVRPG_para_2}.  

\subsection{Proof of \Cref{SpiderPG_conv}}\label{sec_variance_bound_SpiderPG} 
In this section, we provide the proof of AbaSPIDER-PG. We first bound the variance of AbaSPIDER-PG given in the following proposition. 

\begin{proposition} \label{variance_bound_SpiderPG}
	Let Assumptions~\ref{g_assumption}, \ref{assumption}, and \ref{assumption_bound_variance}  hold. For $k = 0,... K$, gradient estimator $v_k$ of AbaSPIDER-PG satisfies 
	\begin{align*} 
	\E  \norml{v_{k} - \nabla J( \theta_{k})}^2  \leq &   \frac{Q\eta^2}{B}  \sum_{i = n_k}^{k } \E \norml{ v_i }^2 
+  \E  \norml{v_{n_k} -\nabla J(\theta_{n_k})}^2.
	\end{align*}
\end{proposition}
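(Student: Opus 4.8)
The plan is to follow the same template as the proof of \Cref{variance_bound_SVRPG}, but to exploit the recursive (SPIDER-type) structure of the estimator
$v_k = \tfrac1B\sum_{i=1}^{B}\big(g(\tau_i|\theta_k)-\omega(\tau_i|\theta_k,\theta_{k-1})g(\tau_i|\theta_{k-1})\big)+v_{k-1}$ so that the variance obeys a \emph{one-step} recursion rather than accumulating over the whole epoch. By the tower property, $\E\norml{v_k-\nabla J(\theta_k)}^2 = \E\,\E_k\norml{v_k-\nabla J(\theta_k)}^2$, so it suffices to control $\E_k\norml{v_k-\nabla J(\theta_k)}^2$ in the case $\mathrm{mod}(k,m)\neq 0$; the case $\mathrm{mod}(k,m)=0$ is immediate because then $n_k=k$ and the claimed bound reduces to $\E\norml{v_{n_k}-\nabla J(\theta_{n_k})}^2\le\E\norml{v_{n_k}-\nabla J(\theta_{n_k})}^2$.

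\emph{Step 1 (orthogonal decomposition).} Write $v_k-\nabla J(\theta_k) = A_k + \big(v_{k-1}-\nabla J(\theta_{k-1})\big)$, where $A_k = \tfrac1B\sum_{i=1}^{B}\big(g(\tau_i|\theta_k)-\omega(\tau_i|\theta_k,\theta_{k-1})g(\tau_i|\theta_{k-1})\big)-\big(\nabla J(\theta_k)-\nabla J(\theta_{k-1})\big)$. Conditioned on $\theta_0,\dots,\theta_k$, the vectors $\theta_{k-1},\theta_k$ and $v_{k-1}=(\theta_k-\theta_{k-1})/\eta$ are fixed, and the only fresh randomness is the batch $\{\tau_i\}\sim p(\cdot|\theta_k)$. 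Using the importance-sampling identity $\E_{\tau\sim p(\cdot|\theta_k)}[\omega(\tau|\theta_k,\theta_{k-1})g(\tau|\theta_{k-1})]=\nabla J(\theta_{k-1})$ together with \Cref{g_assumption}, we get $\E_k[A_k]=0$, so the cross term vanishes and $\E_k\norml{v_k-\nabla J(\theta_k)}^2 = \E_k\norml{A_k}^2 + \norml{v_{k-1}-\nabla J(\theta_{k-1})}^2$.

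\emph{Step 2 (variance of the batch term).} Since the $\tau_i$ are i.i.d., $\E_k\norml{A_k}^2 = \tfrac1B\Var_{\tau\sim p(\cdot|\theta_k)}\big(g(\tau|\theta_k)-\omega(\tau|\theta_k,\theta_{k-1})g(\tau|\theta_{k-1})\big)$ (via \Cref{batch_variance}), which is at most $\tfrac1B\E_{\tau\sim p(\cdot|\theta_k)}\norml{g(\tau|\theta_k)-\omega(\tau|\theta_k,\theta_{k-1})g(\tau|\theta_{k-1})}^2 \le \tfrac QB\norml{\theta_k-\theta_{k-1}}^2 = \tfrac{Q\eta^2}{B}\norml{v_{k-1}}^2$ by \Cref{lispchitz_g}. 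Taking full expectations yields the recursion $\E\norml{v_k-\nabla J(\theta_k)}^2 \le \tfrac{Q\eta^2}{B}\E\norml{v_{k-1}}^2 + \E\norml{v_{k-1}-\nabla J(\theta_{k-1})}^2$. Iterating this down to the epoch anchor $n_k$ (all iterates in one epoch share the same anchor $\theta_{n_k}$, and the convention $\norml{v_i}=0$ for $i<0$ covers the first epoch) gives $\E\norml{v_k-\nabla J(\theta_k)}^2 \le \tfrac{Q\eta^2}{B}\sum_{i=n_k}^{k-1}\E\norml{v_i}^2 + \E\norml{v_{n_k}-\nabla J(\theta_{n_k})}^2$; adding the nonnegative term $\tfrac{Q\eta^2}{B}\E\norml{v_k}^2$ extends the sum to $\sum_{i=n_k}^{k}$, which is the stated bound.

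The computation is routine once the decomposition is in place; the only point that requires care — and the structural reason why this bound has no $(k-n_k)$ prefactor, unlike \Cref{variance_bound_SVRPG} — is the vanishing of the cross term in Step 1, i.e.\ that the fresh-batch part of $v_k$ is a conditionally unbiased estimate of $\nabla J(\theta_k)-\nabla J(\theta_{k-1})$. Because each increment references the \emph{immediately} preceding iterate rather than the epoch anchor, the martingale-difference structure makes the per-step variance contributions add up linearly, and this is exactly the mechanism that later gives AbaSPIDER-PG its improved STO complexity over AbaSVRPG.
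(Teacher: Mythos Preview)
Your proposal is correct and follows essentially the same approach as the paper: the same orthogonal decomposition showing $\E_k[A_k]=0$, the same use of \Cref{batch_variance} and \Cref{lispchitz_g} to bound the per-step increment by $\tfrac{Q\eta^2}{B}\norml{v_{k-1}}^2$, and the same telescoping down to $n_k$ followed by extending the sum to include the $i=k$ term. Your added remark explaining why the $(k-n_k)$ prefactor disappears compared to \Cref{variance_bound_SVRPG} is also spot-on and mirrors the paper's own commentary.
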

Comparing \Cref{variance_bound_SpiderPG} and \Cref{variance_bound_SVRPG}, one can clearly see that AbaSPIDER-PG has a much smaller variance bound than AbaSVRPG, particularly as the inner loop iteration goes further (i.e., as $k$ increases). This is because AbaSVRPG uses the initial outer loop batch gradient to construct the gradient estimator in all inner loop iterations, so that the variance in the inner loop accumulates up as the iteration goes further. In contrast, AbaSPIDER-PG  avoids such a variance accumulation problem by continuously using the gradient information from the immediate preceding step, and hence has less variance during the inner loop iteration. 
\begin{proof}[Proof of \Cref{variance_bound_SpiderPG}]
	To  bound the variance $\E \norml{v_k - \nabla J( \theta_{k})}^2$, it is sufficient to bound ${\E}_k \norml{v_k - \nabla J( \theta_{k})}^2 $,  and then the tower property of expectation yields the desired result. Thus, we first bound ${\E}_k \norml{v_k - \nabla J( \theta_{k})}^2 $ for $\mod(k,m) \neq 0$, and then generalize it to   $\mod(k,m) = 0$.
	{
		\begin{align*}
		{\E}_k &\norml{v_k - \nabla J( \theta_{k})}^2\\
		&\numequ{i}  {\E}_k \norml{ \frac{1}{B} \sum_{i=1}^{B} g(\tau_i| \theta_k) - \frac{1}{B} \sum_{i=1}^{B} \omega(\tau_i| \theta_{k}, \theta_{k-1}) g(\tau_i| \theta_{k-1}) + v_{k-1} - \nabla J( \theta_{k})}^2 \\
		&= {\E}_k \norml{ \frac{1}{B} \sum_{i=1}^{B} g(\tau_i| \theta_k) - \frac{1}{B} \sum_{i=1}^{B} \omega(\tau_i| \theta_{k}, \theta_{k-1}) g(\tau_i| \theta_{k-1})  + \nabla J( \theta_{k-1}) - \nabla J( \theta_{k}) +   v_{k-1} -\nabla J(\theta_{k-1}) }^2 \\
		&={\E}_k \norml{\frac{1}{B} \sum_{i=1}^{B} g(\tau_i|\theta_k) - \frac{1}{B} \sum_{i=1}^{B} \omega(\tau_i| \theta_{k}, \theta_{k-1}) g(\tau_i|\theta_{k-1} ) + \nabla J(\theta_{k-1}) - \nabla J( \theta_{k})  }^2  + {\E}_k \norml{v_{k-1} -\nabla J( \theta_{k-1})}^2 \\
		&\qquad \quad + 2 {\E}_k\inner{\frac{1}{B} \sum_{i=1}^{B} g(\tau_i|\theta_k) - \frac{1}{B} \sum_{i=1}^{B} \omega(\tau_i| \theta_{k}, \theta_{k-1}) g(\tau_i|\theta_{k-1}) + \nabla J( \theta_{k-1}) - \nabla J( \theta_{k})  }{v_{k-1} -\nabla J(\theta_{k-1})} \\
		&\numequ{ii}{\E}_k \norml{\frac{1}{B} \sum_{i=1}^{B} g(\tau_i|\theta_k) - \frac{1}{B} \sum_{i=1}^{B} \omega(\tau_i| \theta_{k}, \theta_{k-1}) g(\tau_i|\theta_{k-1}) + \nabla J( \theta_{k-1}) - \nabla J( \theta_{k})  }^2  + {\E}_k \norml{v_{k-1} -\nabla J(\theta_{k-1})}^2 \\
		&\numleq{iii} \underset{\tau \sim p(\cdot|\theta_k)}{{\E}_k }\frac{1}{B}  \norml{  g(\tau |\theta_k) -     \omega(\tau | \theta_{k}, \theta_{k-1}) g(\tau |\theta_{k-1})   + \nabla J( \theta_{k-1}) - \nabla J( \theta_{k})}^2  + {\E}_k \norml{v_{k-1} -\nabla J( \theta_{k-1})}^2 \\
		&\numleq{iv} \underset{\tau \sim p(\cdot|\theta_k)}{{\E}_k }\frac{1}{B}  \norml{  g(\tau |\theta_k) -     \omega(\tau | \theta_{k}, \theta_{k-1}) g(\tau |\theta_{k-1} )  }^2  + {\E}_k \norml{v_{k-1} -\nabla J( \theta_{k-1})}^2 \\ 
		&\numleq{v} \frac{Q}{B} \norml{\theta_{k} - \theta_{k-1}}^2 + {\E}_k \norml{v_{k-1} -\nabla J( \theta_{k-1})}^2 \\
		&\numleq{vi} \frac{Q\eta^2}{B} \norml{v_{k-1}}^2 + {\E}_k \norml{v_{k-1} -\nabla J( \theta_{k-1})}^2 \numberthis \label{SpiderPG_per_iteration_0}
		\end{align*}
	}
	where (i) follows from the definition of $v_k$ in \Cref{SVRPG}, (ii) follows from the fact that $$\E_k\left[\frac{1}{B} \sum_{i=1}^{B} g(\tau_i|\theta_k) - \frac{1}{B} \sum_{i=1}^{B} \omega(\tau_i| \theta_{k}, \theta_{k-1}) g(\tau_i|\theta_{k-1}) + \nabla J( \theta_{k-1}) - \nabla J( \theta_{k})  \right]= 0,$$ thus given $\theta_{k}, \cdots, \theta_{0}$, the expectation of the inner product equals $0$, (iii) follows from \Cref{batch_variance}, (iv) follows from the fact that $\Var(X) \leq \E \norml{X}^2$, (v) follows from \Cref{lispchitz_g}, and (vi) follows because   $\theta_{k} = \theta_{k-1} + \eta v_{k-1}$.

	Therefore, we have
	\begin{align*}
	\E  \norml{v_k - \nabla J( \theta_{k})}^2 &\numequ{i} \E {\E}_k \norml{v_k - \nabla J( \theta_{k})}^2 \\
	&\numleq{ii} \frac{Q\eta^2}{B} \E \norml{v_{k-1}}^2 + \E \norml{v_{k-1} -\nabla J( \theta_{k-1})}^2, \numberthis \label{SpiderPG_var_bound_0} 
	\end{align*}
	where (i) follows from the tower property of expectation, and (ii) follows from \eqref{SpiderPG_per_iteration_0}.

	Telescoping \eqref{SpiderPG_var_bound_0} over $k$ from $n_k+1$ to $k$, we obtain
	\begin{align*}
	\E  \norml{v_k - \nabla J( \theta_{k})}^2 &= \sum_{i=n_k+1}^{k} \frac{Q\eta^2}{B} \E \norml{v_{i-1}}^2 + \E \norml{v_{n_k} -\nabla J( \theta_{n_k})}^2\\
	& \leq \sum_{i=n_k}^{k} \frac{Q\eta^2}{B} \E \norml{v_{i}}^2 + \E \norml{v_{n_k} -\nabla J( \theta_{n_k})}^2. 
	\end{align*}
	It is straightforward to check that the above inequality also holds for any $k$ with $\mod(k,m) = 0$.
\end{proof}

\subsection*{Proof of \Cref{SpiderPG_conv}} \label{G_SpiderPG}
	Since in \Cref{SpiderPG}, $\nabla J$ is $L$-Lipschitz, and $\theta_{k+1} = \theta_{k+1} + \eta v_k$, we obtain the following inequality directly from \Cref{beginWith}:
	\begin{align} \label{SpiderPG_iteration}
	\E{J(\theta_{k+1}) } - \E J(\theta_{k}) \geq   {\left(\frac{\eta}{2} - \frac{L\eta^2}{2}\right) \E \norml{v_k}^2} -  {\frac{\eta}{2} \E \norml{v_k - \nabla J( \theta_{k})}^2} .  
	\end{align}
	
	By \Cref{variance_bound_SpiderPG}, we obtain
	\begin{align*}
	\E  \norml{v_k - \nabla J( \theta_{k})}^2  	& \leq \sum_{i=n_k}^{k} \frac{Q\eta^2}{B} \E \norml{v_{i}}^2 + \E \norml{v_{n_k} -\nabla J( \theta_{n_k})}^2   \numberthis \label{SpiderPG_var_bound_0_0}
	\end{align*}
	
	Moreover,  for $\mod(k,m) = 0$, we obtain
	\begin{align*}
	\mathbb{E} \|v_k - \nabla f(x_k)\|^2 &= \mathbb{E} \left\| \frac{1}{N}\sum_{i=1}^{N} \nabla g(\tau_i|\theta_{k}) - \nabla J(\theta_{k}) \right\|^2\\
	& \numequ{i}  \frac{1}{N } \E_{\tau \sim p(\cdot|\theta_k) }\left\|   \nabla g(\tau |\theta_{k}) - \nabla J(\theta_{k}) \right\|^2\overset{(ii )}{\leqslant} \frac{\sigma^2}{N} \\
	&\overset{(iii)}{\leqslant}  \frac{  \beta  }{ \alpha m } \sum_{i=n_k-m}^{n_k-1} \norml{v_i}^2   + \frac{\epsilon}{\alpha},  \numberthis \label{SpiderPG_para_1}
	\end{align*}
	where (i) follows from \Cref{batch_variance}, (ii) follows from \Cref{assumption_bound_variance}, and (iii) follows from the fact that $$N = \frac{\alpha \sigma^2}{\frac{\beta}{m} \sum_{i=n_k-m}^{n_k-1} \norml{v_i}^2 + \epsilon},$$  where $\alpha > 0$ and $\beta \geqslant 0$.
	
  	Plugging \eqref{SpiderPG_para_1} into \eqref{SpiderPG_var_bound_0_0}, we obtain
  	\begin{align*}
  	\E  \norml{v_k - \nabla J( \theta_{k})}^2  & \leq  \frac{Q \eta^2}{B} \sum_{i = n_k}^{k } \E \norml{ v_i }^2 + \frac{  \beta   }{ \alpha m } \sum_{i=n_k-m}^{n_k-1} \norml{v_i}^2  + \frac{\epsilon}{\alpha}. \numberthis \label{SpiderPG_var_bound} 
  	\end{align*}
  	
  	Plugging \eqref{SpiderPG_var_bound} into \eqref{SpiderPG_iteration}, we obtain
  	{ \small
  		\begin{align*}
  		\E{J(\theta_{k+1}) } - \E J(\theta_{k}) \geq   {\left(\frac{\eta}{2} - \frac{L\eta^2}{2}\right) \E \norml{v_k}^2}  - \frac{Q\eta^3}{2B}   \sum_{i = n_k}^{k } \E \norml{ v_i }^2  -  \frac{  \eta \beta    }{ 2\alpha m } \sum_{i=n_k-m}^{n_k-1} \norml{v_i}^2  - \frac{\epsilon \eta}{2 \alpha} .
  		\end{align*}
  	}

	We note that for a given $k$, any iteration $ n_k \leq i \leq k$, all their corresponding $n_i$ satisfies $n_i = n_k$. Thus,
	telescoping the above inequality over $k$ from $n_k$ to $k$, we obtain
	\begin{align*}
	\E&{J(\theta_{k+1}) } -  \E J(\theta_{n_k})\\
	&\geq   {\left(\frac{\eta}{2} - \frac{L\eta^2}{2}\right) \sum_{i=n_k}^{k} \E \norml{v_i}^2}  - \frac{Q\eta^3}{2B} \sum_{i=n_k}^{k}   \sum_{j = n_k}^{i } \E \norml{ v_j }^2  -   \sum_{i=n_k}^{k} \frac{  \eta \beta   }{ 2\alpha m } \sum_{i=n_k-m}^{n_k-1} \norml{v_i}^2  -   \sum_{i=n_k}^{k}  \frac{\epsilon \eta}{2 \alpha}\\ 
	&\geq   {\left(\frac{\eta}{2} - \frac{L\eta^2}{2}\right) \sum_{i=n_k}^{k} \E \norml{v_i}^2}  - \frac{Q\eta^3}{2B} \sum_{i=n_k}^{k}   \sum_{j = n_k}^{k } \E \norml{ v_j }^2  -     \sum_{i=n_k}^{k} \frac{  \eta \beta  }{ 2\alpha m }  \sum_{i=n_k-m}^{n_k-1} \norml{v_i}^2 -   \sum_{i=n_k}^{k}  \frac{\epsilon \eta}{2 \alpha}\\
	&=   {\left(\frac{\eta}{2} - \frac{L\eta^2}{2}\right) \sum_{i=n_k}^{k} \E \norml{v_i}^2}  - \frac{Q\eta^3    (k-n_k+1) }{2B}   \sum_{j = n_k}^{k } \E \norml{ v_j }^2  -  \sum_{i=n_k}^{k} \frac{  \eta \beta  }{ 2\alpha m } \sum_{i=n_k-m}^{n_k-1} \norml{v_i}^2  -   \sum_{i=n_k}^{k}  \frac{\epsilon \eta}{2 \alpha}\\
	&=    \left(\frac{\eta}{2} - \frac{L\eta^2}{2} - \frac{Q\eta^3    (k-n_k+1) }{2B} \right)     \sum_{i = n_k}^{k } \E \norml{ v_i }^2   -   \sum_{i=n_k}^{k} \frac{  \eta \beta    }{ 2\alpha m }\sum_{j=n_k-m}^{n_k-1} \norml{v_j}^2  -   \sum_{i=n_k}^{k}  \frac{\epsilon \eta}{2 \alpha}\\
	&\numgeq{i}  \left(\frac{\eta}{2} - \frac{L\eta^2}{2} - \frac{Q\eta^3m }{2B} \right)     \sum_{i = n_k}^{k } \E \norml{ v_i }^2   -   \sum_{i=n_k}^{k} \frac{  \eta \beta    }{ 2\alpha m }\sum_{j=n_k-m}^{n_k-1} \norml{v_j}^2  -   \sum_{i=n_k}^{k}  \frac{\epsilon \eta}{2 \alpha} \\
	&\numequ{ii} \phi  \sum_{i = n_k}^{k } \E \norml{ v_i }^2   -  \frac{\eta \beta (k - n_k + 1)}{2 \alpha m}  \sum_{i=n_k-m}^{n_k-1} \norml{v_i}^2    -   \sum_{i=n_k}^{k}  \frac{\epsilon \eta}{2 \alpha} \\
	&\numgeq{iii} \phi  \sum_{i = n_k}^{k } \E \norml{ v_i }^2   -  \frac{\eta \beta  }{2 \alpha  }  \sum_{i=n_k-m}^{n_k-1} \norml{v_i}^2    -   \sum_{i=n_k}^{k}  \frac{\epsilon \eta}{2 \alpha},  \numberthis \label{SpiderPG_final}
	\end{align*}
	where (i) follows from the fact that $  k-n_k +1 \leq m$,   (ii) follows from the fact  that $\phi \defeq \left(\frac{\eta}{2} - \frac{L\eta^2}{2} - \frac{Q\eta^3m }{2B} \right)$, and (iii) follows because $(k - n_k + 1)/m \leqslant 1$.
	 
	Now, we are ready to bound $J(\theta_{K+1}) - J(\theta_{0})$.
	\begin{align*}
	\E &J(\theta_{K+1})  - \E J(\theta_{0}) \\
	&= \E   J(\theta_{K+1}) -\E J(\theta_{n_K}) +  \E J(\theta_{n_K}) \cdots + \E J(\theta_{m}) - \E J(\theta_{0}) \\
	&\numgeq{i}   \phi  \sum_{i = n_K}^{K } \E \norml{ v_i }^2  -      \frac{\eta \beta  }{2 \alpha  }  \sum_{i=n_K-m}^{n_K-1} \norml{v_i}^2    -   \sum_{i=n_K}^{K}  \frac{\epsilon \eta}{2 \alpha} +  \cdots + \phi  \sum_{i = 0}^{m-1} \E \norml{ v_i }^2  -     \frac{\eta \beta  }{2 \alpha  }  \sum_{i= -m}^{-1} \norml{v_i}^2    -   \sum_{i=0}^{m}  \frac{\epsilon \eta}{2 \alpha} \\
	&\numgeq{ii}  \phi  \sum_{i = 0}^{K }\E \norml{ v_i }^2 -     \frac{\eta \beta  }{2 \alpha  }  \sum_{i=0}^{n_K-1} \norml{v_i}^2    -   \sum_{i=0}^{K}  \frac{\epsilon \eta}{2 \alpha} \\
	&\geqslant \left(\phi  -     \frac{\eta \beta  }{2 \alpha  }  \right) \sum_{i = 0}^{K }\E \norml{ v_i }^2  -   \sum_{i=0}^{K}  \frac{\epsilon \eta}{2 \alpha},
	\end{align*}
	where (i) follows from \eqref{SpiderPG_final}, and (ii) follows from the fact that we define $\norml{v_{-1}} = \cdots =  \norml{v_{-m}} = 0$.  Thus, we obtain
	\begin{align*}  \left(\phi  -     \frac{\eta \beta  }{2 \alpha  }  \right)  \sum_{i = 0}^{K }\E \norml{ v_i }^2 & \leq 	\E J(\theta_{K+1}) - \E J(\theta_{0}) +    \sum_{i=0}^{K}  \frac{\epsilon \eta}{2 \alpha}\\
	& \numleq{i}  J(\theta^*) -  J(\theta_{0}) +    \sum_{i=0}^{K}  \frac{\epsilon \eta}{2 \alpha}, 
	\end{align*}
	where (i) follows because $\theta^* \defeq \arg  \underset{\theta \in \mathbb{R}^d}{\max} J(\theta) $.  Here, we  assume  $\left(\phi  -     \frac{\eta \beta  }{2 \alpha  }  \right) >0$  to continue our proof. Such an assumption will be satisfied by parameter tuning  as shown in \eqref{SVRPG_para_0}. Therefore, we obtain
	\begin{align}
	\sum_{i = 0}^{K }\E \norml{ v_i }^2 \leq \left(\phi  -     \frac{\eta \beta  }{2 \alpha  }  \right)^{-1}\left( {J(\theta^*) -  J(\theta_{0})} +    \sum_{i=0}^{K}  \frac{\epsilon \eta}{2 \alpha}\right).   \label{SpiderPG_finite_bound}
	\end{align}
 
	With \eqref{SpiderPG_finite_bound}, we are now able to bound the gradient norm, i.e., $\norml{\nabla J(\theta_{\xi})}$, of the output of AbaSPIDER-PG. Observe that
	\begin{align}
	\mathbb{E}  \|\nabla J(\theta_\xi)\|^2  = \mathbb{E}  \|\nabla J(\theta_\xi) - v_\xi + v_\xi\|^2 \leq 2\mathbb{E}  \|\nabla J(\theta_\xi) - v_\xi \|^2 + 2\mathbb{E}  \| v_\xi \|^2 \label{final_0}.
	\end{align}
	Therefore, it is sufficient to bound the two terms on the right hand side of the above inequality. First, note that
	\begin{align}
	\mathbb{E} \|v_\xi\|^2  \numequ{i} \frac{1}{K+1}\sum_{i=0}^{K} \mathbb{E} \|v_i\|^2 \numleq{ii}  \left(\phi  -     \frac{\eta \beta  }{2 \alpha  }  \right)^{-1}\left( \frac{J(\theta^*) -  J(\theta_{0})}{K+1} +     \frac{\epsilon \eta}{2 \alpha}\right),  \label{eq: 5}
	\end{align}
	where (i) follows from the fact that $\xi$ is selected uniformly at random from $\{0,\ldots,K\}$, and (ii) follows from \eqref{SpiderPG_finite_bound}. On the other hand, we observe that 
		\begin{align*}
	&\quad \mathbb{E}   \|\nabla J(\theta_\xi)-v_\xi\|^2\\
	&\numequ{i} \frac{1}{K+1} \sum_{k = 0}^{K} \mathbb{E}  \|\nabla J(\theta_k)-v_k\|^2 \\ 
	&\numleq{ii}  \frac{1}{K+1} \sum_{k = 0}^{K}\left(   \frac{Q \eta^2}{B} \sum_{i = n_k}^{k } \E \norml{ v_i }^2 + \frac{  \beta  }{ \alpha m }\sum_{i=n_k-m}^{n_k-1} \norml{v_i}^2    + \frac{\epsilon}{\alpha}\right) \\
	&\numequ{iii}   \frac{Q \eta^2  }{B(K+1)} \sum_{k = 0}^{K}    \sum_{i = n_k}^{k } \E \norml{ v_i }^2  + \frac{\beta}{\alpha m(K+1)} \sum_{k = 0}^{K} \sum_{i=n_k-m}^{n_k-1} \norml{v_i}^2  + \frac{\epsilon}{\alpha} \\
	&\numequ{iv}  \frac{Q \eta^2  }{B(K+1)} \left(\sum_{k = 0}^{m-1}    \sum_{i = 0}^{k } \E \norml{ v_i }^2 + \cdots + \sum_{k = n_K}^{K}    \sum_{i = n_K}^{k } \E \norml{ v_i }^2 \right)  + \frac{\beta}{\alpha m(K+1)} \sum_{k = 0}^{K} \sum_{i=n_k-m}^{n_k-1} \norml{v_i}^2  + \frac{\epsilon}{\alpha} \\
	&\leqslant \frac{Q \eta^2  }{B(K+1)} \left(\sum_{k = 0}^{m-1}    \sum_{i = 0}^{m-1 } \E \norml{ v_i }^2 + \cdots + \sum_{k = n_K}^{K}    \sum_{i = n_K}^{K } \E \norml{ v_i }^2 \right)  + \frac{\beta}{\alpha m(K+1)} \sum_{k = 0}^{K} \sum_{i=n_k-m}^{n_k-1} \norml{v_i}^2  + \frac{\epsilon}{\alpha} \\
	&\numleq{v} \frac{Q \eta^2 m }{B(K+1)}  \sum_{i = 0}^{K } \E \norml{ v_i }^2    + \frac{\beta}{\alpha m(K+1)} \sum_{k = 0}^{K} \sum_{i=n_k-m}^{n_k-1} \norml{v_i}^2  + \frac{\epsilon}{\alpha} \\
	&\numleq{vi} \frac{Q \eta^2 m }{B(K+1)}  \sum_{i = 0}^{K } \E \norml{ v_i }^2    + \frac{\beta}{\alpha m(K+1)} \left(\sum_{k = 0}^{m-1} \sum_{i=-m}^{-1} \norml{v_i}^2 + \cdots + \sum_{k = n_K}^{K} \sum_{i=n_K-m}^{n_K-1} \norml{v_i}^2 \right) + \frac{\epsilon}{\alpha} \\
	&\numleq{vii}\frac{Q \eta^2 m }{B(K+1)}  \sum_{i = 0}^{K } \E \norml{ v_i }^2    + \frac{\beta}{\alpha  (K+1)}  \sum_{i = -m}^{n_K-1}   \norml{v_i}^2   + \frac{\epsilon}{\alpha} \\
	&\numleq{viii}   \frac{1}{K+1} \left(\frac{Q \eta^2 m }{B} + \frac{\beta}{\alpha} \right) \sum_{i = 0}^{K } \E \norml{ v_i }^2    + \frac{\epsilon}{\alpha} \\ 
	&\numleq{viiii} \frac{1}{K+1}  \left(\frac{Q \eta^2 m }{B} + \frac{\beta}{\alpha} \right)  
	\left(\phi  -     \frac{\eta \beta  }{2 \alpha  }  \right)^{-1}\left( {J(\theta^*) -  J(\theta_{0})} +    \sum_{i=0}^{K}  \frac{\epsilon \eta}{2 \alpha}\right)    + \frac{\epsilon}{\alpha} \\ 
	&=  \left(\frac{Q \eta^2 m}{B} + \frac{\beta}{\alpha} \right)  
	\left(\phi  -     \frac{\eta \beta  }{2 \alpha  }  \right)^{-1} \frac{\left( {J(\theta^*) -  J(\theta_{0})} \right)  }{K+1}   + \frac{\epsilon}{\alpha}  \left(1 + \frac{\eta}{2} \left(\frac{Q \eta^2 m^2}{B} + \frac{\beta}{\alpha} \right)   \left(\phi  -     \frac{\eta \beta  }{2 \alpha  }  \right)^{-1}\right) \numberthis \label{final_2}
	\end{align*}
	where (i) follows from the fact that $\xi$ is selected uniformly at random from $\{0,\ldots,K\}$, (ii) follows from \eqref{SpiderPG_var_bound}, (iii) follows from the fact that $k-n_k \leq m$, (iv) follows from the fact that for $n_k \leqslant k \leqslant n_k + m - 1$, $n_i = n_k$. (v) follows from $\sum_{k = n_k}^{n_k + m-1}    \sum_{i = n_k}^{n_k + m-1 } \E \norml{ v_i }^2 = m \sum_{i = n_k}^{n_k + m-1 } \E \norml{ v_i }^2$, (vi) follows from the same reasoning as in (iv), 	(vii) follows from $\sum_{k = n_k}^{n_k + m-1}    \sum_{i = n_k-m}^{ n_k-1 } \E \norml{ v_i }^2 = m\sum_{i = n_k-m}^{ n_k-1 } \E \norml{ v_i }^2$, (viii) follows from $\norml{v_{-1}} = \cdots =  \norml{v_{-m}} = 0$, and (viiii) follows from \cref{SpiderPG_finite_bound}.  
	 
	Substituting \eqref{eq: 5}, \eqref{final_2} into \eqref{final_0}, we obtain
	\begin{align*}
	\mathbb{E} \|\nabla J(\theta_\xi)\|^2  &\le \frac{2}{K+1} \left(1 + \frac{Q \eta^2 m }{B} + \frac{\beta}{\alpha}\right)  \left(\phi  -     \frac{\eta \beta  }{2 \alpha  }  \right)^{-1}\left( {J(\theta^*) -  J(\theta_{0})} \right) \\
	& \qquad \qquad + \frac{2\epsilon}{\alpha} \left(  1 + \frac{\eta}{2} \left(1 + \frac{Q \eta^2 m }{B} + \frac{\beta}{\alpha} \right)   \left(\phi  -     \frac{\eta \beta  }{2 \alpha  }  \right)^{-1}\right)  \numberthis \label{converge_SpiderPG_0}
	\end{align*}
	
\subsection{Proof of \Cref{cor:SpiderPG_sto}}	
	 
	Based on the parameter setting in \Cref{SpiderPG_conv} that 
	\begin{align}
	\eta = \frac{1}{2L},   m = \frac{L\sigma}{\sqrt{ Q\epsilon}}  , B= \frac{\sigma \sqrt{Q}}{L\sqrt{\epsilon}}, \alpha = 48 \text{ and } \beta = 16, \label{SpiderPG_para_2}
	\end{align} 
	we obtain
	\begin{align}
	\phi - \frac{\eta \beta  }{2 \alpha  } = \left(\frac{1}{4L} - \frac{1}{8L} - \frac{1}{16L} \right) - \frac{1}{32L} = \frac{1}{32L} > 0. \label{SpiderPG_para_0}
	\end{align}

	Plugging \eqref{SpiderPG_para_2} and \eqref{SpiderPG_para_0}     into \eqref{converge_SpiderPG_0}, we obtain
	\begin{align*}
	\E \norml{\nabla J(\theta_{\xi})}^2 \leq  \frac{88L}{K+1} \left( J(\theta^*) -  J(\theta_{0})\right) + \frac{\epsilon}{2}  
	\end{align*}
	To obtain $\epsilon$ accuracy, we need
	\begin{align*}
	\frac{88L}{K+1} \left( J(\theta^*) -  J(\theta_{0})\right) \leq \frac{\epsilon}{2},
	\end{align*}
	which gives
	\begin{align*}
	K = \frac{176L\left(J(\theta^*) -  J(\theta_{0})\right)}{\epsilon}.
	\end{align*}
	We note that for $\mod(k,m) =  0$, the outer loop batch size 
	$ N =  \frac{\alpha \sigma^2}{\frac{\beta}{m} \sum_{i=n_k-m}^{n_k-1} \norml{v_i}^2 + \epsilon} \leq \frac{\alpha \sigma^2}{\epsilon}$. Hence, the overall STO complexity is given by
	\begin{align*}
	K\times 2B + \sum_{k=0}^{n_K} \frac{\alpha \sigma^2}{\frac{\beta}{m} \sum_{i=km-m}^{km-1} \norml{v_i}^2 + \epsilon}   &\leqslant K\times 2B + \sum_{k=0}^{n_K}   \frac{\alpha \sigma^2}{\epsilon}  \leqslant K\times 2B + \ceil{\frac{K}{m}}\times \frac{\alpha \sigma^2}{\epsilon} \\
	&\numleq{i} 2KB + \frac{K }{m} \frac{\alpha \sigma^2}{\epsilon}   +  \frac{\alpha \sigma^2}{\epsilon}   \\ 
	&\numequ{ii} \mathcal{O} \left(\left(\frac{L\left(J(\theta^*) -  J(\theta_{0})\right)}{\epsilon}\right) \left(\frac{\sigma \sqrt{Q}}{L\sqrt{\epsilon}} +   \frac{ \sigma^2}{\epsilon} \frac{\sqrt{ Q\epsilon}}{L\sigma}  \right) + \frac{\sigma^2}{\epsilon}  \right) \\
	&= \mathcal{O} \left(\left(\frac{L\left(J(\theta^*) -  J(\theta_{0})\right)}{\epsilon}\right) \left(\frac{\sigma \sqrt{Q}}{L\sqrt{\epsilon}}   \right) + \frac{\sigma^2}{\epsilon}  \right) \\
	&=\mathcal{O} \left(  \epsilon^{-3/2} + \epsilon^{-1} \right).
	\end{align*}
	where (i) follows from the fact that $\ceil{\frac{K}{m}}\times N \leq \frac{KN}{m}  + N $, and (ii) follows from the parameters setting of $K,B, \text{ and } m$ in \eqref{SpiderPG_para_2}.

\section{Proof of Technical Lemmas} \label{proof_of_lemmas}
\subsection{Proof of \Cref{bounded_from_papini}}
	(i), (ii), (iii) follow from Lemma B.2, Lemma B.3 and Lemma B.4 in \citealt{Papini2018}, respectively.

\subsection{Proof of \Cref{lispchitz_g}}
Note that 
	\begin{align*}
	\E_{\tau \sim p(\cdot|\theta_1) } &\norml{g(\tau|\theta_1) - \omega(\tau|\theta_1, \theta_2)g(\tau|\theta_2)  }^2 \\
	&= \E_{\tau \sim p(\cdot|\theta_1) } \norml{g(\tau|\theta_1) - g(\tau|\theta_2) + g(\tau|\theta_2) - \omega(\tau|\theta_1, \theta_2)g(\tau|\theta_2)  }^2 \\
	&\numequ{i}   \E_{\tau \sim p(\cdot|\theta_1) } 2\norml{g(\tau|\theta_1) - g(\tau|\theta_2)  }^2 +   \E_{\tau \sim p(\cdot|\theta_1) } 2\norml{  g(\tau|\theta_2) - \omega(\tau|\theta_1, \theta_2)g(\tau|\theta_2) }^2\\
	&\numleq{ii} 2L_g^2 \norml{\theta_1 - \theta_2}^2 +   \E_{\tau \sim p(\cdot|\theta_1) } 2\norml{g(\tau|\theta_2) }^2\norml{  1 - \omega(\tau|\theta_1, \theta_2)}^2\\
	&\numleq{iii} 2L_g^2 \norml{\theta_1 - \theta_2}^2 +    2\Gamma \alpha\norml{\theta_1 - \theta_2}^2 =  2(L_g^2+ \Gamma \alpha) \norml{\theta_1 - \theta_2}^2 \numequ{iv} Q\norml{\theta_1 - \theta_2}^2,\\
	\end{align*}
	where (i) follows from the fact that $\norml{x+y}^2 \leq 2\norml{x}^2 + 2\norml{y}^2$, (ii) follows from item (ii) in \Cref{bounded_from_papini}, and (iii) follows from item (iii) in \Cref{bounded_from_papini} and \Cref{var_assumption}.	Then, the proof is complete. 

\subsection{Proof of \Cref{beginWith}}
	We derive the following lower bound
	\begin{align*}
	J(\theta_{k+1}) - J(\theta_{k}) &\numgeq{i} \inner{\nabla J(\theta_{k})}{\theta_{k+1} - \theta_{k}} - \frac{L}{2} \norml{\theta_{k+1} - \theta_{k}}^2 \\
	&\numequ{ii} \eta \inner{\nabla J(\theta_{k})}{v_k} - \frac{L\eta^2}{2} \norml{v_k}^2 \\
	&=  \eta \inner{\nabla J(\theta_{k}) - v_k + v_k }{v_k} - \frac{L\eta^2}{2} \norml{v_k}^2 \\
	&= \eta \norml{v_k}^2 +  \eta \inner{\nabla J(\theta_{k}) - v_k }{v_k} - \frac{L\eta^2}{2} \norml{v_k}^2 \\
	&\numgeq{iii} \eta \norml{v_k}^2 -  \eta \frac{\norml{ v_k - \nabla J(\theta_{k})}^2 + \norml{v_k}^2}{2}   - \frac{L\eta^2}{2} \norml{v_k}^2 \\
	&= \left(\frac{\eta}{2} - \frac{L\eta^2}{2}\right)\norml{v_k}^2 -  \frac{\eta}{2} \norml{\nabla J(v_k) -\theta_{k}}^2,  
	\end{align*}
	where (i) follows from the fact  that $\nabla J$ is $L$-Lipschitz, (ii) follows from  the update rule   $x_{k+1} = x_k + \eta  v_k$, and (iii) follows from Young's inequality. Taking the expectation over the entire random process on both sides, we obtain the desired result.

\subsection{Proof of \Cref{batch_variance}}
\begin{lemma} \label{batch_variance}
	Let $X,X_1, \cdots, X_n$ be   independent and identically distributed (i.i.d.) random variables with mean $\E[X]$, then, the following equation holds:
	\begin{align*}
	\E \norml{ \frac{1}{n} \sum_{i=1}^{n} X_i - \E X}^2 = \frac{\E \norml{  X  - \E X}^2}{n} 
	\end{align*}
\end{lemma}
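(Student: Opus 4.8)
\textbf{Proof proposal for Lemma~\ref{batch_variance}.} The plan is to center the variables and expand the squared norm, exploiting independence to kill cross terms and identical distribution to collapse the diagonal terms. First I would set $Y_i := X_i - \E X$ for $i = 1, \dots, n$, so that each $Y_i$ has mean zero and $\E\|Y_i\|^2 = \E\|X - \E X\|^2$ by the identical-distribution assumption. Then
\begin{align*}
\E\Big\|\frac{1}{n}\sum_{i=1}^n X_i - \E X\Big\|^2 = \frac{1}{n^2}\,\E\Big\|\sum_{i=1}^n Y_i\Big\|^2 = \frac{1}{n^2}\sum_{i=1}^n\sum_{j=1}^n \E\langle Y_i, Y_j\rangle.
\end{align*}

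Next I would split the double sum into diagonal ($i = j$) and off-diagonal ($i \neq j$) contributions. For $i \neq j$, independence of $X_i$ and $X_j$ gives $\E\langle Y_i, Y_j\rangle = \langle \E Y_i, \E Y_j\rangle = 0$, since each $Y_i$ is mean zero; for $i = j$ we have $\E\langle Y_i, Y_i\rangle = \E\|Y_i\|^2 = \E\|X - \E X\|^2$. Substituting, the double sum reduces to $n\,\E\|X - \E X\|^2$, so the whole expression equals $\frac{1}{n^2}\cdot n\cdot \E\|X - \E X\|^2 = \frac{\E\|X - \E X\|^2}{n}$, which is the claim.

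There is no real obstacle here: the only points requiring care are (i) justifying that $\E\langle Y_i, Y_j\rangle = \langle \E Y_i, \E Y_j\rangle$ for $i \neq j$, which follows from independence applied coordinatewise to the inner product $\langle Y_i, Y_j\rangle = \sum_k (Y_i)_k (Y_j)_k$, and (ii) interchanging expectation with the finite sums, which is immediate. This lemma is used repeatedly above (e.g.\ in the bounds on $\mathbb{E}\|v_k - \nabla f(x_k)\|^2$ for the mini-batch gradient estimators), so it suffices to state it in this clean form.
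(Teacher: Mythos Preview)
Your proof is correct and follows essentially the same approach as the paper's: center the variables, expand the squared norm into a double sum of inner products, use independence to eliminate the off-diagonal terms, and use identical distribution to collapse the diagonal. The only cosmetic difference is that you introduce the shorthand $Y_i := X_i - \E X$, which the paper does not, but the argument is otherwise step-for-step the same.
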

\begin{proof}
Standard calculation yields
	\begin{align*}
	\E \norml{ \frac{1}{n} \sum_{i=1}^{n} X_i - \E X}^2 &= \E \norml{ \frac{1}{n} \sum_{i=1}^{n} (X_i - \E X)}^2 = \frac{1}{n^2} \E \norml{   \sum_{i=1}^{n} (X_i - \E X)}^2\\
	&= \frac{1}{n^2}  \sum_{i=1}^{n}\sum_{j=1}^{n}  \E \inner{X_i - \E X}{X_j - \E X}   \\
	&\numequ{i} \frac{1}{n^2}  \sum_{i=1}^{n}  \E \inner{X_i - \E X}{X_i - \E X}   \\
	&=   \frac{1}{n^2}  \sum_{i=1}^{n} \E \norml{X_i - \E X}^2\\
	&\numequ{ii} \frac{\E \norml{X  - \E X}^2}{n},
	\end{align*} 
	where (i) follows from the fact that $X_1, \cdots, X_n$ are i.i.d. random variables such that if $i \neq j$,   $\E\inner{X_i - \E X}{X_j - \E X}  = 0 $ , and (ii) follows from the fact that for i.i.d. random variables $\E \norml{X  - \E X}^2 = \E \norml{X_1  - \E X}^2 \cdots = \E \norml{X_n  - \E X}^2 $.
\end{proof}

\section{Proofs for Results in Appendix~\ref{se:5}} \label{youdiandou}
\subsection{Proof for Theorem~\ref{svrg:pl}}
To simplify notations, we let $c_\beta=c_\epsilon=\alpha =  \Big(2\tau+ \frac{2\tau}{1-\exp(\frac{-4}{c_\eta(c_\eta -2)})}\Big)\vee \frac{16c_\eta L\tau }{m}$.

Since the objective function $f(\cdot)$ has a $L$-Lipschitz continuous gradient, we obtain that  for $1 \leq t \leq m$, 
\begin{align}
f({x}_t^s)  \leq &  f({x}_{t-1}^s) + \langle  \nabla f({x}_{t-1}^s), {x}_{t}^s-{x}_{t-1}^s  \rangle + \frac{L\eta^2}{2}\|  {v}_{t-1}^s \|^2 \nonumber
\\=& f({x}_{t-1}^s) + \frac{\eta}{2}\| \nabla f({x}_{t-1}^s) -{v}_{t-1}^s\|^2 - \frac{\eta}{2}\| \nabla f({x}_{t-1}^s)\|^2 - \frac{\eta}{2} \|{v}_{t-1}^s\|^2
 + \frac{L\eta^2}{2}\|  {v}_{t-1}^s \|^2, \nonumber
\end{align}
which, in conjunction with the PL condition that $\|\nabla f({x}_{t-1}^s)\|^2 \geq \frac{1}{\tau} (f({x}_{t-1}^s)-f({x}^*))$, implies that 
\begin{align}
f({x}_t^s) - f({x}^*) \leq \left(1- \frac{\eta}{2\tau} \right)(f({x}_{t-1}^s) - f({x}^*)) - \left(\frac{\eta}{2}  - \frac{L\eta^2}{2}\right) \|{v}_{t-1}^s \|^2 + \frac{\eta}{2} \|\nabla f({x}_{t-1}^s) -{v}_{t-1}^s\|^2. \nonumber
\end{align}
Recall that $\mathbb{E}_{t,s}(\cdot)$  denotes $\mathbb{E}(\cdot | {x}_0^1,{x}_0^2,...,{x}_2^1,...,{x}_{t}^s)$. Then, taking expectation $\mathbb{E}_{0,s}(\cdot)$  over the above inequality yields, for $1 \leq t \leq m$, 
\begin{align}\label{eq:usetwo}
\mathbb{E}_{0,s}(f({x}_t^s) - f({x}^*) )\leq& \left(1- \frac{\eta}{2\tau} \right)\mathbb{E}_{0,s}(f({x}_{t-1}^s) - f({x}^*)) - \left(\frac{\eta}{2}  - \frac{L\eta^2}{2}\right) \mathbb{E}_{0,s} \|{v}_{t-1}^s \|^2 \nonumber
\\&+ \frac{\eta}{2} \mathbb{E}_{0,s}\|\nabla f({x}_{t-1}^s) -{v}_{t-1}^s\|^2,	
\end{align}
which, in conjunction with Lemma~\ref{le:variance}, implies that 
\begin{align}
\mathbb{E}_{0,s}(f({x}_t^s) - f({x}^*) )\leq& \left(1- \frac{\eta}{2\tau} \right)\mathbb{E}_{0,s}(f({x}_{t-1}^s) - f({x}^*)) - \left(\frac{\eta}{2}  - \frac{L\eta^2}{2}\right) \mathbb{E}_{0,s} \|{v}_{t-1}^s \|^2 \nonumber
\\&+ \frac{\eta^3L^2(t-1)}{2B}\mathbb{E}_{0,s}\sum_{i=0}^{t-2}\|{v}_i^s\|^2 +\frac{\eta I_{(N_s<n)}}{2N_s} \sigma^2.\nonumber
\end{align}
Let $\gamma := 1-\frac{\eta}{2\tau}$. Then, telescoping the above inequality over $t$ from $1$ to $m$ and using the fact that $t-1<m$, we have
\begin{align}\label{eq:eos}
\mathbb{E}_{0,s}(f({x}_m^s) -& f({x}^*) )\leq \gamma^m\mathbb{E}_{0,s}(f({x}_{0}^s) - f({x}^*)) - \left(\frac{\eta}{2}  - \frac{L\eta^2}{2}\right)\sum_{t=0}^{m-1}\gamma^{m-1-t} \mathbb{E}_{0,s} \|{v}_{t}^s \|^2 \nonumber
\\&+ \frac{\eta^3L^2m}{2B}\sum_{t=0}^{m-2}\gamma^{m-2-t}\mathbb{E}_{0,s}\sum_{i=0}^{t}\|{v}_i^s\|^2 +\left( \sum_{t=0}^{m-1}\gamma^t  \right) \frac{\eta I_{(N_s<n)}}{2N_s} \sigma^2.
\end{align}
Note that $\gamma^{m-1-t}\geq \gamma^{m}$ for $0\leq t \leq m-1$ and $\sum_{t=0}^{m-1}\gamma^t =\frac{1-\gamma^m}{1-\gamma}\leq \frac{1}{1-\gamma}=\frac{2\tau}{\eta}$. Then, we obtain from~\eqref{eq:eos} that 
\begin{align}\label{eq:eesa}
\mathbb{E}_{0,s}(f({x}_m^s) -f({x}^*) )\leq &\gamma^m\mathbb{E}_{0,s}(f({x}_{0}^s) - f({x}^*)) - \left(\frac{\eta}{4}  - \frac{L\eta^2}{2}\right)\sum_{t=0}^{m-1}\gamma^{m-1-t} \mathbb{E}_{0,s} \|{v}_{t}^s \|^2 \nonumber
\\&-\frac{\eta}{4}\sum_{t=0}^{m-1}\gamma^{m-1-t} \mathbb{E}_{0,s} \|{v}_{t}^s \|^2 + \frac{\eta^3L^2m}{2B}\mathbb{E}_{0,s}\sum_{i=0}^{m-1}\|{v}_i^s\|^2\left(\sum_{t=0}^{m-2}\gamma^{m-2-t}\right) \nonumber
\\&+\frac{\tau I_{(N_s<n)}}{N_s} \sigma^2 \nonumber
\\\leq & \gamma^m\mathbb{E}_{0,s}(f({x}_{0}^s) - f({x}^*)) - \left(\frac{\eta}{4}  - \frac{L\eta^2}{2}\right)\gamma^{m}\sum_{t=0}^{m-1} \mathbb{E}_{0,s} \|{v}_{t}^s \|^2 \nonumber
\\&-\frac{\eta}{4}\sum_{t=0}^{m-1}\gamma^{m-1-t} \mathbb{E}_{0,s} \|{v}_{t}^s \|^2 + \frac{\eta^2L^2m\tau}{B}\mathbb{E}_{0,s}\sum_{i=0}^{m-1}\|{v}_i^s\|^2 +\frac{\tau I_{(N_s<n)}}{N_s} \sigma^2 \nonumber
\\\leq & \gamma^m\mathbb{E}_{0,s}(f({x}_{0}^s) - f({x}^*)) - \left(\left(\frac{\eta}{4}  - \frac{L\eta^2}{2}\right)\gamma^{m}-\frac{\eta^2L^2m\tau}{B}\right)\sum_{t=0}^{m-1} \mathbb{E}_{0,s} \|{v}_{t}^s \|^2 \nonumber
\\& +\frac{\tau I_{(N_s<n)}}{N_s} \sigma^2 -\frac{\eta}{4}\sum_{t=0}^{m-1}\gamma^{m-1-t} \mathbb{E}_{0,s} \|{v}_{t}^s \|^2.
\end{align}
Recall  $\eta =\frac{1}{c_\eta L}$ ($c_\eta>4$), $\frac{8L\tau}{c_\eta -2}\leq m <4L\tau$ and $B= m^2 $. Then, we have 
\begin{align}\label{eq:eta4}
\left(\frac{\eta}{4}  - \frac{L\eta^2}{2}\right)\gamma^{m} &=\eta \left(\frac{1}{4}-\frac{1}{2c_\eta}\right)\left( 1-\frac{1}{2c_\eta\tau L}\right)^m >\eta \left(\frac{1}{4}-\frac{1}{2c_\eta}\right) \left(1-\frac{1}{2m}\right)^{m} \nonumber \\&\overset{(i)}\geq \frac{\eta}{2}\left(\frac{1}{4}-\frac{1}{2c_\eta}\right)\geq \frac{\eta^2L^2\tau}{m} = \frac{\eta^2L^2m\tau}{B} ,
\end{align}
where (i) follows from the fact that  $\left(1-\frac{1}{2m}\right)^{m}\geq \frac{1}{2}$ for $m\geq 1$. Recall $c_\beta=c_\epsilon=\alpha$ and $N_s=\min\{c_\beta \sigma^2\beta_s^{-1}, c_\epsilon\sigma^2 \epsilon^{-1},n\}$. Then, combining~\eqref{eq:bsn},~\eqref{eq:eesa} and~\eqref{eq:eta4} yields   
\begin{align}
\mathbb{E}_{0,s}(f({x}_m^s) -f({x}^*) ) \leq & \gamma^m\mathbb{E}_{0,s}(f({x}_{0}^s) - f({x}^*))  +\frac{\tau I_{(N_s<n)}}{N_s} \sigma^2 -\frac{\eta}{4}\sum_{t=0}^{m-1}\gamma^{m-1-t} \mathbb{E}_{0,s} \|{v}_{t}^s \|^2. \nonumber
\\ \leq & \gamma^m\mathbb{E}_{0,s}(f({x}_{0}^s) - f({x}^*))  +\tau\left(\frac{\beta_s}{\alpha}+\frac{\epsilon}{\alpha}\right) -\frac{\eta}{4}\sum_{t=0}^{m-1}\gamma^{m-1-t} \mathbb{E}_{0,s} \|{v}_{t}^s \|^2. \nonumber
\end{align}
Further taking expectation of the above inequality over  ${x}_{0}^1,...,{x}_0^s$,  we obtain
\begin{align*}
\mathbb{E}(f({x}_m^s) -f({x}^*) ) \leq \gamma^m\mathbb{E}(f({x}_{0}^s) - f({x}^*))  + \frac{\tau}{\alpha}\mathbb{E}\beta_s+\frac{\tau\epsilon}{\alpha}-\frac{\eta}{4}\sum_{t=0}^{m-1}\gamma^{m-1-t} \mathbb{E} \|{v}_{t}^s \|^2
\end{align*}
Recall that $\beta_1 \leq \epsilon\big(\frac{1}{\gamma}\big)^{m(S-1)} $  and $\beta_s = \frac{1}{m}\sum_{t=1}^m \|{v}_{t-1}^{s-1}\|^2$ for $s\geq 2$. Then, telescoping the above inequality over $s$ from $1$ to $S$ yields 
\begin{align}\label{eq:mlop}
\mathbb{E}(f({x}_m^S) -f({x}^*) ) \leq &\gamma^{Sm}\mathbb{E}(f({x}_{0}) - f({x}^*))  +
\sum_{s=1}^{S-1} \gamma^{m(S-1-s)} \frac{\tau}{\alpha m}\sum_{t=0}^{m-1}\mathbb{E}\|{v}_t^s\|^2 \nonumber
\\&+\gamma^{m(S-1)}\frac{\tau \beta_1}{\alpha}+\sum_{s=1}^S \gamma^{m(S-s)}\frac{\tau\epsilon}{\alpha}-\frac{\eta}{4}\sum_{s=1}^S\gamma^{m(S-s)}\sum_{t=0}^{m-1}\gamma^{m-1-t} \mathbb{E} \|{v}_{t}^s \|^2 \nonumber
\\\overset{(i)}\leq &\gamma^{K}\mathbb{E}(f({x}_{0}) - f({x}^*))  -\left( \frac{\eta}{4}\gamma^{2m}- \frac{\tau}{\alpha m}\right)
\sum_{s=1}^{S-1} \gamma^{m(S-1-s)}\sum_{t=0}^{m-1}\mathbb{E}\|{v}_t^s\|^2 \nonumber
\\&+\bigg(1+ \frac{1}{1-\exp(-\frac{4}{c_\eta(c_\eta -2)})}\bigg)\frac{\tau\epsilon}{\alpha},
\end{align}
where (i) follows from the fact that $\gamma^{m-1-t}\geq \gamma^{m}$ for $0\leq t \leq m-1$, $\gamma^{m(S-1)}\leq 1$, $\sum_{s=1}^S \gamma^{m(S-s)}\leq \frac{1}{1-\gamma^m }$ and
\begin{align*}
\gamma^{m} =\left(1 - \frac{1}{2c_\eta \tau L}\right)^m \leq \left(1 - \frac{4}{c_\eta(c_\eta-2)m}\right)^m \leq \exp\left(-\frac{4}{c_\eta(c_\eta-2)}\right).
\end{align*}
 Since $\alpha=\Big(2\tau+ \frac{2\tau}{1-\exp(-\frac{4}{c_\eta(c_\eta -2)})}\Big)\vee \frac{16c_\eta L\tau }{m} $, we have 
\begin{align}\label{eq:taos}
\bigg(1+ \frac{1}{1-\exp(-\frac{4}{c_\eta(c_\eta -2)})}\bigg)\frac{\tau\epsilon}{\alpha}\leq \frac{1}{2},\quad\frac{\eta}{4}\gamma^{2m} \overset{(i)}>\frac{1}{16c_\eta L} \geq \frac{\tau}{\alpha m} 
\end{align}
where (i) follows from~\eqref{eq:eta4} that $\gamma^m\geq \left(1-\frac{1}{2m}\right)^{m}\geq \frac{1}{2}$. Note that  ${x}_m^S={\tilde x}^S$. Then, combining~\eqref{eq:taos} and~\eqref{eq:mlop} yields 
\begin{align}
\mathbb{E}(f({\tilde x}^S) -f({x}^*) ) \leq \gamma^{K}(f({x}_{0}) - f({x}^*))+\frac{\epsilon}{2}.
\end{align} 
Let $K = (2c_\eta\tau L - 1)\log \left(\frac{2(f({x}_0)-f({x}^*))}{\epsilon}\right)$. Then, we have 
\begin{align*}
\gamma^{K}(f({x}_{0}) - f({x}^*))=& \exp\left[(2c_\eta\tau L-1)\log \frac{1}{\gamma}\log\left( \frac{\epsilon} {2(f({x}_0)-f({x}^*))}\right) \right] (f({x}_{0}) - f({x}^*)) \overset{(i)}\leq \frac{\epsilon}{2},
\end{align*}
where (i) follows from the fact that $\log \frac{1}{\gamma} = \log \big(1+\frac{1}{2c_\eta\tau L-1}\big)\leq \frac{1}{2c_\eta\tau L -1}$. Thus, the total number of SFO calls is 
\begin{align*}
\sum_{s=1}^S\min\left\{\frac{c_\beta}{\beta_s}, \frac{c_\epsilon}{\epsilon},n\right\} + KB \leq& \mathcal{O}\left( \left(\frac{c_\epsilon}{\epsilon}\wedge n\right)\frac{\tau}{m} \log \frac{1}{\epsilon} + B\tau \log \frac{1}{\epsilon}\right)
\\\overset{(i)}\leq&\mathcal{O}\left( \left(\frac{\tau}{\epsilon}\wedge n\right) \log \frac{1}{\epsilon} + \tau^{3} \log \frac{1}{\epsilon}\right),
\end{align*}
where (i) follows from the fact that $m=\Theta(\tau)$ and $c_\epsilon=\Theta(\tau)$.

\subsection{Proof of Theorem~\ref{spider_pl}}
To simplify notations, we let $c_\beta=c_\epsilon=\alpha =  \Big(2\tau+ \frac{2\tau}{1-\exp(\frac{-4}{c_\eta(c_\eta -2)})}\Big)\vee \frac{16c_\eta L\tau }{m}$.

Using an approach similar to \eqref{eq:usetwo}, we have, for $1\leq t \leq m$  
\begin{align*}
\mathbb{E}_{0,s}(f({x}_t^s) - f({x}^*) )\leq& \left(1- \frac{\eta}{2\tau} \right)\mathbb{E}_{0,s}(f({x}_{t-1}^s) - f({x}^*)) - \left(\frac{\eta}{2}  - \frac{L\eta^2}{2}\right) \mathbb{E}_{0,s} \|{v}_{t-1}^s \|^2 \nonumber
\\&+ \frac{\eta}{2} \mathbb{E}_{0,s}\|\nabla f({x}_{t-1}^s) -{v}_{t-1}^s\|^2,	
\end{align*}
which, in conjunction with Lemma~\ref{le:spider}, implies that 
\begin{align*}
\mathbb{E}_{0,s}(f({x}_t^s) - f({x}^*) )\leq& \left(1- \frac{\eta}{2\tau} \right)\mathbb{E}_{0,s}(f({x}_{t-1}^s) - f({x}^*)) - \left(\frac{\eta}{2}  - \frac{L\eta^2}{2}\right) \mathbb{E}_{0,s} \|{v}_{t-1}^s \|^2 \nonumber
 \\&+\frac{\eta ^3 L^2}{2B}	\sum_{i=0}^{t-2}\mathbb{E}_{0,s}\|{v}_{i}^s\|^2 + 	\frac{\eta I_{(N_s<n)}}{2N_s}\sigma^2.
\end{align*}
Let $\gamma := 1-\frac{\eta}{2\tau}$. Then, telescoping the above inequality over $t$ from 
$1$ to $m$ yields
\begin{align*}
\mathbb{E}_{0,s}(f({x}_m^s) &- f({x}^*) )\leq\gamma^m\mathbb{E}_{0,s}(f({x}_{0}^s) - f({x}^*)) - \left(\frac{\eta}{2}  - \frac{L\eta^2}{2}\right) \sum_{t=0}^{m-1}\gamma^{m-1-t}\mathbb{E}_{0,s} \|{v}_{t}^s \|^2 \nonumber
\\&+\frac{\eta ^3 L^2}{2B}	\sum_{t=0}^{m-2}\gamma^{m-2-t}\sum_{i=0}^{t}\mathbb{E}_{0,s}\|{v}_{i}^s\|^2 + \left(\sum_{t=0}^{m-1}\gamma^t\right)	\frac{\eta I_{(N_s<n)}}{2N_s}\sigma^2,
\end{align*}
which, in conjunction with $\sum_{t=0}^{m-1}\gamma^t =\frac{1-\gamma^m}{1-\gamma}\leq \frac{1}{1-\gamma}=\frac{2\tau}{\eta}$ and $\gamma^{m-1-t}\geq \gamma^{m}$ for $0\leq t \leq m-1$, implies that 
\begin{align}\label{eq:easp1}
\mathbb{E}_{0,s}(f({x}_m^s) -f({x}^*) )\leq &\gamma^m\mathbb{E}_{0,s}(f({x}_{0}^s) - f({x}^*)) - \left(\frac{\eta}{4}  - \frac{L\eta^2}{2}\right)\sum_{t=0}^{m-1}\gamma^{m-1-t} \mathbb{E}_{0,s} \|{v}_{t}^s \|^2 \nonumber
\\&-\frac{\eta}{4}\sum_{t=0}^{m-1}\gamma^{m-1-t} \mathbb{E}_{0,s} \|{v}_{t}^s \|^2 + \frac{\eta^3L^2}{2B}\left(\sum_{t=0}^{m-2}\gamma^{m-2-t}\right)\mathbb{E}_{0,s}\sum_{i=0}^{m-1}\|{v}_i^s\|^2 \nonumber
\\&+\frac{\tau I_{(N_s<n)}}{N_s} \sigma^2 \nonumber
\\\leq & \gamma^m\mathbb{E}_{0,s}(f({x}_{0}^s) - f({x}^*)) - \left(\frac{\eta}{4}  - \frac{L\eta^2}{2}\right)\gamma^{m}\sum_{t=0}^{m-1} \mathbb{E}_{0,s} \|{v}_{t}^s \|^2 \nonumber
\\&-\frac{\eta}{4}\sum_{t=0}^{m-1}\gamma^{m-1-t} \mathbb{E}_{0,s} \|{v}_{t}^s \|^2 + \frac{\eta^2L^2\tau}{B}\mathbb{E}_{0,s}\sum_{i=0}^{m-1}\|{v}_i^s\|^2 +\frac{\tau I_{(N_s<n)}}{N_s} \sigma^2 \nonumber
\\\leq & \gamma^m\mathbb{E}_{0,s}(f({x}_{0}^s) - f({x}^*)) - \left(\left(\frac{\eta}{4}  - \frac{L\eta^2}{2}\right)\gamma^{m}-\frac{\eta^2L^2\tau}{B}\right)\sum_{t=0}^{m-1} \mathbb{E}_{0,s} \|{v}_{t}^s \|^2 \nonumber
\\& +\frac{\tau I_{(N_s<n)}}{N_s} \sigma^2 -\frac{\eta}{4}\sum_{t=0}^{m-1}\gamma^{m-1-t} \mathbb{E}_{0,s} \|{v}_{t}^s \|^2.
\end{align}
Recall that  $\eta =\frac{1}{c_\eta L}$ with $c_\eta>4$ and  $B=m$ with $\frac{8L\tau}{c_\eta -2}\leq m <4L\tau$. Then, we have 
\begin{align}\label{eq:pops}
\left(\frac{\eta}{4}  - \frac{L\eta^2}{2}\right)\gamma^{m} &=\eta \left(\frac{1}{4}-\frac{1}{2c_\eta}\right)\left( 1-\frac{1}{2c_\eta\tau L}\right)^m >\eta \left(\frac{1}{4}-\frac{1}{2c_\eta}\right) \left(1-\frac{1}{2m}\right)^{m} \nonumber \\&\geq \frac{\eta}{2}\left(\frac{1}{4}-\frac{1}{2c_\eta}\right)\geq \frac{\eta^2L^2\tau}{m} = \frac{\eta^2L^2\tau}{B} ,
\end{align} 
which, combined with~\eqref{eq:easp1} and~\eqref{eq:bsn} , implies that 
\begin{align*}
 \mathbb{E}_{0,s}(f({x}_m^s) -f({x}^*) )\leq  \gamma^m\mathbb{E}_{0,s}(f({x}_{0}^s) - f({x}^*)) +  \tau\left(\frac{\beta_s}{\alpha}+\frac{\epsilon}{\alpha}\right)-\frac{\eta}{4}\sum_{t=0}^{m-1}\gamma^{m-1-t} \mathbb{E}_{0,s} \|{v}_{t}^s \|^2.
\end{align*}
Taking the expectation of the above inequality ${x}_{0}^1,...,{x}_0^s$, we obtain 
\begin{align*}
\mathbb{E}(f({x}_m^s) -f({x}^*) )\leq  \gamma^m\mathbb{E}(f({x}_{0}^s) - f({x}^*)) +  \frac{\tau}{\alpha}\mathbb{E}\beta_s+\frac{\tau\epsilon}{\alpha}-\frac{\eta}{4}\sum_{t=0}^{m-1}\gamma^{m-1-t} \mathbb{E} \|{v}_{t}^s \|^2.
\end{align*}
Recall $\beta_1  \leq  \epsilon\big(\frac{1}{\gamma}\big)^{m(S-1)}  $ and $\beta_s = \frac{1}{m}\sum_{t=0}^{m-1} \|{v}_{t}^{s-1}\|^2$ for $s=2,...,S$. Then, telescoping the above inequality over $s$ from $1$ to $S$ and using an approach similar to~\eqref{eq:mlop}, we have 
\begin{align*}
\mathbb{E}(f({x}_m^S) -f({x}^*) ) \leq &\gamma^{K}\mathbb{E}(f({x}_{0}) - f({x}^*))  -\left( \frac{\eta}{4}\gamma^{2m}- \frac{\tau}{\alpha m}\right)
\sum_{s=1}^{S-1} \gamma^{m(S-1-s)}\sum_{t=0}^{m-1}\mathbb{E}\|{v}_t^s\|^2 \nonumber
\\&+\bigg(1+ \frac{1}{1-\exp(-\frac{4}{c_\eta(c_\eta -2)})}\bigg)\frac{\tau\epsilon}{\alpha},
\end{align*}
which, in conjunction with~\eqref{eq:taos}, yields
\begin{align}
\mathbb{E}(f({x}_m^S) -f({x}^*) ) \leq \gamma^{K}\mathbb{E}(f({x}_{0}) - f({x}^*))  +\frac{\epsilon}{2}.
\end{align}
Let $K = (2c_\eta\tau L - 1)\log \left(\frac{2(f({x}_0)-f({x}^*))}{\epsilon}\right)$. Then, we have 
$\gamma^{K}(f({x}_{0}) - f({x}^*))\leq \frac{\epsilon}{2}$.
Thus, the total number of SFO calls is given by 
\begin{align*}
\sum_{s=1}^S\min\left\{\frac{c_\beta}{\beta_s}, \frac{c_\epsilon}{\epsilon},n\right\} + KB \leq& \mathcal{O}\left( \left(\frac{c_\epsilon}{\epsilon}\wedge n\right)\frac{\tau}{m} \log \frac{1}{\epsilon} + B\tau \log \frac{1}{\epsilon}\right)
\\\overset{(i)}\leq&\mathcal{O}\left( \left(\frac{\tau}{\epsilon}\wedge n\right) \log \frac{1}{\epsilon} + \tau^{2} \log \frac{1}{\epsilon}\right),
\end{align*}
where (i) follows from the fact that $B= m=\Theta(\tau)$ and $c_\epsilon=\Theta(\tau)$.

\newpage
\section{Proofs for Results in \Cref{apen:sgd}}
\subsection{Proof of Theorem~\ref{th_abasgd} }
	Since the gradient $\nabla f $ is $L$-Lipschitz, we obtain that,  for $t \geq 0$,
	\begin{align}
	f({x}_{t+1})   \leq&  f({x}_{t} ) + \langle \nabla f({x}_{t} ), {x}_{t+1} -{x}_{t} \rangle +\frac{L }{2} \|{x}_{t+1} -{x}_{t} \|^2  \nonumber \\ \overset{(i)}=&  f({x}_{t} )  -\eta \langle \nabla f({x}_{t } ),  {v}_t \rangle +\frac{L\eta^2}{2} \|{v}_{t} \|^2  \nonumber
	 \\=&  f({x}_{t} )  -\eta \langle \nabla f({x}_{t } ) - {v}_t + {v}_t,  {v}_t \rangle +\frac{L\eta^2}{2} \|{v}_{t} \|^2  \nonumber
	\\= & f({x}_{t} ) -\eta \langle \nabla f({x}_{t} )-{v}_{t} ,   {v}_{t} \rangle - \eta\|{v}_{t} \|^2 +\frac{L\eta^2}{2} \|{v}_{t} \|^2 \nonumber 
	\\= & f({x}_{t} ) -\eta \langle \nabla f({x}_{t} )-{v}_{t} ,   {v}_{t} - \nabla f({x}_{t} ) + \nabla f({x}_{t} )\rangle 	-\Big (\eta-\frac{L\eta^2}{2} \Big)\|{v}_{t } \|^2 \nonumber 
	\\= & f({x}_{t} ) -\eta \langle \nabla f({x}_{t} )-{v}_{t} ,   {v}_{t} - \nabla f({x}_{t} ) \rangle  -\eta \langle \nabla f({x}_{t} )-{v}_{t} ,    \nabla f({x}_{t} ) \rangle	-\Big (\eta-\frac{L\eta^2}{2} \Big)\|{v}_{t } \|^2 \nonumber  
	\\= & f({x}_{t} ) + \eta \|  \nabla f({x}_{t} )-{v}_{t} \|^2   -\eta \langle \nabla f({x}_{t} )-{v}_{t} ,    \nabla f({x}_{t} ) \rangle	-\Big (\eta-\frac{L\eta^2}{2} \Big)\|{v}_{t } \|^2 \nonumber  
	\end{align}
	where (i) follows from the fact that ${x}_{t+1} ={x}_{t} -\eta{v}_{t} $.	Then,  	taking expectation $\mathbb{E}(\cdot)  $ over the above inequality yields
	\begin{align}\label{abaSGD_1}
	\mathbb{E}  f({x}_{t+1}) \leq& \mathbb{E} f({x}_{t } ) +\eta \mathbb{E}  \|  \nabla f({x}_{t} )-{v}_{t} \|^2   -\eta \mathbb{E}  \langle \nabla f({x}_{t} )-{v}_{t} ,    \nabla f({x}_{t} ) \rangle	-\Big (\eta-\frac{L\eta^2}{2} \Big) \mathbb{E} \|{v}_{t } \|^2   \nonumber  \\
	\overset{(i)}=& \mathbb{E}  f({x}_{t } ) +\eta \mathbb{E}  \|  \nabla f({x}_{t} )-{v}_{t} \|^2   	-\Big (\eta-\frac{L\eta^2}{2} \Big) \mathbb{E} \| {v}_{t } \|^2  \nonumber  \\
	=& \mathbb{E}  f({x}_{t } )	-\Big (\eta-\frac{L\eta^2}{2} \Big) \mathbb{E} \| {v}_{t } \|^2    +\eta \mathbb{E}  \|  \nabla f({x}_{t} )-{v}_{t} \|^2,   
	\end{align}
	where (i) follows from $ \mathbb{E} \langle \nabla f({x}_{t} )-{v}_{t} ,    \nabla f({x}_{t} ) \rangle =\mathbb{E}_{{x}_0,...,{x}_t}\left(\mathbb{E}_{t} \langle \nabla f({x}_{t} )-{v}_{t} ,    \nabla f({x}_{t} ) \rangle\right) = 0$.

	Next, we upper-bound $\mathbb{E}  \|  \nabla f({x}_{t} )-{v}_{t} \|^2 $. 
	For the case when $|B_t| < n$, we  have 
	\begin{align*}
		\mathbb{E}  \|  \nabla f({x}_{t} )-{v}_{t} \|^2 &= \mathbb{E}  \left\|  \nabla f({x}_{t} )- \frac{1}{|B_t|} \sum_{i \in B_t}  \nabla f_i({x}_{t}) \right\|^2 = \mathbb{E}   \left\|\frac{1}{|B_t| }\sum_{i \in B_t}  \left( \nabla f({x}_{t} )-  \nabla f_i({x}_{t})\right) \right\|^2
		\\ &= \mathbb{E}  \frac{1}{|B_t|^2 } \left\| \sum_{i \in B_t} \left(   \nabla f({x}_{t} )-  \nabla f_i({x}_{t})\right) \right\|^2  \\
		&=\mathbb{E}  \frac{1}{|B_t|^2 }  \sum_{i \in B_t} \sum_{j \in B_t} \left\langle   \nabla f({x}_{t} )-  \nabla f_i({x}_{t}), \nabla f({x}_{t} )-  \nabla f_j({x}_{t})\right\rangle  \\
		&=\mathbb{E}_{{x}_0,...,{x}_t} \bigg(\mathbb{E}_{t} \frac{1}{|B_t|^2 }  \sum_{i \in B_t} \sum_{j \in B_t} \left\langle   \nabla f({x}_{t} )-  \nabla f_i({x}_{t}), \nabla f({x}_{t} )-  \nabla f_j({x}_{t})\right\rangle \bigg) \\
		&= \mathbb{E}_{{x}_0,...,{x}_t}  \frac{1}{|B_t|^2 }  \sum_{i \in B_t} \sum_{j \in B_t} \mathbb{E}_{t} \left\langle   \nabla f({x}_{t} )-  \nabla f_i({x}_{t}), \nabla f({x}_{t} )-  \nabla f_j({x}_{t})\right\rangle  \\
		&\overset{(i)}=\mathbb{E}_{{x}_0,...,{x}_t}  \frac{1}{|B_t|^2 }  \sum_{i \in B_t}   \mathbb{E}_{t}  \norml{ \nabla f({x}_{t} )-  \nabla f_i({x}_{t})}^2 \overset{(ii)}\leq \mathbb{E}  \frac{\sigma^2}{|B_t|  } ,   
	\end{align*}
	where (i) follows from  $\mathbb{E}_{t} \nabla f_i({x}_{t}) = \nabla f({x}_{t}) $, and $\mathbb{E}_{t} \left\langle   \nabla f({x}_{t} )-  \nabla f_i({x}_{t}), \nabla f({x}_{t} )-  \nabla f_j({x}_{t})\right\rangle = 0$ for $i \neq j$, and (ii) follows from item (3) in Assumption~\ref{assum1}.
	For the case when  $|B_t| = n$, we have  ${v}_t = \nabla f({x}_t)$,  and thus $\mathbb{E}  \|  \nabla f({x}_{t} )-{v}_{t} \|^2 = 0$. Combining the above two cases, we have
		\begin{align}
		\mathbb{E}  \|  \nabla f({x}_{t} )-{v}_{t} \|^2 \leq  \mathbb{E} \left( \frac{I_{(|B_t|<n)}}{|B_t|} \sigma^2 \right)\label{abaSGD_2}.
	\end{align}
	Plugging \eqref{abaSGD_2} into \eqref{abaSGD_1}, we obtain
	\begin{align*}
		\Big (\eta-\frac{L\eta^2}{2} \Big) \mathbb{E} \| {v}_{t } \|^2  \leq  \mathbb{E}  f({x}_{t } )	 - \mathbb{E}  f({x}_{t+1})   + \mathbb{E}  \frac{I_{(|B_t|<n)}}{|B_t|} \eta \sigma^2.
	\end{align*}
	
	Telescoping the above inequality over $t$ from $0$ to $T $ yields
	\begin{align}
	\sum_{t=0}^{T}	\Big (\eta-\frac{L\eta^2}{2} \Big) \mathbb{E} \| {v}_{t } \|^2  \leq  \mathbb{E}  f({x}_{0 } )	 - \mathbb{E}  f({x}_{T+1})   + \sum_{t=0}^{T} \mathbb{E}  \frac{I_{(|B_t|<n)}}{|B_t|} \eta \sigma^2. \label{abaSGD_3} 
	\end{align}
	
	Next,  we upper-bound $\sum_{t=0}^{T} \mathbb{E}  \big(\frac{I_{(|B_t|<n)}}{|B_t|} \eta \sigma^2\big)$ in the above inequality through the following steps.
	\begin{align*}
		\sum_{t=0}^{T} \mathbb{E}  \frac{I_{(|B_t|<n)}}{|B_t|} \eta \sigma^2 &\overset{(i)}\leq \sum_{t=0}^{T} \mathbb{E}    \left(  \frac{\sum_{i=1}^{m}\|{v}_{t-i}\|^2 }{2m\sigma^2} + \frac{\epsilon}{ 24\sigma^2} \right){\eta \sigma^2}  \\
		&=  \frac{\eta}{2m } \sum_{t=0}^{T}      \sum_{i=1}^{m} \mathbb{E} \|{v}_{t-i}\|^2  +\sum_{t=0}^{T}    \frac{\eta\epsilon}{ 24 }  \\ 
		&= {   \frac{\eta}{2m } \sum_{t=1}^{T} \sum_{i=1}^{\min\{m,t\}} \mathbb{E} \|{v}_{t-i}\|^2 + \frac{\eta}{2m } \sum_{t=0}^{\min \{m-1, T\}} \sum_{i=t+1}^{m} \mathbb{E} \|{v}_{t-i}\|^2 +\sum_{t=0}^{T}    \frac{\eta\epsilon}{ 24 } }\\
		&\overset{(ii)}\leq \frac{\eta}{2m } \sum_{t=1}^{T} \sum_{i=1}^{\min\{m,t\}} \mathbb{E} \|{v}_{t-i}\|^2 + \frac{\eta}{2m } \sum_{t=0}^{m-1} \sum_{i=t+1}^{m} \mathbb{E} \|{v}_{-1}\|^2 +\sum_{t=0}^{T}    \frac{\eta\epsilon}{ 24 } \\
		&= \frac{\eta}{2m } \sum_{t=1}^{T} \sum_{i=1}^{\min\{m,t\}} \mathbb{E} \|{v}_{t-i}\|^2 + \frac{\eta m}{2  }   \|{v}_{-1}\|^2 +\sum_{t=0}^{T}    \frac{\eta\epsilon}{ 24 } \\
		&= \frac{\eta}{2m } \sum_{i=0}^{T-1} \mathbb{E} \|{v}_{ i}\|^2 \sum_{t=i+1}^{\min \{i+m, T\}} 1  + \frac{\eta m}{2  }   \|{v}_{-1}\|^2 +\sum_{t=0}^{T}    \frac{\eta\epsilon}{ 24 } \\
		&\leq\frac{\eta}{2  } \sum_{i=0}^{T } \mathbb{E} \|{v}_{ i}\|^2   + \frac{\eta m}{2  }   \alpha_0^2 +\sum_{t=0}^{T}    \frac{\eta\epsilon}{ 24 }   \numberthis \label{abaSGD_4}
	\end{align*}
	where (i) follows from the definition of $|B_t|$, (ii) follows from the fact that $\|{v}_{-1}\| = \|{v}_{-2}\| = \cdots  =  \|{v}_{-m}\|=\alpha_0$.
		
	Plugging \eqref{abaSGD_4} into \eqref{abaSGD_3}, we obtain
	\begin{align*}
		\sum_{t=0}^{T}	\Big (\eta-\frac{L\eta^2}{2} \Big) \mathbb{E} \| {v}_{t } \|^2  \leq  \mathbb{E}  f({x}_{0 } )	 - \mathbb{E}  f({x}_{T+1})   + \frac{\eta}{2  } \sum_{i=0}^{T } \mathbb{E} \|{v}_{ i}\|^2   + \frac{\eta m}{2  } \alpha_0^2 +\sum_{t=0}^{T}    \frac{\eta\epsilon}{ 24 },
	\end{align*}
	which further yields 
	\begin{align*}
		\sum_{t=0}^{T}	\Big (\frac{\eta}{2}-\frac{L\eta^2}{2} \Big) \mathbb{E} \| {v}_{t } \|^2  &\leq  \mathbb{E}  f({x}_{0 } )	 - \mathbb{E}  f({x}_{T+1})    + \frac{\eta m}{2  }  \alpha_0^2 +\sum_{t=0}^{T}    \frac{\eta\epsilon}{ 24 } \\
		&\leq   f({x}_{0 } )	 -   f^*   + \frac{\eta m}{2  }   \alpha_0^2 +\sum_{t=0}^{T}    \frac{\eta\epsilon}{ 24 }. \numberthis \label{abaSGD_5}
	\end{align*}
Recall that $\phi \defeq \Big (\eta-\frac{L\eta^2}{2} \Big) > 0$.  Then, we obtain  from \eqref{abaSGD_5} that 
	\begin{align}
		\sum_{t=0}^{T} \mathbb{E} \| {v}_{t } \|^2   \leq \frac{2\left( f({x}_{0 } )	 -   f^*  \right) +  {\eta m}  \alpha_0^2}{2\phi} + \frac{(T+1)\eta \epsilon}{24\phi}. \label{abaSGD_7}
	\end{align}
	
	Recall that the output ${x}_{\zeta}$ is chosen  from $\{ {x}_{t}\}_{t=0,...,T}$ uniformly at random. Then, based on \eqref{abaSGD_7}, we have
	\begin{align*}
	\mathbb{E} \norml{\nabla f({x}_\zeta)}^2 &= \frac{1}{T}   \sum_{t=1}^{T} \mathbb{E} \norml{\nabla f({x}_t)}^2 =  \frac{1}{T}   \sum_{t=1}^{T} \mathbb{E} \norml{ \mathbb{E}_{t} {v}_t}^2 \overset{(i)}\leq\frac{1}{T}   \sum_{t=1}^{T} \mathbb{E} \big(\mathbb{E}_{t} \norml{  {v}_t}^2\big) \\
	&= \frac{1}{T}   \sum_{t=1}^{T} \mathbb{E}  \norml{  {v}_t}^2 	 \overset{(ii)}\leq  \frac{2\left( f({x}_{0 } )	 -   f^*  \right)    +  {\eta m}   \alpha_0^2}{2T\phi} + \frac{(T+1)\eta \epsilon}{24T\phi}\\
	&\leq    \frac{2\left( f({x}_{0 } )	 -   f^*  \right)    +  {\eta m}  \alpha_0^2}{2T\phi} + \frac{ \eta \epsilon}{12 \phi},
	\end{align*}
	where (i) follows from the Jensen's inequality, and (ii) follows from \eqref{abaSGD_7}.
	
\subsection{Proof of Corollary~\ref{co:adasgd}}
	Since $\eta = \frac{1}{2L}$, have
	\begin{align*}
			\phi = \Big (\eta-\frac{L\eta^2}{2} \Big) =\frac{1}{8L} > 0.
	\end{align*}
	Then, plugging $\eta = \frac{1}{2L}, \phi =\frac{1}{8L}$  and $T =  \left(16L\left(f({x}_{0 } )	 -   f^*  \right) +  4m   \alpha_0^2\right)\epsilon^{-1}$  in  Theorem~\ref{th_abasgd}, we have 
	\begin{align*}
	\mathbb{E} \norml{\nabla f({x}_\zeta)}^2  	&\leq \frac{8L\left(f({x}_{0 } )	 -   f^*  \right) +  2m    \alpha_0^2}{T}   +   \frac{ \epsilon}{ 3 } \leq \frac{5}{6} \epsilon \leq \epsilon.
	\end{align*} 
	Thus, the total SFO calls required by AbaSGD is given by 
	\begin{align*}
	\sum_{t=0}^{T} |B_t| = \sum_{t=0}^{T}  \min \left\{ \frac{2\sigma^2}{\sum_{i=1}^{m}\|{v}_{t-i}\|^2/m},\frac{ 24\sigma^2}{\epsilon} , n\right\} \leq (T+1)\left(\frac{ 24\sigma^2}{\epsilon} \wedge n\right)= \mathcal{O}  \left(\frac{1}{\epsilon^2} \wedge \frac{n}{\epsilon}\right). 
	\end{align*}

\end{document}